\theoremstyle{plain}
\newtheorem{thm}{Theorem}[section]
\newtheorem{question}[thm]{Question}
\newtheorem{prop}[thm]{Proposition}
\newtheorem{lem}[thm]{Lemma}
\newtheorem{claim}[thm]{Claim}
\theoremstyle{definition}
\newtheorem{definition}[thm]{Definition}
\newtheorem{rem}[thm]{Remark}
\newtheorem{example}[thm]{Example}
\numberwithin{equation}{section}
\numberwithin{figure}{section}
\newcommand{\R}{{\mathbb{R}}}
\newcommand{\T}{{\mathbb{T}}}
\newcommand{\Z}{{\mathbb{Z}}}
\newcommand{\N}{{\mathbb{N}}}
\newcommand{\D}{{\mathbb{D}}}
\newcommand{\Ham}{\operatorname{Ham}}
\title{A universal extension of helicity to topological flows}
\author{Oliver Edtmair and Sobhan Seyfaddini}
\date{\today}
\begin{document}


\maketitle

\begin{abstract}
Helicity is a fundamental conserved quantity in physical systems governed by vector fields whose evolution is described by volume-preserving transformations on a three-manifold. Notable examples include inviscid, incompressible fluid flows, modeled by the three-dimensional Euler equations, and conducting plasmas, described by the magnetohydrodynamics (MHD) equations.

A key property of helicity is its invariance under volume-preserving diffeomorphisms. In an influential article from 1973, Arnold---having provided an ergodic interpretation of helicity as the ``asymptotic Hopf invariant''---posed the question of whether this invariance persists under volume-preserving homeomorphisms. More generally, he asked whether helicity can be extended to topological volume-preserving flows. We answer both questions affirmatively for flows without rest points.

Our approach reformulates Arnold’s question in the framework of what we call $C^0$ Hamiltonian structures. This perspective enables us to leverage recent developments in $C^0$ symplectic geometry, particularly results concerning the algebraic structure of the group of area-preserving homeomorphisms.
\end{abstract}

\tableofcontents

\section{Introduction}

Helicity is a fundamental conserved quantity in physical systems where divergence-free vector fields evolve under volume-preserving transformations on a three-manifold. Notable examples include vorticity fields in inviscid, incompressible fluids, governed by the three-dimensional Euler equations, and magnetic fields in plasmas, described by the magnetohydrodynamics (MHD) equations. Although the concept of helicity was introduced by Woltjer in the study of magnetohydrodynamics \cite{Woltjer}, the term ``helicity'' was coined by Moffatt \cite{moffatt69} in his work on hydrodynamics.  
Building on Moreau~\cite{moreau61}, Moffatt derived the conservation of helicity from the Helmholtz–Kelvin law of vorticity transport~\cite{Helmholtz, hel58, Thomson_1868}. Owing to its physical significance, helicity remains an active topic of research in experimental physics; see, for example, \cite{Scheeler}. For historical overviews, we refer to \cite{Moffatt_1981, Moff-T92, Moffatt14}.

This article addresses questions raised by Arnold \cite{arnold74} concerning topological properties of helicity.

\subsection{Helicity and Arnold's questions}

Let $(Y^3, \mu)$ be a closed smooth $3$-manifold equipped with a volume form $\mu$, and let $X$ be a smooth vector field on $Y$ that preserves $\mu$. That is, the flow $\varphi^t_X$ of $X$ satisfies $(\varphi^t_X)^* \mu = \mu$ for all $t$. This condition is equivalent to the $2$-form
\begin{equation*}
\omega \coloneqq \iota_X \mu
\end{equation*}
being closed. The vector field $X$ is said to be \textit{exact} if $\omega$ is exact.

The \textit{helicity} of an exact volume-preserving vector field $X$ is defined as
\begin{equation*}
    \mathcal{H}(X) \coloneqq \int_Y \alpha \wedge d\alpha,
\end{equation*}
where $\alpha$ is any primitive $1$-form of $\omega$ and $Y$ is oriented via the volume form $\mu$. This integral turns out to be independent of the choice of primitive $1$-form $\alpha$.

A simple but fundamental property of helicity is that it is preserved under volume- and orientation-preserving diffeomorphisms; that is, 
\begin{equation*}
    \mathcal{H}(f_*X) = \mathcal{H}(X) \qquad \text{for every $f\in \operatorname{Diff}^+(Y,\mu)$.}
\end{equation*}
In fact, any other functional on the space of exact volume-preserving vector fields that is invariant under the action of $\operatorname{Diff}^+(Y,\mu)$ and satisfies certain natural regularity conditions must be a function of helicity \cite{Khesin22, Enciso16}.

Two volume-preserving smooth vector fields $X_1$ and $X_2$ on $(Y,\mu)$ are said to be \textit{topologically conjugate} if their flows are conjugate via a volume- and orientation-preserving homeomorphism, i.e.\ if there exists $f\in \operatorname{Homeo}^+(Y,\mu)$ such that
\begin{equation*}
    f \circ \varphi_{X_1}^t \circ f^{-1} = \varphi_{X_2}^t \qquad \text{for all $t \in \mathbb{R}$.}
\end{equation*}
For examples of volume-preserving smooth vector fields that are topologically conjugate but not smoothly (or even $C^1$) conjugate, see \cite[\S 10 \& 11]{Muller13}.

In his 1973 article \cite{arn86}, Arnold, having derived an ergodic interpretation of helicity, posed the following questions concerning the topological invariance of helicity.

\begin{question}
\label{ques:topological_invariance}
    Let $X_1$ and $X_2$ be two exact volume-preserving smooth vector fields which are topologically conjugate. Is it true that $\mathcal{H}(X_1) = \mathcal{H}(X_2)$?
\end{question}

\begin{question}
\label{ques:extension}
    Does helicity admit an extension to topological volume-preserving flows? Here, a topological flow refers to a continuous flow and is not necessarily generated by a vector field.
\end{question}

In this paper we address the above questions for fixed-point-free volume-preserving flows, which is the setting considered by Ghys in \cite[\S 5.4]{ghy23}.  

We provide a preliminary version of our results here, postponing the statement of our main result, Theorem \ref{thm:coherent_helicity_extension}, to Section \ref{sec:main-result}, as it requires some preparatory material.

\begin{thm}
\label{thm:topological_invariance_helicity_flows}
    Two nowhere vanishing, exact, volume-preserving, smooth vector fields which are topologically conjugate have the same helicity.

    Moreover, helicity admits an extension to fixed-point-free, exact, volume-preserving, topological flows whose flow lines have zero measure. This extension is invariant under conjugation by volume- and orientation-preserving homeomorphisms and compatible with the Calabi invariant in the sense of Eq. \eqref{eq:helicity_and_calabi_flows}, and it is uniquely determined by these properties.
\end{thm}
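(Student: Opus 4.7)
I would treat the extension as the main object: once an extension of helicity to fixed-point-free topological flows with measure-zero orbits is constructed and shown to be invariant under volume- and orientation-preserving conjugation, the topological conjugacy invariance for smooth fields falls out as a corollary (both sides of a conjugacy carry well-defined values of the extension, and these must agree). The uniqueness clause then just pins down the normalization prescribed by the Calabi compatibility.

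\textbf{Reformulation in terms of transverse Calabi invariants.} The first substantive step is to reinterpret helicity so that the definition survives $C^0$ regularity. Using $d\alpha = \iota_X \mu$ together with the fact that $\alpha \wedge \mu = 0$ on a 3-manifold, one obtains $\alpha \wedge d\alpha = \alpha(X)\,\mu$, hence $\mathcal{H}(X) = \int_Y \alpha(X)\,\mu$. Since $X$ is nowhere vanishing, $Y$ is covered by flow boxes $U_i \cong (0,\tau_i) \times D_i$ in which $X = \partial_t$ and $\mu = dt \wedge \sigma_i$; a Fubini computation relates the integral of $\alpha(X)$ along a flow line to a Calabi-type integral attached to the short-time Poincaré return map to the transverse disk $D_i$. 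Summing over a partition of unity subordinate to such a cover should then express $\mathcal{H}(X)$ as an aggregate of Calabi invariants of transverse return maps. The independence of $\mathcal{H}(X)$ from the choice of primitive $\alpha$ translates into the statement that this aggregate does not depend on the choice of cover, partition, or primitive, which is the key structural lemma to be established.

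\textbf{$C^0$ extension, invariance, and uniqueness.} With this presentation in hand, the extension to topological flows proceeds by substituting, in each flow box, the area-preserving return homeomorphism produced by the topological flow in place of its smooth counterpart, and invoking the extension of the Calabi invariant to the full group of compactly supported area-preserving homeomorphisms of the disk from the recent $C^0$ symplectic geometry developments alluded to in the abstract. The measure-zero hypothesis on flow lines is what keeps the Fubini step of the previous paragraph valid in the continuous setting, ruling out pathological return-time behavior on sets of positive measure. Conjugation invariance is then inherited from the $C^0$ conjugation invariance of the extended Calabi invariant, since a volume- and orientation-preserving homeomorphism $f$ carries a flow-box decomposition of one flow to one for its conjugate and intertwines the return maps by area-preserving homeomorphisms of the transverse disks. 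Uniqueness is enforced by the compatibility axiom \eqref{eq:helicity_and_calabi_flows}, which pins down the value on any flow admitting a global cross-section, combined with conjugation invariance and the local-to-global patching inherent in the flow-box construction.

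\textbf{Main obstacle.} The principal difficulty will be the decomposition lemma: choosing primitives $\alpha$ adapted to a flow-box cover, controlling the measurability and integrability of the return-time function for merely continuous flows, and verifying that the aggregated expression is genuinely independent of all choices. Handling flows that do not admit a global cross-section, and ensuring that the patching argument truly respects the $C^0$ Calabi invariant (and not just its smooth version), is where the technical heart of the proof should sit; by contrast, once such a local presentation is secured, invariance and uniqueness should follow essentially formally from known properties of the extended Calabi invariant.
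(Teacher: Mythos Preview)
Your proposal has a genuine structural gap at the very first step. In a flow box $(0,\tau_i)\times D_i$ with $X=\partial_t$, there is no ``short-time Poincar\'e return map'' to $D_i$: the flow is trivial there, so whatever Calabi-type quantity you attach to it is zero. The integrand $\alpha(X)$ over such a box depends on the global primitive $\alpha$, not on any locally defined dynamical data, and summing contributions over a partition of unity does not produce an expression in terms of local area-preserving return maps. The Gambaudo--Ghys identity you have in mind relates helicity to the Calabi invariant of a \emph{global} first-return map to a global section; it does not localize, and most flows of interest have no global section. So the ``aggregate of Calabi invariants of transverse return maps'' does not exist as written, and the decomposition lemma you identify as the main obstacle has no candidate statement.

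The paper takes a different, global route. It first passes from the flow $(\varphi,\mu)$ to a $C^0$ Hamiltonian structure $\Omega$ (this is where the measure-zero hypothesis on flow lines is used). The key structural result is then a smoothing-modulo-plug theorem: every $C^0$ Hamiltonian structure on a closed $3$-manifold can be written as $\omega\#\mathcal P$ with $\omega$ smooth and $\mathcal P$ a single $C^0$ plug, and one sets $\overline{\mathcal H}(\Omega)\coloneqq \mathcal H(\omega)+\overline{\mathrm{Cal}}(\mathcal P)$. Well-definedness is the hard part: it requires a companion theorem that any \emph{homeomorphism} between two such presentations can itself be smoothed after inserting plugs whose Calabi invariants match up. A further ingredient you are missing is that the $C^0$ extensions of Calabi produced in the literature are neither unique nor a priori natural under area-preserving embeddings; the paper has to prove that $\overline{\operatorname{Ham}}{}^{\mathrm{ab}}(\Sigma)$ is canonically independent of the open surface $\Sigma$ in order to get a Calabi extension with the coherence needed to make ``$\overline{\mathrm{Cal}}(\mathcal P)$'' independent of how the plug sits in $Y$. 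Your conjugation-invariance argument tacitly assumes exactly this naturality.
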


We briefly comment on the assumptions in the theorem. The notion of exactness for topological flows generalizes its smooth counterpart and is essential for defining helicity, even in the smooth setting; see Definitions \ref{def:flux_C0} \& \ref{def:exact_vol_pres}. The condition that flow lines have zero measure is quite natural; it rules out certain pathological situations (see Example \ref{ex:atomic_transverse_measure}). The same assumption appears also in other works \cite{gg97,Contreras-Iturriaga99}. The significance of the fixed-point-free condition is that it yields a \emph{$C^0$ Hamiltonian structure}, enabling us to build on recent advances in $C^0$ symplectic topology and the understanding of the algebraic structure of groups of area-preserving homeomorphisms \cite{chs24, chmss22, chmss}.

\medskip
 
\noindent \emph{Compatibility with Calabi.} We briefly explain this here, leaving further details to Sections  \ref{sec:plugs and Calabi} \& \ref{sec:main-result}.  Let $(\Sigma, \omega_\Sigma)$ denote an open surface equipped with an area form $\omega_\Sigma$. We denote its group of Hamiltonian diffeomorphisms by $\Ham(\Sigma)$. It was proven recently \cite{chmss, mt} that the Calabi homomorphism $\mathrm{Cal}_\Sigma: \Ham(\Sigma) \rightarrow \R$ admits infinitely many extensions to the group of Hamiltonian homeomorphisms $\overline{\Ham}(\Sigma)$.  Pick one such extension
\begin{equation*}
    \overline{\mathrm{Cal}}_\Sigma : \overline{\Ham}(\Sigma) \rightarrow \R.
\end{equation*}
The Calabi extensions for different surfaces $\Sigma$ can be picked such that they satisfy the naturality property \eqref{eqn:naturality_topological} below; we prove this fact, which is of independent interest and also crucial for our arguments, in Theorem \ref{thm:natural_isomorphisms_abelianizations_hamiltonian_homeomorphisms}.

Now, fix a topological volume-preserving flow $\psi^t$ on $(Y, \mu)$ and suppose that we have a topological volume-preserving embedding 
\begin{equation*}
    \alpha : ((0,1)\times \Sigma), dt \wedge \omega_\Sigma) \hookrightarrow (Y,\mu)
\end{equation*}
which intertwines the flow on $(0,1)\times \Sigma $ generated by the vector field $\partial_t$ and the flow $\psi^t$.  Consider a $C^0$ Hamiltonian isotopy $\varphi^t \in \overline{\Ham}(\Sigma)$ and note that its suspension to $(0,1) \times \Sigma$ is volume preserving.  We refer to the tuple $\mathcal{P} := (\Sigma,\omega_\Sigma,\alpha,\varphi^t)$  as a  \emph{plug}.  Given a plug $\mathcal{P}$, one can define a new volume-preserving flow $\psi^t \# \mathcal{P}$ on $Y$ by replacing the flow $\psi^t$ inside $\operatorname{im}(\alpha)$ with the suspension of $\varphi^t$. The compatibility condition between our helicity extension $\overline{\mathcal{H}}$ and the Calabi extension $\overline{\operatorname{Cal}}_\Sigma$ is given by
\begin{equation}
    \label{eq:helicity_and_calabi_flows}
    \overline{\mathcal{H}}(\psi^t\#\mathcal{P}) = \overline{\mathcal{H}}(\psi^t) + \overline{\mathrm{Cal}}_\Sigma(\varphi^1).
\end{equation}

\subsection{Context and Historical Background}

Our story begins in 1973, when Arnold \cite{arnold74} interpreted the helicity of a vector field as the average asymptotic linking number of flow lines, which motivated Questions \ref{ques:topological_invariance} \& \ref{ques:extension}; see Sections 2-4 of the English translation \cite{arn86} and Problem 1973-23 in \cite{Arnold-problems}. Two trajectories, beginning at two randomly chosen points in space, are followed for a long time and then closed into loops using a well-chosen system of short geodesic arcs.\footnote{The existence of the system of geodesic arcs is a subtle point which was proven rigorously in \cite{Vogel2003}.} The linking number of these loops---averaged over time and all pairs of initial points---converges to the helicity as the time tends to infinity. Since linking numbers are preserved under homeomorphisms, at first glance this seems to imply topological invariance of helicity. However, as remarked by Ghys in \cite{ghy07}, ``... one should be cautious that a homeomorphism might entangle the small geodesic arcs that were used to close the trajectories."  

Since their formulation, Questions \ref{ques:topological_invariance} and \ref{ques:extension} have reappeared in various works \cite{ak21, gg97, ghy07, tao_blog, Muller13}, including Arnold and Khesin’s textbook \cite[III, Problem 4.8]{ak21}, Ghys’ plenary ICM address \cite[Section 1.4]{ghy07}, and Tao’s blog \cite{tao_blog}. The case of flows without rest points is emphasized again in \cite[Sec.\ 5.4]{ghy23}.  One reason for this ongoing interest is that real-world flows often lack smoothness. Therefore, a positive answer to these questions would highlight the significance of helicity as a meaningful invariant, even in low-regularity settings. As Tao notes \cite{tao_blog}, “This would be of interest in fluid equations, as it would suggest that helicity remains invariant even after the development of singularities in the flow.” 

We should mention that Arnold's influential article has prompted substantial further work in various directions; see, for example, \cite{freedman_he_91, Contreras93, gg97, Contreras-Iturriaga99, Riv02, Khesin03, Kot-Vog03, Muller13, Khesin-Saldanha}. Among these,  Gambaudo \& Ghys \cite{gg97} and Müller \& Spaeth \cite{Muller13} contain results towards Questions \ref{ques:topological_invariance} \& \ref{ques:extension}. Both cases in \cite{gg97, Muller13} involve exact volume-preserving flows without fixed points, which are covered by Theorem \ref{thm:topological_invariance_helicity_flows}.  The Gambaudo–Ghys approach establishes the topological invariance of helicity for certain suspension flows by relating it to the Calabi invariant---a connection that is also central to our work.

The extension of helicity to low-regularity settings is the subject of ongoing  research in fluid dynamics: In \cite{Giri-Kwon-Novack}, Giri, Kwon and Novack extend helicity to a class of weak solutions of the Euler equations with regularity $H^{\frac{1}{2} - \varepsilon}$. Using convex integration, they further show that helicity need not be conserved for such solutions. This does not contradict our result, since the vector fields they construct are too irregular to generate a flow. It would be interesting to compare their extension of helicity with ours on the overlap of their respective domains.

\subsection{Hamiltonian structures}
\label{sec:intro-Ham-structures}

\noindent \textbf{Smooth Hamiltonian Structures.} 
Let $Y$ be an oriented smooth $3$-manifold.  A \textit{Hamiltonian structure} on $Y$ is a closed and maximally nondegenerate $2$-form $\omega$ on $Y$. 
The prototypical example is that of the \emph{standard Hamiltonian structure} on $\R^3$, which is given by the $2$-form $\omega_{\operatorname{std}}:= dx\wedge dy$, where $\R^3 = \R \times \R^2$ is equipped with the coordinates $(t,x,y)$.  Every smooth Hamiltonian structure is locally diffeomorphic to $\omega_{\operatorname{std}}$; see Lemma \ref{lem:Draboux}.

Maximal non-degeneracy of $\omega$ means that $\ker \omega$ defines a line field on $Y$, called the \textit{characteristic line field}, which integrates to a  $1$-dimensional foliation on $Y$, called the \textit{characteristic foliation}. The Hamiltonian structure $\omega$ naturally equips this foliation with a coorientation and a transverse measure locally diffeomorphic to the standard $2$-dimensional Lebesgue measure. Conversely, every such foliation uniquely determines a Hamiltonian structure.

The relevance of Hamiltonian structures to our discussion is as follows: Equip $Y$ with a volume form $\mu$. Then, a vector field $X$ is nowhere vanishing and volume-preserving if and only if \( \omega \coloneqq \iota_X \mu \) is a Hamiltonian structure. In this case, the flow $\varphi_X^t$ is tangent to the characteristic foliation of $\omega$.

We say a Hamiltonian structure $\omega$ is \textit{exact} if $\omega$ is an exact $2$-form. We define the \textit{helicity} of an exact Hamiltonian structure on an oriented closed smooth $3$-manifold $Y$ to be
\begin{equation*}
    \mathcal{H}(\omega) \coloneqq \int_Y \alpha \wedge d\alpha,
\end{equation*}
where $\alpha$ is any choice of primitive $1$-form of $\omega$. As already mentioned, this integral is independent of the choice of $\alpha$. By definition, a nowhere vanishing volume-preserving vector field $X$ is exact if and only if the induced Hamiltonian structure $\omega$ is exact. If this is the case, we have
\begin{equation*}
    \mathcal{H}(X) = \mathcal{H}(\omega).
\end{equation*}
This perspective allows us to reformulate Question \ref{ques:topological_invariance} concerning the topological invariance of helicity in terms of Hamiltonian structures. Specifically, the question becomes whether two exact Hamiltonian structures related by pullback under an orientation-preserving homeomorphism necessarily have the same helicity. To make sense of pullbacks by homeomorphisms, we rely on the interpretation of Hamiltonian structures as cooriented measured foliations.

\bigskip

\noindent \textbf{$C^0$ Hamiltonian Structures.} Let $Y$ be an oriented topological $3$-manifold. A \emph{$C^0$ Hamiltonian structure} on $Y$ is a cooriented, $1$-dimensional $C^0$ foliation equipped with a transverse measure, locally modeled on $\R^3$ with the characteristic foliation induced by $\omega_{\operatorname{std}}$. A more detailed definition, formalized in terms of $C^0$ Hamiltonian atlases, is given in Section~\ref{sec:C0_hamiltonian_structures}. We will see in Section~\ref{sec:c0_case} that $C^0$ Hamiltonian structures serve the same role for topological volume-preserving flows (whose flow lines have zero measure) as smooth Hamiltonian structures do for smooth volume-preserving flows. This allows us to recast Question~\ref{ques:extension} as a question about extending helicity to exact\footnote{We explain how to interpret exactness for $C^0$ Hamiltonian structures in Section~\ref{sec:flux_C0_Ham_structures}.} $C^0$ Hamiltonian structures.

\subsection{Plugs and the Calabi invariant in the \texorpdfstring{$C^0$}{C0} setting}
\label{sec:plugs and Calabi}

In the smooth setting, Gambaudo--Ghys \cite{gg97} discovered a natural connection between helicity and the Calabi invariant. We construct our extensions of helicity such that this connection continues to hold in the $C^0$ setting.

Let $(\Sigma,\omega_\Sigma)$ be an open surface with an area form, and let $\Ham(\Sigma)$ and $\overline{\Ham}(\Sigma)$ denote, respectively, the groups of compactly supported Hamiltonian diffeomorphisms and homeomorphisms of $(\Sigma,\omega_\Sigma)$; see Section \ref{sec:prelims-surfaces}. We denote by $\operatorname{Cal}_\Sigma : \Ham(\Sigma) \rightarrow \R$ the Calabi homomorphism \cite{cal69}, which is defined as follows: for $\varphi = \varphi_H^1 \in \operatorname{Ham}(\Sigma)$, generated by a compactly supported Hamiltonian $H \in C^\infty( [0,1]\times \Sigma )$, we have\footnote{Different conventions exist for defining the Calabi invariant; we include the factor 2 for convenience.} 
\begin{equation*}
\operatorname{Cal}_\Sigma(\varphi) = 2 \int_{[0,1]\times \Sigma} H dt \wedge \omega_\Sigma.
\end{equation*}

The $2$-form $\omega_\Sigma$ induces a natural Hamiltonian structure on $(0,1)\times \Sigma $, which we denote by the same symbol $\omega_\Sigma$. Given an oriented $3$-manifold $Y$ equipped with a $C^0$ Hamiltonian structure $\Omega$, we define a \emph{plug} to be a tuple $$\mathcal{P} \coloneqq (\Sigma,\omega_\Sigma,\alpha,(\varphi^t)_{t\in [0,1]})$$ 
where $\alpha : ((0,1)\times \Sigma),\omega_\Sigma) \hookrightarrow (Y,\Omega)$ is an embedding of $C^0$ Hamiltonian structures and $\varphi^t \in \overline{\Ham}(\Sigma)$ is a $C^0$ Hamiltonian isotopy. Given a plug $\mathcal{P}$, we define an operation, called \emph{plug insertion}, which creates a new $C^0$ Hamiltonian structure $\Omega \# \mathcal{P}$ on $Y$ by replacing the $C^0$ Hamiltonian structure $\Omega$ inside $\operatorname{im}(\alpha)$ with the pushforward of the $C^0$ Hamiltonian structure on $(0,1)\times \Sigma$ induced by $\omega_\Sigma$ via the embedding $\alpha \circ \Phi$, where $\Phi : (0,1)\times \Sigma \rightarrow (0,1)\times \Sigma$ is defined by $\Phi(t,p) \coloneqq (t,\varphi^t(p))$.\footnote{Plug insertion does not affect exactness of Hamiltonian structures, in smooth and $C^0$ settings.}  The effect of this operation on the characteristic foliations is as follows:  Before plug insertion, the characteristic leaves of $\Omega$ inside $\operatorname{im}(\alpha)$ are of the form $\alpha((0,1)\times \{p\})$ for $p\in \Sigma$. After plug insertion, the characteristic leaves of $\Omega\# \mathcal{P}$ are of the form
\begin{equation*}
    \{\alpha(t,\varphi^t(p))\mid t\in (0,1)\}
\end{equation*}
for $p \in \Sigma$.

In the smooth setting, the following important identity, which can be deduced from \cite{gg97}, relates the helicities of $\Omega$ and $\Omega \# \mathcal{P}$:
\begin{equation}
\label{eq:helicity_and_calabi_smooth}
    \mathcal{H}(\Omega\#\mathcal{P}) = \mathcal{H}(\Omega) + \operatorname{Cal}(\varphi^1)
\end{equation}

The Calabi homomorphism was recently extended to the group of Hamiltonian homeomorphisms \cite{chmss, mt}. In light of this, it is natural to require that identity~\eqref{eq:helicity_and_calabi_smooth} continues to hold for an extension of helicity to $C^0$ Hamiltonian structures. However, in the smooth setting, the Calabi homomorphism satisfies a certain naturality property---implicit in identity~\eqref{eq:helicity_and_calabi_smooth}---which was not known to hold in the $C^0$ category. Below, we establish this naturality in the $C^0$ setting via Theorem~\ref{thm:natural_isomorphisms_abelianizations_hamiltonian_homeomorphisms}—a result of independent interest that plays a key role in our main theorem.

\subsubsection*{A universal extension of Calabi.} 

In this subsection, $\Sigma, \Sigma_1, \Sigma_2$ denote non-empty, open, connected surfaces equipped with area forms. A smooth area- and orientation-preserving embedding $\iota: \Sigma_1 \hookrightarrow \Sigma_2$ induces an injective homomorphism $\operatorname{Ham}(\iota): \operatorname{Ham}(\Sigma_1) \rightarrow \operatorname{Ham}(\Sigma_2)$ via pushforward. The Calabi homomorphism satisfies the following naturality property:
\begin{equation}\label{eqn:naturality_smooth}
\operatorname{Cal}_{\Sigma_1} = \operatorname{Cal}_{\Sigma_2} \circ \operatorname{Ham}(\iota)
\end{equation}
Note, moreover, that the embedding $\iota$ induces a homomorphism $\operatorname{Ham}^{\operatorname{ab}}(\iota)$ at the level of abelianizations
\begin{equation}
\label{eqn:iso_abelianization_smooth}
\operatorname{Ham}^{\operatorname{ab}}(\iota) : \operatorname{Ham}^{\operatorname{ab}}(\Sigma_1) \rightarrow \operatorname{Ham}^{\operatorname{ab}}(\Sigma_2).
\end{equation}
Here $G^{\operatorname{ab}}\coloneqq G/[G,G]$ denotes the abelianization of a group $G$. It follows from Banyaga's work \cite{ban78} that the homomorphism \eqref{eqn:iso_abelianization_smooth} is an isomorphism and is independent of the embedding $\iota$. Hence the group $\operatorname{Ham}^{\operatorname{ab}}(\Sigma)$ does not depend on $\Sigma$ up to canonical isomorphism. In fact, the Calabi homomorphism induces a natural isomorphism
\begin{equation*}
    \operatorname{Cal}_\Sigma : \operatorname{Ham}^{\operatorname{ab}}(\Sigma) \overset{\cong}{\longrightarrow} \R.
\end{equation*}

\medskip 

For our arguments, we need
extensions of the Calabi homomorphism
\begin{equation*}
    \overline{\operatorname{Cal}}_\Sigma : \overline{\operatorname{Ham}}(\Sigma) \rightarrow \R
\end{equation*}
that satisfy an analogue of the naturality property \eqref{eqn:naturality_smooth} in the $C^0$ setting. Although \cite{chmss, mt} construct infinitely many $C^0$ extensions of Calabi, none are canonical,\footnote{The constructions in \cite{chmss, mt} rely on the axiom of choice in an essential way; see \cite[Remark 5.1]{chmss}.} and it is unclear whether they satisfy the desired properties.

We obtain suitable extensions of Calabi as a consequence of the theorem below. For every area- and orientation-preserving topological embedding of surfaces $\iota: \Sigma_1\hookrightarrow \Sigma_2$, let $\overline{\operatorname{Ham}}^{\operatorname{ab}}(\iota) : \overline{\operatorname{Ham}}^{\operatorname{ab}}(\Sigma_1) \rightarrow \overline{\operatorname{Ham}}^{\operatorname{ab}}(\Sigma_2)$ denote the induced homomorphism at the level of abelianizations.

\begin{thm}
\label{thm:natural_isomorphisms_abelianizations_hamiltonian_homeomorphisms}
    The homomorphism
    \begin{equation*}
        \overline{\operatorname{Ham}}^{\operatorname{ab}}(\iota) : \overline{\operatorname{Ham}}^{\operatorname{ab}}(\Sigma_1) \rightarrow \overline{\operatorname{Ham}}^{\operatorname{ab}}(\Sigma_2)
    \end{equation*}
    is an isomorphism. Moreover, it does not depend on the choice of area- and orientation-preserving embedding $\iota: \Sigma_1 \hookrightarrow \Sigma_2$, i.e.\ if $\iota':\Sigma_1\hookrightarrow \Sigma_2$ is another area- and orientation-preserving embedding, then
    \begin{equation*}
        \overline{\operatorname{Ham}}^{\operatorname{ab}}(\iota') = \overline{\operatorname{Ham}}^{\operatorname{ab}}(\iota).
    \end{equation*}
\end{thm}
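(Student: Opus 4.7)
The plan is to extract two $C^0$ tools—a fragmentation lemma and a disk-mobility lemma—from which surjectivity and independence of $\iota$ follow quickly, and then to attack injectivity via an explicit left-inverse construction. The first tool is \emph{fragmentation}: every $\varphi \in \overline{\Ham}(\Sigma)$ is, modulo the commutator subgroup, a product $\prod_i \varphi_i$ with each $\varphi_i$ supported in an embedded open topological disk $D_i \subset \Sigma$. The second is \emph{disk mobility}: any two area- and orientation-preserving topological embeddings $\alpha_1, \alpha_2 : \mathbb{D} \hookrightarrow \Sigma$ of a disk into a connected open surface are related by some $h \in \overline{\Ham}(\Sigma)$ with $h \circ \alpha_1 = \alpha_2$. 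Both are $C^0$ analogues of classical smooth facts, and should be obtainable by approximation or from the existing $C^0$ symplectic literature.

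Granting these tools, surjectivity and independence go through cleanly. For surjectivity, fragment $f \in \overline{\Ham}(\Sigma_2)$ as $[f] = \sum_i [f_i]$ with $\supp f_i \subset D_i$; disk mobility provides $h_i \in \overline{\Ham}(\Sigma_2)$ with $h_i(D_i) \subset \iota(\Sigma_1)$, so $h_i f_i h_i^{-1} \in \iota_* \overline{\Ham}(\Sigma_1)$ represents the class $[f_i]$, placing $[f]$ in the image of $\iota_*^{\operatorname{ab}}$. For independence, given $\iota_1, \iota_2 : \Sigma_1 \hookrightarrow \Sigma_2$, fragment $\varphi \in \overline{\Ham}(\Sigma_1)$ into disk-supported $\varphi_i$; disk mobility inside $\Sigma_2$ applied to $\iota_1|_{D_i}$ and $\iota_2|_{D_i}$ yields $h_i \in \overline{\Ham}(\Sigma_2)$ with $h_i \circ \iota_1|_{D_i} = \iota_2|_{D_i}$, whence $\iota_{2*}\varphi_i = h_i (\iota_{1*}\varphi_i) h_i^{-1}$ and the two classes coincide in $\overline{\Ham}^{\operatorname{ab}}(\Sigma_2)$.

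Injectivity is the main obstacle. If $\iota_*\varphi$ is a product of commutators in $\overline{\Ham}(\Sigma_2)$ with $\varphi \in \overline{\Ham}(\Sigma_1)$, these commutators may involve elements whose supports straddle $\partial \iota(\Sigma_1)$, so one cannot simply restrict. The intended approach is to build a candidate left inverse $r : \overline{\Ham}^{\operatorname{ab}}(\Sigma_2) \to \overline{\Ham}^{\operatorname{ab}}(\Sigma_1)$ by fragmenting any $g \in \overline{\Ham}(\Sigma_2)$ into disk-supported pieces, conjugating each into $\iota(\Sigma_1)$ via disk mobility, pulling back by $\iota$, and summing in $\overline{\Ham}^{\operatorname{ab}}(\Sigma_1)$. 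By construction $r \circ \iota_*^{\operatorname{ab}} = \operatorname{id}$, so the substantive content is well-definedness. Given a relation $\prod_i g_i = \prod_j [a_j, b_j]$ in $\overline{\Ham}(\Sigma_2)$, one fragments each $a_j, b_j$ into disk-supported pieces and uses the commutator identities expressing $[xy, z]$ and $[x, yz]$ as products of conjugates of $[x, z]$ and $[y, z]$ to expand each $[a_j, b_j]$ into commutators of individual disk-supported elements; each such elementary commutator either vanishes (disjoint supports) or is supported in a single larger disk moveable into $\iota(\Sigma_1)$ via disk mobility, and the resulting identity, now taking place inside $\iota(\Sigma_1) \cong \Sigma_1$, reduces via the already-established independence statement to a relation modulo commutators in $\overline{\Ham}(\Sigma_1)$. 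The careful bookkeeping of the disk-mobility conjugations through this commutator expansion is the key technical challenge.
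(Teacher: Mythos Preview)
Your treatment of independence and surjectivity is correct and matches the paper's Steps~1--2 essentially verbatim: fragment into disk-supported pieces and use that any two area-preserving disk embeddings into a connected surface differ by an ambient Hamiltonian homeomorphism. The construction of the candidate left-inverse $r$ (fragment, push each piece into $\iota(\Sigma_1)$, sum in the abelianization) is also the paper's strategy.

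The gap is in the well-definedness of $r$. What must be shown is the implication
\[
\varphi_n\circ\cdots\circ\varphi_1=\operatorname{id}\ \text{in}\ \overline{\Ham}(\Sigma_2),\ \varphi_i\in\overline{\Ham}(D_i)\ \Longrightarrow\ \sum_i[\iota_{i\ast}\varphi_i]=0\ \text{in}\ \overline{\Ham}^{\operatorname{ab}}(\Sigma_1),
\]
and your commutator-expansion scheme does not establish this. First, the identity is the empty product of commutators, so there is nothing to fragment on the right-hand side; the commutator expansion is vacuous precisely for the relation you need. Second, even in the genuine commutator case $\prod g_i=\prod[a_j,b_j]$, the step ``the resulting identity now takes place inside $\iota(\Sigma_1)$'' is not justified: disk mobility conjugates each disk-supported factor by a \emph{different} element of $\overline{\Ham}(\Sigma_2)$, and conjugating the factors of a relation by different elements destroys the relation. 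What survives is only an identity in $\overline{\Ham}^{\operatorname{ab}}(\Sigma_2)$, which is useless since injectivity of $\iota_\ast^{\operatorname{ab}}$ is exactly what you are trying to prove.

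The paper closes this gap by a genuinely different mechanism: it inductively replaces each $\varphi_j$ by a smooth approximation $\varphi_j^{\mathrm{sm}}\in\Ham(D_j)$ and redistributes the $C^0$-small error $\alpha=\varphi_j\circ(\varphi_j^{\mathrm{sm}})^{-1}$ among the remaining $\varphi_k$ (for $k>j$) via fine fragmentation, so that at every stage the product relation $\prod_i\varphi_i^{(j)}=\operatorname{id}$ is preserved \emph{as an equation in} $\overline{\Ham}(\Sigma_2)$ and the abelianized sum $\sum_i[\iota_{i\ast}\varphi_i^{(j)}]$ is unchanged. After $n$ steps all $\varphi_i^{(n)}$ are smooth, and the conclusion follows from the \emph{smooth} case, i.e.\ from the injectivity of $\Ham^{\operatorname{ab}}(\iota):\Ham^{\operatorname{ab}}(\Sigma_1)\to\Ham^{\operatorname{ab}}(\Sigma_2)$, which is a consequence of Banyaga's theorem and the naturality of the Calabi homomorphism. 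The smooth theory is thus an essential input, not something that can be bypassed by purely combinatorial commutator manipulations.
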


We state and prove a generalization of Theorem \ref{thm:natural_isomorphisms_abelianizations_hamiltonian_homeomorphisms} in Section \ref{sec:hamiltonian_homeos_and_calabi}; see Theorem \ref{thm:natural_isomorphisms_abelianizations_hamiltonian_homeomorphisms-v2}.

Set $\mathcal{R} \coloneqq \overline{\operatorname{Ham}}^{\operatorname{ab}}(\D)$, where $\D\subset \R^2$ denotes the open unit disc. By Theorem \ref{thm:natural_isomorphisms_abelianizations_hamiltonian_homeomorphisms}, we may canonically identify 
\begin{equation}\label{eqn:universal-abelianization} \overline{\operatorname{Ham}}^{\operatorname{ab}}(\Sigma) \cong \mathcal{R}
\end{equation}
for every non-empty, connected, open surface $\Sigma$.

It was an open question---known as the \emph{simplicity conjecture}---whether the group $\overline{\operatorname{Ham}}(\D)$ is simple. This question was recently resolved in the negative in \cite{chs24}, which showed that $\overline{\operatorname{Ham}}(\D)$ is not simple. Moreover, it is shown in \cite[Cor.\ 1.3]{chs24} that $\overline{\operatorname{Ham}}(\D)$ is not perfect either. In other words, the group $\mathcal{R}$ is non-trivial.

It can be further deduced from \cite{chmss, mt} that the natural homomorphism  
\begin{equation}
\label{eq:natural_map_abelianization_smooth_to_C0}
    \operatorname{Ham}^{\operatorname{ab}}(\Sigma) \rightarrow \overline{\operatorname{Ham}}^{\operatorname{ab}}(\Sigma)
\end{equation}
is injective and not surjective; see Section~\ref{sec:hamiltonian_homeos_and_calabi}. The injectivity of the homomorphism~\eqref{eq:natural_map_abelianization_smooth_to_C0}, a key input from $C^0$ symplectic topology, is essential for our proof of the topological invariance of helicity. Recalling that the Calabi homomorphism induces an identification $\operatorname{Ham}^{\operatorname{ab}}(\Sigma) \cong \R$, the homomorphism~\eqref{eq:natural_map_abelianization_smooth_to_C0} allows us to naturally view $\R$ as a subgroup
\begin{equation}
\label{eqn:subgroup}
    \R \subset \mathcal{R}.
\end{equation}

In view of the natural commutative diagram
\begin{equation*}
\begin{tikzcd}
\overline{\operatorname{Ham}}(\Sigma) \arrow[r] 
  & \overline{\operatorname{Ham}}^{\operatorname{ab}}(\Sigma) \arrow[r, "\cong"] 
  & \mathcal{R} \\
\operatorname{Ham}(\Sigma) \arrow[u, hookrightarrow] \arrow[r] 
  \arrow[rr, bend right=20, "\operatorname{Cal}_\Sigma"']
  & \operatorname{Ham}^{\operatorname{ab}}(\Sigma) \arrow[r, "\cong"] \arrow[u, hookrightarrow] 
  & \R\arrow[u, hookrightarrow]
\end{tikzcd}
\end{equation*}
we call the natural homomorphism
\begin{equation}
    \label{eqn:calabi_universal}
    \overline{\operatorname{Cal}}_\Sigma:\overline{\operatorname{Ham}}(\Sigma) \rightarrow \overline{\operatorname{Ham}}^{\operatorname{ab}}(\Sigma) \cong \mathcal{R}
\end{equation}
the \emph{universal} (or \emph{$\mathcal{R}$-valued}) extension of the Calabi homomorphism.\footnote{Note that this amounts to a change of notation from \eqref{eq:helicity_and_calabi_flows}. From this point on-wards, $\overline{\mathrm{Cal}}_\Sigma$ will always denote the universal extension of Calabi, unless otherwise stated.}\footnote{Recall that the standing assumption of this subsection is that surfaces are connected. We will encounter non-connected surfaces $\Sigma$ later on, and in this case we define $\overline{\mathrm{Cal}}_\Sigma : \overline{\operatorname{Ham}}(\Sigma) \rightarrow \mathcal{R}$ to be the sum of all $\overline{\mathrm{Cal}}_S$ where $S$ ranges over the components of $\Sigma$.} Its restriction to $\operatorname{Ham}(\Sigma)$ agrees with the usual smooth Calabi homomorphism $\operatorname{Cal}_\Sigma$ via the natural inclusion $\R\subset \mathcal{R}$. Moreover, by Theorem \ref{thm:natural_isomorphisms_abelianizations_hamiltonian_homeomorphisms} it satisfies the naturality property
\begin{equation}\label{eqn:naturality_topological}
\overline{\mathrm{{Cal}}}_{\Sigma_1} =\overline{\mathrm{{Cal}}}_{\Sigma_2} \circ \overline{\operatorname{Ham}}(\iota)
\end{equation}
for any area- and orientation-preserving topological embedding $\iota: \Sigma_1 \hookrightarrow \Sigma_2$.

The extension $\overline{\operatorname{Cal}}_\Sigma$ is universal in the following sense: Consider an arbitrary extension of abelian groups $\R \subset A$. Then every system of extensions 
\begin{equation*}
    \overline{\operatorname{Cal}}_\Sigma^A:\overline{\operatorname{Ham}}(\Sigma) \rightarrow A
\end{equation*}
of Calabi, one for every surface $\Sigma$ and subject to the naturality condition \eqref{eqn:naturality_topological}, arises as
\begin{equation*}
    \overline{\operatorname{Cal}}^A_\Sigma = p \circ \overline{\operatorname{Cal}}_\Sigma
\end{equation*}
for a unique group homomorphism $p:\mathcal{R} \rightarrow A$ over $\R$. In particular, for every choice of projection $\operatorname{pr}:\mathcal{R}\rightarrow \R$, we obtain a system of $\R$-valued Calabi extensions $\operatorname{pr}\circ \overline{\operatorname{Cal}}_\Sigma$ subject to \eqref{eqn:naturality_topological}, and every such system uniquely arises this way. Note that a projection $\operatorname{pr}: \mathcal{R}\rightarrow \R$ is equivalent to a choice of a single Calabi extension $\overline{\operatorname{Cal}}_\D^{\R}:\overline{\operatorname{Ham}}(\D)\rightarrow \R$ for the open unit disc $\D$. Thanks to \cite{chmss}, such extensions are abundant, but unfortunately not very canonical since they are found using the axiom of choice. In fact, as pointed out in \cite[Remark 5.1]{chmss}, there are models of set theory in which the axiom of choice is false and every group homomorphism between Polish groups is automatically continuous. Since there does not exist a $C^0$ continuous extension of the Calabi homomorphism to $\overline{\operatorname{Ham}}(\D)$, in these models there is no extension as a group homomorphism at all.

For this reason, we prefer to work with the universal Calabi extension and construct a universal $\mathcal{R}$-valued extension of helicity to exact $C^0$ Hamiltonian structures. It is always possible to obtain an $\R$-valued helicity extension from this if one wishes to, but this requires a non-canonical choice.

\subsection{Main Result: A universal extension of helicity}
\label{sec:main-result}

We are now in position to state our main result, which defines  helicity  $\overline{\mathcal{H}}(\Omega)$
for an arbitrary exact $C^0$ Hamiltonian structure $\Omega$ on an oriented closed topological $3$-manifold $Y$. The notion of exactness for $C^0$ Hamiltonian structures is defined as the vanishing of a certain cohomology class $\overline{\mathrm{Flux}}(\Omega) \in H^2(Y; \R)$. In particular, if $Y$ is a rational homology three-sphere, then every $C^0$ Hamiltonian structure is exact. This is discussed in detail in Section \ref{sec:flux_C0_Ham_structures}.

Recall that $\Omega\#\mathcal{P}$ denotes the plug insertion operation introduced in Section \ref{sec:plugs and Calabi}. Moreover, $f^* \Omega$ stands for the pullback of $\Omega$ under a homeomorphism $f$.

\begin{thm}
\label{thm:coherent_helicity_extension}
    There is a unique way of assigning a $\mathcal{R}$-valued helicity $\overline{\mathcal{H}}(\Omega)\in \mathcal{R}$ to every exact $C^0$ Hamiltonian structure $\Omega$ on an oriented closed topological $3$-manifold $Y$ such that the following conditions are satisfied: 
    \begin{enumerate}
        \item \label{item:coherent_helicity_extensions_smooth}  
          Extension:  If $\omega$ and $Y$ are smooth, then 
         \begin{equation*} \label{eq:topological_helicity_extends_smooth_helicity}
             \overline{\mathcal{H}}(\omega) = \mathcal{H}(\omega) \in \R,
        \end{equation*}
         where we view $\R$ as a subgroup of $\mathcal{R}$ via the natural inclusion \eqref{eqn:subgroup}.
        
        \item \label{item:coherent_helicity_extensions_invariance} Invariance: We have 
        \begin{equation*}
        \label{eq:topological_helicity_homeomorphism_invariance}
         \overline{\mathcal{H}}(f^*\Omega) = \overline{\mathcal{H}}(\Omega),
        \end{equation*}
         for any orientation-preserving homeomorphism $f$. 
        
        \item \label{item:coherent_helicity_extensions_plug} Calabi Compatibility: For every plug $\mathcal{P} = (\Sigma,\omega_\Sigma,\alpha,\varphi^t)$,  have
        \begin{equation}
         \label{eq:helicity_and_calabi_topological}
          \overline{\mathcal{H}}(\Omega\#\mathcal{P}) = \overline{\mathcal{H}}(\Omega) + \overline{\mathrm{Cal}}_\Sigma(\varphi^1).
        \end{equation}
    \end{enumerate}
\end{thm}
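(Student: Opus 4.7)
The plan is to reduce both uniqueness and existence to the smooth case via a structural lemma I will call the \emph{Reduction Lemma}: every exact $C^0$ Hamiltonian structure $\Omega$ on an oriented closed topological $3$-manifold $Y$ admits a presentation
\begin{equation*}
\Omega = f^*(\omega \,\#\, \mathcal{P}),
\end{equation*}
where $\omega$ is a smooth exact Hamiltonian structure on $Y$, $\mathcal{P}$ is a plug, and $f$ is an orientation-preserving homeomorphism of $Y$. Morally, this says any $C^0$ structure can be smoothed away from a tubular neighborhood of a transverse surface, with all non-smoothness repackaged as a Hamiltonian isotopy on that surface. To prove the lemma, I would fix a smooth reference $\omega_0$ on $Y$ in the same flux class as $\Omega$, choose a topological surface $\Sigma$ embedded transversally to both characteristic foliations on which $\Omega$ and $\omega_0$ restrict to the same area form, and use the first-return maps to $\Sigma$ to construct both the homeomorphism $f$ and the plug isotopy as the discrepancy between the two return dynamics.

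Uniqueness follows directly: if $\overline{\mathcal{H}}_1, \overline{\mathcal{H}}_2$ both satisfy (1)--(3), their difference $D := \overline{\mathcal{H}}_1 - \overline{\mathcal{H}}_2$ vanishes on smooth structures by (1), is homeomorphism-invariant by (2), and is preserved under plug insertion by (3). The Reduction Lemma then yields $D(\Omega) = D(\omega) = 0$ for every exact $\Omega$.

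For existence, I would define
\begin{equation*}
\overline{\mathcal{H}}(\Omega) := \mathcal{H}(\omega) + \overline{\mathrm{Cal}}_\Sigma(\varphi^1) \in \mathcal{R}
\end{equation*}
using a Reduction Lemma presentation. Axiom (1) holds by taking $\mathcal{P}$ trivial. Axiom (2) is immediate: if $\Omega = f^*(\omega \,\#\, \mathcal{P})$, then $g^* \Omega = (f \circ g)^*(\omega \,\#\, \mathcal{P})$ is a valid presentation with the same value. Axiom (3) is derived by fusing plugs: one merges $\mathcal{P}$ with the plug appearing in the presentation of $\Omega$ into a single larger plug, which is possible after passing to a common transverse surface via the naturality property \eqref{eqn:naturality_topological} from Theorem \ref{thm:natural_isomorphisms_abelianizations_hamiltonian_homeomorphisms}, and then uses the additivity of $\overline{\mathrm{Cal}}$ in its abelian target.

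The main obstacle is well-definedness: given two Reduction Lemma presentations
\begin{equation*}
\Omega = f_1^*(\omega_1 \,\#\, \mathcal{P}_1) = f_2^*(\omega_2 \,\#\, \mathcal{P}_2),
\end{equation*}
one must show that $\mathcal{H}(\omega_1) + \overline{\mathrm{Cal}}_{\Sigma_1}(\varphi_1^1)$ and $\mathcal{H}(\omega_2) + \overline{\mathrm{Cal}}_{\Sigma_2}(\varphi_2^1)$ agree in $\mathcal{R}$. My approach is to embed both plugs into a common enlarged plug with surface $\Sigma \supset \Sigma_1 \sqcup \Sigma_2$ (after perhaps modifying the homeomorphisms $f_i$), using naturality \eqref{eqn:naturality_topological} to identify all Calabi contributions canonically as elements of $\mathcal{R}$. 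After this enlargement, the two smooth Hamiltonian structures $\omega_1, \omega_2$ become related by a plug whose Hamiltonian isotopy is smooth, i.e.\ lies in $\operatorname{Ham}(\Sigma) \subset \overline{\operatorname{Ham}}(\Sigma)$, so the classical Gambaudo--Ghys identity \eqref{eq:helicity_and_calabi_smooth} closes the loop. It is precisely at this step that the $\mathcal{R}$-valued target is essential: the comparison of Calabi contributions takes place inside $\mathcal{R}$, and the injectivity of the natural map $\R \hookrightarrow \mathcal{R}$ from \eqref{eqn:subgroup}---a non-trivial consequence of the $C^0$ symplectic results of \cite{chs24, chmss, mt}---is what ensures that the smooth helicity difference $\mathcal{H}(\omega_1) - \mathcal{H}(\omega_2) \in \R$ is detected unambiguously inside $\mathcal{R}$.
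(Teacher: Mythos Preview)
Your overall architecture is the paper's: a reduction lemma writing $\Omega = \omega \# \mathcal{P}$ for smooth $\omega$ (this is Theorem \ref{thm:from_C0_to_smooth_plus_plug}, which in fact needs no auxiliary homeomorphism $f$), uniqueness forced by the presentation, existence defined via the presentation, and well-definedness correctly identified as the crux.

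There are two gaps. Your Reduction Lemma sketch via first-return maps tacitly assumes the characteristic foliation admits a global closed cross-section, which need not exist. The paper instead constructs an \emph{exhaustive} flow box (Lemma \ref{lem:exhaustive-surface}), smooths the leaf space of its complement---a possibly non-Hausdorff surface (Claim \ref{cl:leaf-space-smoothing})---and extracts the obstruction to extending this smoothing globally as the plug.

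More seriously, your well-definedness argument has a real hole. You assert that after embedding both plugs in a common enlarged surface, ``$\omega_1, \omega_2$ become related by a plug whose Hamiltonian isotopy is smooth,'' and then invoke Gambaudo--Ghys. But nothing in your argument forces that isotopy to be smooth: the only relation you have between $\omega_1 \# \mathcal{P}_1$ and $\omega_2 \# \mathcal{P}_2$ is a \emph{homeomorphism}, and upgrading a $C^0$ conjugacy to something with any smoothness is precisely the difficulty. The paper handles this in Theorem \ref{thm:smoothing_homeomorphisms} by an inductive procedure: one covers the complement of the plug region by flow-box cylinders (Lemma \ref{lem:covering_by_flowboxes}) and, in each, inserts a pair of mutually inverse auxiliary plugs $(\mathcal{Z}_i^-, \mathcal{Z}_i^+)$ of cancelling Calabi, chosen via the area-preserving approximation of Proposition \ref{prop:smooth_approximation_area_preserving_homeos} so that the modified homeomorphism becomes transversely smooth there. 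Only after this construction does the comparison plug acquire a smooth time-$1$ map (Claim \ref{cl:plug-is-smooth}), permitting the use of Lemma \ref{lem:helicity_after_plug_insertion}. Naturality of $\overline{\mathrm{Cal}}$ and enlarging the plug surface alone do not manufacture this smoothness; that is the content of a substantial theorem, not a bookkeeping step.
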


As already mentioned, every choice of projection $\operatorname{pr}:\mathcal{R}\rightarrow \R$ gives rise to a real-valued extension of helicity.

We point out that it follows from the Extension and Invariance properties in Theorem \ref{thm:coherent_helicity_extension} that two smooth Hamiltonian structures which are conjugated by an orientation-preserving homeomorphism have the same helicity. The proof of this makes essential use of the injectivity of the natural map $\R \cong \operatorname{Ham}^{\operatorname{ab}}(\D) \rightarrow \overline{\operatorname{Ham}}^{\operatorname{ab}}(\D) \cong \mathcal{R}$.

Let us elaborate further on the universality of our helicity extension $\overline{\mathcal{H}}$ and the naturality of the properties (Extension, Invariance, and Calabi) that characterize it in Theorem~\ref{thm:coherent_helicity_extension}. The properties Extension and Invariance can be viewed as minimal requirements that any reasonable extension of helicity to $C^0$ Hamiltonian structures ought to satisfy. In light of identity~\eqref{eq:helicity_and_calabi_smooth}, which describes the helicity change under plug insertion in the smooth setting, the Calabi property---though less obvious---is likewise a natural condition to impose.

To further motivate this property, observe that identity~\eqref{eq:helicity_and_calabi_smooth} has the following direct consequence: the helicity change $\mathcal{H}(\omega \# \mathcal{P}) - \mathcal{H}(\omega)$ resulting from the insertion of a smooth plug $\mathcal{P} = (\Sigma, \omega_\Sigma, \alpha, \varphi^t)$ depends only on the time-one map $\varphi^1$, and is independent of both the ambient Hamiltonian structure $\omega$ and the embedding $\alpha$ of the plug. We refer to this fundamental property as \emph{plug homogeneity}. Remarkably, imposing plug homogeneity---together with Extension and Invariance---already forces any helicity extension to be a function of our universal extension $\overline{\mathcal{H}}$.

\begin{prop}
\label{prop:universality}
    Let $\R\subset A$ be an extension of abelian groups. Let $\overline{\mathcal{H}}^A(\Omega)\in A$ be an $A$-valued extension of helicity to exact $C^0$ Hamiltonian structures satisfying the Extension and Invariance properties in Theorem \ref{thm:coherent_helicity_extension}. Moreover, assume that $\overline{\mathcal{H}}^A$ satisfies plug homogeneity. That is, for any $C^0$ Hamiltonian structure $\Omega$ and for any $\Omega$-plug $\mathcal{P}=(\Sigma,\omega_\Sigma,\alpha,\varphi^t)$, the helicity difference
    \begin{equation*}
        \overline{\mathcal{H}}^A(\Omega\# \mathcal{P}) - \overline{\mathcal{H}}^A(\Omega) \in A
    \end{equation*}
    only depends on $\varphi^1$ and is independent of $\Omega$ and $\alpha$. Then there exists a unique homomorphism $p:\mathcal{R} \rightarrow A$ over $\R$ such that $\overline{\mathcal{H}}^A= p\circ \overline{\mathcal{H}}$.
\end{prop}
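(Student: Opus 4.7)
The plan is to construct $p:\mathcal{R}\to A$ directly from $\overline{\mathcal{H}}^A$ via disk plugs, extend it to all open surfaces by naturality, and then invoke the uniqueness portion of Theorem~\ref{thm:coherent_helicity_extension} to identify $\overline{\mathcal{H}}^A$ with $p\circ \overline{\mathcal{H}}$.

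First, fix a smooth exact Hamiltonian $3$-manifold $(Y_0,\Omega_0)$ admitting a $\D$-plug embedding $\alpha_0:(0,1)\times \D\hookrightarrow (Y_0,\Omega_0)$; one can take $(Y_0,\Omega_0)$ to be $\SP^3$ equipped with the Hamiltonian structure associated to, say, a Hopf-type vector field and $\alpha_0$ a small flow-box. For $\varphi\in \overline{\Ham}(\D)$, pick any $C^0$ Hamiltonian isotopy $(\varphi^t)$ with $\varphi^1=\varphi$ and set
\begin{equation*}
\delta(\varphi)\;:=\; \overline{\mathcal{H}}^A(\Omega_0\#\mathcal{P}_\varphi) - \overline{\mathcal{H}}^A(\Omega_0)\;\in\; A,
\end{equation*}
where $\mathcal{P}_\varphi=(\D,\omega_\D,\alpha_0,\varphi^t)$. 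Plug homogeneity makes $\delta(\varphi)$ independent of the chosen isotopy and of $(Y_0,\Omega_0,\alpha_0)$. Performing two plug insertions in succession---first $\mathcal{P}_\psi$ via $\alpha_0$, then $\mathcal{P}_\varphi$ via the induced embedding $\alpha_0\circ\Phi_\psi$ with $\Phi_\psi(t,p):=(t,\psi^t(p))$---produces the same ambient structure as inserting a single plug whose isotopy is the pointwise composition $t\mapsto \psi^t\circ \varphi^t$, with time-one $\psi\circ \varphi$. Two applications of plug homogeneity then yield $\delta(\psi\circ\varphi)=\delta(\psi)+\delta(\varphi)$, so $\delta$ is a group homomorphism; since $A$ is abelian it descends to $p:\mathcal{R}=\overline{\Ham}^{\operatorname{ab}}(\D)\to A$. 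The Extension property combined with the smooth identity~\eqref{eq:helicity_and_calabi_smooth} forces $\delta|_{\Ham(\D)}=\operatorname{Cal}_\D$, so $p$ is over $\R$.

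The analogous construction produces $\delta_\Sigma:\overline{\Ham}(\Sigma)\to A$ for every open surface $\Sigma$---if no $\Sigma$-plug in an exact ambient structure is directly available, one can reduce via fragmentation to disk-supported isotopies. Naturality $\delta_{\Sigma_1}=\delta_{\Sigma_2}\circ \overline{\Ham}(\iota)$ under an area- and orientation-preserving embedding $\iota:\Sigma_1\hookrightarrow \Sigma_2$ is immediate: for any $\Sigma_2$-plug embedding $\alpha_2$, the $\Sigma_1$-plug with embedding $\alpha_2\circ(\operatorname{id}\times \iota)$ and isotopy $\varphi^t$ produces the same plug-insertion as the $\Sigma_2$-plug with embedding $\alpha_2$ and isotopy $\iota_*\varphi^t$. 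Combined with $p|_\R=\operatorname{id}_\R$, the universality property of $\overline{\operatorname{Cal}}$ recalled just before the proposition (a consequence of Theorem~\ref{thm:natural_isomorphisms_abelianizations_hamiltonian_homeomorphisms}) then forces $\delta_\Sigma=p\circ \overline{\operatorname{Cal}}_\Sigma$ for all $\Sigma$, with the same $p$ constructed above.

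Finally, both $\overline{\mathcal{H}}^A$ and $p\circ \overline{\mathcal{H}}$ satisfy the Extension property (by hypothesis, respectively because $p$ is over $\R$), the Invariance property (by hypothesis, respectively because $\overline{\mathcal{H}}$ satisfies it), and the Calabi-compatibility relation with the common $A$-valued Calabi $p\circ \overline{\operatorname{Cal}}_\Sigma$. The uniqueness clause of Theorem~\ref{thm:coherent_helicity_extension}, whose proof will rely only on formal manipulations of the three axioms, transports verbatim to the $A$-valued setting and yields $\overline{\mathcal{H}}^A=p\circ \overline{\mathcal{H}}$; uniqueness of $p$ is automatic since $p$ is determined on $\mathcal{R}=\overline{\Ham}^{\operatorname{ab}}(\D)$ by $\delta$. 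The main technical obstacle I anticipate is the naturality step---ensuring $\delta_\Sigma$ is consistently defined on every $\Sigma$, which is handled by fragmentation into disk-supported plugs---together with verifying that the uniqueness argument of Theorem~\ref{thm:coherent_helicity_extension} carries over to the $A$-valued setting without essential modification.
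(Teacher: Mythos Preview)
Your proposal is correct and follows essentially the same route as the paper: define the difference map $\delta_\Sigma(\varphi)=\overline{\mathcal{H}}^A(\Omega\#\mathcal{P})-\overline{\mathcal{H}}^A(\Omega)$, show it is a homomorphism over $\R$ satisfying naturality, deduce $\delta_\Sigma=p\circ\overline{\operatorname{Cal}}_\Sigma$, and then use the decomposition $\Omega=\omega\#\mathcal{P}$ from Theorem~\ref{thm:from_C0_to_smooth_plus_plug} to conclude $\overline{\mathcal{H}}^A=p\circ\overline{\mathcal{H}}$. The only cosmetic differences are that the paper works with general $\Sigma$ from the outset (rather than starting with $\D$ and invoking the universal property of $\overline{\operatorname{Cal}}$), establishes the homomorphism identity by concatenating isotopies into disjoint sub-plugs (rather than your successive-insertion argument via $\alpha_0\circ\Phi_\psi$), and carries out the final identification as a direct five-line computation rather than appealing to the uniqueness clause---but your appeal is valid since that clause rests only on Theorem~\ref{thm:from_C0_to_smooth_plus_plug} and the three axioms, which are insensitive to the target group.
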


This means that, once plug homogeneity is accepted as a fundamental property of helicity, one is essentially forced to arrive at our universal $\mathcal{R}$-valued helicity extension $\overline{\mathcal{H}}$. Moreover, insisting on plug homogeneity makes the problem of defining an $\R$-valued extension of helicity equivalent to specifying a projection $\operatorname{pr}:\mathcal{R}\rightarrow \R$, which, as explained in Subsection \ref{sec:plugs and Calabi}, cannot be done without making non-canonical choices. This indicates that $\mathcal{R}$ is indeed the natural target group for a helicity extension to exact $C^0$ Hamiltonian structures.

\begin{rem}
    We end our introduction with the following remarks.
    \begin{enumerate}
        \item The $\mathcal{R}$-valued helicity extension from Theorem \ref{thm:coherent_helicity_extension} provides an obstruction to smoothability of $C^0$ Hamiltonian structures: if $\overline{\mathcal{H}}(\Omega)$ lies in $\mathcal{R}\setminus \mathbb{R}$, then $\Omega$ is not homeomorphic to any smooth Hamiltonian structure.
        \item In a sequel to this work, we will investigate helicity from the point of view of characteristic classes of foliations and Haefliger structures.
        \item While our results affirmatively answer Arnold’s Questions \ref{ques:topological_invariance} and \ref{ques:extension} for flows without fixed points, they remain open for flows with fixed points. In future work, we will address topological invariance of helicity in the presence of certain types of singularities, in particular the generic case of non-degenerate singularities. At present, it is unclear whether sufficiently complicated singular sets may destroy topological invariance.
    \end{enumerate}
\end{rem}

\noindent \textbf{Structure of the paper.}

In Section~\ref{sec:prelims-surfaces}, we review some preliminaries concerning area-preserving homeomorphisms, foliations, and transverse measures.

In Section~\ref{sec:hamiltonian_homeos_and_calabi}, we prove a generalization of Theorem~\ref{thm:natural_isomorphisms_abelianizations_hamiltonian_homeomorphisms}, which leads to the construction of our universal $\mathcal{R}$-valued extension of the Calabi homomorphism satisfying the naturality property stated in Section~\ref{sec:plugs and Calabi}.

In Sections~\ref{sec:smooth_hamiltonian_structures} and~\ref{sec:C0_hamiltonian_structures}, we provide precise definitions of smooth and $C^0$ Hamiltonian structures, along with the plug insertion operation.

Section~\ref{sec:smoothings-mod-plugs} presents two central results---Theorems~\ref{thm:from_C0_to_smooth_plus_plug} and~\ref{thm:smoothing_homeomorphisms}---which describe the structure of $C^0$ Hamiltonian structures and their homeomorphisms. These theorems roughly state that such structures and maps can be smoothed up to the insertion of a carefully constructed plug. 

In Section~\ref{sec:flux and helicity extension}, we extend the definitions of flux and helicity to $C^0$ Hamiltonian structures, thereby completing the proof of Theorem~\ref{thm:coherent_helicity_extension}. We also provide a proof of Proposition \ref{prop:universality}.

Finally, in Section~\ref{sec:relationship_hamiltonian_structures_vol_pres_flows}, we carefully explain the relationship between Hamiltonian structures and volume-preserving flows, both in the smooth and in the $C^0$ setting. This is essential for deducing Theorem~\ref{thm:topological_invariance_helicity_flows}, which is formulated for topological volume-preserving flows, from our main result, Theorem~\ref{thm:coherent_helicity_extension}, stated for $C^0$ Hamiltonian structures.

\bigskip

\noindent \textbf{Convention:} Throughout this paper, all manifolds will be oriented and all (local) homeomorphisms between manifolds will be orientation preserving, unless specified otherwise.

\subsection*{Acknowledgments}

We are grateful to Vikram Giri and Hyunju Kwon for helpful explanations about their work \cite{Giri-Kwon-Novack}.  We thank \'Etienne Ghys, Boris Khesin and Chi Cheuk Tsang for their interest and comments.  S.S. is indebted to Boris Khesin for introducing him to Arnold's questions and for enlightening discussions.

O.E. is supported by Dr.\ Max R\"{o}ssler, the Walter Haefner Foundation, and the ETH Z\"{u}rich Foundation. S.S. is partially supported by ERC Starting Grant number 851701 and a start up grant from ETH-Zürich.

\section{Preliminaries}
\label{sec:prelims-surfaces}

We recall some preliminaries concerning area-preserving homeomorphisms and foliations, which will be needed in the forthcoming sections.

\subsection{Area-preserving homeomorphisms}

Let $(\Sigma,\omega)$ be a smooth $2$-manifold equipped with an area form. We assume that $\Sigma$ does not have boundary, but we allow it to be open. 

Every compactly supported smooth Hamiltonian $H: [0,1]\times \Sigma \rightarrow \R$ induces a time-dependent Hamiltonian vector field $X_H$ generating a compactly supported Hamiltonian isotopy $(\varphi_H^t)_{t\in [0,1]}$. We adopt the sign convention that $X_H$ is characterized by the identity $\iota_{X_{H_t}} \omega = dH_t$. The compactly supported Hamiltonian diffeomorphism group of $(\Sigma,\omega)$ is denoted by $\operatorname{Ham}(\Sigma,\omega)$.

Let $\operatorname{Homeo}_c(\Sigma,\omega)$ denote the group of all compactly supported homeomorphisms of $\Sigma$ preserving the measure induced by $\omega$. We topologize $\operatorname{Homeo}_c(\Sigma,\omega)$ as the direct limit of all the subgroups $\operatorname{Homeo}_K(\Sigma,\omega)$ of homeomorphisms supported inside some compact subset $K\subset \Sigma$, equipped with the compact-open topology. The group of Hamiltonian homeomorphisms $\overline{\operatorname{Ham}}(\Sigma,\omega)$ is defined to be the closure of $\operatorname{Ham}(\Sigma,\omega)$ inside $\operatorname{Homeo}_c(\Sigma,\omega)$. It is topologized as a subspace of $\operatorname{Homeo}_c(\Sigma,\omega)$.

Let $\operatorname{Homeo}_0(\Sigma,\omega)$ denote the identity component of $\operatorname{Homeo}_c(\Sigma_,\omega)$. Then $\overline{\operatorname{Ham}}(\Sigma,\omega)$ agrees with the kernel of the mass flow homomorphism \cite{fat80, sch57}
\begin{equation*}
    \theta : \operatorname{Homeo}_0(\Sigma,\omega) \rightarrow H^1(\Sigma;\R)/\Gamma,
\end{equation*}
where $\Gamma \subset H^1(\Sigma;\R)$ is a discrete subgroup whose precise definition will not be relevant for us.

Whenever there is no risk of confusion, we will abbreviate $\operatorname{Ham}(\Sigma) \coloneqq \operatorname{Ham}(\Sigma,\omega)$ and $\overline{\operatorname{Ham}}(\Sigma) \coloneqq \overline{\operatorname{Ham}}(\Sigma,\omega)$.

It is known that $\Ham(\Sigma)$ and $\overline{\Ham}(\Sigma)$, equipped with the $C^0$ topology, are both simply connected, unless $\Sigma$ is a sphere. If $\Sigma$ is a sphere, the fundamental group is $\mathbb{Z}/2\mathbb{Z}$.  In the case of $\Ham(\Sigma)$, a proof of these facts is sketched in \cite[Sec.\ 7.2]{Polterovich-book01} for closed $\Sigma$.  The arguments therein can be adapted to the case of open $\Sigma$.  As for $\overline{\Ham}(\Sigma)$, it can be shown that every loop in $\overline{\Ham}(\Sigma)$ is homotopic a loop in $\Ham(\Sigma)$ using the following two facts: first, every homeomorphism in $\overline{\Ham}(\Sigma)$ can be written as a $C^0$ limit of elements in $\Ham(\Sigma)$; and second, $\operatorname{Ham}(\Sigma)$ and $\overline{\Ham}(\Sigma)$ are both locally path connected; see, for example, \cite[Cor.\ 2]{Serraille} or \cite[Lem.\ 3.2]{Sey13}.

The following two propositions will be used in the upcoming sections. Both are variants of known fragmentation and approximation results for area-preserving maps and can be established using classical techniques---specifically, fragmentation techniques and approximation of homeomorphisms by diffeomorphisms in the area-preserving setting; see, e.g., \cite{LeRoux, EPP}. We therefore omit the proofs.

\begin{prop}
\label{prop:fragmentation_hamiltonian_homeomorphisms}
    Let $(\Sigma,\omega)$ be a surface with area form.
    \begin{enumerate}
        \item Suppose $U_1, \dots, U_n$ is a finite open cover of $\Sigma$, and for each $i$, let $\mathcal{U}_i \subset \overline{\operatorname{Ham}}(U_i)$ be an open neighborhood of the identity. Then any Hamiltonian homeomorphism $\varphi \in \overline{\operatorname{Ham}}(\Sigma)$ that lies in a sufficiently small neighborhood of the identity can be written as a composition $\varphi = \varphi_1 \circ \cdots \circ \varphi_n$, where each $\varphi_i \in \mathcal{U}_i \subset \overline{\operatorname{Ham}}(U_i)$.
        
        \item Let $\mathcal{V}$ be an arbitrary open cover of $\Sigma$. Then, for every Hamiltonian homeomorphism $\varphi \in \overline{\operatorname{Ham}}(\Sigma)$, there exist finitely many open sets $U_1,\dots, U_n \in \mathcal{V}$ and Hamiltonian homeomorphisms $\varphi_i \in \overline{\operatorname{Ham}}(U_i)$ such that $\varphi = \varphi_1 \circ \cdots \circ \varphi_n$.
    \end{enumerate}
\end{prop}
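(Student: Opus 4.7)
The plan is to reduce both statements to the classical smooth fragmentation theorem for Hamiltonian diffeomorphisms, combined with the density of $\operatorname{Ham}(\Sigma)$ inside $\overline{\operatorname{Ham}}(\Sigma)$ (which holds by definition) and the path-connectedness of $\overline{\operatorname{Ham}}(\Sigma)$. The key technical ingredient will be a \emph{continuous} version of the smooth fragmentation procedure, so that one can take $C^0$ limits of fragmented sequences.

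For part (1), I would first recall (or re-prove by a partition-of-unity construction applied to the generating Hamiltonian of a short isotopy from $\id$ to $\varphi$) the classical fragmentation statement for $\operatorname{Ham}(\Sigma)$: there exist open sets $V_i \Subset U_i$ still covering $\Sigma$, and $C^0$-continuous maps $F_i : \mathcal{W} \to \operatorname{Ham}(V_i)$, defined on some $C^0$-neighborhood $\mathcal{W}$ of $\id$ in $\operatorname{Ham}(\Sigma)$, such that $\psi = F_1(\psi)\circ \cdots \circ F_n(\psi)$ and $F_i(\id) = \id$. Given $\varphi \in \overline{\operatorname{Ham}}(\Sigma)$ in a sufficiently small $C^0$-neighborhood of the identity, approximate it by a sequence $\psi_k \in \operatorname{Ham}(\Sigma)$ with $\psi_k \to \varphi$. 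Eventually $\psi_k \in \mathcal{W}$, so $\psi_k = F_1(\psi_k)\circ \cdots \circ F_n(\psi_k)$. By $C^0$-continuity of each $F_i$, and using that each $F_i(\psi_k)$ is supported in the \emph{compact} closure $\overline{V_i} \subset U_i$, one extracts convergent subsequences $F_i(\psi_k) \to \varphi_i \in \overline{\operatorname{Ham}}(U_i)$, yielding the decomposition $\varphi = \varphi_1 \circ \cdots \circ \varphi_n$. Shrinking the initial neighborhood of the identity in $\overline{\operatorname{Ham}}(\Sigma)$ and using $F_i(\id) = \id$ places each $\varphi_i$ inside the prescribed $\mathcal{U}_i$.

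For part (2), I would use path-connectedness of $\overline{\operatorname{Ham}}(\Sigma)$ (mentioned earlier in the section). Given $\varphi$, its support is compact, so a finite subcover $U_1,\dots,U_n \in \mathcal{V}$ of $\operatorname{supp}(\varphi)$ (enlarged slightly so the finite subcover still covers a neighborhood) suffices. Pick a continuous path $\gamma : [0,1] \to \overline{\operatorname{Ham}}(\Sigma)$ with $\gamma(0) = \id$ and $\gamma(1) = \varphi$, whose trace stays in a compactly-supported subgroup. Choose a subdivision $0 = t_0 < t_1 < \cdots < t_N = 1$ fine enough that each increment $\delta_j := \gamma(t_j) \circ \gamma(t_{j-1})^{-1}$ lies in the neighborhood of the identity furnished by part (1) applied to the finite cover $U_1,\dots,U_n$. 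Fragment each $\delta_j$ as $\delta_j = \delta_{j,1}\circ\cdots\circ \delta_{j,n}$ with $\delta_{j,i} \in \overline{\operatorname{Ham}}(U_i)$, and then $\varphi = \delta_N \circ \cdots \circ \delta_1$ is a composition of finitely many Hamiltonian homeomorphisms each supported in some element of $\mathcal{V}$.

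The main obstacle I expect is the construction of the continuous local fragmentation maps $F_i$ in the smooth setting. The standard textbook fragmentation argument produces a \emph{decomposition} but not necessarily a continuous dependence on $\psi$; however, by generating a Hamiltonian isotopy $\varphi_H^t$ from $\id$ to $\psi$ via a short-time Hofer-type trick and then cutting the Hamiltonian $H$ with a smooth partition of unity subordinate to $\{V_i\}$, one obtains the desired $F_i(\psi) := \varphi_{\chi_i H}^1$, with continuity inherited from the continuous dependence of flows on Hamiltonians. A secondary subtlety is the non-compactness of $\Sigma$ in part (2), handled by working inside the compactly-supported subgroup determined by a neighborhood of $\operatorname{supp}(\varphi)$, so that throughout the argument all relevant homeomorphisms have uniformly compact support.
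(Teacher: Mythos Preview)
The paper does not prove this proposition: it explicitly omits the proof, remarking that both parts follow from classical fragmentation and approximation techniques for area-preserving maps and citing \cite{LeRoux, EPP}. Your overall strategy---fragment continuously in the smooth category, pass to $C^0$ limits, then bootstrap part (2) from part (1) via path-connectedness---is very much in that spirit and is a reasonable way to carry out what the paper leaves implicit.

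There is, however, a concrete error in your sketch. The formula $F_i(\psi) := \varphi_{\chi_i H}^1$ does \emph{not} produce a fragmentation: Hamiltonian flows do not compose additively, so in general $\varphi_{\chi_1 H}^1 \circ \cdots \circ \varphi_{\chi_n H}^1 \neq \varphi_H^1$. The standard fix is a telescoping construction: set $\rho_i \coloneqq \chi_1 + \cdots + \chi_i$ and $F_i(\psi) \coloneqq \varphi_{\rho_i H}^1 \circ (\varphi_{\rho_{i-1} H}^1)^{-1}$. These compose to $\varphi_H^1$ by telescoping, and for $H$ sufficiently small each $F_i(\psi)$ is supported in $\bigcup_{t\in[0,1]} \varphi_{\rho_{i-1} H}^t(\operatorname{supp}\chi_i)$, which lies in $U_i$ provided $\operatorname{supp}\chi_i \Subset U_i$ and the isotopy is short. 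Two further points deserve attention: you must explain how to choose the generating Hamiltonian $H$ continuously in $\psi$ (a local-section argument near $\id$, not entirely automatic), and once the $F_i$ are genuinely $C^0$-continuous no subsequence extraction is needed---the limits exist outright.
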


\begin{prop}
\label{prop:smooth_approximation_area_preserving_homeos}
    Let $(\Sigma_i, \omega_i)$ be a surface equipped with an area form for each $i \in \{1, 2\}$. Suppose $\varphi : (\Sigma_1, \omega_1) \to (\Sigma_2, \omega_2)$ is an area-preserving homeomorphism. Let $U \subset \Sigma_1$ be an open subset on which $\varphi$ is smooth. Let $d$ be a distance function on $\Sigma_2$ that induces its topology, and let $\rho: \Sigma_1 \to \mathbb{R}_{\geq 0}$ be a continuous, non-negative function satisfying $\rho^{-1}(\{0\}) \subset U$. Then there exists an area-preserving diffeomorphism $\psi : (\Sigma_1, \omega_1) \to (\Sigma_2, \omega_2)$ such that
\begin{equation*}
    d(\varphi(x), \psi(x)) \leq \rho(x) \quad \text{for all } x \in \Sigma_1.
\end{equation*}
\end{prop}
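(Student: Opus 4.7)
The plan is to combine the classical $C^0$-approximation theorem for area-preserving homeomorphisms (Oxtoby--Ulam, Munkres, and in the symplectic setting Sikorav and Müller) with a Moser-type patching argument, organized around an exhaustion of $\Sigma_1\setminus Z$, where $Z := \rho^{-1}(\{0\})\subset U$. The key observation is that the hypothesis provides two complementary pieces of information: on $U\supset Z$, $\varphi$ is already a smooth area-preserving diffeomorphism and needs no modification, while outside $Z$ we only need a pointwise bound controlled by the positive function $\rho$. The difficulty is stitching these two regimes together without destroying area-preservation.

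First, I would choose nested open neighborhoods $V_0\supset V_1\supset V_2\supset\cdots$ of $Z$ with $\bigcap_n V_n = Z$, $\overline{V_{n+1}}\subset V_n$, and $\overline{V_n}\subset U$ for all sufficiently large $n$, exploiting that $Z\subset U$ is closed and $\rho$ is continuous. Quantitatively, one can arrange $\rho<2^{-n}$ on $V_n$ and $\rho>2^{-n-2}$ on $\Sigma_1\setminus V_{n+1}$, so that a uniform error of size $2^{-n}$ on $V_n\setminus V_{n+1}$ automatically respects the pointwise bound by $\rho$ there.

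Next, I would construct inductively a sequence of area-preserving diffeomorphisms $\psi_n\colon\Sigma_1\to\Sigma_2$ satisfying: (i) $\psi_n = \psi_{n-1}$ on $\Sigma_1\setminus V_n$; (ii) $\psi_n = \varphi$ on a neighborhood of $Z$, concretely on $V_{n+1}$; and (iii) $d(\psi_n(x),\varphi(x))<2^{-n}$ on $V_n\setminus V_{n+1}$. The base step $\psi_0$ comes from the classical smooth approximation theorem applied globally, then adjusted on $V_0$ so that it agrees with $\varphi$ on $V_1\subset U$, using that $\varphi$ is already smooth and area-preserving in a neighborhood of $\overline{V_1}$. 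The inductive step is a relative version of the approximation theorem on the annulus $V_n\setminus V_{n+1}$: one fragments the discrepancy $\varphi\circ\psi_{n-1}^{-1}$ on this annular region via Proposition~\ref{prop:fragmentation_hamiltonian_homeomorphisms}, smooths each fragment, and uses Moser's lemma (or Dacorogna--Moser) to correct the resulting deviations from area-preservation by a compactly supported area-preserving perturbation. The total-area balance required for Moser's trick is automatic because all intermediate maps preserve area in the measure-theoretic sense.

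The main obstacle is precisely this inductive step: producing an area-preserving diffeomorphism supported inside $V_n$ that matches $\varphi$ exactly on $V_{n+1}$ while remaining within $2^{-n}$ of $\varphi$ on $V_n\setminus V_{n+1}$. The standard global approximation theorems do not directly provide such relative control; one must combine them with fragmentation and a Moser-type correction, and verify that the correction vector field can itself be chosen with support in $V_n\setminus\overline{V_{n+2}}$ so that property (ii) is preserved. Once the $\psi_n$ are built, they stabilize on each compact subset of $\Sigma_1\setminus Z$ and equal $\varphi$ on the shrinking neighborhoods $V_{n+1}$ of $Z$; hence they converge uniformly to a map $\psi$ which is smooth and area-preserving on $\Sigma_1\setminus Z$ (where it stabilizes) and coincides with the smooth map $\varphi$ in a neighborhood of $Z$. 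The resulting $\psi$ is a global smooth area-preserving diffeomorphism satisfying $d(\psi(x),\varphi(x))\leq\rho(x)$ for all $x\in\Sigma_1$.
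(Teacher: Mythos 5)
The paper does not actually prove this proposition: in the text just above it, the authors state that both Propositions~\ref{prop:fragmentation_hamiltonian_homeomorphisms} and~\ref{prop:smooth_approximation_area_preserving_homeos} are variants of classical fragmentation and smooth-approximation results and omit the proofs. So there is nothing in the paper to compare your sketch against, and I can only evaluate it on its own merits. You have correctly identified the ingredients (classical $C^0$-approximation of area-preserving homeomorphisms by diffeomorphisms, fragmentation as in Proposition~\ref{prop:fragmentation_hamiltonian_homeomorphisms}, and Moser/Dacorogna--Moser corrections), but the way you organize them contains two genuine gaps.

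First, the nested-neighborhood arrangement around $Z=\rho^{-1}(\{0\})$ is not available in general. You ask for open sets $V_n\supset Z$ with $\rho<2^{-n}$ on $V_n$, $\rho>2^{-n-2}$ on $\Sigma_1\setminus V_{n+1}$, and $\overline{V_n}\subset U$ for large $n$. The middle condition is equivalent to $\{\rho\le 2^{-n-2}\}\subset V_{n+1}$, and together with $\overline{V_n}\subset U$ it forces the sublevel sets $\{\rho\le\varepsilon\}$ to eventually sit inside $U$, i.e.\ it forces $\inf_{\Sigma_1\setminus U}\rho>0$. That holds when $\Sigma_1$ is compact, but the surfaces in this proposition are allowed to be open --- and the paper applies the proposition to open discs, to open subsets of $\R^2$, and to the (non-Hausdorff, non-compact) leaf space --- where $\rho$ may tend to $0$ at an end of $\Sigma_1$ lying far from $Z$. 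In that case $\{\rho\le\varepsilon\}$ has a non-compact piece outside $U$ which no neighborhood of $Z$ can swallow, and your quantitative bounds cannot be arranged. The correct organization has to run over a compact exhaustion of $\Sigma_1$ with a locally finite fragmentation, using that $\rho$ is bounded below on each compact piece after removing a neighborhood of $Z$, and that $\varphi$ is already smooth on $U$. (There is also a small arithmetic slip: with $\rho>2^{-n-2}$ on $V_n\setminus V_{n+1}$, an error of size $2^{-n}$ does not respect the bound; you would need error at most $2^{-n-2}$.)

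Second, the induction as you have phrased it is vacuous. Condition (ii) at level $n-1$ says $\psi_{n-1}=\varphi$ on $V_n$. At level $n$ you want $\psi_n=\psi_{n-1}$ outside $V_n$, $\psi_n=\varphi$ on $V_{n+1}\subset V_n$, and a $2^{-n}$ bound on $V_n\setminus V_{n+1}$. But $\psi_{n-1}$ already equals $\varphi$ on all of $V_n$, so taking $\psi_n\coloneqq\psi_{n-1}$ satisfies (i), (ii) and (iii) verbatim and the induction never changes anything: the limit is just $\psi_0$. All the work is therefore pushed into the base case, which is exactly where the pointwise $\rho$-bound has to be achieved --- and the global $\varepsilon$-approximation you invoke there gives a single uniform error, not one that decays to match $\rho$; this brings you back to the first problem.
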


\subsection{Foliations and transverse measures}
\label{sec:foliations}

Let $M^n$ be an oriented $C^r$ manifold for $r\in \Z_{\geq 0} \cup \{\infty\}$. For $k\geq 0$, let $B^k \subset \R^k$ denote the unit ball. We define a \textit{$d$-dimensional $C^r$ foliation} $\mathcal{F}$ on $M$ to be a decomposition of $M$ into connected subsets $L\subset M$, called \textit{leaves}, such that $M$ can be covered by $C^r$ coordinate charts
\begin{equation*}
    \phi : U \rightarrow B^d\times B^{n-d} \subset \R^n,
\end{equation*}
called \textit{foliation charts}, with the property that, for every leaf $L$, the connected components of $\phi(L\cap U)$ are of the form $B^d\times \{*\}$. The collection of the foliation charts $\{(U, \phi)\}$ defining $\mathcal{F}$ forms a \emph{foliated atlas}. Since the manifold $M$ is oriented, we may assume without loss of generality that the foliated atlas on $M$ is an oriented atlas of $M$, meaning that we require the transition maps between the foliation charts to be orientation preserving.  Note that this does not imply that $\mathcal{F}$ is oriented.

We will consider foliations $\mathcal{F}$ which are equipped with two additional structures:  a \textit{transverse measure} $\Lambda$ and a \emph{transverse orientation}, also referred to as a \emph{coorientation}.  A  transverse measure $\Lambda$ is specified by a foliated atlas of $M$ consisting of foliation charts $\phi: U \rightarrow  B^d \times B^{n-d}$ and a finite Borel measure $\lambda_\phi$ on $B^{n-d}$  for every chart $\phi$ such that the following is true: For $i\in \{1,2\}$, let $\phi_i : U_i \rightarrow B^d\times B^{n-d}$ be a chart and consider the transition map
\begin{equation*}
    \phi_{21} : \phi_1(U_1\cap U_2) \rightarrow \phi_2(U_1\cap U_2).
\end{equation*}
Then, for every point $p_0 = (x_0,y_0) \in \phi_1(U_1\cap U_2)$, there exist an open neighborhood $V$ of $x_0$ in $B^d$, an open neighborhood $W$ of $y_0$ in $B^{n-d}$, and a topological embedding $\iota : W \hookrightarrow B^{n-d}$ such that the restriction of $\phi_{21}$ to $V\times W$ is of the form $\phi_{21}(x,y) = (*, \iota(y))$ and moreover 
$$\iota_*\lambda_{\phi_1}|_W = \lambda_{\phi_2}|_{\iota(W)}.$$
Requiring that the topological embeddings $\iota$ are \emph{orientation preserving} determines a \emph{transverse orientation}, or \emph{coorientation}, of the foliation $\mathcal{F}$.

Given a foliation $\mathcal{F}$, a \textit{transversal} is a compact topological submanifold with boundary $T^{n-d}\subset M$ that is transverse to the foliation $\mathcal{F}$. This means that every point in $T$ is contained in a foliation chart $\phi: U \rightarrow B^d\times B^{n-d}$ such that $\phi(T\cap U)$ is contained in a transverse slice of the form $\{*\} \times B^{n-d}$. A transverse measure $\Lambda$ induces a finite Borel measure on every transversal $T$ and maps between subsets of transversals obtained by sliding along leaves of $\mathcal{F}$ are measure preserving. In fact, one can equivalently define a transverse measure $\Lambda$ on $\mathcal{F}$ to be an assignment of a finite Borel measure to every transversal $T$ subject to the condition that sliding along leaves of $\mathcal{F}$ preserves measure.  A transverse orientation admits a similar description: it is an assignment of an orientation to every transversal $T$ such that sliding along leaves of $\mathcal{F}$ preserves orientation.

Finally, note that since the ambient manifold $M$ is oriented, a coorientation of $\mathcal{F}$ induces a natural orientation on the foliation $\mathcal{F}$.  Vice-versa, the orientation on $M$ together with an orientation of $\mathcal{F}$ determine a coorientation of $\mathcal{F}$.

\section{Universal extension of the Calabi homomorphism}
\label{sec:hamiltonian_homeos_and_calabi}

In this section, we prove Theorem \ref{thm:natural_isomorphisms_abelianizations_hamiltonian_homeomorphisms-v2}, which generalizes Theorem \ref{thm:natural_isomorphisms_abelianizations_hamiltonian_homeomorphisms} of the introduction.  As a consequence, we deduce the existence of a $\mathcal{R}$-valued universal extension of the Calabi homomorphism which satisfies the naturality property stated in Section \ref{sec:plugs and Calabi}. 

\medskip

Let $\mathcal{S}$ be the category whose objects are pairs  $(\Sigma, \omega)$, where $\Sigma$ is a non-empty, connected, smooth surface without boundary, and $\omega$ is an area form on $\Sigma$. Morphisms in $\mathcal{S}$ are smooth embeddings \(\iota : (\Sigma_1, \omega_1) \hookrightarrow (\Sigma_2, \omega_2)\). We include both open and closed surfaces, without requiring finite area in the open case. Let $\mathcal{S}_{\operatorname{op}}$ and $\mathcal{S}_{\operatorname{cl}}$ denote the full subcategories of open and closed surfaces, respectively. When there is no risk of confusion, we simply write $\Sigma$ for $(\Sigma, \omega)$.

Recall that $\operatorname{Ham}(\Sigma)$ denotes the group of all compactly supported Hamiltonian diffeomorphisms of $\Sigma$. Every area-preserving embedding $\iota : \Sigma_1 \hookrightarrow \Sigma_2$ induces a group homomorphism 
\begin{equation*}
    \operatorname{Ham}(\iota) : \operatorname{Ham}(\Sigma_1) \rightarrow \operatorname{Ham}(\Sigma_2) \qquad \operatorname{Ham}(\iota)(\varphi) \coloneqq \iota_*\varphi
\end{equation*}
given by pushforward of compactly supported Hamiltonian diffeomorphisms via $\iota$. We can therefore regard $\operatorname{Ham}$ as a functor from $\mathcal{S}$ to the category of all groups. We define
\begin{equation*}
    \operatorname{Ham}^{\operatorname{ab}}(\Sigma) \coloneqq \operatorname{Ham}(\Sigma) / [\operatorname{Ham}(\Sigma), \operatorname{Ham}(\Sigma)]
\end{equation*}
to be the abelianization of $\Ham(\Sigma)$. Since abelianization forms a functor from the category of groups to the category of abelian groups,  $\operatorname{Ham}^{\operatorname{ab}}$ is a functor from $\mathcal{S}$ to the category of abelian groups. 

It was proved by Banyaga \cite{ban78} that $\Ham(\Sigma)$ is perfect for closed $\Sigma$. In other words, $$\operatorname{Ham}^{\operatorname{ab}}(\Sigma) = 1$$ for every closed surface $\Sigma \in \mathcal{S}_{\operatorname{cl}}$. For open surfaces $\Sigma \in \mathcal{S}_{\operatorname{op}}$ (and more generally for exact connected symplectic manifolds of arbitrary dimension), Banyaga showed that $\operatorname{Ham}^{\operatorname{ab}}(\Sigma) \cong \R$. Moreover, an explicit isomorphism between $\operatorname{Ham}^{\operatorname{ab}}(\Sigma)$ and $\R$ is induced by the Calabi homomorphism
\begin{equation*}
    \operatorname{Cal}_\Sigma : \operatorname{Ham}(\Sigma) \rightarrow \R,
\end{equation*}
whose definition we now recall. For a given $\varphi \in \operatorname{Ham}(\Sigma)$,  pick $H : [0,1]\times \Sigma \rightarrow \R$  such that $\varphi_H^1 = \varphi$. Then,
\begin{equation*}
    \operatorname{Cal}_\Sigma(\varphi) = 2\int_{[0,1]\times \Sigma} H dt \wedge \omega.
\end{equation*}
This turns out to be independent of the choice of the Hamiltonian $H$. Banyaga proved that $\operatorname{Cal}_\Sigma$ is a surjective group homomorphism whose kernel is given by the commutator subgroup of $\operatorname{Ham}(\Sigma)$. In other words, the Calabi homomorphism descends to an isomorphism $\operatorname{Cal}_\Sigma : \operatorname{Ham}^{\operatorname{ab}}(\Sigma) \rightarrow \R$.

The Calabi homomorphism is natural in the following sense: If $\iota : \Sigma_1\hookrightarrow \Sigma_2$ is an area-preserving embedding between open surfaces $\Sigma_1,\Sigma_2 \in \mathcal{S}_{\operatorname{op}}$, then
\begin{equation*}
    \operatorname{Cal}_{\Sigma_2} \circ \operatorname{Ham}(\iota) = \operatorname{Cal}_{\Sigma_1}.
\end{equation*}
This follows from the observation that the integral of a compactly supported Hamiltonian does not change if we push it forward via an area-preserving embedding. Naturality of the Calabi homomorphism and the fact that $\operatorname{Cal}_\Sigma:\operatorname{Ham}^{\operatorname{ab}}(\Sigma)\rightarrow \R$ is an isomorphism for every open surface $\Sigma \in \mathcal{S}_{\operatorname{op}}$ imply that $\operatorname{Ham}^{\operatorname{ab}}(\iota)$ is an isomorphism for all embeddings $\iota$ between open surfaces. Moreover, one can deduce that $\operatorname{Ham}^{\operatorname{ab}}(\iota)$ is independent of the choice of embedding $\iota$.

\medskip 

Recall from Section \ref{sec:prelims-surfaces} that $\overline{\operatorname{Ham}}(\Sigma)$ denotes the group of compactly supported Hamiltonian homeomorphisms of $\Sigma$. As before, this assignment defines a functor from the category $\mathcal{S}$ to the category of groups. Moreover, since Hamiltonian homeomorphisms can be pushed forward via area-preserving topological embeddings, we can extend this functor to a larger category $\overline{\mathcal{S}}$, which has the same objects as $\mathcal{S}$ but allows all area-preserving topological embeddings as morphisms.

There is a natural inclusion $\operatorname{Ham}(\Sigma) \subset \overline{\operatorname{Ham}}(\Sigma)$, which corresponds to a natural transformation from the functor $\operatorname{Ham}$ to the composition of the inclusion $\mathcal{S} \hookrightarrow \overline{\mathcal{S}}$ with the functor $\overline{\operatorname{Ham}}$. 

As mentioned in the introduction, for every area-preserving topological embedding of surfaces $\iota: \Sigma_1\hookrightarrow \Sigma_2$, we let $\overline{\operatorname{Ham}}^{\operatorname{ab}}(\iota) : \overline{\operatorname{Ham}}^{\operatorname{ab}}(\Sigma_1) \rightarrow \overline{\operatorname{Ham}}^{\operatorname{ab}}(\Sigma_2)$ denote the induced homomorphism at the level of abelianizations, where
\begin{equation*}
    \overline{\operatorname{Ham}}^{\operatorname{ab}}(\Sigma)\coloneqq \overline{\operatorname{Ham}}(\Sigma)/[\overline{\operatorname{Ham}}(\Sigma), \overline{\operatorname{Ham}}(\Sigma)].
\end{equation*}
As in the smooth setting, $\overline{\operatorname{Ham}}^{\operatorname{ab}}$ constitutes a functor from $\overline{\mathcal{S}}$ to the category of abelian groups. 

We prove the following generalization of Theorem \ref{thm:natural_isomorphisms_abelianizations_hamiltonian_homeomorphisms}.

\begin{thm}
\label{thm:natural_isomorphisms_abelianizations_hamiltonian_homeomorphisms-v2}
    For $i\in \{1,2\}$, let $(\Sigma_i,\omega_i)$ be non-empty connected surfaces without boundary and equipped with area forms. Let $\iota : \Sigma_1 \hookrightarrow \Sigma_2$ be an area-preserving topological embedding. Then, the induced map
    \begin{equation*}
        \overline{\operatorname{Ham}}^{\operatorname{ab}}(\iota) : \overline{\operatorname{Ham}}^{\operatorname{ab}}(\Sigma_1) \rightarrow \overline{\operatorname{Ham}}^{\operatorname{ab}}(\Sigma_2)
    \end{equation*}
    does not depend on the choice of embedding $\iota: \Sigma_1 \hookrightarrow \Sigma_2$, i.e.\ if $\iota':\Sigma_1\hookrightarrow \Sigma_2$ is any other embedding, then $$\overline{\operatorname{Ham}}^{\operatorname{ab}}(\iota') = \overline{\operatorname{Ham}}^{\operatorname{ab}}(\iota).$$
    Moreover, 
    \begin{itemize}
        \item[i.] If $\Sigma_1$ and $\Sigma_2$ are both open or both closed, then $\overline{\operatorname{Ham}}^{\operatorname{ab}}(\iota)$ is an isomorphism. 
        \item[ii.]  If $\Sigma_1$ is open and $\Sigma_2$ is closed, then $\overline{\operatorname{Ham}}^{\operatorname{ab}}(\iota)$ is surjective and its kernel is given by the image of $\operatorname{Ham}^{\operatorname{ab}}(\Sigma_1)$ in $\overline{\operatorname{Ham}}^{\operatorname{ab}}(\Sigma_1)$.
    \end{itemize}
\end{thm}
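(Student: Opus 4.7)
My plan is to reduce everything to studying the canonical homomorphism induced by the inclusion of a suitably small open disk $U \subset \Sigma$. I aim to prove that for any area-preserving embedding of an open disk into $\Sigma$, the resulting map on abelianizations is surjective, an isomorphism when $\Sigma$ is open, and has kernel $\R$ when $\Sigma$ is closed. The basic case of embeddings between disks of different areas is handled by the same arguments. Granted this, both groups in Part (i) become canonically identified with $\mathcal{R}$ (open case) or $\mathcal{R}/\R$ (closed case), and $\overline{\operatorname{Ham}}^{\operatorname{ab}}(\iota)$ becomes the identity under these identifications. Part (ii) then reduces to saying that the natural projection $\mathcal{R} \to \mathcal{R}/\R$ has kernel $\R$, which matches the image of $\operatorname{Ham}^{\operatorname{ab}}(\Sigma_1) \cong \R$ inside $\overline{\operatorname{Ham}}^{\operatorname{ab}}(\Sigma_1) \cong \mathcal{R}$. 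The two main tools are fragmentation (Proposition \ref{prop:fragmentation_hamiltonian_homeomorphisms}) and the mobility of small disks in a non-empty connected surface.

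\textbf{Independence and surjectivity.} Given $\varphi \in \overline{\operatorname{Ham}}(\Sigma_1)$ and two area-preserving embeddings $\iota, \iota' : \Sigma_1 \hookrightarrow \Sigma_2$, I would fragment $\varphi = \varphi_1 \circ \cdots \circ \varphi_n$ via Proposition \ref{prop:fragmentation_hamiltonian_homeomorphisms}(2), with each $\varphi_i$ supported in a small disk $D_i \subset \Sigma_1$. Since $\Sigma_2$ is connected, there is an element $\psi_i \in \overline{\operatorname{Ham}}(\Sigma_2)$ (which may be taken to be a smooth Hamiltonian diffeomorphism) carrying $\iota(D_i)$ to $\iota'(D_i)$, and any residual area-preserving self-homeomorphism of $D_i$ can be absorbed by a further Hamiltonian conjugation. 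Thus $\iota'_*\varphi_i$ is conjugate to $\iota_*\varphi_i$ in $\overline{\operatorname{Ham}}(\Sigma_2)$ and has the same class in the abelianization. This gives the independence of $\overline{\operatorname{Ham}}^{\operatorname{ab}}(\iota)$ from $\iota$. The same mobility-of-disks argument yields surjectivity: fragment any $\varphi \in \overline{\operatorname{Ham}}(\Sigma_2)$ into disk-supported pieces and conjugate each piece into $\iota(\Sigma_1)$.

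\textbf{Injectivity, kernel, and main obstacle.} The crux of the proof is the construction of a left inverse $\Phi : \overline{\operatorname{Ham}}(\Sigma) \to \mathcal{R}$ (respectively $\mathcal{R}/\R$ in the closed case), defined by $\Phi(\varphi) := \sum [\overline{\varphi_i}]_{\mathcal{R}}$ on a fragmentation $\varphi = \prod \varphi_i$, where $\overline{\varphi_i} \in \overline{\operatorname{Ham}}(\D)$ is the pullback of $\varphi_i \in \overline{\operatorname{Ham}}(D_i)$ via an area-preserving identification $\D \cong D_i$ (rescaled if necessary). Once $\Phi$ is shown to be a well-defined group homomorphism restricting to the identity on $\overline{\operatorname{Ham}}(\D)$, the map $\mathcal{R} \to \overline{\operatorname{Ham}}^{\operatorname{ab}}(\Sigma)$ acquires a left inverse and is therefore injective, proving Part (i). In Part (ii), the containment $\R \subset \ker$ follows from Banyaga's perfectness of $\operatorname{Ham}(\Sigma_2)$ for closed $\Sigma_2$, which makes the smooth Calabi class vanish, while the reverse containment $\ker \subset \R$ follows from the $\mathcal{R}/\R$-valued variant of $\Phi$. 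The principal difficulty lies in the well-definedness of $\Phi$: two fragmentations of the same element can look very different, and showing they produce the same sum in $\mathcal{R}$ requires combining Proposition \ref{prop:fragmentation_hamiltonian_homeomorphisms} (to pass to a common refinement), Proposition \ref{prop:smooth_approximation_area_preserving_homeos} (to handle the choice of identifications $\D \cong D_i$), the commutativity of $\mathcal{R}$, and disk mobility. In contrast to the smooth setting, where analogous identities reduce to explicit Calabi integrals, here the argument must be carried out intrinsically inside the abstract abelianization $\mathcal{R}$, which is what makes this step delicate.
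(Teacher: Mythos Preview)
Your overall architecture matches the paper's proof closely: independence and surjectivity via fragmentation plus conjugation by disk-moving Hamiltonian homeomorphisms (the paper's Steps~1--2), then injectivity/kernel computation by defining a candidate inverse $\tilde F$ via fragmentation (Steps~3--4). You have also correctly identified the crux as the well-definedness of this inverse.

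However, your outline has a genuine gap at precisely this crux. The phrase ``pass to a common refinement'' does not describe a workable mechanism here: fragmentations are ordered compositions of non-commuting homeomorphisms, and there is no obvious notion of common refinement that would let you compare two such products term by term. The paper's actual argument is quite different and is the substantive content of the proof. One reduces well-definedness to showing that $\varphi_n\circ\cdots\circ\varphi_1=\operatorname{id}$ implies $\prod_i[\iota_{i*}\varphi_i]=\operatorname{id}$ in the target abelianization. The key idea is an \emph{iterative smoothing}: for each $j$ in turn, use Proposition~\ref{prop:smooth_approximation_area_preserving_homeos} to replace $\varphi_j$ by a smooth $\varphi_j^{\operatorname{sm}}\in\operatorname{Ham}(D_j)$, writing $\varphi_j=\alpha\circ\varphi_j^{\operatorname{sm}}$ with $\alpha$ a $C^0$-small Hamiltonian homeomorphism supported in $D_j\cap(D_{j+1}\cup\cdots\cup D_n)$; then fragment $\alpha$ along the overlap pieces $D_j\cap(D_i\setminus\operatorname{supp}(\varphi_{i-1}\circ\cdots\circ\varphi_{j+1}))$ and absorb each piece into the corresponding later $\varphi_i$. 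One checks that both the identity $\varphi_n\circ\cdots\circ\varphi_1=\operatorname{id}$ and the product of abelianization classes are preserved at each step. After $n$ steps all fragments are smooth, and one invokes Banyaga's result that $\operatorname{Cal}_\Sigma$ induces an isomorphism $\operatorname{Ham}^{\operatorname{ab}}(\Sigma)\cong\R$ (hence $\operatorname{Ham}^{\operatorname{ab}}(\iota)$ is injective in the smooth category) to conclude. Your proposal mentions Proposition~\ref{prop:smooth_approximation_area_preserving_homeos} only in connection with the choice of identification $\D\cong D_i$, which is not where the difficulty lies; the reduction-to-smooth mechanism above is what is missing.
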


Before presenting the proof of the above, we remark on some of its consequences.

\medskip

For every surface $\Sigma \in \mathcal{S}$, the abelian group $\overline{\operatorname{Ham}}^{\operatorname{ab}}(\Sigma)$ is non-trivial. This was proven in the case of the disc in \cite{chs24} and for closed surfaces and open surfaces which are the interiors of compact surfaces with boundary in \cite{chs24b, chmss22}. For general open surfaces, possibly of infinite area, non-triviality of $\overline{\operatorname{Ham}}^{\operatorname{ab}}(\Sigma)$ can be deduced from Theorem \ref{thm:natural_isomorphisms_abelianizations_hamiltonian_homeomorphisms-v2}.

It is interesting to point out that non-perfectness of $\overline{\operatorname{Ham}}(\Sigma)$ for one single open surface $\Sigma\in \mathcal{S}_{\operatorname{op}}$, for example $\Sigma = \D$, implies non-perfectness for all open surfaces via Theorem \ref{thm:natural_isomorphisms_abelianizations_hamiltonian_homeomorphisms-v2}. Similarly, non-perfectness of $\overline{\operatorname{Ham}}(\Sigma)$ for one single closed surface $\Sigma \in \mathcal{S}_{\operatorname{cl}}$ implies non-perfectness for all closed surfaces.

As already explained in the introduction, we define
\begin{equation*}
    \mathcal{R} \coloneqq \overline{\operatorname{Ham}}^{\operatorname{ab}}(\mathbb{D}).
\end{equation*}
Theorem \ref{thm:natural_isomorphisms_abelianizations_hamiltonian_homeomorphisms-v2} allows us to canonically identify
\begin{equation*}
    \mathcal{R}\cong \overline{\operatorname{Ham}}^{\operatorname{ab}}(\Sigma)
\end{equation*}
for every open surface $\Sigma\in \mathcal{S}_{\operatorname{op}}$.

Despite significant interest, the algebraic structure of $\mathcal{R}$ remains rather mysterious. Most of what is known follows from the existence of a certain sequence of quasimorphisms
\begin{equation*}
    c_d : \overline{\operatorname{Ham}}(\D) \rightarrow \R
\end{equation*}
called \emph{link spectral invariants}; see \cite{chmss22, chmss}. Consider the space $\R^\N$ of real valued sequences and let $N\subset \R^\N$ denote the subspace of sequences converging to zero. Then the link spectral invariants induce a map
\begin{equation*}
    c : \overline{\operatorname{Ham}}(\D) \rightarrow \R^\N/N \qquad \varphi \mapsto [c_1(\varphi),c_2(\varphi), c_3(\varphi), \dots].
\end{equation*}
Since the sequence of defects of the quasimorphisms $c_d$ behaves like $O(d^{-1})$ as $d$ goes to infinity, this map $c$ is a group homomorphism. Moreover, it fits into the following commutative diagram:
\begin{equation*}
\begin{tikzcd}
\overline{\operatorname{Ham}}(\D) \arrow[r] \arrow[rr, bend left=30, "c"] & \overline{\operatorname{Ham}}^{\operatorname{ab}}(\D) \arrow[r] & \R^\N/N \\
\operatorname{Ham}(\D) \arrow[u, hook] \arrow[r] \arrow[rr, bend right=30, "\operatorname{Cal}_\D"] & \operatorname{Ham}^{\operatorname{ab}}(\D) \arrow[u] \arrow[r, "\cong"] & \R \arrow[u, hook, "\Delta"]
\end{tikzcd}
\end{equation*}
Here the homomorphism $\Delta$ maps $x\in \R$ to the equivalence class of the constant sequence $(x)_{n\in \N}$. Commutativity of this diagram is equivalent to the important \emph{Weyl law} satisfied by the link spectral invariants, which says that they asymptotically recover the Calabi invariant.

It follows from this commutative diagram that the natural map
\begin{equation*}
    \operatorname{Ham}^{\operatorname{ab}}(\D) \rightarrow \overline{\operatorname{Ham}}^{\operatorname{ab}}(\D)
\end{equation*}
is injective. By Theorem \ref{thm:natural_isomorphisms_abelianizations_hamiltonian_homeomorphisms-v2} the same is then true for any other open surface $\Sigma$ as well. As discussed in the introduction, after identifying $\operatorname{Ham}^{\operatorname{ab}}(\D) \cong \R$ via the Calabi homomorphism, we can therefore regard
\begin{equation*}
    \R \subset \mathcal{R}
\end{equation*}
as a subgroup. It is proven in \cite[Proposition 5.3]{chmss} that the homomorphism $c$ is surjective. In particular, this implies that $\R$ is a proper subgroup of $\mathcal{R}$.

Let $\Sigma \in \mathcal{S}_{\operatorname{op}}$ be an open surface. The restriction of the natural map
\begin{equation*}
    \overline{\mathrm{Cal}}_\Sigma  : \overline{\operatorname{Ham}}(\Sigma) \rightarrow \overline{\operatorname{Ham}}^{\operatorname{ab}}(\Sigma) \cong \mathcal{R}.
\end{equation*}
to $\operatorname{Ham}(\Sigma)$ agrees with the usual Calabi homomorphism via the natural inclusion $\R \subset \mathcal{R}$. We refer to this map as the \emph{universal $\mathcal{R}$-valued extension of Calabi}. If $\Sigma$ is not connected, we define $\overline{\mathrm{Cal}}_\Sigma : \overline{\operatorname{Ham}}(\Sigma) \rightarrow \mathcal{R}$ to be the sum of all $\overline{\mathrm{Cal}}_S$ where $S$ ranges over the components of $\Sigma$. In view of Theorem \ref{thm:natural_isomorphisms_abelianizations_hamiltonian_homeomorphisms-v2}, it is clear that  we have  the naturality property
\begin{equation}
\label{eq:naturality_universal_calabi_section_hamiltonian_homeoms_and_calabi}
\overline{\mathrm{{Cal}}}_{\Sigma_1} =\overline{\mathrm{{Cal}}}_{\Sigma_2} \circ \overline{\operatorname{Ham}}(\iota)
\end{equation}
for any area-preserving topological embedding $\iota: \Sigma_1 \hookrightarrow \Sigma_2$.

Every projection $p : \mathcal{R} \rightarrow \R$ gives rise to a real-valued extension $p\circ \overline{\operatorname{Cal}}_\Sigma$ of the Calabi homomorphism $\operatorname{Cal}_\Sigma$ for any open surface $\Sigma$. Conversely, every real-valued Calabi extension arises this way. Clearly, the real-valued Calabi extensions arising from a single choice of projection $p: \mathcal{R}\rightarrow \R$ satisfy the naturality condition \eqref{eq:naturality_universal_calabi_section_hamiltonian_homeoms_and_calabi}.

Projections $p: \mathcal{R}\rightarrow \R$ can be obtained as follows: The homomorphism $c$ descends to a surjective homomorphism $c : \mathcal{R} \rightarrow \R^\N/N$. Viewing $\R$ as a subgroup of $\R^\N/N$ via the inclusion $\Delta$, this is actually a homomorphism over $\R$. Composing $c$ with any projection $\R^\N/N\rightarrow \R$ yields a projection $p:\mathcal{R}\rightarrow \R$. There exist infinitely many projections $\R^\N/N\rightarrow \R$, but the choice of any such projection requires the axiom of choice.

Finally, note that one interesting consequence of the discussion here is that having a $\R$-valued extension of the Calabi homomorphism to $\overline{\Ham}(\Sigma)$ for one single open surface $\Sigma$, for example $\Sigma = \D$, implies the existence of such extensions for all open surfaces. The fact that Calabi admits such extensions was proven for surfaces obtained as the interior of a compact surface with non-empty boundary in \cite{chmss22} in the genus zero case and by Mak--Trifa in \cite{mt} in the case of arbitrary genus. Here we obtain extensions in full generality, even for surfaces of infinite area.

\begin{proof}[Proof of Theorem \ref{thm:natural_isomorphisms_abelianizations_hamiltonian_homeomorphisms-v2}]
    The proof proceeds in several steps.
    
    \emph{Step 1 (independence of $\iota$):} We show that $\overline{\operatorname{Ham}}^{\operatorname{ab}}(\iota)$ is independent of $\iota$. Consider two area-preserving topological embeddings $\iota,\iota' : \Sigma_1 \hookrightarrow \Sigma_2$. Let $\varphi \in \overline{\operatorname{Ham}}(\Sigma_1)$. We need to show that $[\iota_*\varphi] = [\iota_*'\varphi]$ in $\overline{\operatorname{Ham}}^{\operatorname{ab}}(\Sigma_2)$. By Proposition \ref{prop:fragmentation_hamiltonian_homeomorphisms}, we may pick a finite collection of relatively compact open discs $D_1,\dots,D_n\Subset \Sigma_1$ and Hamiltonian homeomorphisms $\varphi_i \in \overline{\operatorname{Ham}}(D_i)$ such that $\varphi = \varphi_1\circ \cdots \circ \varphi_n$. Since $\Sigma_2$ is connected, we can also find Hamiltonian homeomorphisms $\alpha_i \in \overline{\operatorname{Ham}}(\Sigma_2)$ such that $\alpha_i \circ \iota|_{D_i} = \iota'|_{D_i}$. In $\overline{\operatorname{Ham}}^{\operatorname{ab}}(\Sigma_2)$ we can then compute
    \begin{equation*}
        [\iota_*'\varphi] = [\iota_*'\varphi_1  \circ\cdots\circ  \iota_*'\varphi_n] = [\alpha_1(\iota_*\varphi_1)\alpha_1^{-1} \circ \cdots \circ \alpha_n(\iota_*\varphi_n)\alpha_n^{-1}] = [\iota_*\varphi_1 \circ \cdots \circ \iota_*\varphi_n] = [\iota_*\varphi],
    \end{equation*}
    as desired.

    \emph{Step 2 (surjectivity):} We show that $\overline{\operatorname{Ham}}^{\operatorname{ab}}(\iota)$ is surjective. By Step 1, we may replace $\iota$ by a smooth area-preserving embedding. After identifying $\Sigma_1$ with its image under $\iota$, we can regard $\Sigma_1$ as an open subset of $\Sigma_2$. Given an arbitrary element $\varphi \in \overline{\operatorname{Ham}}(\Sigma_2)$, we need to show that the class $[\varphi] \in \overline{\operatorname{Ham}}^{\operatorname{ab}}(\Sigma_2)$ has a representative in $\overline{\operatorname{Ham}}(\Sigma_1)$. By Proposition \ref{prop:fragmentation_hamiltonian_homeomorphisms}, there exist finitely many relatively compact open discs $D_1,\dots,D_n \Subset \Sigma_2$, each sufficiently small such that there exists $\alpha_i \in \operatorname{Ham}(\Sigma_2)$ such that $\alpha_i(D_i)\Subset \Sigma_1$, and Hamiltonian homeomorphisms $\varphi_i \in \overline{\operatorname{Ham}}(D_i)$ such that $\varphi = \varphi_1\circ \cdots \varphi_n$. Set $\psi_i \coloneqq \alpha_i \varphi_i\alpha_i^{-1} \in \overline{\operatorname{Ham}}(\Sigma_1)$. Observe that in $\overline{\operatorname{Ham}}^{\operatorname{ab}}(\Sigma_2)$ we have the identity
    \begin{equation*}
        [\varphi] = [\varphi_1 \circ \cdots \circ \varphi_n] = [\psi_1 \circ \cdots \circ \psi_n].
    \end{equation*}
    Therefore, $\psi\coloneqq \psi_1 \circ \cdots \circ \psi_n \in \overline{\operatorname{Ham}}(\Sigma_1)$ is the desired representative of $[\varphi]$.

    \emph{Step 3 (injectivity in the case of open surfaces):} Assume that both $\Sigma_1$ and $\Sigma_2$ are open. We show that $\overline{\operatorname{Ham}}^{\operatorname{ab}}(\iota)$ is injective. As in Step 2, we may assume that $\iota$ is smooth. We identify $\Sigma_1$ with its image under $\iota$ and view it as an open subset of $\Sigma_2$. Our goal is to construct an inverse
    \begin{equation*}
        F : \overline{\operatorname{Ham}}^{\operatorname{ab}}(\Sigma_2) \rightarrow \overline{\operatorname{Ham}}^{\operatorname{ab}}(\Sigma_1)
    \end{equation*}
    of $\overline{\operatorname{Ham}}^{\operatorname{ab}}(\iota)$. By the universal property of abelianization, defining a group homomorphism $F: \overline{\operatorname{Ham}}^{\operatorname{ab}}(\Sigma_2) \rightarrow \overline{\operatorname{Ham}}^{\operatorname{ab}}(\Sigma_1)$ is equivalent to defining a group homomorphism $\tilde{F} : \overline{\operatorname{Ham}}(\Sigma_2) \rightarrow \overline{\operatorname{Ham}}^{\operatorname{ab}}(\Sigma_1)$. Given $\varphi \in \overline{\operatorname{Ham}}(\Sigma_2)$, we define $\tilde{F}(\varphi)$ as follows. By Proposition  \ref{prop:fragmentation_hamiltonian_homeomorphisms}, we may pick a fragmentation $\varphi = \varphi_n \circ \cdots \circ \varphi_1$ such that $\varphi_i \in \overline{\operatorname{Ham}}(D_i)$ for some open disc $D_i \Subset \Sigma_2$ which is sufficiently small such that there exists an area-preserving embedding $\iota_i : D_i \hookrightarrow \Sigma_1$. We set
    \begin{equation*}
        \tilde{F}(\varphi) \coloneqq \prod\limits_{i=1}^n \overline{\operatorname{Ham}}^{\operatorname{ab}}(\iota_i)([\varphi_i]) = \prod\limits_{i=1}^n [(\iota_i)_*\varphi_i] \in \overline{\operatorname{Ham}}^{\operatorname{ab}}(\Sigma_1).
    \end{equation*}
    Since $\overline{\operatorname{Ham}}^{\operatorname{ab}}(\Sigma_1)$ is abelian, the order of the product of course does not matter. We need to check that this definition of $\tilde{F}$ is independent of the choice of fragmentation of $\varphi$. Postponing the proof of this claim, let us first argue that $\tilde{F}$ indeed induces a homomorphism $F$ which is an inverse of $\overline{\operatorname{Ham}}^{\operatorname{ab}}(\iota)$. Observe that $\tilde{F}$ is a group homomorphism because given fragmentations of $\varphi, \psi \in \overline{\operatorname{Ham}}(\Sigma_2)$, one can obtain a fragmentation of $\varphi\circ \psi$ by concatenation. Consider $\varphi \in \overline{\operatorname{Ham}}(\Sigma_1)$. We may pick a fragmentation $\varphi = \varphi_n \circ \cdots \circ \varphi_1$ such that all the discs $D_i$ are contained in $\Sigma_1$. We can then take the embeddings $\iota_i: D_i\hookrightarrow \Sigma_1$ to be inclusions. With these choices, we see that $F\circ \overline{\operatorname{Ham}}^{\operatorname{ab}}(\iota)([\varphi]) = [\varphi]$. Since we already know that $\overline{\operatorname{Ham}}^{\operatorname{ab}}(\iota)$ is surjective, this implies that $\overline{\operatorname{Ham}}^{\operatorname{ab}}(\iota)$ is an isomorphism and that $F$ is its inverse.
    
    Showing that the definition of $\tilde{F}$ is independent of choices reduces to verifying the following claim:

    \textit{Let $D_1,\dots,D_n \Subset \Sigma_2$ be open discs which are small enough such that they admit area-preserving embeddings $\iota_i : D_i \hookrightarrow \Sigma_1$ into $\Sigma_1$. Let $\varphi_i \in \overline{\operatorname{Ham}}(D_i)$ be Hamiltonian homeomorphisms. Then
    \begin{equation*}
        \varphi_n \circ \cdots \circ \varphi_1 = \operatorname{id} \qquad \Longrightarrow \qquad \prod_i \overline{\operatorname{Ham}}^{\operatorname{ab}}(\iota_i)([\varphi_i]) = \operatorname{id}.
    \end{equation*}}

    Fix discs $D_i$ and Hamiltonian homeomorphisms $\varphi_i \in \overline{\operatorname{Ham}}(D_i)$ as in the claim and assume that $\varphi_n\circ \cdots\circ \varphi_1 = \operatorname{id}$. Set $\varphi_i^{(0)} \coloneqq \varphi_i$ for $1\leq i \leq n$. Our strategy is to construct, for $1\leq i,j \leq n$, Hamiltonian homeomorphisms $\varphi_i^{(j)} \in \overline{\operatorname{Ham}}(D_i)$ such that the following properties are satisfied:
     \begin{enumerate}
         \item \label{item:coherent_system_of_calabi_extensions_proof_composition}$\varphi_n^{(j)} \circ \cdots \circ\varphi_1^{(j)} = \operatorname{id}$ for all $j$.
         \item \label{item:coherent_system_of_calabi_extensions_proof_sum}$\prod_i \overline{\operatorname{Ham}}^{\operatorname{ab}}(\iota_i)([\varphi_i^{(j)}]) = \prod_i \overline{\operatorname{Ham}}^{\operatorname{ab}}(\iota_i)([\varphi_i^{(j-1)}])$ for all $j$.
         \item \label{item:coherent_system_of_calabi_extensions_proof_smooth} $\varphi_i^{(j)}\in \operatorname{Ham}(D_i)$ is smooth for all $i \leq j$.
     \end{enumerate}
     Once we have such homeomorphisms $\varphi_i^{(j)}$, the desired claim follows from the injectivity of $\operatorname{Ham}^{\operatorname{ab}}(\iota):\operatorname{Ham}^{\operatorname{ab}}(\Sigma_1) \rightarrow \operatorname{Ham}^{\operatorname{ab}}(\Sigma_2)$. In order to see this, note that by Step 1 we may assume that the embeddings $\iota_i : D_i \hookrightarrow \Sigma_1$ are smooth. By property \ref{item:coherent_system_of_calabi_extensions_proof_smooth}, all $\varphi_i^{(n)}$ are smooth as well. We can therefore compute
     \begin{equation*}
         \operatorname{Ham}^{\operatorname{ab}}(\iota) \left(\prod_i\operatorname{Ham}^{\operatorname{ab}}(\iota_i)([\varphi_i^{(n)}])\right) = \prod_i[\varphi_i^{(n)}] = \operatorname{id},
     \end{equation*}
     where we use that $\operatorname{Ham}^{\operatorname{ab}}(\kappa)$ is independent of the embedding $\kappa$. By injectivity of $\operatorname{Ham}^{\operatorname{ab}}(\iota)$, this means that $\prod_i\operatorname{Ham}^{\operatorname{ab}}(\iota_i)([\varphi_i^{(n)}])$ is equal to the identity in $\operatorname{Ham}^{\operatorname{ab}}(\Sigma_1)$. But since $\prod_i \overline{\operatorname{Ham}}^{\operatorname{ab}}(\iota_i)([\varphi_i^{(n)}])$ is the image of $\prod_i\operatorname{Ham}^{\operatorname{ab}}(\iota_i)([\varphi_i^{(n)}])$ in $\overline{\operatorname{Ham}}^{\operatorname{ab}}(\Sigma_1)$, we see that $\prod_i \overline{\operatorname{Ham}}^{\operatorname{ab}}(\iota_i)([\varphi_i^{(n)}])$ is equal to the identity as well. It is then immediate from property \ref{item:coherent_system_of_calabi_extensions_proof_sum} that $\prod_i\overline{\operatorname{Ham}}^{\operatorname{ab}}(\iota_i)([\varphi_i])$ is equal to the identity, as desired.
     
     It remains to construct the homeomorphisms $\varphi_i^{(j)}$ satisfying properties \ref{item:coherent_system_of_calabi_extensions_proof_composition}, \ref{item:coherent_system_of_calabi_extensions_proof_sum}, and \ref{item:coherent_system_of_calabi_extensions_proof_smooth}. Fix $1\leq j \leq n$ and assume that the homeomorphisms $\varphi_i^{(k)}$ have already been constructed for $k < j$. We explain how to construct $\varphi_i^{(j)}$. For $1\leq i \leq j-1$, we simply set $\varphi_i^{(j)} \coloneqq \varphi_i^{(j-1)}$. Note that all of these homeomorphisms are smooth. In order to simplify notation, we set
     \begin{equation*}
         \psi_i \coloneqq \varphi_i^{(j-1)} \quad \text{for $j\leq i \leq n$ and} \quad \psi \coloneqq \varphi_{j-1}^{(j-1)} \circ \cdots \circ \varphi_{1}^{(j-1)}.
     \end{equation*}
     Note that $\psi$ is smooth and that
     \begin{equation}
     \label{eq:coherent_system_of_calabi_extensions_proof_c}
         \psi_n \circ \cdots \circ \psi_j \circ \psi = \operatorname{id}.
     \end{equation}
     Set
     \begin{equation*}
         K_i \coloneqq \operatorname{supp} (\psi_i \circ \cdots \circ \psi_{j+1}) \qquad \text{for $j+1 \leq i \leq n$}.
     \end{equation*}
     It follows from smoothness of $\psi$ and identity \eqref{eq:coherent_system_of_calabi_extensions_proof_c} that $\psi_n \circ \cdots \circ \psi_{j+1} \circ \psi_j$ is smooth. As a consequence, any point at which $\psi_j$ is not smooth must be contained in $\operatorname{supp}(\psi_j) \cap \psi_j^{-1}(K_n)$. By Proposition \ref{prop:smooth_approximation_area_preserving_homeos}, we can find $\psi_j^{\operatorname{sm}} \in \operatorname{Ham}(D_j)$ which agrees with $\psi_j$ outside an arbitrarily small neighborhood of $\operatorname{supp}(\psi_j) \cap \psi_j^{-1}(K_n)$ and is arbitrarily $C^0$ close to $\psi_j$. Here we use that because $D_j$ is a disc, every compactly supported area-preserving diffeomorphism of $D_j$ is automatically contained in $\operatorname{Ham}(D_j)$. We can then write $\psi_j = \alpha \circ \psi_j^{\operatorname{sm}}$, where $\alpha$ is an area-preserving homeomorphism which is supported in an arbitrarily small neighborhood of $\operatorname{supp}(\psi_j) \cap K_n$ and is arbitrarily $C^0$ close to the identity. Since $\operatorname{supp}(\psi_j)\cap K_n$ is contained in $D_j\cap (D_{j+1} \cup \cdots \cup D_n)$, we can assume that $\alpha \in \operatorname{Homeo}_0(D_j\cap(D_{j+1} \cup \cdots \cup D_n),\omega_2)$. We claim that we can in addition assume that $\alpha \in \overline{\operatorname{Ham}}(D_j \cap (D_{j+1} \cup \cdots \cup D_n))$. Indeed, if this is not the case, simply pick $\beta \in \operatorname{Diff}_0(D_j\cap(D_{j+1} \cup \cdots \cup D_n),\omega_2)$ close to the identity such that the mass flow homomorphism agrees on $\alpha$ and $\beta$. Then replace $\alpha$ and $\psi_j^{\operatorname{sm}}$ by $\alpha\circ \beta^{-1}$ and $\beta\circ \psi_j^{\operatorname{sm}}$, respectively. To summarize, we have constructed a factorization
     \begin{equation}\label{eq:coherent_system_of_calabi_extensions_proof_d}
         \psi_j = \alpha \circ \psi_j^{\operatorname{sm}} \quad \text{with $\psi_j^{\operatorname{sm}} \in \operatorname{Ham}(D_j)$ and $\alpha \in \overline{\operatorname{Ham}}(D_j \cap (D_{j+1} \cup \dots \cup D_n))$}.
     \end{equation}
     Moreover, we can take $\alpha$ to be arbitrarily close to the identity. Now set
     \begin{equation*}
         U_i \coloneqq D_j\cap (D_i \setminus K_{i-1}) \quad \text{for $j+1 < i \leq n$ and} \quad U_{j+1} \coloneqq D_j \cap D_{j+1}.
     \end{equation*}
     Observe that
     \begin{equation*}
         U_{j+1} \cup \cdots \cup U_n = D_j\cap (D_{j+1} \cup \cdots \cup D_n).
     \end{equation*}
     Since $\alpha \in \overline{\operatorname{Ham}}(D_j \cap (D_{j+1} \cup \dots \cup D_n))$ is close to the identity, Proposition \ref{prop:fragmentation_hamiltonian_homeomorphisms} allows us to fragment
     \begin{equation}
     \label{eq:coherent_system_of_calabi_extensions_proof_g}
         \alpha = \alpha_n \circ \cdots \circ \alpha_{j+1} \qquad \text{with $\alpha_i \in \overline{\operatorname{Ham}}(U_i)$.}
     \end{equation}
     Using that $U_i$ is disjoint from $K_{i-1}$ by definition, we obtain
     \begin{equation}
     \label{eq:coherent_system_of_calabi_extensions_proof_e}
         \psi_n \circ \cdots \circ \psi_{j+1} \circ \alpha = (\psi_n \circ \alpha_n) \circ \cdots \circ (\psi_{j+1} \circ \alpha_{j+1}).
     \end{equation}
     We set
     \begin{equation*}
         \varphi_i^{(j)} \coloneqq \psi_i \circ \alpha_i \quad \text{for $j+1 \leq i \leq n$ and} \quad \varphi_j^{(j)} \coloneqq \psi_j^{\operatorname{sm}}.
     \end{equation*}
     This concludes the construction of the Hamiltonian homeomorphisms $\varphi_i^{(j)}$ and we need to check that they satisfy properties \ref{item:coherent_system_of_calabi_extensions_proof_composition}, \ref{item:coherent_system_of_calabi_extensions_proof_sum}, and \ref{item:coherent_system_of_calabi_extensions_proof_smooth}. First, note that $\varphi_i^{(j)} \in \overline{\operatorname{Ham}}(D_i)$ for all $i$ and $\varphi_i^{(j)} \in \operatorname{Ham}(D_i)$ for all $i\leq j$ by construction. In particular, property \ref{item:coherent_system_of_calabi_extensions_proof_smooth} is satisfied. It is immediate from \eqref{eq:coherent_system_of_calabi_extensions_proof_c}, \eqref{eq:coherent_system_of_calabi_extensions_proof_d}, and \eqref{eq:coherent_system_of_calabi_extensions_proof_e} that property \ref{item:coherent_system_of_calabi_extensions_proof_composition} holds. It remains to show property \ref{item:coherent_system_of_calabi_extensions_proof_sum}. Since $\varphi_i^{(j)} = \varphi_i^{(j-1)}$ for $i<j$, we need to check that
     \begin{equation*}
         \prod\limits_{i=j}^n \overline{\operatorname{Ham}}^{\operatorname{ab}}(\iota_i)([\psi_i]) = \overline{\operatorname{Ham}}^{\operatorname{ab}}(\iota_j)([\psi_j^{\operatorname{sm}}]) \cdot \prod\limits_{i=j+1}^n \overline{\operatorname{Ham}}^{\operatorname{ab}}(\iota_i)([\psi_i\circ \alpha_i]).
     \end{equation*}
     Substituting $\overline{\operatorname{Ham}}^{\operatorname{ab}}(\iota_i)([\psi_i\circ \alpha_i]) = \overline{\operatorname{Ham}}^{\operatorname{ab}}(\iota_i)([\psi_i]) \cdot \overline{\operatorname{Ham}}^{\operatorname{ab}}(\iota_i)([\alpha_i])$, we see that this is equivalent to showing thatv
     \begin{equation*}
         \overline{\operatorname{Ham}}^{\operatorname{ab}}(\iota_j)([\psi_j]) = \overline{\operatorname{Ham}}^{\operatorname{ab}}(\iota_j)([\psi_j^{\operatorname{sm}}]) \cdot \prod\limits_{i=j+1}^n \overline{\operatorname{Ham}}^{\operatorname{ab}}(\iota_i)([\alpha_i]).
     \end{equation*}
     It follows from identities \eqref{eq:coherent_system_of_calabi_extensions_proof_d} and \eqref{eq:coherent_system_of_calabi_extensions_proof_g} that
     \begin{equation*}
         \overline{\operatorname{Ham}}^{\operatorname{ab}}(\iota_j)([\psi_j]) = \overline{\operatorname{Ham}}^{\operatorname{ab}}(\iota_j)([\psi_j^{\operatorname{sm}}]) \cdot \prod\limits_{i=j+1}^n \overline{\operatorname{Ham}}^{\operatorname{ab}}(\iota_j)([\alpha_i]).
     \end{equation*}
     Thus it suffices to prove
     \begin{equation*}
         \overline{\operatorname{Ham}}^{\operatorname{ab}}(\iota_i)([\alpha_i]) = \overline{\operatorname{Ham}}^{\operatorname{ab}}(\iota_j)([\alpha_i]) \qquad \text{for all $j+1\leq i \leq n$.}
     \end{equation*}
     In order to see this, simply observe that, for every $i$, there exists $\beta \in \overline{\operatorname{Ham}}(\Sigma_1)$ such that $(\iota_i)_*(\alpha_i) = \beta\circ (\iota_j)_*(\alpha_i)\circ \beta^{-1}$. This concludes the proof that the homeomorphisms $\varphi_i^{(j)}$ satisfy property \ref{item:coherent_system_of_calabi_extensions_proof_sum} and thus the proof that the definition of the map $\tilde{F}$ is independent of choices.

     \emph{Step 4 (embeddings of open into closed surfaces):} Now consider an embedding $\iota: \Sigma_1 \hookrightarrow \Sigma_2$ of an open surface $\Sigma_1$ into a closed surface $\Sigma_2$. Again, we assume that $\iota$ is smooth and identify $\Sigma_1$ with its image under $\iota$. Since the group of Hamiltonian diffeomorphisms $\operatorname{Ham}(\Sigma_2)$ of the closed surface $\Sigma_2$ is perfect, the image of $\operatorname{Ham}^{\operatorname{ab}}(\Sigma_1)$ in $\overline{\operatorname{Ham}}^{\operatorname{ab}}(\Sigma_1)$ is contained in the kernel of $\overline{\operatorname{Ham}}^{\operatorname{ab}}(\iota)$. Therefore, $\overline{\operatorname{Ham}}^{\operatorname{ab}}(\iota)$ induces a homomorphism
     \begin{equation*}
         G:\overline{\operatorname{Ham}}^{\operatorname{ab}}(\Sigma_1) / \operatorname{Ham}^{\operatorname{ab}}(\Sigma_1) \rightarrow \overline{\operatorname{Ham}}^{\operatorname{ab}}(\Sigma_2).
     \end{equation*}
     Our goal is to show that this is an isomorphism. Again, the strategy is to construct an inverse. Our construction is very similar to the one in Step 3. Given $\varphi\in \overline{\operatorname{Ham}}(\Sigma_2)$, we pick a fragmentation $\varphi = \varphi_n\circ \cdots \circ \varphi_1$ with $\varphi_i \in \overline{\operatorname{Ham}}(D_i)$ for open discs $D_i\Subset \Sigma_2$ which are sufficiently small such that they admit embeddings $\iota_i : D_i \hookrightarrow \Sigma_1$. Given these choices, we set
     \begin{equation*}
         \tilde{F}(\varphi) \coloneqq \left[\prod_i \overline{\operatorname{Ham}}^{\operatorname{ab}}(\iota_i)([\varphi_i])\right] \in\overline{\operatorname{Ham}}^{\operatorname{ab}}(\Sigma_1) / \operatorname{Ham}^{\operatorname{ab}}(\Sigma_1).
     \end{equation*}
     Again, the main difficulty is to show that this definition does not depend on the choice of fragmentation. Once we know this, we can argue as in Step 3 that $\tilde{F}$ descends to a homomorphism
     \begin{equation*}
         F : \overline{\operatorname{Ham}}^{\operatorname{ab}}(\Sigma_2) \rightarrow \overline{\operatorname{Ham}}^{\operatorname{ab}}(\Sigma_1) / \operatorname{Ham}^{\operatorname{ab}}(\Sigma_1)
     \end{equation*}
     which is an inverse to $G$. In order to show that $\tilde{F}$ is independent of the choice of fragmentation, we have to show that if $D_i \Subset \Sigma_2$ are open discs admitting embeddings $\iota_i: D_i \hookrightarrow \Sigma_1$, which we can assume to be smooth, and if $\varphi_i \in \overline{\operatorname{Ham}}(D_i)$ are Hamiltonian homeomorphisms such that $\varphi_n \circ \cdots \circ \varphi_1 = \operatorname{id}$, then $\left[\prod_i \overline{\operatorname{Ham}}^{\operatorname{ab}}(\iota_i)([\varphi_i])\right] = \operatorname{id}$ in the quotient $\overline{\operatorname{Ham}}^{\operatorname{ab}}(\Sigma_1) / \operatorname{Ham}^{\operatorname{ab}}(\Sigma_1)$. Going through the argument in Step 3 word by word, we see that there exist Hamiltonian diffeomorphisms $\varphi_i^{(n)} \in \operatorname{Ham}(D_i)$ such that $\varphi_n^{(n)} \circ \cdots \circ \varphi_1^{(n)} = \operatorname{id}$ and
     \begin{equation*}
         \prod_i \overline{\operatorname{Ham}}^{\operatorname{ab}}(\iota_i)([\varphi_i^{(n)}]) = \prod_i \overline{\operatorname{Ham}}^{\operatorname{ab}}(\iota_i)([\varphi_i])
     \end{equation*}
     in $\overline{\operatorname{Ham}}^{\operatorname{ab}}(\Sigma_1)$. Since the $\varphi_i^{(n)}$ are Hamiltonian diffeomorphisms and the embeddings $\iota_i$ can assumed to be smooth, the left hand side of this identity is contained in $\operatorname{Ham}^{\operatorname{ab}}(\Sigma_1)$. Thus the right hand side represents the identity in the quotient $\overline{\operatorname{Ham}}^{\operatorname{ab}}(\Sigma_1)/\operatorname{Ham}^{\operatorname{ab}}(\Sigma_1)$, which is exactly what we need to show for concluding that $\tilde{F}$ is well-defined and independent of choices. This finishes the proof that the kernel of $\overline{\operatorname{Ham}}^{\operatorname{ab}}(\iota)$ is given by the image of $\operatorname{Ham}^{\operatorname{ab}}(\Sigma_1)$ in the case of an open surface $\Sigma_1$ embedded into a closed surface $\Sigma_2$.

     \emph{Step 5 (two closed surfaces):} Any area-preserving embedding $\iota$ of a closed connected surface into another is necessarily an area-preserving homeomorphism, which clearly implies that $\overline{\operatorname{Ham}}^{\operatorname{ab}}(\iota)$ is an isomorphism. This concludes the proof of the theorem.
\end{proof}

\section{Smooth Hamiltonian structures}
\label{sec:smooth_hamiltonian_structures}

In this section, we introduce smooth Hamiltonian structures and discuss basic notions such as flux, helicity, and plugs.

\subsection{Basic definitions}

Let $W$ be an oriented smooth $3$-manifold without boundary and possibly open. A \textit{Hamiltonian structure} on $W$ is a closed, maximally nondegenerate $2$-form $\omega$ on $W$. 
    
The line bundle $\operatorname{ker}\omega \subset TW$ is called the \textit{characteristic line bundle} of $\omega$. The foliation generated by this line bundle is called the \textit{characteristic foliation} of $\omega$. It is naturally cooriented by $\omega$. Together with the ambient orientation of $W$, this coorientation induces a natural orientation of the characteristic foliation.
    
The $2$-form $\omega$ also induces a transverse measure on the characteristic foliation. Locally on every smooth transversal, the induced measure is diffeomorphic to the standard $2$-dimensional Lebesgue measure. The (cooriented and measured) characteristic foliation uniquely determines the $2$-form $\omega$. We can therefore equivalently think of a Hamiltonian structure as a $1$-dimensional cooriented and measured smooth foliation with the property that the induced measure on transversals is locally diffeomorphic to the $2$-dimensional Lebesgue measure.

A diffeomorphism of Hamiltonian structures $(W_0, \omega_0)$ and $(W_1, \omega_1)$ is a diffeomorphism $f:W_0 \rightarrow W_1$ which satisfies $f^* \omega_1 = \omega_0$.

\begin{example}
\label{ex:standard-structure}
    Consider $\R^3 = \R\times \R^2$ equipped with coordinates $(t,x,y)$ and oriented with respect to these coordinates.
     
    We will refer to the closed, maximally nondegenerate $2$-form $\omega_{\operatorname{std}} = dx\wedge dy$ as the {\it standard Hamiltonian structure} on $\R^3$.

    For $a>0$, let $B(a)\subset \R^2$ denote the ball of area $a$ and consider the product $(-1,1)\times B(a) \subset \R \times \R^2$.  The restriction of the standard Hamiltonian structure on $\R^3$ induces a Hamiltonian structure on $(-1,1)\times B(a) \subset \R \times \R^2$.  We will abbreviate it by the same symbol $\omega_{\operatorname{std}}$ and refer to it as the standard Hamiltonian structure on $(-1,1)\times B(a)$. 
\end{example}

\begin{example}
\label{ex:surafcexintervarl}
    Consider a surface  $\Sigma$, without boundary, equipped with an area form $\omega_\Sigma$.  Let $\operatorname{pr}_2: (0,1) \times \Sigma \rightarrow \Sigma$ be the projection onto the second factor.  We equip $(0,1) \times \Sigma$ with the orientation induced by the volume form $dt \wedge \operatorname{pr}_2^* \omega_\Sigma$, where $t$ denotes the coordinate on $(0,1)$. Then, $\operatorname{pr}_2^* \omega_\Sigma$ is a Hamiltonian structure on $(0,1)\times \Sigma$ for which the oriented leaves of the characteristic foliation are given by the flow lines of the vector field $\partial_t$. This Hamiltonian structure will be denoted by $( (0,1) \times \Sigma, \omega_\Sigma )$.

    Now, let $(\varphi^t)_{t\in [0,1]}$ be a smooth isotopy in $\Ham(\Sigma, \omega_\Sigma)$ such that $\varphi^t = \operatorname{id}_\Sigma$ for $t$ close to $0$ and $\varphi^t = \varphi^1$ for $t$ close to $1$. Let $H:[0,1]\times \Sigma\rightarrow \R$ be the unique compactly supported Hamiltonian generating $\varphi^t$. Then, the $2$-form
    \begin{equation*}
        dH \wedge dt + \omega_\Sigma
    \end{equation*}
    is another Hamiltonian structure on $(0,1) \times \Sigma$. It agrees with $\omega_\Sigma$ outside a compact set and the leaves of its characteristic foliation are of the form $\{(t,\varphi^t(p)) \mid t\in (0,1)\}$ for $p \in \Sigma$.
\end{example}

Hamiltonian structures satisfy a version of the Darboux neighborhood theorem which we state here as a lemma.

\begin{lem}
\label{lem:Draboux}
    Let $\omega$ be a Hamiltonian structure on $W$. For every point $p$ in $(W,\omega)$, there exists a neighborhood $U$ such that $(U, \omega)$ is diffeomorphic to $((-1,1)\times B(a),\omega_{\operatorname{std}})$, for some $a>0$.
\end{lem}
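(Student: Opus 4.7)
The plan is to construct the Darboux chart by combining two classical ingredients: invariance of $\omega$ along its characteristic line field, and the surface Darboux theorem for area forms on the disc.

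First, I would choose a small smooth $2$-disc $D \subset W$ through $p$ which is transverse to $\ker \omega$. Since $\omega$ is maximally nondegenerate, $\ker \omega$ is a $1$-dimensional subbundle of $TW$, so such a transversal exists by picking any hyperplane in $T_p W$ complementary to $\ker \omega_p$ and exponentiating a small disc in it. By continuity, after shrinking, $\omega|_D$ is a nowhere-vanishing $2$-form on $D$, i.e.\ an area form on the disc.

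Next, I would pick a smooth nowhere-vanishing local section $X$ of $\ker \omega$ defined on a neighborhood of $p$, and use its flow $\varphi^t_X$ to define a map
\begin{equation*}
\Phi : (-\varepsilon,\varepsilon) \times D \longrightarrow W, \qquad \Phi(t,q) \coloneqq \varphi_X^t(q).
\end{equation*}
For $\varepsilon$ small and $D$ small, $\Phi$ is a diffeomorphism onto an open neighborhood $U$ of $p$. The key computation is that $\iota_X \omega = 0$ (since $X \in \ker \omega$) and $d\omega = 0$, so Cartan's formula gives $\mathcal{L}_X \omega = \iota_X d\omega + d\iota_X \omega = 0$. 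Hence the flow of $X$ preserves $\omega$, and moreover $\iota_{\partial_t}\Phi^*\omega = \Phi^*\iota_X \omega = 0$. Combining these two facts, $\Phi^*\omega$ is a closed $2$-form on $(-\varepsilon,\varepsilon)\times D$ which is annihilated by $\partial_t$ and invariant under translation in $t$. Therefore $\Phi^*\omega = \pi^*\omega_D$, where $\pi : (-\varepsilon,\varepsilon)\times D \to D$ is the projection and $\omega_D \coloneqq \omega|_D$ is an area form on the disc $D$.

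Finally, I would invoke the classical Darboux theorem for surfaces (equivalently, Moser's theorem for area forms on a disc): after shrinking $D$ around $p$, there is a diffeomorphism $\psi : D \to B(a) \subset \R^2$ with $\psi^*(dx \wedge dy) = \omega_D$, where $a$ is the $\omega_D$-area of $D$. Taking the product with the obvious linear reparametrization $(-\varepsilon,\varepsilon)\cong (-1,1)$, we get a diffeomorphism
\begin{equation*}
(\mathrm{id}\times \psi) \circ \Phi^{-1} : (U,\omega) \longrightarrow ((-1,1)\times B(a), \omega_{\operatorname{std}}),
\end{equation*}
as required. There is no real obstacle here; the only subtle step is the verification that $\Phi^*\omega$ is pulled back from the slice, which is exactly where the closedness of $\omega$ (through $\mathcal{L}_X \omega = 0$) is used. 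Orientation conventions should be checked and, if necessary, one composes with a reflection to ensure the diffeomorphism is orientation preserving.
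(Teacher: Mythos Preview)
Your proof is correct and is the standard argument. The paper does not actually supply a proof of this lemma---it is stated as a known Darboux-type fact---but the same construction (choose a transversal disc on which $\omega$ restricts to an area form in standard coordinates, then extend via the flow of a section of $\ker\omega$) appears implicitly later in the paper, in the proof of Lemma~\ref{lem:compatible_triple_equivalent_conditions_smooth}.
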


\begin{rem}
    For every Hamiltonian structure $\omega$ on $W$, there exist a symplectic $4$-manifold $(M, \omega')$ and an embedding $\iota: (W, \omega) \hookrightarrow (M, \omega')$ such that $\iota^* \omega' = \omega$.  Moreover, by  Gotay's theorem \cite{Gotay},  any two such embeddings $\iota_i : (W, \omega) \rightarrow (M_i, \omega_i), \; i\in\{1,2\}$, are neighborhood equivalent in the following sense: There exist open neighborhoods $\iota_i(W_i) \subset U_i \subset  M_i$ and a symplectomorphism $\phi : (U_1, \omega_1) \rightarrow (U_2, \omega_2)$ such that $\phi \circ \iota_1 = \iota_2$. 
\end{rem}

\subsection{Flux and Helicity}

Let $(Y, \omega)$ be a closed oriented $3$-manifold $Y$ equipped with a Hamiltonian structure $\omega$.

\begin{definition}
\label{def:smooth_flux}
    We define the \textit{flux} of the Hamiltonian structure $\omega$ to be the cohomology class
    \begin{equation*}
        \operatorname{Flux}(\omega) \coloneqq [\omega] \in H^2(Y;\R).
    \end{equation*}
    We say that a Hamiltonian structure $\omega$ is \textit{exact} if $\operatorname{Flux}(\omega) = 0$.
\end{definition}

Recall that the helicity of an exact Hamiltonian structure $\omega$ is defined to be the quantity
\begin{equation*}
    \mathcal{H}(\omega) \coloneqq \int_Y \alpha \wedge \omega,
\end{equation*}
where $\alpha$ is any primitive of $\omega$. We check in the lemma below that helicity is well-defined.  

\begin{lem}
    Let $\omega$ be an exact Hamiltonian structure on $Y$ and let $\alpha$ and $\beta$ be two $1$-forms such that $d\alpha = d\beta = \omega$. Then,
    \begin{equation*}
        \int_Y \alpha \wedge \omega = \int_Y \beta \wedge \omega.
    \end{equation*}
\end{lem}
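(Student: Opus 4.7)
The plan is to reduce the statement to a single application of Stokes' theorem on the closed manifold $Y$. First I would set $\gamma \coloneqq \alpha - \beta$. By hypothesis $d\gamma = d\alpha - d\beta = \omega - \omega = 0$, so $\gamma$ is closed. The claim that $\int_Y \alpha \wedge \omega = \int_Y \beta \wedge \omega$ is then equivalent to showing that
\begin{equation*}
    \int_Y \gamma \wedge \omega = 0.
\end{equation*}

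Next I would exhibit $\gamma \wedge \omega$ as an exact $3$-form. Since $\omega = d\alpha$ and $d\gamma = 0$, the Leibniz rule gives
\begin{equation*}
    d(\gamma \wedge \alpha) = d\gamma \wedge \alpha - \gamma \wedge d\alpha = -\gamma \wedge \omega.
\end{equation*}
Thus $\gamma \wedge \omega = -d(\gamma \wedge \alpha)$. Since $Y$ is a closed oriented $3$-manifold (in particular without boundary) and $\alpha, \gamma$ are smooth, Stokes' theorem yields
\begin{equation*}
    \int_Y \gamma \wedge \omega = -\int_Y d(\gamma \wedge \alpha) = 0,
\end{equation*}
which finishes the argument.

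There is no real obstacle here: the proof is entirely formal and consists of a single Leibniz computation followed by Stokes' theorem. The only thing worth being slightly careful about is the sign convention in the Leibniz rule for wedge products of forms of odd degree, and the fact that the argument uses closedness of $Y$ in an essential way, so no boundary terms appear.
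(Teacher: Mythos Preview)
Your proof is correct and follows essentially the same approach as the paper: both observe that $\alpha - \beta$ is closed and $\omega$ is exact, so $(\alpha - \beta)\wedge\omega$ is an exact $3$-form whose integral over the closed manifold $Y$ vanishes by Stokes' theorem. You simply make the primitive $\gamma \wedge \alpha$ explicit, whereas the paper just asserts exactness.
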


\begin{proof}
    The $1$-form $\alpha - \beta$ is closed and the $2$-form $\omega$ is exact. Hence, the $3$-form $(\alpha - \beta)\wedge \omega$ is exact and consequently, by Stokes' theorem, we have
    \begin{equation*}
        \int_Y (\alpha -\beta) \wedge \omega = 0.
    \end{equation*}
\end{proof}

\subsection{Plugs}
\label{sec:smooth_plugs}

We introduce the concept of plugs, which can be \emph{inserted} into a given Hamiltonian structure to generate a new one.

\begin{definition}
\label{def:plug}
    Let $\omega$ be a Hamiltonian structure on $W$. A \textit{plug} is a tuple $\mathcal{P} = (\Sigma, \omega_\Sigma, \alpha, (\varphi^t)_{t\in [0,1]})$ consisting of
    \begin{enumerate}
        \item an open surface  $\Sigma$, not necessarily connected; 
        \item an area form $\omega_\Sigma$ on $\Sigma$;
        \item a smooth embedding of Hamiltonian structures $\alpha : ((0,1) \times \Sigma,\omega_\Sigma)\hookrightarrow (W,\omega)$;
        \item a smooth isotopy $(\varphi^t)_{t\in [0,1]}$ in $\Ham(\Sigma)$ such that $\varphi^t = \operatorname{id}_\Sigma$ for $t$ close to $0$ and $\varphi^t = \varphi^1$ for $t$ close to $1$.
    \end{enumerate}
    We define the \textit{Calabi invariant} of the plug $\mathcal{P} = (\Sigma, \omega_\Sigma, \alpha, (\varphi^t)_{t\in [0,1]})$ to be 
    \begin{equation}
    \label{eqn:plug-Calabi-defn}
        \mathrm{Cal}(\mathcal{P}):= \mathrm{Cal}_\Sigma(\varphi^1).
    \end{equation}
\end{definition}

Given a plug $\mathcal{P}$, we define a new  Hamiltonian structure $\omega\# \mathcal{P}$ which coincides with $\omega$ outside of $\operatorname{im}(\alpha)$ and with $\alpha_* (dH \wedge dt + \omega_\Sigma)$ inside $\operatorname{im}(\alpha)$, where $H$ is the unique compactly supported Hamiltonian generating $\varphi^t$. Note that $\omega\# \mathcal{P}$ is well-defined because $dH \wedge dt$ vanishes near the boundary of  $\operatorname{im}(\alpha)$. 

The Hamiltonian structure $\omega\# \mathcal{P}$ has the following equivalent definition which is better suited for the $C^0$ setting which we consider below. Define $\Phi : (0,1)\times \Sigma \rightarrow (0,1)\times \Sigma$ by $\Phi(t,p)\coloneqq (t,\varphi^t(p))$. Then $\omega\#{\mathcal{{P}}}$ is defined to agree with $\omega$ outside of $\operatorname{im}(\alpha)$ and with $\alpha_*\Phi_*\omega_\Sigma$ inside $\operatorname{im}(\alpha)$.  Note that before plug insertion, the characteristic leaves of $\omega$ inside $\operatorname{im}(\alpha)$ are of the form $\alpha((0,1)\times \{p\})$ for $p\in \Sigma$. After plug insertion, the characteristic leaves of $\omega\# \mathcal{P}$ are of the form
\begin{equation*}
    \{\alpha(t,\varphi^t(p))\mid t\in (0,1)\}
\end{equation*}
for $p \in \Sigma$.

Although the Hamiltonian structure $\omega \# \mathcal{P}$ depends on the entire isotopy $\varphi^t$, its diffeomorphism type depends only on the time-1 map $\varphi^1$.

\begin{lem}
\label{lem:plug_independence_of_hamiltonian_isotopy}
    Consider two plugs $ \mathcal{P}_1 = (\Sigma, \omega_\Sigma, \alpha, (\varphi_1^t)_{t\in [0,1]})$ and  $ \mathcal{P}_2 = (\Sigma, \omega_\Sigma, \alpha, (\varphi_2^t)_{t\in [0,1]})$.  
    If $\varphi_1^1 = \varphi_2^1$, then there exists a diffeomorphism of Hamiltonian structures $\psi: (Y,\omega\#\mathcal{P}_1) \rightarrow (Y,\omega\#\mathcal{P}_2)$ which is supported inside $\mathrm{im}(\alpha)$ and is isotopic to the identity through diffeomorphisms supported inside $\mathrm{im}(\alpha)$.
\end{lem}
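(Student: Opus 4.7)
The plan is to construct $\psi$ by pushing forward, via $\alpha$, the fibrewise diffeomorphism
\begin{equation*}
    \Psi \coloneqq \Phi_2 \circ \Phi_1^{-1} : (0,1) \times \Sigma \to (0,1) \times \Sigma,
\end{equation*}
where $\Phi_i(t,p) = (t, \varphi_i^t(p))$. Explicitly, $\Psi(t, q) = (t, \chi^t(q))$ with $\chi^t \coloneqq \varphi_2^t \circ (\varphi_1^t)^{-1}$. Since each $\varphi_i^t$ is compactly supported in $\Sigma$, so is $\chi^t$. Moreover, the normalization conditions $\varphi_i^t = \operatorname{id}_\Sigma$ for $t$ close to $0$ and $\varphi_i^t = \varphi_i^1$ for $t$ close to $1$, together with the hypothesis $\varphi_1^1 = \varphi_2^1$, yield $\chi^t = \operatorname{id}_\Sigma$ in neighborhoods of both $t = 0$ and $t = 1$. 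Hence $\Psi$ is the identity outside a compact subset of $(0,1) \times \Sigma$, and extending $\alpha \circ \Psi \circ \alpha^{-1}$ by the identity outside $\mathrm{im}(\alpha)$ gives a diffeomorphism $\psi$ of $Y$ supported in $\mathrm{im}(\alpha)$.

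To verify that $\psi^*(\omega \# \mathcal{P}_2) = \omega \# \mathcal{P}_1$, it suffices to check the equality inside $\mathrm{im}(\alpha)$, since outside both structures agree with $\omega$ and $\psi$ is the identity there. Pulling back by $\alpha$ reduces the claim to the identity
\begin{equation*}
    \Psi^* (\Phi_2)_* \omega_\Sigma = (\Phi_1)_* \omega_\Sigma
\end{equation*}
of $2$-forms on $(0,1) \times \Sigma$, which is a direct manipulation:
\begin{equation*}
    \Psi^* (\Phi_2)_* \omega_\Sigma \;=\; (\Phi_1^{-1})^* \, \Phi_2^* (\Phi_2)_* \omega_\Sigma \;=\; (\Phi_1^{-1})^* \omega_\Sigma \;=\; (\Phi_1)_* \omega_\Sigma.
\end{equation*}

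For the isotopy, let $K_t : \Sigma \to \R$ be a compactly supported time-dependent Hamiltonian generating the Hamiltonian loop $\chi^t$, arranged so that $K_t \equiv 0$ for $t$ in neighborhoods of $0$ and $1$. For $s \in [0,1]$, let $\chi_s^t$ denote the Hamiltonian isotopy generated by $(1-s)K_t$, and set $\Psi_s(t,q) \coloneqq (t, \chi_s^t(q))$. Then $\Psi_0 = \Psi$, $\Psi_1 = \operatorname{id}$, and each $\Psi_s$ is compactly supported in $(0,1) \times \Sigma$; pushing forward via $\alpha$ and extending by the identity yields the desired isotopy of $\psi$ to $\operatorname{id}_Y$ through diffeomorphisms supported in $\mathrm{im}(\alpha)$. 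There is no serious obstacle here; the key structural input is simply that the hypothesis $\varphi_1^1 = \varphi_2^1$ closes $\chi^t$ into a Hamiltonian loop based at the identity, which is precisely what allows $\Psi$ to be supported in the interior of $(0,1) \times \Sigma$.
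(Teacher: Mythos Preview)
Your construction of $\psi$ via $\Psi = \Phi_2 \circ \Phi_1^{-1}$ is exactly the paper's argument (given for the $C^0$ generalization, Lemma~\ref{lem:C0_plugs_with_same_time_1_maps}), and your verification that $\psi^*(\omega\#\mathcal{P}_2)=\omega\#\mathcal{P}_1$ is correct.

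The gap is in the isotopy. Your family $\chi_s^t$, defined as the flow of $(1-s)K_t$, does \emph{not} stay a loop for $s\in(0,1)$: scaling a time-dependent Hamiltonian generating a loop does not in general produce another loop. Concretely, take $\varphi_1^t\equiv\operatorname{id}$ and $\varphi_2^t=\varphi_H^{\beta(t)}$ for an autonomous $H$ whose time-$1$ map is a full rotation on an annulus, with $\beta$ increasing from $0$ to $1$ and constant near the endpoints. Then $K_t=\beta'(t)H$, and the flow of $(1-s)K_t$ is $\chi_s^t=\varphi_H^{(1-s)\beta(t)}$, so $\chi_s^1=\varphi_H^{1-s}$ is a nontrivial rotation for $0<s<1$. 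Hence $\Psi_s$ is \emph{not} compactly supported in $(0,1)\times\Sigma$ near $t=1$, and $\alpha\circ\Psi_s\circ\alpha^{-1}$ cannot be extended by the identity to a diffeomorphism of $Y$.

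What is actually needed is a contraction of the loop $(\chi^t)_{t\in[0,1]}$ in $\operatorname{Ham}(\Sigma)$ to the constant loop, i.e.\ a two-parameter family $\chi_s^t$ with $\chi_s^0=\chi_s^1=\operatorname{id}$ for all $s$. This is where the paper invokes the fact (recalled in Section~\ref{sec:prelims-surfaces}) that $\operatorname{Ham}(\Sigma)$ is simply connected because $\Sigma$ is open and hence not $S^2$. That topological input is the missing ingredient; once you have such a contraction, your $\Psi_s(t,q)=(t,\chi_s^t(q))$ is compactly supported for every $s$ and the rest of your argument goes through.
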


We omit the proof of the above lemma, as its generalization in the $C^0$ setting is proven below in Lemma \ref{lem:C0_plugs_with_same_time_1_maps}.

The next two lemmas describe the effect of plugs on flux and helicity. Lemma \ref{lem:helicity_after_plug_insertion} may be viewed as a special case of \cite[Théorème 3.1]{gg97}.

\begin{lem}
\label{lem:flux_invariant_under_plug_insertion_smooth_case}
    Let $\omega$ be a Hamiltonian structure on a closed $3$-manifold $Y$ and let $\mathcal{P}$ be an $\omega$-plug. Then,
    \begin{equation*}
        \operatorname{Flux}(\omega) = \operatorname{Flux}(\omega\# \mathcal{P}).
    \end{equation*}
\end{lem}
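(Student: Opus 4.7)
The plan is to show that $\omega\#\mathcal{P} - \omega$ is globally exact on $Y$, which immediately gives equality of cohomology classes in $H^2(Y;\R)$.

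First I would unpack the definition. By construction, $\omega\#\mathcal{P}$ agrees with $\omega$ outside $\operatorname{im}(\alpha)$, while on $\operatorname{im}(\alpha)$ we have
\begin{equation*}
    \omega\#\mathcal{P} = \alpha_*(dH \wedge dt + \omega_\Sigma),
\end{equation*}
where $H : [0,1]\times \Sigma \rightarrow \R$ is the unique compactly supported Hamiltonian generating $(\varphi^t)$. Since $\alpha$ is an embedding of Hamiltonian structures, $\alpha_*\omega_\Sigma$ agrees with the original $\omega$ on $\operatorname{im}(\alpha)$. Hence on $\operatorname{im}(\alpha)$,
\begin{equation*}
    \omega\#\mathcal{P} - \omega = \alpha_*(dH \wedge dt) = \alpha_* d(H\, dt) = d\bigl(\alpha_*(H\,dt)\bigr),
\end{equation*}
and the left-hand side vanishes identically outside $\operatorname{im}(\alpha)$.

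Next I would verify that the primitive $\alpha_*(H\,dt)$ extends smoothly by zero to all of $Y$. The assumption $\varphi^t = \operatorname{id}_\Sigma$ for $t$ near $0$ and $\varphi^t = \varphi^1$ for $t$ near $1$ forces $H$ to vanish in a neighborhood of $\{0,1\}\times \Sigma$; combined with the compact support of $H$ in the $\Sigma$-direction, this means $H\,dt$ is a compactly supported $1$-form on $(0,1)\times \Sigma$. Pushing forward under $\alpha$ and extending by zero yields a global smooth $1$-form $\beta \in \Omega^1(Y)$ with $d\beta = \omega\#\mathcal{P} - \omega$.

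Thus $[\omega\#\mathcal{P}] = [\omega]$ in $H^2(Y;\R)$, i.e.\ $\operatorname{Flux}(\omega\#\mathcal{P}) = \operatorname{Flux}(\omega)$. There is no real obstacle here; the only subtle point is checking the boundary behavior of $H$ to ensure $\beta$ extends globally, which is exactly why the plug definition imposes $\varphi^t$ to be locally constant near $t = 0, 1$. This argument will also serve as a model for the $C^0$ version later in the paper, where one replaces de Rham cohomology with the notion of flux for $C^0$ Hamiltonian structures from Section~\ref{sec:flux_C0_Ham_structures}.
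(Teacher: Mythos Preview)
Your proof is correct and follows essentially the same approach as the paper: both show that $\omega\#\mathcal{P} - \omega = d\bigl(\alpha_*(H\,dt)\bigr)$, where $\alpha_*(H\,dt)$ is compactly supported in $\operatorname{im}(\alpha)$ and hence extends by zero to a global $1$-form on $Y$. Your version is slightly more explicit about why the boundary conditions on $\varphi^t$ guarantee the compact support of $H\,dt$, but the argument is the same.
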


\begin{proof}
    Write $\mathcal{P} = (\Sigma,\omega_\Sigma,\alpha,(\varphi^t)_{t\in [0,1]})$. Let $H$ be the unique compactly supported Hamiltonian generating $\varphi^t$. Then the $1$-form $\alpha_*(Hdt)$ is compactly supported in $\operatorname{im}(\alpha)$ and can hence be extended to a $1$-form on $Y$ by setting it equal to zero outside of $\operatorname{im}(\alpha)$. By definition, the Hamiltonian structures $\omega$ and $\omega \#\mathcal{P}$ differ by the exact $2$-form $d\alpha_*(Hdt)$. This clearly implies that $\operatorname{Flux}(\omega) = \operatorname{Flux}(\omega \# \mathcal{P})$.
\end{proof}

\begin{lem}
\label{lem:Helicity-Plug}
\label{lem:helicity_after_plug_insertion}
    Let $Y$ be a closed $3$-manifold and let $\omega$ be an exact Hamiltonian structure on $Y$. Let $\mathcal{P}$ be an $\omega$-plug. Then
    \begin{equation*}
        \mathcal{H}(\omega\#\mathcal{P}) = \mathcal{H}(\omega) + \mathrm{Cal}(\mathcal{P}).
    \end{equation*}
\end{lem}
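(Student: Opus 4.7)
The plan is to compute $\mathcal{H}(\omega\#\mathcal{P})$ using an explicit primitive, building directly on the proof of Lemma \ref{lem:flux_invariant_under_plug_insertion_smooth_case}. Write $\mathcal{P}=(\Sigma,\omega_\Sigma,\alpha,(\varphi^t))$ and let $H$ be the unique compactly supported Hamiltonian generating $\varphi^t$. Set $\beta \coloneqq \alpha_*(H\,dt)$, extended by zero to all of $Y$; this is a smooth, compactly supported $1$-form satisfying $d\beta = \omega\#\mathcal{P}-\omega$. Hence, for any primitive $\alpha_0$ of $\omega$, the $1$-form $\alpha_0+\beta$ is a primitive of $\omega\#\mathcal{P}$. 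The plan is then to expand
\[
\mathcal{H}(\omega\#\mathcal{P})=\int_Y(\alpha_0+\beta)\wedge(\omega+d\beta)
=\mathcal{H}(\omega)+\int_Y\alpha_0\wedge d\beta+\int_Y\beta\wedge\omega+\int_Y\beta\wedge d\beta
\]
and to identify the three non-trivial pieces.

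The first cross term will be handled by integration by parts. Since $d(\alpha_0\wedge\beta)=\omega\wedge\beta-\alpha_0\wedge d\beta$ and $Y$ is closed, Stokes' theorem yields
\[
\int_Y\alpha_0\wedge d\beta=\int_Y\omega\wedge\beta=\int_Y\beta\wedge\omega,
\]
where the last equality is the commutation of a $1$-form past a $2$-form. Combining the two cross terms therefore produces exactly $2\int_Y\beta\wedge\omega$. For the fourth term, I would note that $d\beta=\alpha_*(dH\wedge dt)$, so locally on $\operatorname{im}(\alpha)\cong(0,1)\times\Sigma$ the integrand is $H\,dt\wedge dH\wedge dt$, which vanishes because $dt\wedge dt=0$. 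Thus this term contributes nothing.

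It remains to identify $2\int_Y\beta\wedge\omega$ with $\mathrm{Cal}(\mathcal{P})$. The plan is to pull back via $\alpha$: since $\beta$ is supported in $\operatorname{im}(\alpha)$ and, by the definition of an embedding of Hamiltonian structures (Example \ref{ex:surafcexintervarl}), $\alpha^*\omega=\operatorname{pr}_2^*\omega_\Sigma$, we obtain
\[
\int_Y\beta\wedge\omega=\int_{(0,1)\times\Sigma}H\,dt\wedge\omega_\Sigma.
\]
Doubling yields $2\int_{[0,1]\times\Sigma}H\,dt\wedge\omega_\Sigma=\mathrm{Cal}_\Sigma(\varphi^1)=\mathrm{Cal}(\mathcal{P})$ by the definition of the Calabi homomorphism, completing the proof.

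The computation is essentially a routine application of Stokes' theorem, and I do not expect genuine obstacles. The only point that requires care is bookkeeping of signs and conventions: the orientation on $(0,1)\times\Sigma$ used in Example \ref{ex:surafcexintervarl}, the sign convention $\iota_{X_H}\omega=dH$ that determines $H$ from $\varphi^t$, and the factor of $2$ in the definition of $\mathrm{Cal}_\Sigma$ must all align so that the factor of $2$ produced by the integration by parts exactly reproduces the factor in the Calabi invariant.
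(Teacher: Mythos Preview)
Your proposal is correct and follows essentially the same argument as the paper: both take the primitive $\alpha_0+\alpha_*(H\,dt)$ of $\omega\#\mathcal{P}$, expand the helicity integral, kill the $\beta\wedge d\beta$ term since $dt\wedge dt=0$, and use Stokes' theorem to combine the two cross terms into $2\int H\,dt\wedge\omega_\Sigma=\mathrm{Cal}(\mathcal{P})$. The only cosmetic difference is that you apply Stokes on $Y$ before pulling back via $\alpha$, whereas the paper pulls back to $(0,1)\times\Sigma$ first and applies Stokes there.
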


\begin{proof}
    As in the proof of Lemma \ref{lem:flux_invariant_under_plug_insertion_smooth_case}, note that $\omega$ and $\omega\#\mathcal{P}$ differ by the exact $2$-form $d\alpha_*(Hdt)$. Let $\lambda$ be an arbitrary primitive $1$-form of $\omega$. We compute
    \begin{align}
        \mathcal{H}(\omega \# \mathcal{P}) &= \int_Y (\lambda + \alpha_*(Hdt)) \wedge (\omega + d\alpha_*(Hdt)) \nonumber \\
        &= \mathcal{H}(\omega) + \int_{(0,1)\times \Sigma} H dt \wedge \omega_\Sigma + \alpha^*\lambda \wedge d(Hdt) + Hdt \wedge d(Hdt). \label{eq:helicity_after_plug_insertion_proof}
    \end{align}
    Note that $Hdt \wedge d(Hdt) = H dt\wedge dH \wedge dt$ vanishes identically. Moreover,
    \begin{equation*}
        d (\alpha^*\lambda \wedge Hdt) = \omega_\Sigma \wedge Hdt - \alpha^*\lambda \wedge d(Hdt). 
    \end{equation*}
    By Stokes' theorem, the integral $\int_{(0,1)\times \Sigma} d(\alpha^*\lambda \wedge Hdt)$ vanishes, which implies that
    \begin{equation*}
        \int_{(0,1)\times \Sigma} \alpha^*\lambda \wedge d(Hdt) = \int_{(0,1)\times \Sigma} \omega_\Sigma \wedge Hdt.
    \end{equation*}
    Combining this with \eqref{eq:helicity_after_plug_insertion_proof}, we see that
    \begin{equation*}
        \mathcal{H}(\omega\#\mathcal{P}) = \mathcal{H}(\omega) + 2 \int_{(0,1)\times \Sigma} Hdt \wedge \omega_\Sigma = \mathcal{H}(\omega) + \operatorname{Cal}(\varphi_H^1) = \mathcal{H}(\omega) + \operatorname{Cal}(\mathcal{P}).
    \end{equation*}
\end{proof}

\section{\texorpdfstring{$C^0$}{C0} Hamiltonian structures}
\label{sec:C0_hamiltonian_structures}

In this section, we introduce $C^0$ Hamiltonian structures and $C^0$ plugs.

\subsection{Basic definitions}

Recall the definition of the standard Hamiltonian structure on $\R^3$ from Example \ref{ex:standard-structure}. A $C^0$ Hamiltonian structure on a topological $3$-manifold $W$ is a $1$-dimensional cooriented $C^0$ foliation, equipped with a transverse measure, which is locally modeled on $\R^3$ with the characteristic foliation induced by $\omega_{\operatorname{std}}$. Explicitly, this notion can be formalized in terms of $C^0$ Hamiltonian atlases.

\begin{definition}
\label{def:C0_Hamiltonian_atlas}
    Consider $\R^3 = \R\times \R^2$ equipped with the smooth Hamiltonian structure $\omega_{\operatorname{std}}$. Let $\psi: U \rightarrow V$ be a homeomorphism between two open subsets of $\R^3$. We say that \textit{$\psi$ preserves the standard Hamiltonian structure $\omega_{\operatorname{std}}$ on $\R^3$} if, for every point $p\in U$, there exist an open neighborhood of $p$ of the form $I\times B$, where $I\subset \R$ is an open interval and $B\subset \R^2$ is a ball, and a continuous area- and orientation-preserving embedding $a:B\hookrightarrow \R^2$ such that
    \begin{equation*}
        \psi(t,z) = (*, a(z)) \qquad \text{for all $(t,z)\in I\times B$.}
    \end{equation*}
    A \textit{$C^0$ Hamiltonian atlas} on a topological $3$-manifold is an atlas whose transition maps are homeomorphisms preserving the standard Hamiltonian structure on $\R^3$. Two such atlases are considered equivalent if the transition maps between the two atlases preserve the standard Hamiltonian structure on $\R^3$. A \textit{$C^0$ Hamiltonian structure $\Omega$} on $W$ is an equivalence class of $C^0$ Hamiltonian atlases.
\end{definition}

\begin{definition}
\label{def:homeo_C0_Ham_Structure}
    A \textit{homeomorphism of $C^0$ Hamiltonian structures} $\psi : (W_0,\Omega_0) \rightarrow (W_1,\Omega_1)$ is a homeomorphism $\psi: W_0  \rightarrow W_1$ which, in local charts, preserves the standard Hamiltonian structure on $\R^3$.
    A \textit{topological embedding of $C^0$ Hamiltonian structures} $\alpha:(W_0,\Omega_0)\hookrightarrow (W_1,\Omega_1)$ is a topological embedding $\alpha:W_0\hookrightarrow W_1$ which is a homeomorphism of the $C^0$ Hamiltonian structures  $(W_0,\Omega_0)$  and $(\alpha(W_0), \Omega_1)$.
\end{definition}

Let $\psi: W_0 \rightarrow W_1$ be a homeomorphism. Given a $C^0$ Hamiltonian structure $\Omega_1$ on $W_1$, we can pull back its defining atlas, via $\psi$, to obtain a $C^0$ Hamiltonian structure on $W_0$ which we abbreviate by $\psi^* \Omega_1$ and refer to as  the \emph{pullback} of $\Omega_1$ via $\psi$.  We define the \emph{pushforward} of a $C^0$ Hamiltonian structure $\Omega_0$ on $W_0$ to be the pullback of $\Omega_0$ via $\psi^{-1}$; we abbreviate this by $\psi_* \Omega_0$.  

\begin{rem}
    A smooth Hamiltonian structure $\omega$ on a smooth $3$-manifold $W$ gives rise to a $C^0$ Hamiltonian structure. Indeed, consider the atlas of $W$ consisting of all Darboux charts. The transition maps of this atlas are diffeomorphisms $\psi:U\rightarrow V$ between open subsets of $\R^3$ which satisfy $\psi^*\omega_{\operatorname{std}} = \omega_{\operatorname{std}}$ as smooth $2$-forms. Note that for a diffeomorphism $\psi$, this condition is equivalent to preserving the standard Hamiltonian structure $\omega_{\operatorname{std}}$ on $\R^3$ in the sense of Definition \ref{def:C0_Hamiltonian_atlas}. Hence our atlas consisting of smooth Darboux charts is a $C^0$ Hamiltonian atlas.
\end{rem}

\begin{rem}
\label{rem:smooth-structure-on-topological-mfld}
    Let $W$ be a topological $3$-manifold. In what follows, by a smooth Hamiltonian structure $\omega$ on $W$ we mean a choice of smooth structure on $W$ and a maximally nondegenerate closed $2$-form. Alternatively, a smooth Hamiltonian structure on a topological $3$-manifold can be specified by an atlas on $W$ whose transition maps are diffeomorphisms preserving the standard Hamiltonian structure on $\R^3$.
\end{rem}

\subsection{\texorpdfstring{$C^0$}{C0} plugs}
\label{subsec:C0_plugs}

Analogous to the smooth plugs from Section \ref{sec:smooth_plugs}, we introduce the concept of  $C^0$  plugs, which can be \emph{inserted} into  $C^0$  Hamiltonian structures to generate new $C^0$  Hamiltonian structures.  We also define the \emph{Calabi invariant} of $C^0$ plugs.

\medskip

Throughout the rest of this section, let $W$ denote a topological $3$-manifold without boundary and possibly open.

\begin{definition}
\label{def:C0_plugs}
    Let $\Omega$ be a $C^0$ Hamiltonian structure on $W$. An \textit{$\Omega$-plug} (or simply a \textit{plug}, if $\Omega$ is clear from the context) is a tuple $\mathcal{P} = (\Sigma, \omega_\Sigma, \alpha, (\varphi^t)_{t\in [0,1]})$ consisting of
    \begin{enumerate}
        \item a smooth, open surface $\Sigma$, not necessarily connected;
        \item a smooth area form $\omega_\Sigma$ on $\Sigma$;
        \item a topological embedding of $C^0$ Hamiltonian structures $\alpha : ((0,1) \times \Sigma,\omega_\Sigma)\hookrightarrow (W,\Omega)$;
        \item a continuous isotopy $(\varphi^t)_{t\in [0,1]}$ in $\overline{\operatorname{Ham}}(\Sigma, \omega_\Sigma)$ such that $\varphi^t = \operatorname{id}_\Sigma$ for $t$ close to $0$ and $\varphi^t = \varphi^1$ for $t$ close to $1$.
    \end{enumerate}

    We define the \textit{Calabi invariant} of $\mathcal{P}$ to be
    \begin{equation*}
        \overline{\operatorname{Cal}}(\mathcal{P}) \coloneqq \overline{\operatorname{Cal}}_{\Sigma}(\varphi^1) \in \mathcal{R}.
    \end{equation*}
    Here, we recall that $\overline{\operatorname{Cal}}_\Sigma : \overline{\operatorname{Ham}}(\Sigma) \rightarrow \mathcal{R}$ denotes the universal $\mathcal{R}$-valued extension of the Calabi homomorphism; Section \ref{sec:hamiltonian_homeos_and_calabi}.
\end{definition}

We now describe how an $\Omega$-plug $\mathcal{P}$ can be \emph{inserted} into $\Omega$ yielding a new $C^0$ Hamiltonian structure $\Omega\#{\mathcal{P}}$.  Define $\Phi : (0,1)\times \Sigma \rightarrow (0,1)\times \Sigma$ by $$\Phi(t,p)\coloneqq (t,\varphi^t(p)).$$ Inside $\operatorname{im}(\alpha)$, we define $\Omega\#\mathcal{{P}}$ as the pushforward $(\alpha\circ\Phi)_*\omega_\Sigma$, where $\omega_\Sigma$ is regarded as a $C^0$ Hamiltonian structure on $(0,1)\times \Sigma$. Note that $(\alpha\circ\Phi)_*\omega_\Sigma$ agrees with $\Omega$ near the boundary of $\operatorname{im}(\alpha)$. We define $\Omega\# \mathcal{P}$ to agree with $\Omega$ outside $\operatorname{im}(\alpha)$. The effect of plug insertion on the characteristic foliation is as follows:  Before plug insertion, the characteristic leaves of $\Omega$ inside $\operatorname{im}(\alpha)$ are of the form $\alpha((0,1)\times \{p\})$ for $p\in \Sigma$. After plug insertion, the characteristic leaves of $\omega\# \mathcal{P}$ are of the form
\begin{equation*}
    \{\alpha(t,\varphi^t(p))\mid t\in (0,1)\}
\end{equation*}
for $p \in \Sigma$.

The \textit{inverse} of an $\Omega$-plug $\mathcal{P}$ is defined to be the $\Omega\#\mathcal{P}$-plug
\begin{equation}\label{eqn:inverse-plug}
    \overline{\mathcal{P}} \coloneqq (\Sigma,\omega_\Sigma, \alpha\circ \Phi, ((\varphi^t)^{-1})_{t\in [0,1]}).
\end{equation}
Note that if $\mathcal{P}$ is an $\Omega$-plug, then $\Omega\#\mathcal{P}\#\overline{\mathcal{P}} = \Omega$ and $\overline{\overline{\mathcal{P}}} = \mathcal{P}$.  A plug $\mathcal{P}$ is called \textit{trivial} if $\varphi^t = \operatorname{id}$ for all $t\in [0,1]$. If $\mathcal{P}$ is trivial, then $
\Omega\# \mathcal{P} = \Omega$. If $\psi: (W',\Omega') \rightarrow (W,\Omega)$ is a homeomorphism of $C^0$ Hamiltonian structures, then the \textit{pull back} $\psi^*\mathcal{P}$ of the $
\Omega$-plug $\mathcal{P}$ via $\psi$ is the $\Omega'$-plug defined by
\begin{equation}\label{eqn:pullback-plug}
    \psi^*\mathcal{P} \coloneqq (\Sigma,\omega_\Sigma,\psi^{-1} \circ \alpha,(\varphi^t)_{t\in [0,1]}).
\end{equation}
We observe that $\psi$ is a homeomorphism of $C^0$ Hamiltonian structures
\begin{equation}\label{eqn:pullback-plug-homeos}
    \psi: (W',\Omega'\#\psi^*\mathcal{P}) \rightarrow (W,\Omega\#\mathcal{P}).
\end{equation}

\begin{rem}
\label{rem:shrinking_plugs}
   Consider an $\Omega$-plug $\mathcal{P} = (\Sigma, \omega_\Sigma, \alpha, (\varphi^t)_{t\in [0,1]})$. It is always possible to slightly shrink the image of $\mathcal{P}$ without affecting $\Omega \# \mathcal{P}$ or the Calabi invariant $\overline{\operatorname{Cal}}(\mathcal{P})$ as follows: Pick a relatively compact open subset $\Sigma' \Subset \Sigma$ such that the entire isotopy $\varphi^t$ is supported in $\Sigma'$. Let $\varepsilon>0$ such that $\varphi^t$ is equal to the identity for all $t\in (0,2\varepsilon]$ and equal to $\varphi^1$ for all $t \in [1-2\varepsilon,1)$. Let $\tau: (0,1)\rightarrow (\varepsilon,1-\varepsilon)$ be an orientation-preserving diffeomorphism which agrees with the identity on the interval $[2\varepsilon,1-2\varepsilon]$. Define an embedding of $C^0$ Hamiltonian structures $\alpha' : ((0,1)\times \Sigma',\omega_\Sigma) \hookrightarrow (W,\Omega)$ by setting $\alpha'(t,p) \coloneqq \alpha(\tau(t),p)$. Set $\mathcal{P}' \coloneqq (\Sigma',\omega_\Sigma,\alpha',(\varphi^t)_{t\in [0,1]})$. The closure of the image of $\mathcal{P}'$ is clearly contained in the image of $\mathcal{P}$. Moreover, $\Omega \# \mathcal{P}' = \Omega \# \mathcal{P}$. Finally, by the naturality of the $\mathcal{R}$-valued extension of Calabi, we have $\overline{\operatorname{Cal}}(\mathcal{P'}) = \overline{\operatorname{Cal}}(\mathcal{P})$.
\end{rem}

Analogously to Lemma \ref{lem:plug_independence_of_hamiltonian_isotopy} in the smooth setting, although the $C^0$ Hamiltonian structure $\omega \# \mathcal{P}$ depends on the entire isotopy $\varphi^t$, its homeomorphism type depends only on the time-1 map $\varphi^1$.

\begin{lem}
\label{lem:C0_plugs_with_same_time_1_maps}
    Let $\Omega$ be a $C^0$ Hamiltonian structure on $W$. For $i \in \{0,1\}$, let $\mathcal{P}_i = (\Sigma,\omega_\Sigma,\alpha,(\varphi_i^t)_{t\in [0,1]})$ be two $\Omega$-plugs. If $\varphi_0^1 = \varphi_1^1$, then there exists a homeomorphism of $C^0$ Hamiltonian structures $\psi:(W,\Omega\#\mathcal{P}_0) \rightarrow (W,\Omega\#\mathcal{P}_1)$ which is supported inside $\operatorname{im}(\alpha)$ and isotopic to the identity through homeomorphisms supported in $\operatorname{im}(\alpha)$.
\end{lem}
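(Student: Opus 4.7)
The approach is to build $\psi$ explicitly inside $\operatorname{im}(\alpha)$ as the canonical ``sliding'' homeomorphism that sends one characteristic foliation to the other, and extend it by the identity. Concretely, set $\Phi_i(t,p) := (t,\varphi_i^t(p))$ for $i\in\{0,1\}$, so that by definition of plug insertion $\Omega\#\mathcal{P}_i$ equals $(\alpha \circ \Phi_i)_*\omega_\Sigma$ inside $\operatorname{im}(\alpha)$ and coincides with $\Omega$ elsewhere. The natural candidate is $\psi := \alpha \circ \Psi \circ \alpha^{-1}$ inside $\operatorname{im}(\alpha)$, extended by the identity elsewhere, where
\begin{equation*}
\Psi := \Phi_1 \circ \Phi_0^{-1} : (0,1)\times \Sigma \to (0,1)\times \Sigma, \qquad \Psi(t,q) = (t,\, \varphi_1^t \circ (\varphi_0^t)^{-1}(q)).
\end{equation*}
Since $\varphi_0^t = \varphi_1^t = \operatorname{id}_\Sigma$ for $t$ near $0$ and $\varphi_0^t = \varphi_0^1 = \varphi_1^1 = \varphi_1^t$ for $t$ near $1$, and each $\varphi_i^t$ is compactly supported in $\Sigma$, $\Psi$ is the identity outside a compact subset of $(0,1)\times\Sigma$. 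Hence $\psi$ extends continuously across $\partial\operatorname{im}(\alpha)$ to a homeomorphism of $W$ supported in $\operatorname{im}(\alpha)$.

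The verification that $\psi$ preserves the $C^0$ Hamiltonian structures is essentially tautological from the definition of pushforward: each $\Phi_i$ is, by construction, a homeomorphism of $C^0$ Hamiltonian structures $((0,1)\times\Sigma,\omega_\Sigma) \to ((0,1)\times\Sigma, (\Phi_i)_*\omega_\Sigma)$, so $\Psi = \Phi_1 \circ \Phi_0^{-1}$ is a homeomorphism $((0,1)\times\Sigma, (\Phi_0)_*\omega_\Sigma) \to ((0,1)\times\Sigma, (\Phi_1)_*\omega_\Sigma)$. Conjugating by $\alpha$ shows $\psi$ is a Hamiltonian structure homeomorphism on $\operatorname{im}(\alpha)$, and on the complement it is the identity between two structures that both agree with $\Omega$.

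For the isotopy, I would use that $\overline{\operatorname{Ham}}(\Sigma)$ is simply connected (since no component of the open surface $\Sigma$ is a sphere; the disconnected case factors through $\prod_i \overline{\operatorname{Ham}}(\Sigma_i)$). The two paths $(\varphi_i^t)_{t\in [0,1]}$ in $\overline{\operatorname{Ham}}(\Sigma)$ share both endpoints ($\operatorname{id}_\Sigma$ and the common time-one map), hence are homotopic rel endpoints via a continuous family $\varphi_s^t \in \overline{\operatorname{Ham}}(\Sigma)$. Setting $\Psi_s(t,q) := (t, \varphi_s^t \circ (\varphi_0^t)^{-1}(q))$ and transferring via $\alpha$ produces a path of homeomorphisms of $W$ from $\operatorname{id}$ to $\psi$, each supported in $\operatorname{im}(\alpha)$.

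The main technical nuisance is ensuring that $\varphi_s^t$ is \emph{uniformly} in $s$ equal to $\operatorname{id}$ for $t$ near $0$ and to $\varphi_0^1$ for $t$ near $1$, so that $\Psi_s$ extends by identity across $\{0,1\}\times \Sigma$ for every intermediate $s$; a generic homotopy rel endpoints only fixes the values at exactly $t=0$ and $t=1$. This is handled by first using Remark \ref{rem:shrinking_plugs} to shrink the plugs so that each $\varphi_i^t$ is constant on an honest neighborhood of $\{0,1\}$, and then precomposing the homotopy with a reparametrization $\chi : [0,1] \to [0,1]$ that is constantly $0$ near $0$ and constantly $1$ near $1$. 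Since $\chi$ reparametrizes the isotopies $\varphi_i^t$ without affecting $\Omega\#\mathcal{P}_i$ (again by Remark \ref{rem:shrinking_plugs}), the resulting family $\Psi_s$ is a valid isotopy of $\psi$ to the identity through homeomorphisms supported in $\operatorname{im}(\alpha)$.
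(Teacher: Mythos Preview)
Your proposal is correct and follows essentially the same approach as the paper: define $\Psi = \Phi_1 \circ \Phi_0^{-1}$, conjugate by $\alpha$, and invoke simple connectedness of $\overline{\operatorname{Ham}}(\Sigma)$ for the isotopy. The paper's proof is terser and does not spell out the endpoint issue you raise; your treatment of that detail is more careful. One minor point: the appeal to Remark~\ref{rem:shrinking_plugs} is unnecessary, since by Definition~\ref{def:C0_plugs} the isotopies $\varphi_i^t$ are \emph{already} constant on neighborhoods of $0$ and $1$, and your reparametrization $\chi$ can then be chosen to agree with the identity on the interval where the $\varphi_i^t$ are nonconstant (so that $\varphi_i^{\chi(t)} = \varphi_i^t$ identically and $\Psi_0 = \operatorname{id}$, $\Psi_1 = \Psi$ hold on the nose).
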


\begin{proof}
    Define $\Phi_i : (0,1)\times \Sigma \rightarrow (0,1)\times \Sigma$ by $\Phi_i(t,p) \coloneqq (t,\varphi_i^t(p))$. Then
    \begin{equation*}
        \Phi_1 \circ \Phi_0^{-1} : ((0,1)\times \Sigma, (\Phi_0)_*\omega_\Sigma) \rightarrow ((0,1)\times \Sigma, (\Phi_1)_*\omega_\Sigma)
    \end{equation*}
    is a homeomorphism of $C^0$ Hamiltonian structures. Since $\varphi_0^1 = \varphi_1^1$, it is compactly supported. We can therefore define the desired homeomorphism $\psi: (W,\Omega\#\mathcal{P}_0) \rightarrow (W,\Omega\#\mathcal{P}_1)$ by setting $\psi\coloneqq \alpha\circ \Phi_1 \circ \Phi_0^{-1} \circ \alpha^{-1}$ inside $\operatorname{im}(\alpha)$ and extending it to be the identity on the complement of this set.

    Now, the surface $\Sigma$ is open and therefore not  $S^2$. Hence $\overline{\operatorname{Ham}}(\Sigma)$ is simply connected, as explained in Section \ref{sec:prelims-surfaces}. It follows from simply connectedness of  $\overline{\operatorname{Ham}}(\Sigma)$ that the homeomorphism $\psi$ is isotopic to the identity via homeomorphisms supported in $\operatorname{im}(\alpha)$.
\end{proof}

The next lemma states that sliding a plug along the characteristic foliation has no effect on the homeomorphism type of the $C^0$ Hamiltonian structure.

\begin{lem}
\label{lem:C0_plugs_with_isotopic_embeddings}
    Let $\Omega$ be a $C^0$ Hamiltonian structure on $W$. For $i\in \{0,1\}$, let $\mathcal{P}_i = (\Sigma, \omega_\Sigma, \alpha_i,(\varphi^t)_{t\in [0,1]})$ be two $\Omega$-plugs. Assume that there exists a continuous family $(\alpha_s)_{s\in [0,1]}$ of topological embeddings of $C^0$ Hamiltonian structures $\alpha_s : ((0,1)\times \Sigma,\omega_\Sigma)\hookrightarrow (W,\Omega)$ connecting $\alpha_0$ to $\alpha_1$ such that $\alpha_s(t,p)$ is contained in the same characteristic leaf of $\Omega$ as $\alpha_0(t,p)$ for all $s\in [0,1]$ and $(t,p)\in (0,1)\times \Sigma$. Then, there exists a homeomorphism of $C^0$ Hamiltonian structures $\psi:(W,\Omega\#{\mathcal{P}_0}) \rightarrow (W,\Omega\#{\mathcal{P}_1})$ which is supported inside $U\coloneqq \bigcup_{s\in [0,1]} \operatorname{im}(\alpha_s)$ and isotopic to the identity through homeomorphisms supported in $U$.
\end{lem}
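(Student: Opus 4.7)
The plan is to construct a continuous family of homeomorphisms $(\psi_s)_{s\in[0,1]}$ of $W$, each supported inside $U$, with $\psi_0 = \operatorname{id}$ and each $\psi_s$ a homeomorphism of $C^0$ Hamiltonian structures $(W,\Omega\# \mathcal{P}_0) \to (W, \Omega\# \tilde{\mathcal{P}}_s)$, where $\tilde{\mathcal{P}}_s \coloneqq (\Sigma, \omega_\Sigma, \alpha_s, (\varphi^t))$ is the plug built from the intermediate embedding $\alpha_s$. Then $\psi \coloneqq \psi_1$ is the required homeomorphism, and the family $(\psi_s)$ itself provides the isotopy from $\operatorname{id}$ to $\psi$ through homeomorphisms of $W$ supported in $U$.

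The hypothesis, combined with the fact that each $\alpha_s$ is an embedding of $C^0$ Hamiltonian structures, forces for every $p \in \Sigma$ all the arcs $\alpha_s((0,1)\times\{p\})$, $s \in [0,1]$, to lie inside a single characteristic leaf $L_p$ of $\Omega$. Consequently $\Omega \# \tilde{\mathcal{P}}_s$ is a continuous family of $C^0$ Hamiltonian structures on $W$: each differs from $\Omega$ only inside the moving tube $\operatorname{im}(\alpha_s) \subset U$, and along each leaf $L_p$ the twist encoded by $\varphi^t$ is merely repositioned from the subarc $\alpha_0((0,1)\times\{p\})$ to the subarc $\alpha_s((0,1)\times\{p\})$ as $s$ varies.

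The local model for $\psi_s$ is built in Darboux charts $V\cong((-1,1)\times B(a), \omega_{\operatorname{std}})$, where the leaves of $\Omega$ become the fibres $(-1,1)\times\{q\}$ and the embeddings take the product form $\alpha_s(t,p) = (f_s(t,p), q(p))$ with $q(p)$ independent of $s$. In such a chart both $\Omega\#\mathcal{P}_0|_V$ and $\Omega \# \tilde{\mathcal{P}}_s|_V$ are obtained by twisting the product foliation along each fibre via $\varphi^t$, supported in (possibly different) subintervals of the fibre. An explicit leaf-preserving homeomorphism conjugating the two structures is then given by a fibrewise map of the form $(x,q)\mapsto (x,\varphi^{\tau_s(x,q)}(q))$, where $\tau_s$ is a continuous cut-off measuring the discrepancy in accumulated twist between the $\alpha_0$- and $\alpha_s$-plugs along the fibre through $q$; it vanishes at $s=0$, at points outside $U$, and wherever the two plugs already agree along the fibre.

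The main obstacle is the global patching: assembling these local fibrewise homeomorphisms into a single continuous family $(\psi_s)$ on $W$ supported in $U$ and equal to the identity on $\partial U$. This is handled by a partition of unity subordinate to a Darboux cover of $U$, with all interpolations performed in the fibre direction so that every operation takes place within individual leaves; this automatically preserves the leaf structure and the transverse measure, so the global $\psi_s$ is a homeomorphism of $C^0$ Hamiltonian structures. Continuity of $s\mapsto\psi_s$ follows from continuity of $s\mapsto\alpha_s$ and of $t\mapsto\varphi^t$ in $\overline{\operatorname{Ham}}(\Sigma)$, and the support of each $\psi_s$ lies in $U$ by construction. Once the family $(\psi_s)$ is in place, the statement of the lemma is immediate.
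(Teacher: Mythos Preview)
Your overall strategy of producing a family $\psi_s$ with $\psi_0=\operatorname{id}$ is the right shape, but the execution has two genuine gaps.

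First, your local formula $(x,q)\mapsto(x,\varphi^{\tau_s(x,q)}(q))$ moves points \emph{transversely} to the characteristic leaves of $\Omega$ in the Darboux chart. For such a map to preserve the transverse measure one would need, for each fixed $x$, that $q\mapsto\varphi^{\tau_s(x,q)}(q)$ is area-preserving; this fails in general once $\tau_s$ genuinely depends on $q$ (which it must, since different leaves meet the plug region at different heights). So the local model is not, as written, a homeomorphism of $C^0$ Hamiltonian structures. Calling it ``leaf-preserving'' while it visibly changes $q$ is where the confusion shows.

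Second, ``partition of unity subordinate to a Darboux cover'' is not a valid mechanism for gluing homeomorphisms: one cannot average maps. Saying the interpolations are ``in the fibre direction'' does not help, especially since your local maps act in the transverse direction.

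The paper's argument bypasses both issues with a single observation you missed: because the family $(\alpha_s)$ slides along leaves of the \emph{original} structure $\Omega$, one can take $\psi_s$ to be a homeomorphism of $(W,\Omega)$ itself, obtained by sliding points along the characteristic leaves of $\Omega$, supported in $U$, and satisfying $\psi_s\circ\alpha_0=\alpha_s$ on the compact set $K\subset(0,1)\times\Sigma$ outside of which the plug modification is trivial. Such leafwise reparametrizations have the local form $(x,q)\mapsto(g_s(x,q),q)$, so area-preservation is automatic and the extension from $K$ to a compactly supported homeomorphism is elementary. Since $\psi_s$ preserves $\Omega$ and carries $\alpha_0|_K$ to $\alpha_s|_K$, it automatically pushes $\Omega\#\mathcal{P}_0$ to $\Omega\#\tilde{\mathcal{P}}_s$. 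No transverse motion, no chart-patching of homeomorphisms.
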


\begin{proof}
    Define $\Phi: (0,1)\times \Sigma \rightarrow (0,1)\times \Sigma$ by $\Phi(t,p)\coloneqq (t,\varphi^t(p))$. Let $K\subset (0,1)\times \Sigma$ be a compact subset such that $\Phi_*\omega_\Sigma$ agrees with $\omega_\Sigma$ on the complement of $K$. As a consequence of the assumption on the embeddings $\alpha_s$, we can find a family of homeomorphisms $\psi_s : (W,\Omega) \rightarrow (W,\Omega)$ starting at the identity obtained by moving points along the characteristic leaves of $\Omega$ such that $\psi_s$ is supported inside $U$ and such that $\psi_s\circ \alpha_0(t,p) = \alpha_s(t,p)$ for all $(t,p)\in K$. Then, $\psi\coloneqq \psi_1$ is a homeomorphism $(W,\Omega\#{\mathcal{P}_0}) \rightarrow (W,\Omega\#{\mathcal{P}_1})$ supported inside $U$. By construction, $\psi$ is isotopic to the identity through homeomorphisms supported in $U$.
\end{proof}

\section{Smoothings modulo plugs}
\label{sec:smoothings-mod-plugs}

In this section we prove Theorems \ref{thm:from_C0_to_smooth_plus_plug} \& \ref{thm:smoothing_homeomorphisms} on smoothing $C^0$ Hamiltonian structures and their homeomorphisms.

\subsection{Smoothing \texorpdfstring{$C^0$}{C0} Hamiltonian structures}
\label{sec:smoothing-c0-structure}

We prove in this section that every $C^0$ Hamiltonian structure on a closed $3$-manifold can be obtained from a smooth Hamiltonian structure  via the insertion of a $C^0$ plug.  This result, stated below, plays a crucial role in the remainder of the paper.

\begin{thm}
\label{thm:from_C0_to_smooth_plus_plug}
    Let $\Omega$ be a $C^0$ Hamiltonian structure on a closed topological $3$-manifold $Y$. Then, there exist a smooth Hamiltonian structure $\omega$ on $Y$ and a $C^0$ $\omega$-plug $\mathcal{P}$ such that $\Omega = \omega \# \mathcal{P}$.
\end{thm}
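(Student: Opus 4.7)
The strategy is to localize all non-smooth behavior of $\Omega$ inside the image of a single plug $\mathcal{P}$---whose $\Sigma$ is a disjoint union of open disks corresponding to small flow tubes---and, on the complement, to glue local smoothings of $\Omega$ into a globally defined smooth Hamiltonian structure $\omega$.

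\emph{Step 1 (local smoothings).} Fix a smooth structure on $Y$ (available by Moise's theorem), and by compactness together with Definition \ref{def:C0_Hamiltonian_atlas}, choose a finite cover $\{U_i\}_{i=1}^n$ of $Y$ by $C^0$ Darboux charts $\phi_i : U_i \to (-1,1) \times B_i$. Each pullback $\omega_i \coloneqq \phi_i^{*}\omega_{\operatorname{std}}$ is a \emph{smooth} Hamiltonian structure on $U_i$ that coincides with $\Omega|_{U_i}$ as a $C^0$ Hamiltonian structure. On an overlap $U_i \cap U_j$, the two local smoothings differ by the transition homeomorphism $\psi_{ij} = \phi_j \circ \phi_i^{-1}$, which by Definition \ref{def:C0_Hamiltonian_atlas} locally takes the form $\psi_{ij}(t, z) = (\ast, a_{ij}(z))$ with $a_{ij}$ a $C^0$ area-preserving map.

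\emph{Step 2 (absorbing the obstruction into a plug).} The obstruction to smoothly gluing the $\omega_i$ into a global smooth Hamiltonian structure on $Y$ is the \v{C}ech-type cocycle formed by the $\psi_{ij}$. Using Proposition \ref{prop:fragmentation_hamiltonian_homeomorphisms} applied fiberwise in the area-preserving direction, we decompose this cocycle into factors supported in a finite family of pairwise disjoint flow tubes
\[
\alpha_j : ((0,1) \times D_j, \omega_{\operatorname{std}}) \hookrightarrow (Y, \Omega), \qquad j=1,\dots,N,
\]
where each $D_j$ is an open disk. Concatenating these yields a single topological embedding of $C^0$ Hamiltonian structures
\[
\alpha : ((0,1) \times \Sigma, \omega_\Sigma) \hookrightarrow (Y, \Omega), \qquad \Sigma = \bigsqcup_{j=1}^{N} D_j,
\]
with the property that outside $\operatorname{im}(\alpha)$ all transitions $\psi_{ij}$ become trivial, so the local smoothings $\omega_i$ agree on overlaps. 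Proposition \ref{prop:smooth_approximation_area_preserving_homeos} is used to ensure that this agreement is literal rather than merely $C^0$-close. This produces a smooth Hamiltonian structure on $Y \setminus \operatorname{im}(\alpha)$, which we extend to a smooth Hamiltonian structure $\omega$ on all of $Y$ that still equals $\Omega$ as a $C^0$ Hamiltonian structure on a neighborhood of $Y \setminus \operatorname{im}(\alpha)$.

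\emph{Step 3 (reading off the plug isotopy).} Inside $\operatorname{im}(\alpha)$, the characteristic foliation of $\Omega$, pulled back via $\alpha$, is a foliation on $(0,1) \times \Sigma$ whose leaves take the form $\{(t,\varphi^t(p)) : t \in (0,1)\}$ for a unique compactly supported continuous isotopy $\varphi^t$ of area-preserving homeomorphisms of $\Sigma$, trivial near $t=0$ and equal to $\varphi^1$ near $t=1$. Since each component $D_j$ of $\Sigma$ is an open disk, $H^1(D_j;\R) = 0$, so by Section \ref{sec:prelims-surfaces} the identity component of $\operatorname{Homeo}_c(D_j,\omega_{\operatorname{std}})$ coincides with $\overline{\Ham}(D_j)$; hence $\varphi^t \in \overline{\Ham}(\Sigma)$. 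Setting $\mathcal{P} = (\Sigma, \omega_\Sigma, \alpha, \varphi^t)$, the definition of plug insertion gives $\omega \# \mathcal{P} = \Omega$ directly.

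\emph{Main obstacle.} The heart of the proof is Step 2: simultaneously arranging the local smoothings $\omega_i$ to literally agree on the complement of $\operatorname{im}(\alpha)$ \emph{and} ensuring that $\operatorname{im}(\alpha)$ is a genuine product $(0,1) \times \Sigma$ with $\Sigma$ a disjoint union of disks, rather than a more complicated region with branching. This requires a careful inductive procedure: order the charts $U_i$, and at each step absorb the next transition map $\psi_{ij}$ into newly chosen flow tubes that are disjoint from the previously constructed ones and come equipped with compatible time parameters. Proposition \ref{prop:smooth_approximation_area_preserving_homeos} is essential for promoting the $C^0$ matching of smoothings to genuine smooth matching outside the plug region.
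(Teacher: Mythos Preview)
Your proposal has a genuine gap in Step 2 and a type error in Step 3 that together prevent the argument from going through.

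\textbf{Step 2 is not a proof.} You invoke Proposition \ref{prop:fragmentation_hamiltonian_homeomorphisms} to ``decompose the cocycle'' formed by the transition maps $\psi_{ij}$, but that proposition applies to elements of $\overline{\operatorname{Ham}}(\Sigma)$ for a fixed surface $\Sigma$, not to transition homeomorphisms between different open subsets of $\R^2$. There is no fixed surface on which the $a_{ij}$ live, and even if you fragmented each $a_{ij}$ individually, the cocycle condition $\psi_{jk}\psi_{ij}=\psi_{ik}$ would be destroyed. You acknowledge this is the ``main obstacle'' and promise a ``careful inductive procedure,'' but no such procedure is given, and it is not clear one exists along these lines.

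\textbf{Step 3 is confused.} You declare $\alpha:((0,1)\times\Sigma,\omega_\Sigma)\hookrightarrow (Y,\Omega)$ to be an embedding of $C^0$ Hamiltonian structures, which by definition means $\alpha^*\Omega=\omega_\Sigma$. Hence the pullback of the characteristic foliation of $\Omega$ is exactly the vertical foliation with leaves $(0,1)\times\{p\}$; there is no isotopy $\varphi^t$ to read off. What you presumably want is to compare $\alpha^*\omega$ with $\omega_\Sigma$. But then for $\mathcal{P}$ to be an $\omega$-plug you need an embedding $\beta:((0,1)\times\Sigma,\omega_\Sigma)\hookrightarrow (Y,\omega)$, not into $(Y,\Omega)$, and you must also argue that $\alpha^*\omega$ has leaves which are graphs $t\mapsto(t,\varphi^t(p))$ for a \emph{compactly supported} isotopy constant near $t\in\{0,1\}$. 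Neither point is addressed.

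\textbf{How the paper proceeds instead.} The paper avoids the \v{C}ech-cocycle difficulty entirely by exploiting the dynamics. It first finds (Lemma \ref{lem:exhaustive-surface}) a single flow-box $\alpha((0,1)\times\Sigma)$ whose slightly shrunk core $A$ is \emph{exhaustive}: every characteristic leaf meets $A$ in both forward and backward time. On the complement $W=Y\setminus A$, each leaf is then a compact arc, so the leaf space $\mathcal{L}$ of $(W,\Omega)$ is a (possibly non-Hausdorff) $2$-manifold with an area measure. The smoothing problem is thereby reduced from dimension three to dimension two: one smooths $\mathcal{L}$ directly via Proposition \ref{prop:smooth_approximation_area_preserving_homeos} (Claim \ref{cl:leaf-space-smoothing}), and this lifts to a smoothing of $(W,\Omega)$. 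The only remaining obstruction is a single area-preserving return homeomorphism $S_-\to S_+$ across the removed flow box; inserting one plug $\mathcal{Q}$ makes it smooth, and then $\omega\coloneqq\Omega\#\mathcal{Q}$, $\mathcal{P}\coloneqq\overline{\mathcal{Q}}$ do the job. The exhaustiveness condition is what makes the leaf space tractable and is the key idea you are missing.
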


\begin{rem}
   Recall that a smooth Hamiltonian structure $\omega$ on $Y$ includes the choice of a smooth structure on $Y$, see Remark \ref{rem:smooth-structure-on-topological-mfld}. Thus Theorem \ref{thm:from_C0_to_smooth_plus_plug} in particular contains the statement that the closed topological $3$-manifold $Y$ can be smoothed. It is of course well-known that every topological $3$-manifold admits a smoothing. This follows from theorems of Moise \cite[Theorem 1 \& 3]{moi52} (see also \cite[\S 23, Theorem 1 \& \S 35, Theorem 3]{moi77}), saying that every topological $3$-manifold can be triangulated and that every triangulated $3$-manifold is piecewise linear, and the fact that every piecewise linear $3$-manifold can be smoothed, see e.g. \cite[Theorem 3.10.8]{thu97}. As we will see, the existence of a $C^0$ Hamiltonian structure $\Omega$ on $Y$ actually simplifies the problem of smoothing the underlying manifold $Y$. Essentially, it reduces the problem to smoothing a topological $2$-manifold.
\end{rem}

\begin{rem}
    Theorem \ref{thm:from_C0_to_smooth_plus_plug} is clearly false if we do not allow for the insertion of a $C^0$ plug, i.e.\ there exist $C^0$ Hamiltonian structures which cannot be globally upgraded to smooth Hamiltonian structures. Indeed, consider a closed surface $(\Sigma,\omega_\Sigma)$ and an area-preserving homeomorphism $\varphi$ of $\Sigma$ which is not $C^0$ conjugate to any area-preserving diffeomorphism.  For example, we can take $\varphi$ to have infinite topological entropy. The Hamiltonian structure $\omega_\Sigma$ on $[0,1]\times \Sigma$ descends to a $C^0$ Hamiltonian structure on the mapping torus
    \begin{equation*}
        Y_\varphi \coloneqq [0,1]\times \Sigma/\sim \qquad (1,p)\sim (0,\varphi(p)).
    \end{equation*}
    This $C^0$ Hamiltonian structure is not homeomorphic to any smooth Hamiltonian structure.
\end{rem}

Let $Y$ and $\Omega$ be as in the statement of Theorem \ref{thm:from_C0_to_smooth_plus_plug}. Let $\varphi^t$ be a fixed-point-free topological flow whose flow lines equipped with the orientation induced by $\varphi^t$ agree with the oriented characteristic leaves of $\Omega$. The existence of $\varphi^t$ is guaranteed by Proposition \ref{prop:flow_on_C0_foliations}.  

\begin{definition}
\label{defn:exhaustive-set}
   We say a subset $A \subset (Y,\Omega)$ is \emph{exhaustive} if there exists a constant $T>0$ such that, for any point $p\in Y$, there exist times $-T<t_-<0<t_+<T$ such that $\varphi^{t_\pm}(p)\in A$.
\end{definition}

We remark that whether or not  $A \subset (Y,\Omega)$ is exhaustive is independent of the choice of the flow $\varphi^t$.  

\begin{rem}
\label{rem:from_C0_to_smooth_plus_plug_stronger}
    Our proof of Theorem \ref{thm:from_C0_to_smooth_plus_plug} shows the following slightly stronger statement, which we record here for later reference: Let $(Y,\Omega)$ be a closed topological $3$-manifold equipped with a $C^0$ Hamiltonian structure $\Omega$. Let $U\subset (Y,\Omega)$ be an arbitrary exhaustive open set. Then we can find a smooth Hamiltonian structure $\omega$ on $Y$ and an $\omega$-plug $\mathcal{P}$ whose image is contained in $U$ such that $\Omega = \omega \# \mathcal{P}$.
\end{rem}

The following lemma guarantees the existence of an exhaustive flow box.

\begin{lem}
\label{lem:exhaustive-surface}
    Let $\Omega$ be a $C^0$ Hamiltonian structure on a closed topological $3$-manifold $Y$, and let $U \subset Y$ be an open exhaustive subset. Then there exist an open surface $(\Sigma, \omega_\Sigma)$ equipped with an area form, a compact subsurface with boundary $K \subset \Sigma$, and a topological embedding of $C^0$ Hamiltonian structures  
    \begin{equation*}
        \alpha : ((0,1) \times \Sigma, \omega_\Sigma) \hookrightarrow (U, \Omega)
    \end{equation*}
such that the image $\alpha((0,1) \times K)$ is exhaustive.
\end{lem}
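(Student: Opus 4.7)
The plan is to assemble $\alpha$ out of finitely many small flow-box tubes contained in $U$, chosen so that their central cross-sections jointly capture every orbit within bounded time. The starting point is the local model in Definition~\ref{def:C0_Hamiltonian_atlas}: every $p \in U$ admits an open neighborhood $V_p \subset U$ and a homeomorphism of $C^0$ Hamiltonian structures
$$\psi_p \colon ((-\eta_p, \eta_p) \times B_p,\, \omega_{\operatorname{std}}) \longrightarrow V_p$$
with $B_p \subset \R^2$ an open disc and $\psi_p(0,0) = p$. For each $q \in Y$, the exhaustive property of $U$ gives times $-T < s_q^- < 0 < s_q^+ < T$ with $\varphi^{s_q^\pm}(q) \in U$, which sit inside such flow-box charts. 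Continuity of $\varphi^{s_q^\pm}$ yields open neighborhoods of $q$ whose flow images at times $s_q^\pm$ lie in compact transverse subdiscs of the corresponding central slices. Extracting a finite subcover of $Y$, I obtain flow-box charts $\psi_1, \dots, \psi_N \colon ((-\eta_i, \eta_i) \times B_i, \omega_{\operatorname{std}}) \hookrightarrow U$ together with compact subdiscs $K_i \Subset B_i$ such that every orbit of $\varphi^t$ meets $\bigcup_i \psi_i(\{0\} \times K_i)$ at some time in $(-T', 0)$ and at some time in $(0, T')$, for a uniform constant $T' > 0$.

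Next, I would shrink the flow tubes so that they become pairwise disjoint. By a small initial perturbation of the base points (and slight adjustments of the $B_i$), I can arrange that the central slices $\psi_i(\{0\} \times \overline{K_i})$ are pairwise disjoint in $Y$ while still preserving the exhaustive property of their union. Processing indices $i = 1, \dots, N$ in order, at step $i$ I remove from $B_i$ the $\psi_i^{-1}$-image of all previously selected tubes, obtaining an open set $B'_i \supset K_i$; continuity of the flow then allows me to choose $\epsilon_i > 0$ small enough that the tube $\psi_i((-\epsilon_i, \epsilon_i) \times \overline{B'_i})$ is disjoint from every previously constructed tube. Finally, I set
$$\Sigma := \bigsqcup_{i=1}^N B'_i, \qquad \omega_\Sigma := \bigsqcup_{i=1}^N \omega_{\operatorname{std}}|_{B'_i}, \qquad K := \bigsqcup_{i=1}^N K_i,$$
and define $\alpha \colon (0,1) \times \Sigma \to U$ on the $i$-th component by $\alpha(t, b) := \psi_i(\tau_i(t), b)$ for any orientation-preserving homeomorphism $\tau_i \colon (0,1) \to (-\epsilon_i, \epsilon_i)$. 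Time reparametrization preserves $\omega_{\operatorname{std}} = dx \wedge dy$, so each local piece is an embedding of $C^0$ Hamiltonian structures; global injectivity follows from the disjointness just established; and $\alpha((0,1) \times K)$ contains $\bigcup_i \psi_i(\{0\} \times K_i)$, which is exhaustive by construction.

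The delicate step is the simultaneous requirement in the inductive shrinking that each compact $K_i$ remains inside $B'_i$ after carving out the $\psi_i^{-1}$-images of previously selected tubes. I expect to handle this through the initial perturbation, ensuring that the central slices are already pairwise disjoint in $Y$ so that the carving only removes small neighborhoods disjoint from each $K_i$. If needed, one may further refine the initial cover to leave a generous transverse margin around each $K_i$ inside $B_i$, guaranteeing that successive shrinkings never cut into the compact cross-sections.
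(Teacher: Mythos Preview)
Your plan has a genuine gap at the step where you assert that ``by a small initial perturbation of the base points (and slight adjustments of the $B_i$)'' the central slices $\psi_i(\{0\}\times \overline{K_i})$ can be made pairwise disjoint in $Y$. These slices are $2$-dimensional discs transverse to the characteristic foliation in a $3$-manifold; two such discs generically intersect along a $1$-manifold, and no small perturbation removes such an intersection. Concretely, inside the $i$-th flow box the $j$-th slice appears as the graph of a continuous function over a region of the base disc, while the $i$-th slice is the zero graph; their intersection is a level set and persists under perturbation. Consequently your inductive carving cannot in general produce $B_i'\supset K_i$, and neither of the remedies you sketch (disjoint central slices, or extra transverse margin $K_i\Subset B_i$) addresses this obstruction: even granting disjointness of the $\psi_j(\{0\}\times K_j)$, the earlier \emph{tubes} $\psi_j((-\epsilon_j,\epsilon_j)\times \overline{B'_j})$ may still meet $\psi_i(\{0\}\times K_i)$.

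The paper's proof avoids this by abandoning the attempt to keep each transverse disc in one piece. Given the previously placed discs, viewed as a transverse surface $S$ inside the $i$-th flow box $(-1,1)\times B(2a_i)$, it observes that each leaf $(-1,1)\times\{p\}$ meets $S$ only in a discrete set, so around every $p\in B(2a_i)$ there is a small closed disc $D$ and a height $s$ with $\{s\}\times D$ disjoint from $S$; compactness of $\overline{B(a_i)}$ then yields finitely many such discs $\{s_i^j\}\times D_i^j$, at pairwise distinct heights, covering $B(a_i)$ and disjoint from $S$ (this is Claim~\ref{cl:disjoint-discs-covering}). Thickening these pairwise disjoint transverse discs into short tubes then gives the embedding $\alpha$. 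The essential idea you are missing is precisely this height-varying subdivision; once you insert it in place of the perturbation claim, the remainder of your outline goes through.
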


\begin{proof}
    Since $U$ is exhaustive, we can fix $T>0$ as in Definition \ref{defn:exhaustive-set}. Consider $p \in Y$. Then there exist $-T < t_- < 0 < t_+ < T$ such that $\varphi^{t_\pm}(p) \in U$. For $a>0$ sufficiently small, we can find embeddings of $C^0$ Hamiltonian structures $\alpha_\pm : ((-1,1)\times B(2a),\omega_{\operatorname{std}})\hookrightarrow (U,\Omega)$ such that $\alpha_\pm(0,0) = \varphi^{t_\pm}(p)$. Moreover, we can find an open neighborhood $V_p$ of $p$ with the property that if we start at any point $q \in V_p$ and follow the flow $\varphi^t$ forward/backward in time, we meet $\alpha_\pm(\{0\}\times B(a))$ within time at most $T$.
    
    Since $Y$ is compact, we can cover it by finitely many neighborhoods $V_p$. This implies that there exists a finite collection of embeddings of $C^0$ Hamiltonian structures $\alpha_i : ((-1,1)\times B(2a_i),\omega_{\operatorname{std}}) \hookrightarrow (U,\Omega)$ with $i$ ranging from $1$ to $n$ such that the union $\bigcup_i \alpha_i(\{0\}\times B(a_i))$ is an exhaustive set.  We remark that the surfaces $\alpha_i(\{0\}\times B(a_i))$ are not necessarily pairwise disjoint.

    Our next step is to construct, for each $i$, finitely many closed discs of the form $\{s_i^j\} \times D_i^j \subset (-1,1) \times B(2a_i)$ such that $B(a_i) \subset \bigcup_j D_i^j$ and the images $\alpha_i(\{s_i^j\} \times D_i^j)$, ranging over all $i, j$, are pairwise disjoint.

    The construction of these discs relies on Claim \ref{cl:disjoint-discs-covering}, stated below. Let $a>0$ and consider a closed subset $S \subset ((-1,1)\times B(2a),\omega_{\operatorname{std}})$ which is a $2$-dimensional topological submanifold, possibly with boundary, and which is transverse to the characteristic foliation on $(-1,1)\times B(2a)$ in the following sense:  near every point of $S$ there exists a local $C^0$ Hamiltonian chart in which both $S$ and the characteristic foliation are smooth and in which $S$ is transverse to the characteristic foliation in the usual sense. 
    
    \begin{claim}
    \label{cl:disjoint-discs-covering}
        We can find finitely many pairwise disjoint closed discs in $(-1,1)\times B(2a)$ of the form $\{s^j\}\times D^j$ which are all disjoint from $S$ and such that $B(a) \subset \bigcup_j D^j$.
    \end{claim}
    
    \begin{proof}[Proof of Claim \ref{cl:disjoint-discs-covering}]
        Indeed, consider an arbitrary point $p \in B(2a)$. It follows from the transversality assumption on $S$ that the characteristic leaf $(-1,1)\times \{p\}$ contains a point in the complement of $S$. This means that we can find a small closed disc $D\subset B(2a)$ containing $p$ and $s\in (-1,1)$ such that $\{s\}\times D$ is disjoint from $S$. Since $\overline{B}(a)$ is compact, we can find finitely many discs $\{s^j\}\times D^j$ in the complement of $S$ such that the $D^j$ cover $B(a)$. We can in addition make these discs pairwise disjoint by slightly perturbing the heights $s^j$ so that the $s^j$ are all distinct. This proves the claim.
    \end{proof}

    Returning to the construction of the discs $\{s_i^j\}\times D_i^j$, let $1\leq i \leq n$ and assume that the discs $\{s_k^j\}\times D_k^j$ have already been construction for all $k<i$. Now simply set
    \begin{equation*}
        S\coloneqq \alpha_i^{-1}(\bigcup_{k<i} \bigcup_j \alpha_k(\{s_k^j\}\times D_k^j)) \subset (-1,1)\times B(2a_i)
    \end{equation*}
    and apply the above claim to find the discs $\{s_i^j\}\times D_i^j$. This concludes our construction of the discs $\{s_i^j\}\times D_i^j$. We observe that since $\bigcup_i \alpha_i(\{0\}\times B(a_i))$ is an exhaustive set and $B(a_i)$ is contained in the union $\bigcup_j D_i^j$, the set $\bigcup_{i,j} \alpha_i(\{s_i^j\}\times D_i^j)$ is exhaustive as well.

    For each disc $D_i^j$, pick a slightly larger open disc $B_i^j$. Let $\varepsilon >0$ be small. Consider the embedding
    \begin{equation*}
        \tilde \alpha : \bigsqcup_{i,j} (s_i^j,s_i^j+\varepsilon) \times B_i^j \hookrightarrow U
    \end{equation*}
    which on each component is given by the restriction of the corresponding embedding $\alpha_i$. Here we have to choose $\varepsilon>0$ sufficiently small and take the enlarged discs $B_i^j$ sufficiently close to the original discs $D_i^j$ to ensure that this indeed defines an embedding and has image contained in $U$.

    We define $K$ to be the disjoint union of all the discs $D_i^j$ and $\Sigma$ to be the disjoint union of all the discs $B_i^j$. Then the desired embedding $\alpha : (0,1)\times \Sigma \hookrightarrow U$ is obtained from the above embedding $\alpha$ by reparametrizing the intervals $(s_i^j,s_i^j+\varepsilon)$ to $(0,1)$.
\end{proof}

We now present a proof of Theorem \ref{thm:from_C0_to_smooth_plus_plug}.

\begin{proof}[Proof of Theorem \ref{thm:from_C0_to_smooth_plus_plug}]
    By Lemma \ref{lem:exhaustive-surface}, applied to the exhaustive set $U=Y$, we may pick an open surface $(\Sigma,\omega_\Sigma)$, a compact subsurface with boundary $K\subset\Sigma$, and an embedding of $C^0$ Hamiltonian structures $\alpha: ((0,1)\times \Sigma,\omega_\Sigma) \hookrightarrow (Y,\Omega)$ such that $\alpha((0,1)\times K)$ is exhaustive. In order to prove Remark \ref{rem:from_C0_to_smooth_plus_plug_stronger}, we note that if we are given an arbitrary exhaustive open subset $U\subset (Y,\Omega)$, we can choose $\alpha$ such that its image is contained in $U$. Let $0<t_0<t_1<1$ and define the compact interval $I \coloneqq [t_0,t_1]$. Then the set $A \coloneqq \alpha(I\times K)$ is also exhaustive. Up to possibly enlarging $K$, we can assume that the interior of $A$ is still exhaustive.
    
    Set $W\coloneqq Y\setminus A$ and equip it with the restriction of the $C^0$ Hamiltonian structure $\Omega$. Our goal is to construct a smoothing of the $C^0$ Hamiltonian structure $(W,\Omega)$. The crucial observation that allows us to achieve this is that the characteristic foliation of $(W,\Omega)$ exhibits no complicated recurrent behavior. The closure (with respect to $Y$) of any characteristic leaf of $(W,\Omega)$ is an embedded compact interval which starts at $\alpha(\{t_1\}\times K)$ and ends at $\alpha(\{t_0\}\times K)$.
    
    Let $\mathcal{L}$ denote the leaf space of the characteristic foliation of $(W,\Omega)$ and let $\operatorname{pr}:W \rightarrow \mathcal{L}$ denote the natural projection.
    
    \begin{claim}
        The leaf space $\mathcal{L}$ has the structure of a $2$-dimensional, topological, possibly non-Hausdorff manifold equipped with a measure which in local charts is homeomorphic to the standard Lebesgue measure on $\R^2$. This means that $\mathcal{L}$ admits an atlas whose transition maps are area-preserving homeomorphisms between open subsets of $\R^2$.
    \end{claim}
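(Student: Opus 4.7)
The plan is to construct a $2$-dimensional measured atlas on $\mathcal{L}$ using local transversals coming from the defining $C^0$ Hamiltonian atlas of $\Omega$. For each $\ell \in \mathcal{L}$, pick a representative $p \in W$ and a $C^0$ Hamiltonian chart $\phi : U \to (-\varepsilon, \varepsilon) \times B(a)$ with $\phi(p) = (0,0)$; the transversal $T := \phi^{-1}(\{0\} \times B(a))$, carrying the Lebesgue measure on $B(a) \subset \R^2$, is my candidate chart for $\mathcal{L}$ around $\ell$, via the composition $B(a) \cong T \hookrightarrow W \twoheadrightarrow \mathcal{L}$, where the last arrow is the quotient projection $\pi$.

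The main step is to show that, after shrinking $T$, the restriction $\pi|_T : T \to \mathcal{L}$ is a homeomorphism onto an open subset. The characteristic leaf $L_p$ of $(W,\Omega)$ through $p$ has an embedded compact closure in $Y$, so $L_p \cap T$ is a finite discrete set, and I can shrink $T$ so that $L_p \cap T = \{p\}$. The \emph{hard part} is then a continuity–compactness argument ruling out that nearby leaves re-enter $T$: using that $\varphi^t$ is continuous on the compact manifold $Y$ and that exhaustiveness of $A$ bounds the $\varphi^t$-traversal time of every leaf in $W$ by some uniform $\tau > 0$, the orbit of any point of $T$ close to $p$ exits $A$ at $\alpha(\{t_1\}\times K)$ and re-enters at $\alpha(\{t_0\}\times K)$ within time $\tau$, while remaining close to the corresponding portion of $L_p$; for $T$ chosen small enough this orbit does not revisit $T$. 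Openness of $\pi(T)$ and continuity of the inverse follow immediately from the local product structure of $C^0$ Hamiltonian charts, since the saturation of $T$ in $W$ is the open set $\pi^{-1}(\pi(T))$ and every point of this saturation has a unique associated point in $T$ within its chart $U$.

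Finally, I would verify that, given two such charts $T_1$ and $T_2$ whose images in $\mathcal{L}$ meet, the transition map $h : V_1 \subset T_1 \to V_2 \subset T_2$ is the holonomy map sending $q$ to the unique point of $T_2$ on the same characteristic leaf as $q$. The local product structure makes $h$ a homeomorphism, and the defining property of the transverse measure of $\Omega$—combined with the fact that in $C^0$ Hamiltonian charts this transverse measure is standard Lebesgue—shows that $h$ preserves the respective Lebesgue measures, hence is an area-preserving homeomorphism between open subsets of $\R^2$. The coorientation of $\Omega$ ensures these transition maps can be chosen orientation preserving. Regarding Hausdorffness: two distinct leaves of $(W,\Omega)$ may accumulate against each other as they approach $\partial A$ from opposite sides of $A$, producing pairs of points of $\mathcal{L}$ that cannot be separated by disjoint open sets; this genuine non-Hausdorff behavior is permitted by the statement and does not obstruct the local atlas produced above.
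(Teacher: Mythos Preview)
Your proposal is correct and takes essentially the same approach as the paper: both build charts on $\mathcal{L}$ from local transversals in $C^0$ Hamiltonian charts, isolate injectivity of $\pi|_T$ as the main point, and deduce it from the bounded traversal time furnished by exhaustiveness of $A$. The only difference is cosmetic: you argue injectivity directly via uniform continuity of $\varphi$ over the bounded time interval, while the paper phrases the same step as a contradiction---a failure of injectivity for every shrinking of $T$ yields a sequence $(p_k,t_k)$ with $t_k$ bounded and bounded away from zero, whose limit produces a periodic orbit of $\varphi$ contained in $\overline{W}$, contradicting that the interior of $A$ is exhaustive.
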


    \begin{rem}
        It is not surprising to encounter non-Hausdorff manifolds in the context of leaf spaces, see for example the work of Haefliger and Reeb \cite{hr57}, which studies non-Hausdorff $1$-manifolds in connection with foliations of $\R^2$.
    \end{rem}

    \begin{proof}
        Consider a point $p \in W$. Since $W$ is an open subset of $Y$, for $b>0$ sufficiently small, we can chose an embedding of $C^0$ Hamiltonian structures $\beta : ( (-1,1)\times B(b), \omega_{\operatorname{std}})\hookrightarrow (W,\Omega)$ such that $\beta(0,0) = p$. We show that after possibly shrinking $b$, we can assume that the restriction of the projection $\operatorname{pr}: W\rightarrow \mathcal{L}$ to $\beta(\{0\}\times B(b))$ is injective. Indeed, assume by contradiction that this is not the case. Then there exist a sequence of points $p_k \in \beta(\{0\}\times B(k^{-1}b))$ and a sequence of times $t_k \in \R \setminus \{0\}$ such that $\varphi^{t_k}(p_k) \in \beta(\{0\}\times B(k^{-1}b))$ and such $\varphi^t(p_k)$ remains in $W$ as $t$ ranges from $0$ to $t_k$.  Here, $\varphi^t$ is a choice of flow as in Definition \ref{defn:exhaustive-set}. Since the complement of $W$ is exhaustive, the sequence $t_k$ must be bounded. Clearly, the sequence must also be bounded away from zero. After passing to a subsequence, we can therefore assume that $t_k$ converges to $t_* \in \R\setminus\{0\}$. This implies that $p$ is a periodic orbit of $\varphi^t$ of period $t_*$. Moreover, this periodic orbit must be contained in the closure of $W$ since it is the limit of flow line segments contained in $W$. But this contradicts the fact that the complement of the closure of $W$ (i.e.\ the interior of $A$) is exhaustive.

        The above discussion yields a continuous injective map
        \begin{equation*}
            \gamma: B(b) \cong \{0\}\times B(b) \subset (-1,1)\times B(b) \overset{\beta}{\rightarrow} W \overset{\operatorname{pr}}{\rightarrow} \mathcal{L}
        \end{equation*}
        mapping $0$ to $\operatorname{pr}(p)$. We claim that this map is open. This amounts to showing that $\operatorname{pr}^{-1}(\gamma(V)) \subset W$ is open for every open subset $V\subset B(b)$. In order to see this, consider a point $q \in \operatorname{pr}^{-1}(\gamma(V))$. The point $q$ being in $\operatorname{pr}^{-1}(\gamma(V))$ means that there exists a line segment contained in a characteristic leaf of $(W,\Omega)$ which connects $q$ to a point in $\beta(\{0\}\times V)$. Since $W$ and $V$ are open, every point $q'$ in a neighborhood of $q$ can also be connected to $\beta(\{0\}\times V)$ via a line segment contained in a leaf of $(W,\Omega)$. Thus $\operatorname{pr}^{-1}(\gamma(V))$ contains a neighborhood of $q$, showing that $\operatorname{pr}^{-1}(\gamma(V))$ is open.
        
        We conclude that $\gamma$ is a homeomorphism between $B(b)$ and an open neighborhood of $\operatorname{pr}(p)$ in $\mathcal{L}$. Since $p\in W$ was chosen arbitrarily, this implies that $\mathcal{L}$ is a $2$-dimensional, topological, possibly non-Hausdorff manifold. The transverse measure on the characteristic foliation of $(W,\Omega)$ clearly descends to a measure on $\mathcal{L}$ locally homeomorphic to the standard $2$-dimensional Lebesgue measure.
    \end{proof}
    
    \begin{claim}
    \label{cl:leaf-space-smoothing}
        $\mathcal{L}$ has a smoothing, i.e.\ it admits an atlas whose transition maps are area-preserving diffeomorphisms between open subsets of $\R^2$.
    \end{claim}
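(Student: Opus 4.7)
The strategy is to upgrade the topological, locally-Lebesgue structure on $\mathcal{L}$ to a smooth structure in which $\mu$ is represented by a genuine smooth positive area form, and then invoke Moser's theorem to produce the required charts. To begin, I would fix any smooth structure $\mathcal{S}$ on $\mathcal{L}$: even though $\mathcal{L}$ may fail to be Hausdorff, smoothability is a local property and is provided by Rado's smoothing theorem applied locally on Hausdorff pieces. Orientability of $\mathcal{L}$ follows from the coorientation of the characteristic foliation of $\Omega$ together with the ambient orientation of $Y$, so I may also fix a smooth positive area form $\omega_0$ on $(\mathcal{L}, \mathcal{S})$.

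Second, I would invoke an Oxtoby--Ulam type result to produce a homeomorphism $g : \mathcal{L} \to \mathcal{L}$ with $g_{*}\mu$ equal to the Borel measure induced by $\omega_0$. This is legitimate because both measures are non-atomic, have full support, and are locally equivalent to the standard Lebesgue measure. Defining the pulled-back smooth structure $\mathcal{S}' := g^{*}\mathcal{S}$ on $\mathcal{L}$, the map $g$ becomes by construction a diffeomorphism $(\mathcal{L}, \mathcal{S}') \to (\mathcal{L}, \mathcal{S})$, so $\mu = g^{*}\omega_0$ is a smooth positive area form with respect to $\mathcal{S}'$.

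With $\mu$ now a smooth area form on a smooth $2$-manifold, the Darboux--Moser theorem provides, at each $p \in \mathcal{L}$, a smooth chart $\phi_p : U_p \to B_p \subset \R^2$ pushing $\mu$ to the standard Lebesgue measure on $B_p$. Any two such charts are related on their overlaps by a composition of smooth diffeomorphisms and preserve Lebesgue measure, so the transitions $\phi_q \circ \phi_p^{-1}$ are area-preserving diffeomorphisms between open subsets of $\R^2$. The collection $\{\phi_p\}$ is therefore the atlas asserted by the claim.

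The principal obstacle is the Oxtoby--Ulam step in two respects: the classical theorem is stated for Hausdorff manifolds, while $\mathcal{L}$ may not be Hausdorff, and $\mathcal{L}$ may be noncompact with measures of infinite total mass. I would resolve both by running the construction inductively on a countable, locally finite Hausdorff cover of $\mathcal{L}$ by relatively compact open sets of finite $\mu$-mass, extending a partially defined rectifying homeomorphism at each stage via cut-offs supported in regions where $\mu$ and $\omega_0$ already agree locally with Lebesgue, so that the extensions compose without disturbing the previously achieved matches.
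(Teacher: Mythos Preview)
Your proposal has two genuine gaps, both stemming from the non-Hausdorff nature of $\mathcal{L}$, which you acknowledge but do not adequately address.

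\textbf{Step 1 is not justified.} Smoothability of a topological manifold is \emph{not} a local property in the sense you claim: every point trivially has a smoothable neighborhood, but producing a global smooth structure means making all transition maps smooth, which is a global compatibility problem. Rad\'o's theorem concerns triangulation of \emph{Hausdorff} surfaces and does not yield a smoothing of a non-Hausdorff $\mathcal{L}$ by ``applying it on Hausdorff pieces.'' The paper explicitly points out that the standard smoothing results do not apply here. To carry out this step you would need an inductive chart-by-chart smoothing of transition maps---but that is precisely the argument the paper gives (with area-preservation built in), so you have not avoided the difficulty.

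\textbf{Step 3 is not justified either.} The Oxtoby--Ulam theorem is stated for compact Hausdorff manifolds; your proposed inductive patching on a locally finite Hausdorff cover is too vague. After rectifying on one chart $U_1$ via a homeomorphism $g_1$, you must extend to $U_2$ by a homeomorphism that \emph{agrees with $g_1$} on $U_1\cap U_2$, not merely one that is the identity there; controlling this near non-Hausdorff points (where distinct charts overlap in topologically delicate ways) is exactly the subtle part, and you have not explained how to do it. There is also no discussion of why the total masses of $\mu$ and $\omega_0$ can be matched on each piece, which Oxtoby--Ulam requires.

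The paper's approach is more direct and avoids both issues: starting from a finite atlas with area-preserving \emph{homeomorphism} transitions, it runs through the pairs $(j,i)$ with $j<i$ and, at each stage, replaces the transition $\varphi_{ji}$ by a nearby area-preserving \emph{diffeomorphism} obtained from Proposition~\ref{prop:smooth_approximation_area_preserving_homeos}, absorbing the correction into the chart $\varphi_i$ and choosing the approximation to agree with $\varphi_{ji}$ on the locus already smoothed. This handles smoothing and area-preservation simultaneously, uses only chart-local data, and is indifferent to whether $\mathcal{L}$ is Hausdorff.
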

    
    \begin{proof}
        It is well-known that every Hausdorff topological $2$-manifold admits a smoothing. Indeed, by a theorem of Rad\'{o} \cite{rad25} (see also \cite[\S 8, Theorem 3]{moi77}) every Hausdorff topological $2$-manifold admits a piecewise linear structure. Moreover, piecewise linear $2$-manifolds, which are automatically Hausdorff, can be smoothed, see e.g.\ \cite[Theorem 3.10.8]{thu97}. Claim \ref{cl:leaf-space-smoothing} cannot be directly deduced from these results since it involves non-Hausdorff manifolds and area-preserving homeomorphisms/diffeomorphisms. For this reason, we provide a proof of Claim \ref{cl:leaf-space-smoothing}. As we will now explain, Claim \ref{cl:leaf-space-smoothing} essentially boils down to the fact that area-preserving homeomorphisms can be approximated by area-preserving diffeomorphisms, see Proposition \ref{prop:smooth_approximation_area_preserving_homeos}.
        
        Note that $\mathcal{L}$ admits a finite open covering by charts $\varphi_i : U_i \rightarrow V_i\subset \R^2$ with $1\leq i \leq n$. In the following, it will be useful to be able to replace these charts by smaller charts $\varphi_i|_{U_i'}:U_i'\rightarrow \varphi_i(U_i')$ for relatively compact open subsets $U_i' \Subset U_i$ which still cover $\mathcal{L}$. We can choose our initial charts $\varphi_i$ such that this is possible. Note that in this situation we can shrink the charts in such a way that it is possible to repeat the chart shrinking for the resulting cover by charts.
        
        For all $i,j$, we set $V_{ji} \coloneqq \varphi_i(U_i\cap U_j)$ and let
        \begin{equation*}
            \varphi_{ji} : V_{ji} \rightarrow V_{ij} \qquad \varphi_{ji} \coloneqq \varphi_j \circ \varphi_i^{-1}
        \end{equation*}
        denote the transition map, which is an area-preserving homeomorphism. Our goal is to modify the charts $\varphi_i$ such that the transition maps $\varphi_{ji}$ become smooth for all $i,j$. Let $1 \leq j < i \leq n$. Suppose we have already modified the charts in such a way that $\varphi_{\ell k}$ is smooth for all $k,\ell < i$ and such that $\varphi_{ki}$ is smooth for all $k<j$. We explain how to modify the charts to make $\varphi_{ji}$ smooth without destroying smoothness of the transition maps we have already made smooth. Set $V\coloneqq V_{ji} \cap \bigcup_{k<j} V_{ki}$. It follows from our assumptions that the restriction of $\varphi_{ji}$ to $V$ is smooth. After slightly shrinking all chart neighborhoods $U_k$, we can assume that $\varphi_{ji}$ is smooth on an open neighborhood of the closure of $V$ inside $V_{ji}$. Pick a non-negative continuous function $\rho : V_{ji} \rightarrow \R_{\geq 0}$ such that $V$ is contained in $\rho^{-1}(\{0\})$ and $\rho^{-1}(\{0\})$ is contained in the smooth locus of $\varphi_{ji}$. Moreover, assume that $\rho$ decays to $0$ towards the boundary of $V_{ji}$. By Proposition \ref{prop:smooth_approximation_area_preserving_homeos}, we can find an area-preserving diffeomorphism $\varphi_{ji}':V_{ji}\rightarrow V_{ij}$ such that $|\varphi_{ji}'(p)-\varphi_{ji}(p)|\leq \rho(p)$ for all $p\in V_{ji}$. Since $\rho$ decays to $0$ towards the boundary of $V_{ji}$, the homeomorphism $\varphi_{ji}^{-1} \circ  \varphi_{ji}'$ of $V_{ji}$ extends to an area-preserving homeomorphism $\chi$ of $V_i$ which agrees with the identity on the complement of $V_{ji}$. Now define the chart $\varphi_i'\coloneqq \chi^{-1}\circ \varphi_i$. The transition map between $\varphi_i'$ and $\varphi_j$ is simply given by $\varphi_{ji}'$ and is therefore smooth. Moreover, note that since $\varphi_{ji}$ and $\varphi_{ji}'$ agree on $V$, the homeomorphism $\chi$ restricts to the identity on $V_{ki}$ for all $k<j$. Thus the transition map $\varphi_{ki}$ does not change and remains smooth. Now, simply replace $\varphi_i$ by $\varphi_i'$. This concludes our construction of a smoothing of $\mathcal{L}$     
    \end{proof}
    
    We will now use the smooth structure on $\mathcal{L}$ to construct a smoothing of $(W,\Omega)$, i.e.\ we will construct a $C^0$ Hamiltonian atlas for $(W,\Omega)$ whose transition maps are smooth.

    Recall that we have a fixed-point-free flow $\varphi^t$ on $Y$ whose flow lines along with their natural orientations agree with the oriented leaves of the characteristic foliation of $\Omega$. Let $a>0$ be sufficiently small and pick finitely many topological embeddings $\iota_i: B(a) \hookrightarrow W$ such that the images of $\operatorname{pr}\circ \iota_i$ cover $\mathcal{L}$ and such that each $\operatorname{pr}\circ \iota_i$ is an area-preserving embedding which is smooth with respect to the smooth structure on $\mathcal{L}$ constructed in Claim \ref{cl:leaf-space-smoothing}. Given two embeddings $\iota_i$ and $\iota_j$, we define 
    \begin{equation*}
        V_{ji} \coloneqq (\operatorname{pr}\circ \iota_i)^{-1}(\operatorname{pr}\circ\iota_i(B(a))\cap \operatorname{pr}\circ\iota_j(B(a))) \subset B(a).
    \end{equation*}
    Note that, for every point $p \in V_{ji}$, there exists a unique transfer time $t_{ji}(p)\in \R$ such that $\varphi^{t_{ji}(p)}(\iota_i(p)) \in \operatorname{im}(\iota_j)$ and $\varphi^t(\iota_i(p)) \in W$ for all $t$ ranging from $0$ to $t_{ji}(p)$. The functions $t_{ji}: V_{ji}\rightarrow \R$ are continuous. We can perturb the embeddings $\iota_i$, without changing $\operatorname{pr} \circ \iota_i$,  such that all the $t_{ji}$ become smooth. This can be done via an elementary smoothing process similar to the one described in the proof of Claim \ref{cl:leaf-space-smoothing}. In place of Proposition \ref{prop:smooth_approximation_area_preserving_homeos}, the smoothing here relies on the simpler fact that real-valued continuous functions can be approximated by smooth ones. We omit the details of this.
    
    Now, we say an embedding $\iota: B(b)\hookrightarrow W$ to be smooth if $\operatorname{pr}\circ \iota$ is a smooth area-preserving embedding and if, for every embedding $\iota_i$, the locally defined transfer time between the image of $\iota$ and the image of $\iota_i$ is smooth. Note that by construction, if the transfer time between the image of $\iota$ and the image of $\iota_i$ is smooth near some point $p\in B(b)$, then the transfer time between the image of $\iota$ and the image of any other $\iota_j$ is smooth near $p$ as well. In particular, this implies that we can find a smooth embedding $\iota$ passing through any point in $W$.

    Finally, consider all $C^0$ Hamiltonian embeddings of the form
    \begin{equation*}
        \psi : ((-\varepsilon,\varepsilon)\times B(b),\omega_{\operatorname{std}}) \hookrightarrow (W,\Omega) \qquad \psi(t,p) = \varphi^t(\iota(p))
    \end{equation*}
    for some $\varepsilon>0$ and some smooth embedding $\iota : B(b)\hookrightarrow W$. We can cover $W$ by images of embeddings of this form. The inverses of these embeddings yield the desired collection of $C^0$ Hamiltonian charts with smooth transition maps.  
    
    We have now constructed a smoothing of $(W, \Omega)$; however, it does not in general extend to a global smoothing of $(Y,\Omega)$. The problem is the following: Let $S_- \subset \alpha((0,t_0)\times \Sigma)$ and $S_+ \subset \alpha((t_1,1)\times \Sigma)$ be two surfaces which are transverse to the characteristic foliation and smoothly embedded with respect to the smoothing of $(W,\Omega)$ constructed above. Moreover, suppose that each leaf segment of the form $\alpha((0,1)\times \{p\})$ for $p\in \Sigma$ intersects each of the surfaces $S_\pm$ exactly once. Traversing $\alpha((0,1)\times \Sigma)$ along the characteristic foliation then yields a homeomorphism $S_- \rightarrow S_+$. This homeomorphism is not going to be smooth in general and this is exactly the obstruction to extending the smoothing of $(W,\Omega)$ to all of $Y$. We may, however, modify $\Omega$ by inserting an $\Omega$-plug of the form $\mathcal{Q} = (\Sigma, \omega_\Sigma, \alpha, (\psi^t)_{t\in [0,1]})$ such that the resulting map $S_-\rightarrow S_+$ becomes smooth. We may then extend our smoothing of $(W, \Omega)$ to a global smoothing of the $C^0$ Hamiltonian structure $\Omega \# \mathcal{Q}$ on $Y$.

    Now, simply set $\omega  = \Omega\#\mathcal{Q} $ and $\mathcal{P}\coloneqq \overline{\mathcal{Q}}$, where $\overline{\mathcal{Q}}$ denotes the inverse plug of $\mathcal{Q}$. Then, $\omega\#\mathcal{P} = \Omega\#\mathcal{Q}\#\mathcal{P} = \Omega$, as desired.
    
    To prove the strengthened statement in Remark \ref{rem:from_C0_to_smooth_plus_plug_stronger}, the image of $\mathcal{P}$ is contained in the given exhaustive open set $U$ because the embedding $\alpha$ was chosen to have image contained in $U$.
\end{proof}

\subsection{Smoothing \texorpdfstring{$C^0$}{C0} Hamiltonian homeomorphisms}
\label{sec:smnoothing homeos}

We prove in this section that homeomorphisms of $C^0$ Hamiltonian structures on  closed $3$-manifolds may be smoothed, up to insertion of plugs.

\begin{thm}
\label{thm:smoothing_homeomorphisms}
    For $i \in \{1,2\}$, let $Y_i$ be a closed $3$-manifold and let $\omega_i$ be a smooth Hamiltonian structure on $Y_i$. Let $\mathcal{P}_i$ be a $C^0$ $\omega_i$-plug and assume that
    \begin{equation*}
        \psi : (Y_1,\omega_1\# \mathcal{P}_1) \rightarrow (Y_2,\omega_2\#\mathcal{P}_2)
    \end{equation*}
    is a homeomorphism of $C^0$ Hamiltonian structures. Then, there exist smooth $\omega_i$-plugs $\mathcal{Q}_i$ for $i\in \{1,2\}$ and a diffeomorphism of Hamiltonian structures
    \begin{equation*}
        \varphi: (Y_1, \omega_1 \# \mathcal{Q}_1) \rightarrow (Y_2, \omega_2  \# \mathcal{Q}_2)
    \end{equation*}
    with the following properties:
    \begin{enumerate}
        \item $\operatorname{Cal}(\mathcal{Q}_1) - \overline{\operatorname{Cal}}(\mathcal{P}_1) = \operatorname{Cal}(\mathcal{Q}_2) - \overline{\operatorname{Cal}}(\mathcal{P}_2)$

        \item $\varphi$ is homotopic to $\psi$ through homeomorphisms.
    \end{enumerate}
\end{thm}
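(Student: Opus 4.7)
My plan is to adapt the leaf-space smoothing argument of Theorem \ref{thm:from_C0_to_smooth_plus_plug} to the relative setting of the homeomorphism $\psi$, simultaneously producing smooth plugs $\mathcal{Q}_i$ on both sides and a diffeomorphism between the resulting smooth Hamiltonian structures. First, using Lemma \ref{lem:exhaustive-surface} applied to an exhaustive open set containing $\operatorname{im}(\mathcal{P}_2)\cup\psi(\operatorname{im}(\mathcal{P}_1))$ in $(Y_2,\omega_2\# \mathcal{P}_2)$, I would produce an open surface $(\Sigma,\omega_\Sigma)$, a compact subsurface $K\subset\Sigma$, and a $C^0$ Hamiltonian embedding $\alpha_2 : ((0,1)\times\Sigma,\omega_\Sigma) \hookrightarrow (Y_2,\omega_2\# \mathcal{P}_2)$ with $\alpha_2((0,1)\times K)$ exhaustive and containing $\operatorname{im}(\mathcal{P}_2)\cup\psi(\operatorname{im}(\mathcal{P}_1))$. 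Setting $\alpha_1:=\psi^{-1}\circ\alpha_2$ yields a matching $C^0$ flow box on $Y_1$ with $\psi\circ\alpha_1=\alpha_2$. Fix $0<t_0<t_1<1$ and set $A_i:=\alpha_i([t_0,t_1]\times K)$ and $W_i:=Y_i\setminus A_i$; on $W_i$ the $C^0$ Hamiltonian structure $\omega_i\# \mathcal{P}_i$ coincides with the smooth $\omega_i$, so the restriction of $\psi$ is a homeomorphism of smooth Hamiltonian structures $(W_1,\omega_1)\to(W_2,\omega_2)$.

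The heart of the argument is to run the leaf-space construction from the proof of Theorem \ref{thm:from_C0_to_smooth_plus_plug} simultaneously on both sides. The leaf spaces $\mathcal{L}_i$ of the characteristic foliation of $(W_i,\omega_i)$ are topological (possibly non-Hausdorff) surfaces equipped with locally standard measures; since $\psi$ preserves characteristic leaves and transverse measures, it descends to an area-preserving homeomorphism $\bar\psi : \mathcal{L}_1\to\mathcal{L}_2$. I would then choose the smoothings of $\mathcal{L}_1$ and $\mathcal{L}_2$ compatibly so that $\bar\psi$ becomes a smooth area-preserving diffeomorphism: first fix any smoothing of $\mathcal{L}_2$ as in Claim \ref{cl:leaf-space-smoothing}, transport charts to $\mathcal{L}_1$ via $\bar\psi$, and then run the iterative smoothing procedure from that claim (using Proposition \ref{prop:smooth_approximation_area_preserving_homeos}) to replace the pulled-back $C^0$ transition maps on $\mathcal{L}_1$ by smooth area-preserving ones without disturbing overlaps already made smooth. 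With $\bar\psi$ now smooth, the remaining steps in the proof of Theorem \ref{thm:from_C0_to_smooth_plus_plug} applied on each side produce smooth plugs $\mathcal{Q}_i$ with image in $A_i$ such that $\omega_i\# \mathcal{Q}_i$ is smooth, together with a smooth diffeomorphism of Hamiltonian structures $\varphi : (Y_1,\omega_1\# \mathcal{Q}_1)\to(Y_2,\omega_2\# \mathcal{Q}_2)$ that coincides with $\psi$ on a dense open subset of $W_1$.

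The homotopy from $\varphi$ to $\psi$ then follows as in Lemma \ref{lem:C0_plugs_with_same_time_1_maps}: outside a slight enlargement of the $A_i$ the two maps agree up to a $C^0$ isotopy supported in $A_i$, while inside the flow boxes they differ by a loop in $\overline{\operatorname{Ham}}(\Sigma)$, which is null-homotopic since $\Sigma$ is open. For the Calabi identity, the ``residual plug'' $\mathcal{R}_i$ relating $\omega_i\# \mathcal{Q}_i$ to $\omega_i\# \mathcal{P}_i$ is supported in the common flow box, and $\varphi$ conjugates $\mathcal{R}_1$ to a plug whose time-$1$ map agrees in $\overline{\operatorname{Ham}}(\Sigma)$ with that of $\mathcal{R}_2$ under the identification $\alpha_2\circ\alpha_1^{-1}$ of the flow-box models; the naturality \eqref{eqn:naturality_topological} of the universal Calabi extension then yields $\overline{\operatorname{Cal}}(\mathcal{R}_1)=\overline{\operatorname{Cal}}(\mathcal{R}_2)$, which translates via additivity of Calabi under plug composition into $\operatorname{Cal}(\mathcal{Q}_1)-\overline{\operatorname{Cal}}(\mathcal{P}_1)=\operatorname{Cal}(\mathcal{Q}_2)-\overline{\operatorname{Cal}}(\mathcal{P}_2)$. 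The principal obstacle is this simultaneous leaf-space smoothing: one must carry out the argument of Claim \ref{cl:leaf-space-smoothing} in a $\bar\psi$-equivariant manner, and verify that the resulting plugs $\mathcal{Q}_i$ actually lie in $\operatorname{Ham}(\Sigma)$ rather than merely in the area-preserving group, which requires careful tracking of mass flow through the simultaneous approximations.
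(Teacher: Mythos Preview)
Your proposal has a genuine structural gap at the very first step, and the approach diverges substantially from the paper's.

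\textbf{The setup does not work.} Lemma \ref{lem:exhaustive-surface} produces an exhaustive flow box \emph{inside} a given exhaustive open set; it does not produce a flow box \emph{containing} a prescribed compact set. There is no reason a single product neighborhood $(0,1)\times\Sigma$ can swallow $\operatorname{im}(\mathcal{P}_2)\cup\psi(\operatorname{im}(\mathcal{P}_1))$, and even if it could, the middle slab $A_i=\alpha_i([t_0,t_1]\times K)$ is strictly smaller, so your claim that $\omega_i\#\mathcal{P}_i$ agrees with $\omega_i$ on $W_i=Y_i\setminus A_i$ is unjustified. Consequently the restriction $\psi|_{W_1}$ need not be a homeomorphism between smooth Hamiltonian structures, and the leaf-space picture you build on top of this collapses.

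\textbf{The core difficulty is not addressed.} Even granting your setup, the leaf spaces $\mathcal{L}_i$ are \emph{already smooth} (they come from the smooth $\omega_i$), so there is nothing to ``smooth'' on them; the problem is that $\bar\psi:\mathcal{L}_1\to\mathcal{L}_2$ is only an area-preserving homeomorphism between non-Hausdorff surfaces. Approximating it by a diffeomorphism, lifting that to a diffeomorphism of $3$-manifolds, and then showing the residual plugs on the two sides have equal $\overline{\operatorname{Cal}}$ is precisely the content of the theorem. Your last paragraph asserts this via ``$\varphi$ conjugates $\mathcal{R}_1$ to a plug whose time-$1$ map agrees with that of $\mathcal{R}_2$'', but that is exactly what must be proved, not assumed.

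\textbf{How the paper proceeds.} The paper first reduces to the case where $\mathcal{P}_2$ is trivial by pulling back $\overline{\mathcal{P}}_2$ and sliding it off $\mathcal{P}_1$. It then makes the image $U$ of $\mathcal{P}_1$ exhaustive and, via Lemma \ref{lem:covering_by_flowboxes}, covers $Y_1\setminus U$ by finitely many cylinders $\iota_i(C_{\Sigma_i})$ whose collar pieces $\iota_i(C_{\Sigma_i}^\pm)$ sit inside $U$. The key device is to insert, for each $i$, a \emph{pair} of plugs $\mathcal{Z}_i^-$ and $\mathcal{Z}_i^+$ with inverse Hamiltonian isotopies $\varphi_i^t$ and $(\varphi_i^t)^{-1}$; this makes $\psi$ transversely smooth on one more cylinder while the Calabi contributions cancel by design, so $\overline{\operatorname{Cal}}(\mathcal{Q}_1)=\overline{\operatorname{Cal}}(\mathcal{P}_1)$ is automatic. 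A separate argument (Claim \ref{cl:plug-is-smooth}) shows the resulting time-$1$ map is actually smooth, and Lemma \ref{lem:transversely_smooth_topological_conjugacy_implies_diffeomorphism} upgrades transverse smoothness to a genuine diffeomorphism. The Calabi identity thus emerges from the cancelling-pair construction rather than from any global comparison of residual plugs.
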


\begin{rem}
    As an immediate corollary of Theorem \ref{thm:smoothing_homeomorphisms}, one obtains that two exact smooth Hamiltonian structures $\omega_1$ and $\omega_2$ on $Y$ which are homeomorphic have the same helicity. Indeed, we can apply Theorem \ref{thm:smoothing_homeomorphisms} with empty plugs $\mathcal{P}_i$. Then the resulting smooth $\omega_i$-plugs $\mathcal{Q}_i$ have the same (smooth) Calabi invariant, i.e.\ $\operatorname{Cal}(\mathcal{Q}_1) = \operatorname{Cal}(\mathcal{Q}_2)$. Since the smooth Hamiltonian structures $\omega_1\#\mathcal{Q}_1$ and $\omega_2\#\mathcal{Q}_2$ are diffeomorphic, they have the same helicity. Therefore, we can conclude that $\mathcal{H}(\omega_1) = \mathcal{H}(\omega_2)$ by Lemma \ref{lem:helicity_after_plug_insertion}.
\end{rem}

The proof of Theorem \ref{thm:smoothing_homeomorphisms}, which takes up the rest of this section, requires some preparatory lemmas.

Let $(\Sigma, \omega)$ be a surface equipped with an area-form $\omega$. Let $C_\Sigma$ denote the cylinder $(0,3) \times \Sigma$; we equip it with the smooth Hamiltonian structure induced by $\omega$. We also introduce the notation
\begin{equation*}
    C^-_\Sigma \coloneqq (0,1)\times \Sigma, \qquad C^0_\Sigma \coloneqq (1,2)\times \Sigma, \qquad C^+_\Sigma \coloneqq (2,3)\times \Sigma.
\end{equation*}

Figure~\ref{fig:C_Sigma} illustrates \( C_\Sigma \) and may aid the reader during the proof of Lemma~\ref{lem:covering_by_flowboxes}. In that proof, it is helpful to visualize \( C^-_\Sigma \) and \( C^+_\Sigma \) as being thin relative to \( C^0_\Sigma \). This can be made precise via a reparametrization of the interval \( (0,3) \), but we omit the details to keep the notation light and the argument clear.

\begin{figure}[h!]
    \centering
    \def\svgwidth{.2\textwidth}
\begingroup%
  \makeatletter%
  \providecommand\color[2][]{%
    \errmessage{(Inkscape) Color is used for the text in Inkscape, but the package 'color.sty' is not loaded}%
    \renewcommand\color[2][]{}%
  }%
  \providecommand\transparent[1]{%
    \errmessage{(Inkscape) Transparency is used (non-zero) for the text in Inkscape, but the package 'transparent.sty' is not loaded}%
    \renewcommand\transparent[1]{}%
  }%
  \providecommand\rotatebox[2]{#2}%
  \newcommand*\fsize{\dimexpr\f@size pt\relax}%
  \newcommand*\lineheight[1]{\fontsize{\fsize}{#1\fsize}\selectfont}%
  \ifx\svgwidth\undefined%
    \setlength{\unitlength}{389.96469092bp}%
    \ifx\svgscale\undefined%
      \relax%
    \else%
      \setlength{\unitlength}{\unitlength * \real{\svgscale}}%
    \fi%
  \else%
    \setlength{\unitlength}{\svgwidth}%
  \fi%
  \global\let\svgwidth\undefined%
  \global\let\svgscale\undefined%
  \makeatother%
  \begin{picture}(1,1.66165734)%
    \lineheight{1}%
    \setlength\tabcolsep{0pt}%
    \put(0,0){\includegraphics[width=\unitlength,page=1]{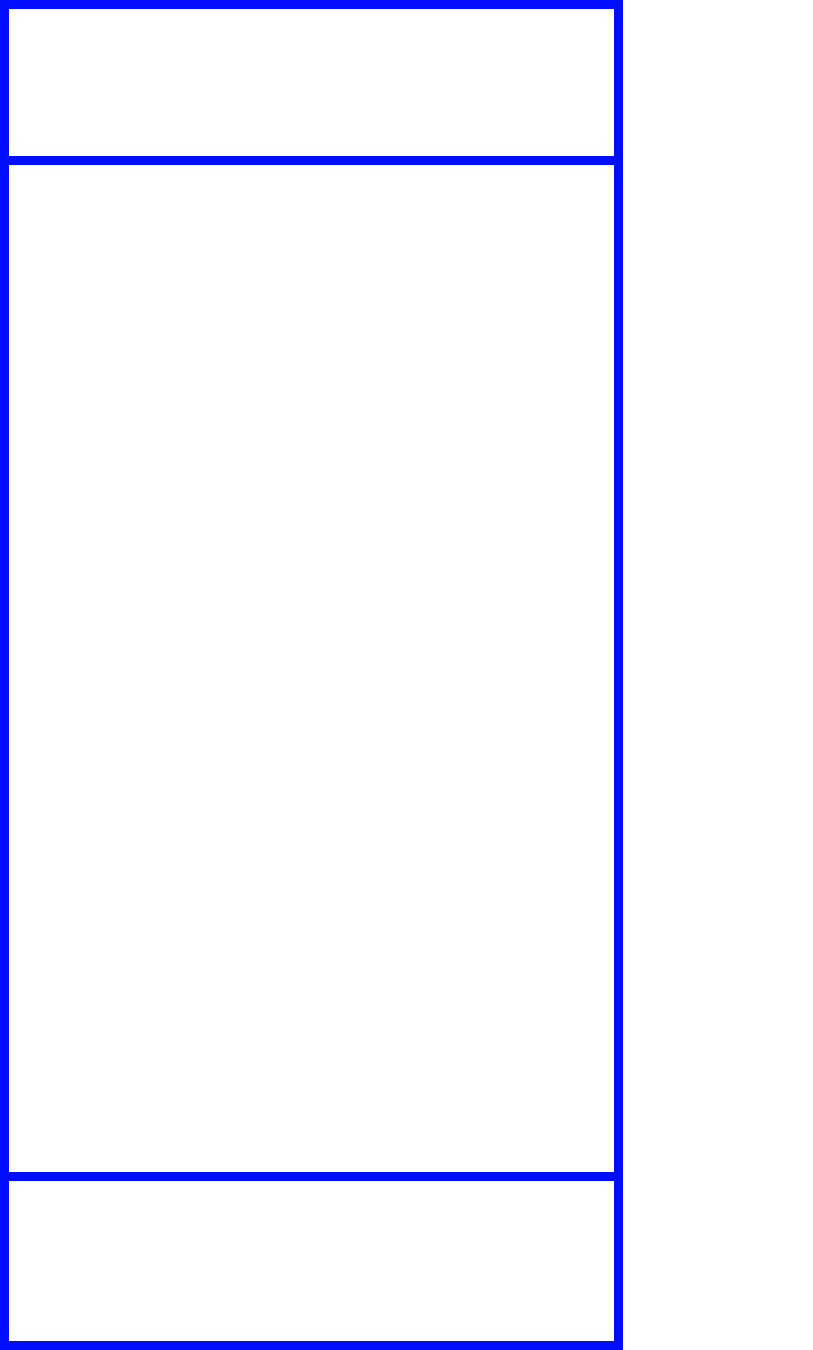}}%
    \put(0.1064068,0.08866378){\color[rgb]{0,0,0}\transparent{0.99000001}\makebox(0,0)[lt]{\lineheight{1.35000002}\smash{\begin{tabular}[t]{l}$C_{\Sigma}^-$\end{tabular}}}}%
    \put(0.09375128,1.53427005){\color[rgb]{0,0,0}\transparent{0.99000001}\makebox(0,0)[lt]{\lineheight{1.35000002}\smash{\begin{tabular}[t]{l}$C_{\Sigma}^+$\end{tabular}}}}%
    \put(0.10216084,0.81767808){\color[rgb]{0,0,0}\transparent{0.99000001}\makebox(0,0)[lt]{\lineheight{1.35000002}\smash{\begin{tabular}[t]{l}$C_{\Sigma}^0$\end{tabular}}}}%
    \put(0,0){\includegraphics[width=\unitlength,page=2]{C_Sigma.pdf}}%
  \end{picture}%
\endgroup%

    \caption{A depiction of $C_\Sigma$. The dotted vertical arrow indicates the direction of the characteristic foliation in $C_\Sigma$.  It is helpful to imagine $C^-_\Sigma , C^+_\Sigma$ as thin relative to $C^0_\Sigma$. }
    \label{fig:C_Sigma}
\end{figure}

\begin{lem}
\label{lem:covering_by_flowboxes}
    Let $Y$ be a closed topological $3$-manifold equipped with a $C^0$ Hamiltonian structure $\Omega$. Let $U,V\subset Y$ be two open sets such that $U\cup V = Y$. Assume that $U$ is exhaustive in the sense of Definition \ref{defn:exhaustive-set}.  
    
    Then there exist open surfaces with area forms $(\Sigma_1,\omega_1), \ldots, (\Sigma_n,\omega_n)$, relatively compact open subsets $S_i \Subset \Sigma_i$, and  topological embeddings of $C^0$ Hamiltonian structures
    \begin{equation*}
        \iota_i : (C_{\Sigma_i}, \omega_i) \hookrightarrow (V,\Omega)
    \end{equation*}
    satisfying the following properties:
    \begin{enumerate}
        \item \label{item:covering_by_flowboxes_covering} $Y\setminus U \subset \bigcup_i \iota_i(C_{S_i}^0)$
        \item \label{item:covering_by_flowboxes_collars} $\overline{\iota_i(C_{\Sigma_i}^\pm)} \subset U\cap V$ for all $i$.
        \item \label{item:covering_by_flowboxes_intersection_with_collars} $\iota_j(C_{\Sigma_j}^\pm) \cap \iota_i(C_{\Sigma_i}) = \emptyset$ for all $j<i$.
    \end{enumerate}
    Moreover, if the restriction of $\Omega$ to $V$ is smooth, we can take all $\iota_i$ to be smooth as well.
\end{lem}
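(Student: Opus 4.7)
The plan is to construct the flow boxes inductively: at each step I choose a point $p_i$ in the yet-uncovered part of $Y \setminus U$ and build a new flow box around it whose full image avoids the closures of all previously constructed collars. The key technical ingredient is a local construction: for any $p \in Y \setminus U \subset V$ and any closed subset $K \subset U \cap V$ with $p \notin K$, one can construct an embedding of $C^0$ Hamiltonian structures $\iota^{p,K} : ((0,3) \times \Sigma^{p,K}, \omega_{\operatorname{std}}) \hookrightarrow V \setminus K$ with $p \in \iota^{p,K}(C^0_{S^{p,K}})$ for some $S^{p,K} \Subset \Sigma^{p,K}$ and $\overline{\iota^{p,K}(C^\pm_{\Sigma^{p,K}})} \subset U \cap V$.

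To build this local flow box, I would fix a flow $\varphi^t$ tangent to the characteristic foliation of $\Omega$ (via Proposition~\ref{prop:flow_on_C0_foliations}) and analyze the leaf through $p$ inside the open set $V \setminus K$, which contains $p$. This leaf stays in $V \setminus K$ on some maximal open time interval about $0$. When this interval terminates at a finite time $b>0$, the endpoint $\varphi^b(p) \in (Y \setminus V) \cup K$, and in either case $\varphi^b(p) \in U$ (since $Y \setminus V \subset U$ and $K \subset U$); openness of $U$ then forces the leaf to lie in $U \cap V \setminus K$ just before $b$. If instead the interval is infinite, the exhaustive property of $U$ produces the same conclusion. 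This yields times $t_- < 0 < t_+$ with $\varphi^{t_\pm}(p) \in U \cap V \setminus K$ and $\{\varphi^t(p) : t \in [t_-, t_+]\} \subset V \setminus K$, and continuity of $\varphi^t$ then supplies a transversal disc around $p$ whose flow, after affine reparametrization of time to $(0,3)$, defines $\iota^{p,K}$ with the desired properties.

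For the iterative assembly, I would set $A_0 := \emptyset$; at step $i$, let $A_{i-1} := \bigcup_{j<i} \iota_j(C^\pm_{\Sigma_j})$, whose closure lies in $U \cap V$ by the inductive form of condition~(2); then pick $p_i \in Y \setminus U$ not contained in any previously constructed middle and set $\iota_i := \iota^{p_i, \overline{A_{i-1}}}$ using the local construction. Condition~(2) is immediate from the local construction, and condition~(3) holds because $\iota_i(C_{\Sigma_i})$ is by construction disjoint from $\overline{A_{i-1}} \supset A_{i-1}$. The main obstacle is proving termination: since the middles $\iota_i(C^0_{S_i})$ may in principle shrink as $\overline{A_{i-1}}$ grows, a naive greedy choice of $p_i$ need not terminate in finitely many steps. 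I would handle this by fixing in advance a finite cover of the compact set $Y \setminus U$ by default middles (produced by the local construction with $K = \emptyset$) and then choosing each $p_i$ consistently with this fixed cover, combined with a Lebesgue-number argument exploiting the positive distance between the compact sets $Y \setminus U$ and $\overline{A_{i-1}}$ to get a uniform lower bound on each new middle's size. Finally, when $\Omega|_V$ is smooth, the flow $\varphi^t$ can be chosen smooth on $V$, so that all embeddings constructed above are automatically smooth.
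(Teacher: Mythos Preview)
Your local construction is correct and cleanly argued: the maximal-interval analysis showing that the leaf through $p$ reaches $U\cap V\setminus K$ in both time directions while staying in $V\setminus K$ is exactly the right observation, and thickening to a flow box is routine. The inductive framework also correctly maintains conditions~(2) and~(3).

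The gap is termination. Your Lebesgue-number sketch does not close it. The distance $d(Y\setminus U,\overline{A_{i-1}})$ is positive for each fixed $i$, but nothing prevents it from shrinking to zero as $i$ grows: each new collar you add can lie closer to $Y\setminus U$ than the last, since when the leaf through $p_i$ first meets $\overline{A_{i-1}}$ you are forced to place the new collar just before that moment, and this can be arbitrarily near $\partial U$. Consequently the transversal disc you can take at the next step may have to shrink, and the new middle need not contain any default middle, nor any ball of a size fixed in advance. ``Choosing $p_i$ consistently with a fixed default cover'' does not help unless you can show the new middle always swallows the corresponding default middle intersected with $Y\setminus U$, and that is precisely what fails when previous collars crowd in. No decreasing complexity measure is exhibited.

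The paper proceeds differently and avoids the termination issue. It first uses compactness to produce a \emph{finite} list of flow boxes satisfying (1) and (2), arranges their collars to be pairwise disjoint, and then achieves (3) by a combinatorial \emph{splitting} procedure: whenever a lower-indexed collar $\iota_j(C_{\Sigma_j}^{*})$ meets a higher-indexed cylinder $\iota_i(C_{\Sigma_i})$, the preimage in $(0,3)\times\Sigma_i$ is a product slab $J\times E$, and one replaces $\iota_i$ by three sub-cylinders---one over $\Sigma_i\setminus E$ and two shorter ones over a neighborhood of $E$, straddling $J$---each disjoint from $\iota_j(C_{\Sigma_j}^{*})$. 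Because every new box is carved out of an old one, all subsequent boxes remain subsets of the original finite family, so there is no shrinking-to-zero phenomenon. What your approach buys is that (3) holds at every stage without post-processing; what the paper's approach buys is that finiteness is immediate from compactness, with (3) handled afterwards by a controlled modification.
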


\begin{proof}
    Let $p \in Y \setminus U$ be an arbitrary point. Let $L$ denote the characteristic leaf passing through $p$. By the exhaustiveness assumption on $U$, there exists a topologically embedded closed interval $I\subset L \cap V$ such that $p$ is contained in the interior of $I$ and the endpoints of $I$ are contained in $U\cap V$. Using sufficiently small local transverse sections of the characteristic foliation at the endpoints of $I$, one can construct an open surface with area form $(\Sigma,\omega)$ and a topological embedding $\iota: (C_\Sigma,\omega)\hookrightarrow (V,\Omega)$ containing $p$ in its image such that $\overline{\iota(C_\Sigma^\pm)} \subset U\cap V$. If the restriction of $\Omega$ to $V$ is smooth, we can construct $\iota$ to be smooth as well.
    
    Since the set $Y\setminus U$ is compact, we can cover it with the images of finitely many such cylinder embeddings. We may therefore pick $n\geq 0$ and tuples $(\Sigma_i,S_i,\iota_i)$ for $1\leq i \leq n$ such that
    \begin{equation*}
        Y\setminus U \subset \bigcup_i \iota_i(C_{S_i}^0) \qquad \text{and} \qquad \overline{\iota_i(C_{\Sigma_i}^\pm)} \subset U\cap V.
    \end{equation*}
    It remains to explain how to achieve property \ref{item:covering_by_flowboxes_intersection_with_collars} in the statement of the lemma. First, note that there is enough flexibility in the construction of $(\Sigma_i,S_i,\iota_i)$ to make sure that the intersection between $\overline{\iota_i(C_{\Sigma_i}^*)}$ and $\overline{\iota_j(C_{\Sigma_j}^\bullet)}$ for $1\leq i,j\leq n$ and $*, \bullet \in \{+,-\}$ is non-empty if and only if $i=j$ and $* = \bullet$.
    
    Moreover, we can arrange $(\Sigma_i,S_i,\iota_i)$ to have the property that, for all $j<i$ and $*\in \{+,-\}$ such that the intersection $\iota_j(C_{\Sigma_j}^*)\cap \iota_i(C_{\Sigma_i})$ is non-empty, there exist an open subset $E \subset \Sigma_i$ and an open interval $J \Subset (1,2)$ such that
    \begin{equation*}
        \iota_i^{-1}(\iota_j(C_{\Sigma_j}^*)) = J \times E.
    \end{equation*}
    In order to resolve, and remove,  the intersection between $\iota_j(C_{\Sigma_j}^*)$ and $\iota_i(C_{\Sigma_i})$, we replace $(\Sigma_i,S_i,\iota_i)$ by three tuples $(\Sigma_i^k,S_i^k,\iota_i^k)$ with $1\leq k\leq 3$ as described in detail below. At the end of this process, the cylinder $\iota_i(C_{\Sigma_i})$ will be replaced by three cylinders $\iota_i^1(C_{\Sigma_i^1}), \iota_i^2(C_{\Sigma_i^2}), \iota_i^3(C_{\Sigma_i^3})$, which will be constructed in such a way that they do not intersect    $\iota_j(C_{\Sigma_j}^*)$. This, of course, requires removing some portions of $\iota_i(C_{\Sigma_i})$, and also $\iota_i(C_{S_i}^0)$. The removed portions will all be contained in $U$ and hence property \ref{item:covering_by_flowboxes_covering} will continue to hold. We will also ensure that property \ref{item:covering_by_flowboxes_collars} continues to hold. However, this is less complicated. 
    
    \begin{figure}[h!]
        \centering
        \def\svgwidth{.9\textwidth}
        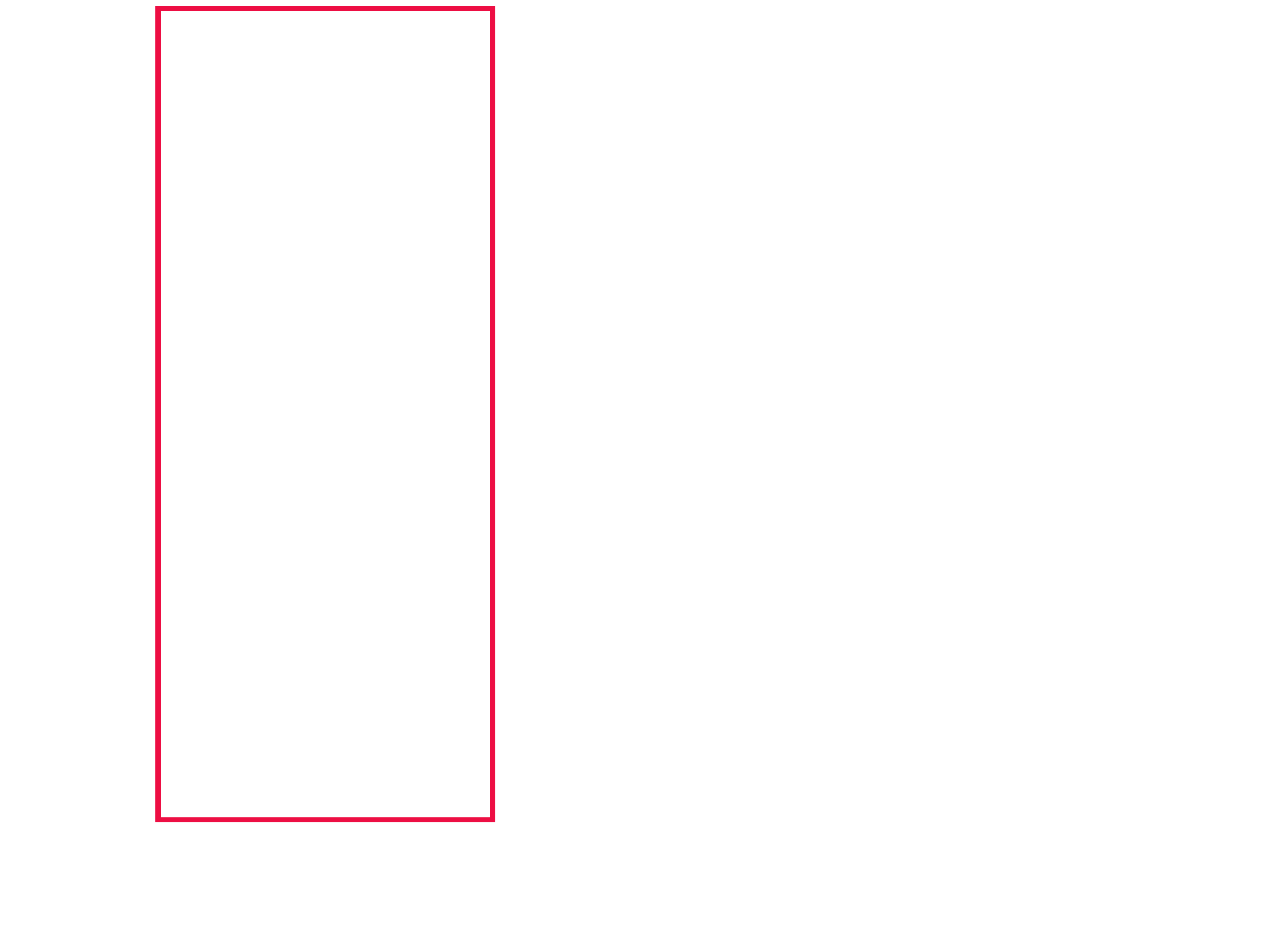
        \caption{Removing intersections: On the left, $C_{\Sigma_j}^+$ intersects $C_{\Sigma_i}$, with $i>j$. On the right, $C_{\Sigma_i}$ is replaced with with $C_{\Sigma_i^1}, C_{\Sigma_i^2},C_{\Sigma_i^3}$.}
        \label{fig:resolving-intersections}
    \end{figure}

    We now proceed with a detailed description of the above process, which is also depicted in Figure \ref{fig:resolving-intersections}. Pick open sets
    \begin{equation*}
        S_i^1\Subset \Sigma_i^1\Subset \Sigma_i \quad \text{and}\quad S_i^2 = S_i^3\Subset \Sigma_i^2 = \Sigma_i^3\Subset \Sigma_i
    \end{equation*}
    such that
    \begin{equation*}
        S_i \subset S_i^1\cup S_i^2, \quad  E\cap \Sigma_i^1 = \emptyset \quad \text{and}\quad \overline{\iota_i(J\times \Sigma_i^2)} \subset U\cap V.
    \end{equation*}
    Moreover, we can arrange $\Sigma_i^2=\Sigma_i^3$ to be contained in an arbitrarily small neighborhood of the closure of $E$ inside $\Sigma_i$. Note that $(1,2)\setminus J$ has two components $(1,t_-]$ and $[t_+,2)$. We pick orientation-preserving embeddings
    \begin{equation*}
        \tau_2 : [0,3]\rightarrow (1,t_-) \quad \text{and} \quad \tau_3 : [0,3] \rightarrow (t_+,2)
    \end{equation*}
    such that
    \begin{align*}
        \tau_2([0,1]) &\subset (1,1+\varepsilon) & \tau_2([2,3]) 
        &\subset (t_--\varepsilon,t_-) \\
        \tau_3([0,1]) &\subset (t_+,t_++\varepsilon) & \tau_3([2,3]) &\subset (2-\varepsilon,2)
    \end{align*}
    for some small $\varepsilon>0$. We define
    \begin{align*}
        \iota_i^1 : C_{\Sigma_i^1} &\rightarrow V & \iota_i^1(t,p) &\coloneqq \iota_i(t,p)\\
        \iota_i^2 : C_{\Sigma_i^2} &\rightarrow V & \iota_i^2(t,p) &\coloneqq \iota(\tau_2(t),p) \\
        \iota_i^3 : C_{\Sigma_i^3} &\rightarrow V & \iota_i^3(t,p) &\coloneqq \iota_i(\tau_3(t),p).
    \end{align*}
    If $\varepsilon$ is chosen sufficiently small and $\Sigma_i^2 = \Sigma_i^3$ is contained in a sufficiently small neighborhood of the closure of $E$ in $\Sigma_i$, then the six sets $\overline{\iota_i^k(C_{\Sigma_i^k}^\pm)}$ for $1\leq k \leq 3$ are contained in $U\cap V$ and are disjoint from $\overline{\iota_\ell(C_{\Sigma_\ell}^\pm)}$ for any $\ell \neq i$. They are also pairwise disjoint. Moreover, $\iota_i(C_{S_i}^0)\setminus U$ is contained in $\bigcup_k\iota_i^k(C_{S_i^k}^0)$, which implies that
    \begin{equation*}
        Y\setminus U \subset \bigcup_{1\leq k \leq 3} \iota_i^k(C_{S_i^i}^0) \cup \bigcup_{\ell \neq i} \iota_\ell(C_{S_\ell}^0).
    \end{equation*}
    The three sets $\iota_i^k(C_{\Sigma_i^k})$ for $1\leq k \leq 3$ are disjoint from $\iota_j(C_{\Sigma_j}^*)$. In addition, $\iota_i^k(C_{\Sigma_i^k}^\pm)$ is disjoint from $\iota_i^\ell(C_{\Sigma_i^\ell})$ for $1\leq k < \ell \leq 3$. We discard the tuple $(S_i,\Sigma_i,\iota_i)$ and instead insert the three tuples $(S_i^1,\Sigma_i^1,\iota_i^1), (S_i^2,\Sigma_i^2,\iota_i^2), (S_i^3,\Sigma_i^3,\iota_i^3)$ between $(\Sigma_{i-1},S_{i-1}, \iota_{i-1})$ and $(\Sigma_{i+1},S_{i+1},\iota_{i+1})$. Then we adjust the indexing of our new list of tuples. As observed above, properties \ref{item:covering_by_flowboxes_covering} and \ref{item:covering_by_flowboxes_collars} in Lemma \ref{lem:covering_by_flowboxes} are still satisfied. Applying the above construction finitely many times, we can get rid of all non-empty intersections of the form $\iota_j(C_{\Sigma_j}^\pm) \cap \iota_i(C_{\Sigma_i})$ for $j<i$ and therefore also achieve property \ref{item:covering_by_flowboxes_intersection_with_collars}.
\end{proof}

\begin{definition}
    Let $U, V$ be open subset of $\R^3$.  Suppose that $\psi: (U, \omega_{\operatorname{std}}) \rightarrow (V, \omega_{\operatorname{std}})$ is a homeomorphism of Hamiltonian structures.  We say that $\psi$ is {\it transversely smooth} if, for every point $p\in U$, there exist an open neighborhood of $p$ of the form $I\times B$, where $I\subset \R$ is an open interval and $B\subset \R^2$ is a ball, and a {\it  smooth} area-preserving embedding $a:B\hookrightarrow \R^2$ such that
    \begin{equation*}
      \psi(t,z) = (*, a(z)) \qquad \text{for all $(t,z)\in I\times B$.}
    \end{equation*}

    Let $\psi: (Y, \omega_0) \rightarrow (Y, \omega_1)$ be a homeomorphism of Hamiltonian structures, where $\omega_0, \omega_1$ are smooth.  We say that $\psi$ is \textit{transversely smooth} if it is transversely smooth in smooth local charts.  
\end{definition}

\begin{lem}
\label{lem:transversely_smooth_topological_conjugacy_implies_diffeomorphism}
   
For \( i \in \{1,2\} \), let \( (Y_i, \omega_i) \) be a closed \( 3 \)-manifold with a smooth Hamiltonian structure.  
Suppose that \( \varphi_0: (Y_1, \omega_1) \to (Y_2, \omega_2) \) is a transversely smooth homeomorphism of Hamiltonian structures.  Then, there exists a diffeomorphism of Hamiltonian structures $\varphi_1: (Y_1,\omega_1) \rightarrow (Y_2,\omega_2)$ which is isotopic to $\varphi_0$ through homeomorphisms of Hamiltonian structures $\varphi_t: (Y_1,\omega_1) \rightarrow (Y_2,\omega_2)$.  Moreover, the homeomorphisms $\varphi_t$ all induce the same homeomorphism between the leaf spaces of $(Y_1, \omega_1)$ and $(Y_2, \omega_2)$.
\end{lem}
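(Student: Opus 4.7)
The plan is to realize the smooth diffeomorphism $\varphi_1$ by pushing $\varphi_0$ along the characteristic flow of $\omega_2$ by a continuous time function that is specifically chosen to make the resulting map smooth. The upshot of the hypothesis is that transverse smoothness of $\varphi_0$ already makes the induced maps between (local) leaf spaces smooth; what remains to be smoothed is only the parametrization of $\varphi_0$ along individual characteristic leaves of $\omega_2$, and this can be achieved by sliding along those leaves.

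First I would fix a smooth nowhere-vanishing vector field $X_2$ on $Y_2$ tangent to the characteristic foliation of $\omega_2$; such a vector field exists because the characteristic line bundle of an oriented, cooriented $1$-dimensional foliation on an oriented $3$-manifold is trivial. Let $\Phi_2^t$ denote its flow, which is smooth, complete, and $\omega_2$-preserving (since $\iota_{X_2}\omega_2 = 0$). Next, cover $Y_1$ by finitely many smooth Hamiltonian flow-box charts $\phi_\alpha\colon U_\alpha \cong I_\alpha \times B_\alpha$, and for each $\alpha$ choose a smooth Hamiltonian chart $\psi_\alpha\colon V_\alpha \cong I'_\alpha \times B'_\alpha$ on $Y_2$ containing $\varphi_0(\overline{U_\alpha})$, reparametrized in the $t'$-direction so that $X_2 = \partial_{t'}$. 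In these coordinates, transverse smoothness of $\varphi_0$ forces the local expression $(t,z) \mapsto (f_\alpha(t,z), a_\alpha(z))$, with $a_\alpha$ a smooth area-preserving embedding and $f_\alpha$ continuous and strictly increasing in $t$.

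Now, for each $\alpha$ I would approximate $f_\alpha$ in the uniform topology by a smooth function $g_\alpha$ still strictly increasing in $t$ (via convolution with a mollifier in the $t$-variable), set $h_\alpha \coloneqq g_\alpha - f_\alpha$, and note that $p \mapsto \Phi_2^{h_\alpha(p)}(\varphi_0(p))$ is smooth on $U_\alpha$ by construction. Choosing a smooth partition of unity $\{\rho_\alpha\}$ subordinate to $\{U_\alpha\}$, define the continuous global time function $h \coloneqq \sum_\alpha \rho_\alpha h_\alpha$ and the candidate isotopy $\varphi_s(p) \coloneqq \Phi_2^{s h(p)}(\varphi_0(p))$ for $s \in [0,1]$. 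For $s=0$ this is $\varphi_0$, and I denote its value at $s=1$ by $\varphi_1$.

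Smoothness of $\varphi_1$ is the key thing to verify: near any point $p \in U_\beta$, the identity $h - h_\beta = \sum_\alpha \rho_\alpha (h_\alpha - h_\beta)$ holds, and since $\Phi_2$ is a smooth flow of a nowhere-vanishing vector field and both $\Phi_2^{h_\alpha} \circ \varphi_0$ and $\Phi_2^{h_\beta} \circ \varphi_0$ are smooth on $U_\alpha \cap U_\beta$, each difference $h_\alpha - h_\beta$ is smooth on that overlap. Hence $h - h_\beta$ is smooth near $p$ and $\varphi_1 = \Phi_2^{h - h_\beta} \circ (\Phi_2^{h_\beta} \circ \varphi_0)$ is smooth near $p$. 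In local coordinates each $\varphi_s$ has the form $(t,z) \mapsto (f_\alpha(t,z) + s h(t,z),\, a_\alpha(z))$, so it preserves $dx \wedge dy$ automatically; moreover, since each $\varphi_s$ slides every point only along its characteristic leaf in $Y_2$, all $\varphi_s$ induce the same leaf-space map as $\varphi_0$. The main obstacle I expect is the bookkeeping required to choose the approximations $g_\alpha$ uniformly small enough — simultaneously across all charts and relative to the partition of unity — to guarantee that $t \mapsto f_\alpha(t,z) + s h(t,z)$ remains strictly increasing for every $s \in [0,1]$, every $z$, and every chart; without this monotonicity some $\varphi_s$ could fail to be a homeomorphism.
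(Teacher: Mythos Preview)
Your core idea matches the paper's: transverse smoothness means only the leafwise parametrization is non-smooth, and this can be corrected by sliding along the characteristic flow of $\omega_2$. Your partition-of-unity argument that $h-h_\beta = \sum_\alpha \rho_\alpha(h_\alpha-h_\beta)$ is smooth, because each $h_\alpha-h_\beta$ is the (smooth) transfer time between the two smooth maps $\Phi_2^{h_\alpha}\circ\varphi_0$ and $\Phi_2^{h_\beta}\circ\varphi_0$, is correct and rather clean. (Minor slip: mollifying $f_\alpha$ only in the $t$-variable does not make it smooth in $z$; mollify in all variables.)

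The genuine gap is not the monotonicity you flag but one step further. Even granting that $G \coloneqq f_\alpha + h$ is strictly increasing in $t$ for every $z$ (so that $\varphi_1$ is a smooth homeomorphism), you still need $\partial_t G > 0$ everywhere for $\varphi_1$ to be a \emph{diffeomorphism}, and a smooth strictly increasing function can have vanishing derivative (think of $t\mapsto t^3$). Writing $G = g_\alpha + (h-h_\alpha)$, even if you arrange $\partial_t g_\alpha>0$, the correction contributes $\sum_\beta\rho_\beta\,\partial_t(h_\beta-h_\alpha)$, and the $C^1$ size of $h_\beta-h_\alpha$ is \emph{not} controlled by your $C^0$ estimates $\|g_\gamma-f_\gamma\|_\infty$: two smoothings of the same continuous function, each $C^0$-close to it, can be $C^1$-far from each other. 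So the partition-of-unity gluing gives no control on $\partial_t G$. The paper avoids this by an inductive chart-by-chart procedure rather than a simultaneous gluing: at each step the height function is modified on one more chart to be smooth there \emph{with strictly positive $\partial_t$}, agreeing with the previous step outside a compact set; after finitely many steps one has a global smooth height with $\partial_t>0$, hence a diffeomorphism. Your approach buys smoothness in one stroke but loses exactly the derivative bound needed for the conclusion.
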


\begin{proof}
    Using smooth local charts on $(Y_1, \omega_1)$ and $(Y_2, \omega_2)$, one can reduce the proof of the lemma to the following claim.
    
    \begin{claim}
        Consider open intervals $I\Subset J\subset \R$ and open balls $B\Subset C \subset \R^2$. Let $U\subset J\times C$ be an open subset. Let $\operatorname{pr}_2 : \R^3= \R\times \R^2 \rightarrow \R^2$ denote the projection onto the second factor. Consider an embedding $\psi_0: J\times C \hookrightarrow \R^3$ with the property that $\operatorname{pr}_2 \circ \psi_0 = \operatorname{pr}_2|_{J\times C}$. Assume that the restriction of $\psi_0$ to $U$ is a smooth embedding.
        
        Then there exists a family of embeddings $\psi_t: J\times C \hookrightarrow \R^3$, for $t \in [0,1]$, satisfying $\operatorname{pr}_2\circ \psi_t = \operatorname{pr}_2$ and agreeing with $\psi_0$ outside some compact subset of $J\times C$, such that the restriction of $\psi_1$ to $U\cup (I\times B)$ is a smooth embedding. 
    \end{claim}

    In order to see that this claim implies the lemma, note that since $\varphi_0$ is transversely smooth, in suitable smooth local Darboux coordinates it precisely takes the form of an embedding $\psi_0$ as in the claim. Therefore, the assertion of the claim allows us to go through a finite covering of $(Y_1,\omega_1)$ by Darboux charts and step by step make $\varphi_0$ smooth in each of the charts. 
    
    We briefly outline the proof of the claim.  The embedding $\psi_0: J\times C\hookrightarrow \R^3$ is of the form
    \begin{equation*}
        \psi_0(s,z) = (f_0(s,z), z)
    \end{equation*}
     where $(s,z)$ denotes a point in $\R \times \R^2$. For each fixed $z \in C$, the function $f_0(\cdot,z)$ is strictly increasing. The restriction of $f_0$ to $U$ is smooth and the partial derivative $\partial_sf_0$ is strictly positive in this region. We can find a function $f_1:J\times C\rightarrow \R$ agreeing with $f_0$ outside some compact subset of $J\times C$ such that $f_1(\cdot,z)$ is strictly increasing for every $z$ and such that the restriction of $f_1$ to $U \cup (I\times B)$ is smooth and has strictly positive partial derivative $\partial_sf_1$ in this region. Moreover, we can find a path of functions $f_t(s,z)$ connecting $f_0$ to $f_1$ such that each $f_t$ agrees with $f_0$ outside a compact subset of $J\times C$ and such that $f_t(\cdot,z)$ is strictly increasing for all $z$. We can then define \begin{equation*}
        \psi_t(s,z) = (f_t(s,z), z),
    \end{equation*}
     which satisfies the requirements of the claim.
\end{proof}

\begin{rem}
\label{rem:making_plug_embedding_smooth}
    Let $(Y,\omega)$ be a $3$-manifold equipped with a smooth Hamiltonian structure and let $\mathcal{P} = (\Sigma,\omega_\Sigma,\alpha, (\varphi^t)_{t\in [0,1]})$ be a $C^0$ $\omega$-plug. Using Proposition \ref{prop:smooth_approximation_area_preserving_homeos}, it is possible to find an area-preserving homeomorphism $\gamma$ of $(\Sigma,\omega_\Sigma)$, not necessarily compactly supported, such that $\alpha \circ (\operatorname{id}\times \gamma) : ((0,1)\times \Sigma,\omega_\Sigma)\hookrightarrow (Y,\omega)$ is a transversely smooth embedding of Hamiltonian structures. By an argument similar to the proof of Lemma \ref{lem:transversely_smooth_topological_conjugacy_implies_diffeomorphism}, it is possible to modify the embedding $\alpha\circ (\operatorname{id}\times \gamma)$ by slightly sliding it along the leaves of the characteristic foliation of $(Y,\omega)$ to obtain a smooth embedding of Hamiltonian structures $\alpha' : ((0,1)\times \Sigma,\omega_\Sigma)\hookrightarrow (Y,\omega)$. Define the $\omega$-plug
    \begin{equation*}
        \mathcal{P'} \coloneqq (\Sigma,\omega_\Sigma,\alpha', (\gamma^{-1}\circ \varphi^t\circ \gamma)_{t\in [0,1]}).
    \end{equation*}
    One can check, using Lemma \ref{lem:C0_plugs_with_isotopic_embeddings}, that $\omega\#\mathcal{P}'$ is homeomorphic to $\omega\#\mathcal{P}$ via a homeomorphism isotopic to the identity. Moreover, it follows from the naturality of the $\mathcal{R}$-valued extension of the Calabi homomorphism that $\overline{\operatorname{Cal}}(\mathcal{P}') = \overline{\operatorname{Cal}}(\mathcal{P})$. The upshot of this remark is that we can replace any $C^0$ $\omega$-plug $\mathcal{P}$ by an $\omega$-plug whose underlying embedding $\alpha : ((0,1)\times \Sigma,\omega_\Sigma)\hookrightarrow (Y,\omega)$ is smooth.
\end{rem}

\begin{proof}[Proof of Theorem \ref{thm:smoothing_homeomorphisms}]
    First, let us consider the special case where $\mathcal{P}_2$ is a trivial plug. In this simplified situation, we will construct a smooth plug $\mathcal{Q}_1$ satisfying $\operatorname{Cal}(\mathcal{Q}_1) = \overline{\operatorname{Cal}}(\mathcal{P}_1)$ and a diffeomorphism of Hamiltonian structures
    \begin{equation*}
        \varphi: (Y_1,\omega_1\#\mathcal{Q}_1) \rightarrow (Y_2,\omega_2)
    \end{equation*}
    which is isotopic to $\psi$ through homeomorphisms.  Note that this in particular implies that $\overline{\operatorname{Cal}}(\mathcal{P}_1) \in \R \subset \mathcal{R}$.

    Let $U\subset Y_1$ denote the image of the plug $\mathcal{P}_1$. After possibly adding trivial components to the plug $\mathcal{P}_1$, we can assume that $U$ is exhaustive in the sense of Definition \ref{defn:exhaustive-set}. This is possible by Lemma \ref{lem:exhaustive-surface}. Note that the addition of trivial components to $\mathcal{P}_1$ does not change the Calabi invariant of the plug. Pick an open neighborhood $V$ of $Y_1\setminus U$ such that $\omega_1\#\mathcal{P}_1$ agrees with $\omega_1$ on $V$ and is thus smooth in this region. We apply Lemma \ref{lem:covering_by_flowboxes}. This yields smooth open surfaces $(\Sigma_1,\sigma_1), \ldots, (\Sigma_n, \sigma_n)$, relatively compact open subsets $S_i\Subset \Sigma_i$, and smooth embeddings of Hamiltonian structures $\iota_i : (C_{\Sigma_i},\sigma_i) \hookrightarrow (V,\omega_1\# \mathcal{P}_1)$ satisfying all the properties listed in Lemma \ref{lem:covering_by_flowboxes}.

    In what follows, we will construct $C^0$ $\omega_1\#\mathcal{P}_1$-plugs $\mathcal{Z}_1^-,  \mathcal{Z}_1^+, \ldots, \mathcal{Z}_n^-,  \mathcal{Z}_n^+$ which are of the form 
    \begin{align*}
        &\mathcal{Z}_i^- = (\Sigma_i, \sigma_i, \iota_i|_{C_{\Sigma_i}^-}, \;\varphi_i^t \,) \\
        &\mathcal{Z}_i^+ = (\Sigma_i, \sigma_i, \iota_i|_{C_{\Sigma_i}^+}, (\varphi_i^t)^{-1})
    \end{align*}
    where $\varphi_i^t$ is an isotopy in $\overline{\operatorname{Ham}}(\Sigma_i)$ which will be chosen below. By the properties listed in Lemma \ref{lem:covering_by_flowboxes}, the $2n$ plugs $\mathcal{Z}_i^\pm$ are pairwise disjoint and contained in $U\cap V$. Let $\Omega_i$ denote the $C^0$ Hamiltonian structure obtained by inserting the plugs $\mathcal{Z}_j^\pm$ for $1\leq j \leq i$ into $\omega_1\#\mathcal{P}_1$. We will show that for appropriately chosen $\varphi_i^t$ there exists a homeomorphism of $C^0$ Hamiltonian structures  $$\psi_i : (Y_1,\Omega_i) \rightarrow (Y_2,\omega_2)$$  whose restriction  to $\bigcup_{j\leq i} \iota_j(C_{S_j}^0) \setminus \overline{U}$ is transversely smooth. Note that the assertion on transverse smoothness of $\psi_i$ makes sense because the restriction of $\Omega_i$ to the complement of $\overline{U}$ is a smooth Hamiltonian structure.
    
    We will now describe an inductive procedure for constructing $\varphi_i^t$ and $\psi_i$ as above.  Let $1\leq i \leq n$ and suppose that $\varphi_k$ and $\psi_k$ have already been constructed for all $k<i$. We explain how to construct $\varphi_i$ and $\psi_i$. Consider the composition
    \begin{equation*}
        \psi_{i-1} \circ \iota_i : (C_{\Sigma_i},\sigma_i) \hookrightarrow (Y_2,\omega_2),
    \end{equation*}
    which is a topological embedding of $C^0$ Hamiltonian structures. Note that the leaf space of $(\operatorname{im}(\psi_{i-1} \circ \iota_i),\omega_2)$ has the structure of a smooth $2$-dimensional manifold with area form. We abbreviate it by $(\mathcal{L},\lambda)$. The embedding $\psi_{i-1} \circ \iota_i$ induces an area-preserving homeomorphism $\gamma : (\Sigma_i,\sigma_i) \rightarrow (\mathcal{L},\lambda)$. Let $\operatorname{pr}_2 : C_{\Sigma_i} = (0,3)\times \Sigma_i \rightarrow \Sigma_i$ denote the projection onto the second factor. Note that the restriction of $\gamma$ to the set
    \begin{equation*}
        E\coloneqq      \operatorname{pr}_2(\iota_i^{-1}(\bigcup_{j<i}\iota_j(C_{S_j}^0))\setminus \overline{U})
    \end{equation*}
    is smooth by the transverse smoothness assumption on $\psi_k$ for $k<i$.
    
    Using Proposition \ref{prop:smooth_approximation_area_preserving_homeos}, one can find an area-preserving homeomorphism $\tilde{\gamma} : (\Sigma_i,\sigma_i) \rightarrow (\mathcal{L},\lambda)$ which agrees with $\gamma$ outside some compact set and whose restriction to $S_i\cup E$ is smooth. Indeed, pick a continuous function $\rho:S_i \rightarrow \R_{\geq 0}$ which decays to zero towards the boundary of $S_i$, whose zero set $\rho^{-1}(\{0\})$ is contained in $E$, and which has the property that there exists an open neighborhood $F$ of $\partial S_i \cap E$ in $E$ such that $F\cap S_i \subset \rho^{-1}(\{0\})$. Now apply Proposition \ref{prop:smooth_approximation_area_preserving_homeos} to $\gamma|_{S_i}$ with this function $\rho$. This yields an area-preserving diffeomorphism $\tilde \gamma : S_i \rightarrow \gamma(S_i)$. Since $\rho$ decays to zero near the boundary of $S_i$, we can extend $\tilde \gamma$ to an area-preserving homeomorphism $\tilde\gamma : \Sigma_i \rightarrow \mathcal{L}$ by setting it equal to $\gamma$ outside of $S_i$. Since $F\cap S_i \subset \rho^{-1}(\{0\})$, we see that $\tilde{\gamma}$ agrees with $\gamma$ on $F$ and is therefore smooth in this region. We conclude that the restriction of $\tilde\gamma$ to $S_i \cup E$ is smooth.
    
    Moreover, we can pick $\tilde{\gamma}$ such that $\tilde{\gamma}^{-1}\circ \gamma$ is in $\overline{\operatorname{Ham}}(\Sigma_i)$. Indeed, note that since $\tilde{\gamma}$ is a $C^0$ approximation of $\gamma$, the homeomorphism $\tilde{\gamma}^{-1}\circ \gamma$ is contained in the identity component $\operatorname{Homeo}_0(\Sigma_i,\sigma_i)$. If $\tilde\gamma^{-1}\circ \gamma$ is not in $\overline{\operatorname{Ham}}(\Sigma_i)$, simply pick a compactly supported area-preserving diffeomorphism $\beta \in \operatorname{Diff}_0(\mathcal{L},\lambda)$ such that the mass flow homomorphism on $\operatorname{Homeo}_0(\mathcal{L},\lambda)$ takes the same value on $\beta$ and $\gamma\circ \tilde\gamma^{-1}$. Then replace $\tilde\gamma$ by $\beta\circ \tilde\gamma$.
    
    Since $\tilde\gamma^{-1}\circ \gamma \in \overline{\operatorname{Ham}}(\Sigma_i)$, we can find  an isotopy $\varphi_i^t \in \overline{\operatorname{Ham}}(\Sigma_i)$ whose time-1 map satisfies
    \begin{equation*}
        \varphi_i^1=\tilde{\gamma}^{-1}\circ \gamma.
    \end{equation*}
    
    The map $\psi_i$ is constructed by modifying $\psi_{i-1}$ inside $\iota_i(C_{\Sigma_i})$, hence it agrees with $\psi_{i-1}$ outside $\iota_i(C_{\Sigma_i})$. We describe the construction of $\psi_i$ on each of $\iota_i(C_{\Sigma_i}^-), \iota_i(C_{\Sigma_i}^0), \iota_i(C_{\Sigma_i}^+)$. Inside $\iota_i(C_{\Sigma_i}^0)$, we define $\psi_i$ such that
    \begin{equation*}
         \psi_i \circ \iota_i(t,z) = \psi_{i-1}\circ \iota_i (t,\gamma^{-1}\circ \tilde{\gamma}(z)) \qquad \text{for $(t,z) \in C_{\Sigma_i}^0$.}
    \end{equation*}
    Inside $\iota_i(C_{\Sigma_i}^-)$, we define  $\psi_i$ such that
    \begin{equation*}
         \psi_i \circ \iota_i(t,z) = \psi_{i-1}\circ \iota_i(t, (\varphi_i^t)^{-1}(z)) \qquad \text{for $(t,z) \in C_{\Sigma_i}^-$.}
    \end{equation*}
    Inside $\iota_i(C_{\Sigma_i}^+)$, we define  $\psi_i$ such that
    \begin{equation*}
         \psi_i \circ \iota_i(t,z) = \psi_{i-1}\circ \iota_i(t, \varphi_i^{t-2}(z)) \qquad \text{for $(t,z) \in C_{\Sigma_i}^+$.}
    \end{equation*}
    One can verify that the above yield a well-defined map $\psi_i$ on $\iota_i(C_{\Sigma_i})$ which coincides with $\psi_{i-1}$ near the boundary of $\iota_i(C_{\Sigma_i})$. The plugs $\mathcal{Z}_i^\pm$ were defined precisely such that $\psi_i$ is a homeomorphism of $C^0$ Hamiltonian structures $\psi_i: (Y_1,\Omega_i)\rightarrow (Y_2,\omega_2)$. Finally, note that the map $\Sigma_i \rightarrow \mathcal{L}$ induced by $\psi_i\circ \iota_i|_{C_{\Sigma_i}^0}$ is precisely given by $\tilde{\gamma}$. Since the restriction of $\tilde\gamma$ to $S_i \cup E$ is smooth, we see that the restriction of $\psi_i$ to $\bigcup_{j\leq i} \iota_j(C_{\Sigma_j}^0) \setminus \overline{U}$ is transversely smooth. Since $\psi_{i-1}$ is isotopic to $\psi$ by assumption, so is $\psi_i$. This concludes our construction of $\psi_i$ and the isotopy $\varphi_i^t$.

    To summarize, at this point we have a homeomorphism of $C^0$ Hamiltonian structures 
    \begin{equation*}
        \psi_n: (Y_1, \Omega_n) \rightarrow (Y_2, \omega_2)
    \end{equation*}
    which, by property \ref{item:covering_by_flowboxes_covering} in Lemma \ref{lem:covering_by_flowboxes}, is transversely smooth on $Y_1 \setminus \overline U$.  Moreover, $\Omega_n$ is obtained from $\omega_1\#\mathcal{P}_1$ via the insertion of the plugs $\mathcal{Z}_1^\pm, \ldots, \mathcal{Z}_n^\pm$.

    Since the plugs $\mathcal{Z}_1^\pm, \ldots, \mathcal{Z}_n^\pm$ are pairwise disjoint and all contained in $U\cap V$, there exist a single $\omega_1$-plug $\mathcal{Q}_1$, contained in $U$, and a homeomorphism $\chi : (Y_1,\omega_1\#\mathcal{Q}_1) \rightarrow (Y_1,\Omega_n)$ which is isotopic to the identity and compactly supported inside $U$. The Calabi invariant of $\mathcal{Q}_1$ is given by
    \begin{equation*}
        \overline{\operatorname{Cal}}(\mathcal{Q}_1) = \overline{\operatorname{Cal}}(\mathcal{P}_1) + \sum_i (\overline{\operatorname{Cal}}(\mathcal{Z}_i^-) + \overline{\operatorname{Cal}}(\mathcal{Z}_i^+)) = \overline{\operatorname{Cal}}(\mathcal{P}_1),
    \end{equation*}
    where we use naturality of the tautological extension of the Calabi homomorphism and the identity
    \begin{equation*}
        \overline{\operatorname{Cal}}(\mathcal{Z}_i^-) + \overline{\operatorname{Cal}}(\mathcal{Z}_i^+) = \overline{\operatorname{Cal}}(\varphi_i) + \overline{\operatorname{Cal}}(\varphi_i^{-1}) = 0.
    \end{equation*}
    Like $\psi_n$, the homeomorphism
    \begin{equation*}
        \psi_n \circ \chi : (Y_1,\omega_1\#\mathcal{Q}_1) \rightarrow (Y_2,\omega_2)
    \end{equation*}
    is transversely smooth on the complement of $\overline{U}$. Write $\mathcal{Q}_1 := (\Sigma, \sigma, \alpha, (\theta^t)_{t\in [0,1]})$ for the data of the plug $\mathcal{Q}_1$. Note that we can assume that the embedding $\alpha$ is smooth with respect to the smooth Hamiltonian structure $\omega_1$, see Remark \ref{rem:making_plug_embedding_smooth}.
    
    \begin{claim}
    \label{cl:plug-is-smooth}
        The map $\theta^1$ is smooth.
    \end{claim}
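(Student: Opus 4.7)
\textbf{Proof plan for Claim \ref{cl:plug-is-smooth}.} By Remark \ref{rem:making_plug_embedding_smooth}, we may assume that the embedding $\alpha : (0,1) \times \Sigma \hookrightarrow Y_1$ of $\mathcal{Q}_1$ is smooth with respect to $\omega_1$. Choose $\varepsilon > 0$ small enough that $\theta^t = \operatorname{id}_\Sigma$ for $t \in [0, \varepsilon]$ and $\theta^t = \theta^1$ for $t \in [1-\varepsilon, 1]$, and set
\begin{equation*}
    T_0 \coloneqq \alpha(\{\varepsilon/2\} \times \Sigma), \qquad T_1 \coloneqq \alpha(\{1-\varepsilon/2\} \times \Sigma).
\end{equation*}
Both $T_0$ and $T_1$ lie in collar regions where $\omega_1 \# \mathcal{Q}_1$ coincides with the smooth structure $\omega_1$, so they are smooth transversals, each naturally identified with $\Sigma$ via $\alpha$. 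Under these identifications, the holonomy of $\omega_1 \# \mathcal{Q}_1$ from $T_0$ to $T_1$ across the plug is exactly $\theta^1$. The strategy is to recompute this holonomy along a second route through the region $Y_1 \setminus \overline{U}$, on which $\psi_n \circ \chi$ is transversely smooth, and verify that the second route is manifestly smooth.

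Fix $p \in \Sigma$; we show $\theta^1$ is smooth near $p$. On the collars $\alpha((0,\varepsilon) \times \Sigma) \cup \alpha((1-\varepsilon,1) \times \Sigma)$ and throughout $Y_1 \setminus \operatorname{supp}(\mathcal{Q}_1)$, the foliations of $\omega_1 \# \mathcal{Q}_1$ and $\omega_1$ coincide, hence holonomy in these regions is governed by the smooth structure $\omega_1$ and is smooth. Take a small neighborhood $N_p \subset T_0$ of $\alpha(\varepsilon/2,p)$ and slide it backward along $\omega_1$-leaves until it enters $Y_1 \setminus \overline{U}$, producing a smooth transversal $T_0' \subset Y_1 \setminus \overline{U}$ and a smooth holonomy map $h_0 : N_p \to T_0'$. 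Analogously, slide $\theta^1(N_p) \subset T_1$ forward along $\omega_1$-leaves into a smooth transversal $T_1' \subset Y_1 \setminus \overline{U}$ via a smooth holonomy $h_1 : \theta^1(N_p) \to T_1'$. On $T_0'$ and $T_1'$ the map $\psi_n \circ \chi$ is transversely smooth, so it sends them to smooth transversals of the smooth foliation $\omega_2$, whose own holonomy between these images is smooth. Composing
\begin{equation*}
    h_1^{-1} \circ (\psi_n \circ \chi)^{-1} \circ (\text{smooth } \omega_2\text{-holonomy}) \circ (\psi_n \circ \chi) \circ h_0 \; :\; N_p \longrightarrow T_1
\end{equation*}
yields a smooth map. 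Since $\psi_n \circ \chi$ is a single homeomorphism intertwining the characteristic foliations of $\omega_1 \# \mathcal{Q}_1$ and $\omega_2$, this composition must agree with the plug-holonomy $\theta^1|_{N_p}$. Hence $\theta^1$ is smooth at $p$.

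The technically delicate step—and the main obstacle—is ensuring that the two sliding operations can actually be carried out near every $p \in \Sigma$, i.e.\ that the $\omega_1$-leaves through $\alpha(\varepsilon/2,p)$ and $\alpha(1-\varepsilon/2,\theta^1(p))$ genuinely reach $Y_1 \setminus \overline{U}$. This is arranged by a preliminary enlargement of $\mathcal{P}_1$ via Lemma \ref{lem:exhaustive-surface} (adding trivial components before starting the construction) to guarantee that the exhaustive set $U$ has complement met by every characteristic orbit in finite time; such an enlargement does not affect $\overline{\operatorname{Cal}}(\mathcal{P}_1)$ and preserves all previous steps. Once the sliding is available, the remainder of the proof is a straightforward concatenation of smooth pieces enabled by the transverse smoothness of $\psi_n \circ \chi$ on $Y_1 \setminus \overline{U}$, and the local conclusion at each $p$ gives global smoothness of $\theta^1$.
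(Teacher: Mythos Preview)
Your overall approach is correct and shares the paper's core idea: factor $\theta^1$ through smooth $\omega_2$-holonomy using the transverse smoothness of $\psi_n \circ \chi$. The paper is more compact: it places transversals $S_\pm = \alpha(\{\delta\}\times \Sigma)$, $\alpha(\{1-\delta\}\times\Sigma)$ directly in the collars of $\mathcal{Q}_1$, asserts that $\psi_n\circ\chi$ is transversely smooth there, and reads $\theta^1$ off a commutative diagram relating the plug holonomy $f:S_-\to S_+$ to the smooth $\omega_2$-holonomy $f':\psi_n\circ\chi(S_-)\to\psi_n\circ\chi(S_+)$ through $\operatorname{im}(\psi_n\circ\chi\circ\alpha)$. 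Your sliding step (pushing $T_0,T_1$ out to $T_0',T_1'\subset Y_1\setminus\overline U$) is in effect the argument that would justify the paper's assertion of transverse smoothness at $S_\pm$, so you have spelled out a detail the paper leaves implicit.

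There is, however, a genuine confusion in your final paragraph. You say the sliding is enabled by ``a preliminary enlargement of $\mathcal{P}_1$ \dots to guarantee that the exhaustive set $U$ has complement met by every characteristic orbit.'' This is backward: exhaustiveness of $U$ means every orbit meets $U$, not $Y_1\setminus U$; enlarging $U$ by adding trivial components can only make it harder for orbits to escape. The correct reason the sliding works is purely structural: $U$ is (a disjoint union of) plug images $\alpha_{\mathcal{P}_1}((0,1)\times\Sigma_{\mathcal{P}_1})$, so the $\omega_1$-leaves intersected with each component of $U$ are bounded arcs $\alpha_{\mathcal{P}_1}((0,1)\times\{p\})$, and each such arc continues past the plug boundary into $Y_1\setminus\overline U$. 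That is what lets you slide $T_0$ backward and $T_1$ forward into the region where transverse smoothness of $\psi_n\circ\chi$ is actually established. With this justification corrected, your argument goes through.
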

    
    \begin{proof}
        Pick $\delta>0$ sufficiently small such that $\theta^t$ is equal to $\operatorname{id}$ for $t \in [0,2\delta)$ and equal to $\theta^1$ for $t\in (1-2\delta,1]$. Note that $\omega_1 \#\mathcal{Q}_1$ agrees with $\omega_1$ on $\alpha((0,2\delta) \times \Sigma)$ and on $\alpha((1-2\delta)\times \Sigma)$ and that $\psi_n\circ \chi$ is transversely smooth on these sets.

        Set $S_-\coloneqq \alpha(\{\delta\}\times \Sigma)$ and $S_+ \coloneqq \alpha(\{1-\delta\}\times \Sigma)$. Traversing $\operatorname{im}(\alpha)$ along the characteristic foliation of $\omega_1\# \mathcal{Q}_1$ induces a homeomorphism
        \begin{equation*}
            f: S_- \rightarrow S_+.
        \end{equation*}
        Similarly, traversing $\operatorname{im}(\psi_n\circ\chi\circ\alpha)$ along the the characteristic foliation of $\omega_2$ induces a homeomorphism
        \begin{equation*}
            f': \psi_n\circ \chi(S_-)) \rightarrow \psi_n\circ \chi(S_+).
        \end{equation*}
        These homeomorphisms fit into the following commutative diagram:
        \begin{equation*}
            \begin{tikzcd}
                \psi_n\circ \chi(S_-) \arrow[r, "f'"] & \psi_n\circ \chi(S_+) \\
                S_- \arrow[r, "f"] \arrow[u, "\psi_n\circ \chi"] & S_+ \arrow[u, "\psi_n\circ \chi"] \\
                \{\delta\} \times \Sigma \arrow[u,"\alpha"] & \{1-\delta\}\times \Sigma \arrow[u,"\alpha"] \\
                \Sigma \arrow[u,"\cong"] \arrow[r,"\theta^1"] & \Sigma \arrow[u,"\cong"]
            \end{tikzcd}
        \end{equation*}
        Since $\psi_n\circ \chi$ is transversely smooth on a neighborhood of $S_-$ and $S_+$ and $\omega_2$ is a smooth Hamiltonian structure, we see that $\theta^1$ is smooth.    
    \end{proof}

    As a consequence of Claim \ref{cl:plug-is-smooth}, we may modify the plug $\mathcal{Q}_1$ by replacing $\theta^t$ with a smooth Hamiltonian isotopy with the same time-1 map is $\theta^1$. By Lemma \ref{lem:C0_plugs_with_same_time_1_maps}, modifying the plug in this way yields a smooth Hamiltonian structure homeomorphic to $\omega_1 \#\mathcal{Q}_1$ via a homeomorphism supported in $U$ and isotopic to the identity.  Moreover, the two plugs have the same Calabi invariant. We will refer to this new smooth plug by the same symbol $\mathcal{Q}_1$. We then have a homeomorphism of Hamiltonian structures $$\varphi:(Y_1,\omega_1\#\mathcal{Q}_1) \rightarrow (Y_2,\omega_2)$$
    which is isotopic to $\psi$ and transversely smooth outside of $\overline{U}$. Since both $\omega_1\# \mathcal{Q}_1$ and $\omega_2$ are smooth and no characteristic leaf of $\omega_1\#\mathcal{Q}_1$ is trapped in $U$, we can conclude that $\varphi$ is transversely smooth everywhere. By Lemma \ref{lem:transversely_smooth_topological_conjugacy_implies_diffeomorphism}, we can replace $\varphi$ by a diffeomorphism of Hamiltonian structures, still isotopic to $\psi$. This concludes the proof of Theorem \ref{thm:smoothing_homeomorphisms} in the special case that $\mathcal{P}_2$ is a trivial plug.

    It remains to treat the general case. Let $\overline{\mathcal{P}}_2$ be the inverse plug of $\mathcal{P}_2$ and let $\psi^*\overline{\mathcal{P}}_2$ be its pull back via $\psi$; see Equations \eqref{eqn:plug-Calabi-defn} and \eqref{eqn:pullback-plug}. Then, as noted in \eqref{eqn:pullback-plug-homeos}, $\psi$ is a homeomorphism of $C^0$ Hamiltonian structures
    \begin{equation*}
        \psi : (Y_1,\omega_1\#\mathcal{P}_1\#\psi^*\overline{\mathcal{P}}_2) \rightarrow (Y_2,\omega_2\#\mathcal{P}_2\#\overline{\mathcal{P}}_2) = (Y_2,\omega_2).
    \end{equation*}
    We also have
    \begin{equation*}
        \overline{\operatorname{Cal}}(\psi^*\overline{\mathcal{P}}_2) = \overline{\operatorname{Cal}}(\overline{\mathcal{P}}_2) = -\overline{\operatorname{Cal}}(\mathcal{P}_2).
    \end{equation*}
    
   As we explain below, we can slide the plug $\psi^*\overline{\mathcal{P}}_2$ along the characteristic foliation of $\omega_1 \# \mathcal{P}_1$ so that it becomes disjoint from $\mathcal{P}_1$, as described in Lemma \ref{lem:C0_plugs_with_isotopic_embeddings}.
   
    Let $\mathcal{S}$ denote the resulting plug disjoint from $\mathcal{P}_1$. It is both an $\omega_1$- and an $\omega_1\#\mathcal{P}_1$-plug and has Calabi invariant $\overline{\operatorname{Cal}}(\mathcal{S}) = - \overline{\operatorname{Cal}}(\mathcal{P}_2)$. Let $\mathcal{P}_1 \sqcup \mathcal{S}$ be the $\omega_1$-plug given by the disjoint union of $\mathcal{P}_1$ and $\mathcal{S}$. By Lemma \ref{lem:C0_plugs_with_isotopic_embeddings}, $(Y_1,\omega_1\#\mathcal{P}_1\#\psi^*\overline{\mathcal{P}}_2)$ is homeomorphic to $(Y_1,\omega_1\#(\mathcal{P}_1\sqcup \mathcal{S}))$ via a homeomorphism isotopic to the identity. Thus $(Y_1,\omega_1\#(\mathcal{P}_1\sqcup\mathcal{S}))$ is homeomorphic to $(Y_2,\omega_2)$ via a homeomorphism isotopic to $\psi$. By the special case of Theorem \ref{thm:smoothing_homeomorphisms} already treated above, we can find a smooth $\omega_1$-plug $\mathcal{Q}_1$ such that there exists a diffeomorphism of smooth Hamiltonian structures $\varphi: (Y_1,\omega_1\#\mathcal{Q}_1) \rightarrow (Y_2,\omega_2)$, still isotopic to $\psi$, and such that
    \begin{equation*}
        \operatorname{Cal}(\mathcal{Q}_1) = \overline{\operatorname{Cal}}(\mathcal{P}_1\sqcup \mathcal{S}) = \overline{\operatorname{Cal}}(\mathcal{P}_1) - \overline{\operatorname{Cal}}(\mathcal{P}_2).
    \end{equation*}
    Now simply define $\mathcal{Q}_2$ to be the empty plug. Then the tuple $(\mathcal{Q}_1, \mathcal{Q}_2, \varphi)$ clearly satisfies all assertions of Theorem \ref{thm:smoothing_homeomorphisms}.

    It thus remains to explain why we can always reduce to a situation where the plug $\psi^*\overline{\mathcal{P}}_2$ can be made disjoint from $\mathcal{P}_1$ by sliding it along the characteristic foliation of $\omega_1 \# \mathcal{P}_1$.

   For $i\in \{1,2\}$, write $\mathcal{P}_i = (\Sigma_i,\omega_{\Sigma_i},\alpha_i, (\varphi_i^t)_{t\in [0,1]})$. Note that by Remark \ref{rem:making_plug_embedding_smooth}, we can assume that $\alpha_i$ are smooth embeddings of Hamiltonian structures. After possibly shrinking the images of $\mathcal{P}_i$ as in Remark \ref{rem:shrinking_plugs}, we can assume that there exist surfaces $(\Sigma_i',\omega_{\Sigma_i'})$ containing $(\Sigma_i,\omega_{\Sigma_i})$ as relatively compact open subsets such that $\alpha_i$ extends to a smooth embedding of Hamiltonian structures
    \begin{equation*}
        \alpha_i' : ((-1,2)\times \Sigma_i',\omega_{\Sigma_i'}) \hookrightarrow (Y_i,\omega_i).
    \end{equation*}
    
    Consider a compact surface with smooth boundary $S\subset \Sigma_1'$ containing $\overline{\Sigma}_1$ in its interior. Let $t_0\in (-1,2)$ and let $U\subset (-1,2)\times \Sigma_1'$ be an arbitrary open neighborhood of $\{t_0\}\times S$. Then there exists a compactly supported homeomorphism of $C^0$ Hamiltonian structures
    \begin{equation*}
        \Psi:((-1,2)\times \Sigma_1',\omega_{\Sigma_1'}) \rightarrow ((-1,2)\times \Sigma_1',\omega_{\Sigma_1'})
    \end{equation*}
    such that $[0,1] \times \overline{\Sigma}_1 \subset \Psi(U)$. Indeed, any compactly supported homeomorphism of $((-1,2)\times \Sigma_1',\omega_{\Sigma_1'})$ takes the form $\Psi(t,p) = (f_p(t),p)$ for a family of compactly supported homeomorphisms $f_p$ of $(-1,2)$ which agree with the identity for $p$ outside some compact subset of $\Sigma_1'$. One can arrange $f_p$ such that, for every $p\in S$, the homeomorphism $f_p$ maps an arbitrarily small neighborhood of $t_0$ to the interval $(-1/2,3/2)$. The image of $U$ under the resulting homeomorphism $\Psi$ will then contain $[0,1]\times \overline{\Sigma}_1$, as desired. Let us also point out that the group of compactly supported homeomorphisms of $((-1,2)\times \Sigma_1',\omega_{\Sigma_1'})$ is connected. In particular, $\Psi$ is isotopic to the identity through compactly supported homeomorphisms of $C^0$ Hamiltonian structures.

   We can extend the isotopy $(\varphi_1^t)_{t\in [0,1]}$ in $\overline{\operatorname{Ham}}(\Sigma_1)$ to an isotopy $(\varphi_1^t)_{t\in (-1,2)}$ in $\overline{\operatorname{Ham}}(\Sigma_1')$ which is equal to the identity outside of $\Sigma_1$, and agrees with the identity for $t<0$ and with $\varphi_1^1$ for $t>1$. We can then define the homeomorphism
    \begin{equation*}
        \Phi_1 : (-1,2)\times \Sigma_1' \rightarrow (-1,2)\times \Sigma_1' \qquad \Phi_1(t,p) \coloneqq 
            (t,\varphi_1^t(p)),
    \end{equation*}
    see Subsection \ref{subsec:C0_plugs}. Recall that by definition, $\alpha_1'$ is an embedding of $C^0$ Hamiltonian structures
    \begin{equation*}
        \alpha_i' : ((-1,2)\times \Sigma_1',(\Phi_1)_*\omega_{\Sigma_1'}) \hookrightarrow (Y_1,\omega_1\#\mathcal{P}_1).
    \end{equation*}

  The homeomorphism $\Phi_1$ leaves $\{t_0\}\times S$ invariant. The above discussion thus implies that for every open neighborhood $U$ of $\{t_0\}\times S$, there exists a compactly supported homeomorphism $\Psi$ of $((-1,2)\times \Sigma_1',(\Phi_1)_*\omega_{\Sigma_1'})$ isotopic to the identity through such homeomorphisms such that $\Psi(U)$ contains $[0,1]\times \overline{\Sigma}_1$. From this observation, we obtain the following statement: Suppose that the closure of the image of $\psi^*\overline{\mathcal{P}}_2$ is disjoint from $\alpha_1'(\{t_0\}\times S)$. Then one can slide the plug $\psi^*\overline{\mathcal{P}}_2$ along the characteristic foliation of $\omega_1\#\mathcal{P}_1$ to make it disjoint from $\mathcal{P}_1$.

    We therefore need to explain how to reduce to the case where the closure of the image of $\psi^*\overline{\mathcal{P}}_2$ is disjoint from $\alpha_1'(\{t_0\}\times S)$. We set $T\coloneqq \psi(\alpha_1'(\{t_0\}\times S))$. This defines a compact surface in $(Y_2,\omega_2\#\mathcal{P}_2)$ which is transverse to the characteristic foliation, in the sense described in the proof of Claim \ref{cl:disjoint-discs-covering}. In what follows, we identify $((-1,2)\times \Sigma_2',\omega_{\Sigma_2'})$ with its image under the embedding $\alpha_2'$ and work in these coordinates. Since $T$ is transverse to the characteristic foliation, we can find finitely many open discs $B_1,\dots,B_n \Subset \Sigma_2'$ covering $\overline{\Sigma}_2$ and numbers $-1<s_1<\dots <s_n<0$ such that the closures of the discs $\{s_i\}\times B_i$ are disjoint from $T$ (cf. proof of Claim \ref{cl:disjoint-discs-covering}). Now, choose embeddings of $C^0$ Hamiltonian structures
    \begin{equation*}
        \beta_i : ((0,1)\times B_i,\omega_{B_i}) \hookrightarrow ((-1,0)\times \Sigma_2',\omega_{\Sigma_2'}) \subset (Y,\omega_2)
    \end{equation*}
    such that the closures of their images are pairwise disjoint and also disjoint from $T$.

    Pick a smooth isotopy $(\tilde\varphi_2^t)_{t\in [0,1]}$ in $\operatorname{Ham}(\Sigma_2)$ which $C^0$ approximates the continuous isotopy $(\varphi_2^t)_{t\in [0,1]}$; the existence of $(\tilde\varphi_2^t)$ is well-known and it can be deduced from the following two facts: first, every homeomorphism in $\overline{\Ham}$ can be written as a $C^0$ limit of elements in $\Ham$; and second, $\operatorname{Ham}$ and $\overline{\Ham}$  are both locally path connected; see, for example, \cite[Cor.\ 2]{Serraille} or \cite[Lem.\ 3.2]{Sey13}.  We can take $\gamma$ to be arbitrarily $C^0$ close to the identity by choosing better approximations $\tilde{\varphi}_2^t$. By Proposition \ref{prop:fragmentation_hamiltonian_homeomorphisms}, we can therefore find a fragmentation $\gamma = \gamma_n \circ \cdots \circ \gamma_1$ into Hamiltonian homeomorphisms $\gamma_i \in \overline{\operatorname{Ham}}(B_i)$ which are $C^0$ close to the identity. For each $i$, pick a $C^0$ small isotopy $(\gamma_i^t)_{t\in [0,1]}$ connecting the identity to $\gamma_i$. We define $\omega_2$-plugs
    \begin{equation*}
        \mathcal{Q}_2 \coloneqq (\Sigma_2,\omega_{\Sigma_2},\alpha_2,(\tilde\varphi_2^t)_{t\in [0,1]}) \qquad \text{and}\qquad \mathcal{Z}_i \coloneqq (B_i,\omega_{B_i},\beta_i,(\gamma_i^t)_{t\in [0,1]}).
    \end{equation*}
    Note that these plugs are disjoint. Moreover, $\mathcal{Q}_2$ is a smooth plug because both $\alpha_2$ and $\tilde\varphi_2^t$ are smooth. We also define the $\omega_2$-plug $\mathcal{Z}$ to be the disjoint union of the plugs $\mathcal{Z}_i$. Using Lemma \ref{lem:C0_plugs_with_same_time_1_maps} and the identity $\varphi_2^1 = \tilde\varphi_2^1\circ \gamma_n^1 \circ \cdots \circ \gamma_1^1$, we see that there exists a homeomorphism of $C^0$ Hamiltonian structures
    \begin{equation*}
        \chi: (Y_2,\omega_2\#\mathcal{P}_2) \rightarrow (Y_2,\omega_2\#(\mathcal{Q}_2\sqcup \mathcal{Z}))
    \end{equation*}
    isotopic to the identity. Moreover, since $\tilde\varphi_2^t$ is $C^0$ close to $\varphi_2^t$ and all $\gamma_i^t$ are $C^0$ close to the identity, it is clear from the proof of Lemma \ref{lem:C0_plugs_with_same_time_1_maps} that the homeomorphsim $\chi$ can be taken $C^0$ close to the identity. In particular, we can assume that $\chi(T)$ is disjoint from the closure of the image of $\mathcal{Z}$. Now consider the composite
    \begin{equation*}
        \chi\circ \psi : (Y_1,\omega_1\#\mathcal{P}_1) \rightarrow (Y_2,(\omega_2\#\mathcal{Q}_2) \# \mathcal{Z})
    \end{equation*}
    and note that $(\chi\circ \psi)^*\overline{\mathcal{Z}}$ is disjoint from $\alpha_1'(\{t_0\}\times S)$. As discussed above, we can slide $(\chi\circ \psi)^*\overline{\mathcal{Z}}$ away from $\mathcal{P}_1$ along the characteristic foliation of $\omega_1\#\mathcal{P}_1$. Hence, there exist a smooth $\omega_1$-plug $\mathcal{Q}_1$ of Calabi invariant
    \begin{equation}
    \label{eq:smoothing_homeomorphisms_proof_last_step}
        \operatorname{Cal}(\mathcal{Q}_1) = \overline{\operatorname{Cal}}(\mathcal{P}_1) - \overline{\operatorname{Cal}}(\mathcal{Z})
    \end{equation}
    and a diffeomorphism of Hamiltonian structures $\varphi: (Y_1,\omega_1\#\mathcal{Q}_1) \rightarrow (Y_2,\omega_2\#\mathcal{Q}_2)$ isotopic to $\chi\circ \psi$ and hence also to $\psi$. Now observe that by naturality of our $\mathcal{R}$-valued extension of the Calabi homomorphism, we have
    \begin{equation*}
        \overline{\operatorname{Cal}}(\mathcal{P}_2) = \operatorname{Cal}(\mathcal{Q}_2) + \overline{\operatorname{Cal}}(\mathcal{Z}).
    \end{equation*}
    Together with identity \eqref{eq:smoothing_homeomorphisms_proof_last_step}, this readily yields the desired identity
    \begin{equation*}
        \operatorname{Cal}(\mathcal{Q}_1) - \overline{\operatorname{Cal}}(\mathcal{P}_1) = \operatorname{Cal}(\mathcal{Q}_2) - \overline{\operatorname{Cal}}(\mathcal{P}_2).
    \end{equation*}
    This completes the proof of 
    Theorem \ref{thm:smoothing_homeomorphisms} in the general case.
\end{proof}

\section{Flux and helicity of \texorpdfstring{$C^0$}{C0} Hamiltonian structures}
\label{sec:flux and helicity extension}

In this section, we use the results of Section \ref{sec:smoothings-mod-plugs} to extend the definitions of flux and helicity to $C^0$ Hamiltonian structures. We prove our main result, Theorem \ref{thm:coherent_helicity_extension}, on the existence and uniqueness of a universal $\mathcal{R}$-valued helicity extension. Moreover, we prove Proposition \ref{prop:universality}.

\subsection{Flux}
\label{sec:flux_C0_Ham_structures}

We generalize the notion of flux, which we introduced for smooth Hamiltonian structures in Section \ref{def:smooth_flux}, to $C^0$ Hamiltonian structures.

\medskip

We begin with the following lemma, which is a straightforward consequence of Theorem \ref{thm:smoothing_homeomorphisms}.

\begin{lem}
\label{lem:C0_flux_definition}
    For $i \in \{1,2\}$, let $Y_i$ be a closed $3$-manifold and let $\omega_i$ be a smooth Hamiltonian structure on $Y_i$. Let $\mathcal{P}_i$ be a $C^0$ $\omega_i$-plug and assume that
    \begin{equation*}
        \psi : (Y_1,\omega_1\# \mathcal{P}_1) \rightarrow (Y_2,\omega_2\#\mathcal{P}_2)
    \end{equation*}
    is a homeomorphism of $C^0$ Hamiltonian structures. Then,
    \begin{equation*}
        \operatorname{Flux}(\omega_1) = \psi^* \operatorname{Flux}(\omega_2).
    \end{equation*}
\end{lem}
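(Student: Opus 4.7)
The plan is to reduce the claim to the smooth setting via Theorem \ref{thm:smoothing_homeomorphisms} and then invoke the plug-invariance of flux established in Lemma \ref{lem:flux_invariant_under_plug_insertion_smooth_case}. Concretely, applying Theorem \ref{thm:smoothing_homeomorphisms} to $\psi$ yields smooth $\omega_i$-plugs $\mathcal{Q}_i$ and a diffeomorphism of smooth Hamiltonian structures
\begin{equation*}
\varphi : (Y_1, \omega_1 \# \mathcal{Q}_1) \longrightarrow (Y_2, \omega_2 \# \mathcal{Q}_2)
\end{equation*}
that is homotopic to $\psi$ through (continuous) maps $Y_1 \to Y_2$.

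Since $\varphi$ pulls back the smooth $2$-form $\omega_2 \# \mathcal{Q}_2$ to $\omega_1 \# \mathcal{Q}_1$, passing to de Rham cohomology gives $\varphi^* \operatorname{Flux}(\omega_2 \# \mathcal{Q}_2) = \operatorname{Flux}(\omega_1 \# \mathcal{Q}_1)$ in $H^2(Y_1;\R)$. By Lemma \ref{lem:flux_invariant_under_plug_insertion_smooth_case}, which shows that smooth plug insertion modifies the Hamiltonian structure by an exact form, we have $\operatorname{Flux}(\omega_i \# \mathcal{Q}_i) = \operatorname{Flux}(\omega_i)$ for both $i$. Combining these two identities,
\begin{equation*}
\varphi^* \operatorname{Flux}(\omega_2) = \operatorname{Flux}(\omega_1).
\end{equation*}

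Finally, since $\varphi$ and $\psi$ are homotopic as continuous maps $Y_1 \to Y_2$, they induce the same homomorphism on singular cohomology, so $\varphi^* = \psi^*$ on $H^2(Y_2;\R)$. Therefore $\psi^* \operatorname{Flux}(\omega_2) = \operatorname{Flux}(\omega_1)$, as desired.

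I do not anticipate a serious obstacle here: all the real work is absorbed into Theorem \ref{thm:smoothing_homeomorphisms}. The only mild point to verify carefully is that the homotopy produced by that theorem lives in the category of continuous self-maps of a closed manifold, which is exactly what is needed to conclude $\varphi^* = \psi^*$ at the level of $H^2(\cdot;\R)$.
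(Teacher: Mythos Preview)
Your proof is correct and follows essentially the same approach as the paper: apply Theorem \ref{thm:smoothing_homeomorphisms} to obtain smooth plugs $\mathcal{Q}_i$ and a diffeomorphism $\varphi$ homotopic to $\psi$, use Lemma \ref{lem:flux_invariant_under_plug_insertion_smooth_case} to drop the plugs from the flux, and conclude by noting that homotopic maps induce the same map on cohomology. The paper's wording emphasizes that $\varphi$ is isotopic to $\psi$ through homeomorphisms, but as you observe, mere homotopy already suffices for the cohomological conclusion.
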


\begin{proof}
    By Theorem \ref{thm:smoothing_homeomorphisms}, we can find a smooth $\omega_i$-plug $\mathcal{Q}_i$ for $i\in \{1,2\}$ and a diffeomorphism of Hamiltonian structures
    \begin{equation*}
        \varphi : (Y_1,\omega_1\# \mathcal{Q}_1) \rightarrow (Y_2,\omega_2\# \mathcal{Q}_2)
    \end{equation*}
    which is isotopic to $\psi$. Clearly, we have
    \begin{equation*}
        \operatorname{Flux}(\omega_1\# \mathcal{Q}_1) = \varphi^* \operatorname{Flux}(\omega_2 \# \mathcal{Q}_2).
    \end{equation*}
    Recall from Lemma \ref{lem:flux_invariant_under_plug_insertion_smooth_case} that $\operatorname{Flux}(\omega_i\#\mathcal{Q}_i) = \operatorname{Flux}(\omega_i)$. The assertion of the lemma now follows from the observation that the actions of $\varphi$ and $\psi$ on cohomology agree because these two maps are isotopic.
\end{proof}

\begin{definition}
\label{def:flux_C0}
    Let $\Omega$ be a $C^0$ Hamiltonian structure on a closed topological $3$-manifold $Y$. By Theorem \ref{thm:from_C0_to_smooth_plus_plug}, we can find a smooth Hamiltonian structure $\omega$ on $Y$ and a $C^0$ $\omega$-plug $\mathcal{P}$ such that $\Omega = \mathcal{\omega} \# \mathcal{P}$. We define the \textit{flux} of $\Omega$ to be the cohomology class
    \begin{equation*}
        \overline{\operatorname{Flux}}(\Omega) \coloneqq \operatorname{Flux}(\omega) \in H^2(Y;\R),
    \end{equation*}
    which is independent of choices by Lemma \ref{lem:C0_flux_definition}. We say that a $C^0$ Hamiltonian structure $\Omega$ is \textit{exact} if $\overline{\operatorname{Flux}}(\Omega) = 0$.
\end{definition}

Note that if $\omega$ is a smooth Hamiltonian structure, then $\overline{\operatorname{Flux}}(\omega) = \operatorname{Flux}(\omega)$ because we can simply take $\mathcal{P}$ to be the trivial plug in Definition \ref{def:flux_C0}. Moreover, it is immediate from Lemma \ref{lem:C0_flux_definition} that if $\psi: (Y_1,\Omega_1)\rightarrow (Y_2,\Omega_2)$ is a homeomorphism of $C^0$ Hamiltonian structures, then $\overline{\operatorname{Flux}}(\Omega_1) = \psi^* \overline{\operatorname{Flux}}(\Omega_2)$.

\begin{lem}
\label{lem:flux_invariant_under_plug_insertion_C0_case}
    Let $\Omega$ be a $C^0$ Hamiltonian structure on a closed topological manifold $Y$ and let $\mathcal{P}$ be an $\Omega$-plug. Then
    \begin{equation*}
        \overline{\operatorname{Flux}}(\Omega) = \overline{\operatorname{Flux}}(\Omega\# \mathcal{P}).
    \end{equation*}
\end{lem}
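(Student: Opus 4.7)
The plan is to reduce to the smooth case by exhibiting $\Omega$ and $\Omega \# \mathcal{P}$ as plug insertions into the same smooth Hamiltonian structure $\omega$. To do so, I would first shrink $\mathcal{P}$ via Remark \ref{rem:shrinking_plugs} to obtain an $\Omega$-plug $\mathcal{P}'$ whose closure $K \coloneqq \overline{\mathrm{im}(\mathcal{P}')}$ is a compact subset of the open set $\mathrm{im}(\mathcal{P})$ and which satisfies $\Omega \# \mathcal{P}' = \Omega \# \mathcal{P}$; it thus suffices to prove $\overline{\operatorname{Flux}}(\Omega \# \mathcal{P}') = \overline{\operatorname{Flux}}(\Omega)$.

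The next step is to verify that $U \coloneqq Y \setminus K$ is exhaustive in the sense of Definition \ref{defn:exhaustive-set}. Writing $\mathcal{P} = (\Sigma, \omega_\Sigma, \alpha, \varphi^t)$, the shrinking construction places $K$ inside $\alpha([\varepsilon, 1-\varepsilon] \times \overline{\Sigma'})$ for some $0 < \varepsilon < 1/2$ and some $\Sigma' \Subset \Sigma$. In the flowbox coordinates supplied by $\alpha$, any characteristic leaf of $\Omega$ passing through $K$ traverses the $t$-direction monotonically, and a uniform bound on the flow time needed to exit the compact slab $\alpha([\varepsilon, 1-\varepsilon] \times \overline{\Sigma'})$ follows by compactness together with continuity of a fixed-point-free flow tangent to the characteristic foliation, yielding exhaustiveness of $U$.

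Having arranged this, I would apply the strengthened smoothing statement in Remark \ref{rem:from_C0_to_smooth_plus_plug_stronger} to the exhaustive open set $U$, producing a smooth Hamiltonian structure $\omega$ on $Y$ and a $C^0$ $\omega$-plug $\mathcal{Q}$ with $\mathrm{im}(\mathcal{Q}) \subset U$ such that $\Omega = \omega \# \mathcal{Q}$. By construction $\mathrm{im}(\mathcal{Q}) \cap K = \emptyset$, so $\omega$ and $\Omega$ coincide on an open neighborhood of $\overline{\mathrm{im}(\mathcal{P}')}$; in particular the embedding defining $\mathcal{P}'$ is also a topological embedding of $C^0$ Hamiltonian structures into $(Y,\omega)$, making $\mathcal{P}'$ simultaneously a $C^0$ $\omega$-plug. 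Since the plug images are disjoint, the disjoint-union tuple $\mathcal{Q} \sqcup \mathcal{P}'$ is itself a $C^0$ $\omega$-plug, and plug insertions in disjoint regions commute, giving
\begin{equation*}
\Omega \# \mathcal{P}' \;=\; (\omega \# \mathcal{Q}) \# \mathcal{P}' \;=\; \omega \#\bigl(\mathcal{Q} \sqcup \mathcal{P}'\bigr).
\end{equation*}
Feeding this presentation into Definition \ref{def:flux_C0} yields $\overline{\operatorname{Flux}}(\Omega \# \mathcal{P}') = \operatorname{Flux}(\omega) = \overline{\operatorname{Flux}}(\Omega)$, completing the argument.

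The main technical obstacle is the exhaustiveness verification: if one tried to work directly with $\mathcal{P}$ rather than a shrunken $\mathcal{P}'$, then $Y \setminus \overline{\mathrm{im}(\mathcal{P})}$ need not be exhaustive (for example, $\mathrm{im}(\mathcal{P})$ can be dense in $Y$, or can contain full characteristic leaves of $\Omega$), and Remark \ref{rem:from_C0_to_smooth_plus_plug_stronger} could not be applied with a complement-of-plug exhaustive set. The preliminary shrinking step, together with the flowbox structure of $\alpha$, is what renders the exhaustiveness argument clean and makes the disjointness of the plugs $\mathcal{Q}$ and $\mathcal{P}'$ achievable.
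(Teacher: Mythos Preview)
Your proposal is correct and follows essentially the same approach as the paper's proof: shrink the plug so that its complement is exhaustive, apply Remark~\ref{rem:from_C0_to_smooth_plus_plug_stronger} to write $\Omega = \omega \# \mathcal{Q}$ with $\mathcal{Q}$ disjoint from the shrunken plug, and conclude by presenting $\Omega \# \mathcal{P}$ as $\omega \# (\mathcal{Q} \sqcup \mathcal{P}')$. The only difference is that you spell out the exhaustiveness verification via the flowbox structure of $\alpha$, whereas the paper simply asserts that after shrinking one may assume the complement contains an exhaustive open set.
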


\begin{proof}
    After possibly slightly shrinking the image of the plug $\mathcal{P}$, see Remark \ref{rem:shrinking_plugs}, we can assume that the complement of the image of $\mathcal{P}$ contains an exhaustive open set $U$. By Theorem \ref{thm:from_C0_to_smooth_plus_plug} and Remark \ref{rem:from_C0_to_smooth_plus_plug_stronger}, we can find a smooth Hamiltonian structure $\omega$ on $Y$ and a $C^0$ $\omega$-plug $\mathcal{Q}$ whose image is contained in $U$ such that $\Omega = \omega \# \mathcal{Q}$. Let $\mathcal{Z}$ be the disjoint union of the plugs $\mathcal{Q}$ and $\mathcal{P}$. Now $\mathcal{Z}$ is an $\omega$-plug and $\omega\# \mathcal{Z} = \Omega\# \mathcal{P}$. Using the definition of $\overline{\operatorname{Flux}}$, we can therefore compute
    \begin{equation*}
        \overline{\operatorname{Flux}}(\Omega \#\mathcal{P}) = \overline{\operatorname{Flux}}(\omega \# \mathcal{Z}) = \operatorname{Flux}(\omega) = \overline{\operatorname{Flux}}(\omega \# \mathcal{Q}) = \overline{\operatorname{Flux}}(\Omega).
    \end{equation*}
\end{proof}

\subsection{Helicity}
\label{sec:helictiy-extended}

The goal of this section is to prove Theorem \ref{thm:coherent_helicity_extension}. 

\medskip

The uniqueness of the extension $\overline{\mathcal{H}}$ in Theorem \ref{thm:coherent_helicity_extension} is a consequence of Theorem \ref{thm:from_C0_to_smooth_plus_plug}. Indeed, consider an arbitrary exact $C^0$ Hamiltonian structure $\Omega$ on a closed topological $3$-manifold $Y$. By Theorem \ref{thm:from_C0_to_smooth_plus_plug}, there exist a smooth Hamiltonian structure $\omega$ on $Y$ and an $\omega$-plug $\mathcal{P}$ such that $\Omega = \omega\# \mathcal{P}$. By Lemma \ref{lem:flux_invariant_under_plug_insertion_C0_case}, the smooth Hamiltonian structure $\omega$ is exact. It follows from properties \ref{item:coherent_helicity_extensions_smooth}, \ref{item:coherent_helicity_extensions_invariance}, and \ref{item:coherent_helicity_extensions_plug} in Theorem \ref{thm:coherent_helicity_extension} that we must have
\begin{equation}
\label{eq:coherent_helicity_extension_proof_computation_uniqueness}
    \overline{\mathcal{H}}(\Omega) = \overline{\mathcal{H}}(\omega\# \mathcal{P}) = \overline{\mathcal{H}}(\omega) + \overline{\operatorname{Cal}}(\mathcal{P}) = \mathcal{H}(\omega) + \overline{\operatorname{Cal}}(\mathcal{P}).
\end{equation}
This proves uniqueness.

We turn to the proof of existence. We begin with the following lemma, which is a corollary of Theorem \ref{thm:smoothing_homeomorphisms}.

\begin{lem}
\label{lem:coherent_helicity_extension_proof_existence}
    For $i\in \{1,2\}$, let $Y_i$ be a closed $3$-manifold and let $\omega_i$ be an exact smooth Hamiltonian structure on $Y_i$. Let $\mathcal{P}_i$ be a $C^0$ $\omega_i$-plug and assume that
    \begin{equation*}
        \psi : (Y_1, \omega_1\# \mathcal{P}_1) \rightarrow (Y_2,\omega_2\# \mathcal{P}_2)
    \end{equation*}
    is a homeomorphism of $C^0$ Hamiltonian structures. Then
    \begin{equation*}
        \mathcal{H}(\omega_1) + \overline{\operatorname{Cal}}(\mathcal{P}_1) = \mathcal{H}(\omega_2) + \overline{\operatorname{Cal}}(\mathcal{P}_2).
    \end{equation*}
\end{lem}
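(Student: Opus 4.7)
The proof is essentially a direct assembly of three inputs already established: Theorem \ref{thm:smoothing_homeomorphisms}, the smooth plug-helicity formula (Lemma \ref{lem:helicity_after_plug_insertion}), and the diffeomorphism invariance of smooth helicity. My plan is to reduce the $C^0$ situation to a smooth one via the Smoothing Theorem and then chain the Calabi identities.

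\textbf{Step 1: Smoothing.} The plan is to apply Theorem \ref{thm:smoothing_homeomorphisms} to the given homeomorphism $\psi : (Y_1,\omega_1\#\mathcal{P}_1) \to (Y_2,\omega_2\#\mathcal{P}_2)$. This yields smooth $\omega_i$-plugs $\mathcal{Q}_i$ and a diffeomorphism of Hamiltonian structures $\varphi : (Y_1,\omega_1\#\mathcal{Q}_1) \to (Y_2,\omega_2\#\mathcal{Q}_2)$ satisfying
\begin{equation*}
    \operatorname{Cal}(\mathcal{Q}_1) - \overline{\operatorname{Cal}}(\mathcal{P}_1) = \operatorname{Cal}(\mathcal{Q}_2) - \overline{\operatorname{Cal}}(\mathcal{P}_2).
\end{equation*}
Note that $\operatorname{Cal}(\mathcal{Q}_i)\in\R$ since $\mathcal{Q}_i$ is smooth, so this identity already forces $\overline{\operatorname{Cal}}(\mathcal{P}_1)-\overline{\operatorname{Cal}}(\mathcal{P}_2)\in\R \subset \mathcal{R}$.

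\textbf{Step 2: Exactness is preserved.} The Hamiltonian structures $\omega_i\#\mathcal{Q}_i$ are exact: by assumption $\omega_i$ is exact, and by Lemma \ref{lem:flux_invariant_under_plug_insertion_smooth_case}, insertion of a smooth plug does not change the flux. Hence both $\mathcal{H}(\omega_i\#\mathcal{Q}_i)$ are defined.

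\textbf{Step 3: Smooth identities.} Since $\varphi$ is a diffeomorphism of Hamiltonian structures between exact smooth Hamiltonian structures, the standard diffeomorphism invariance of helicity gives
\begin{equation*}
    \mathcal{H}(\omega_1\#\mathcal{Q}_1) = \mathcal{H}(\omega_2\#\mathcal{Q}_2).
\end{equation*}
On the other hand, Lemma \ref{lem:helicity_after_plug_insertion} gives, for each $i$,
\begin{equation*}
    \mathcal{H}(\omega_i\#\mathcal{Q}_i) = \mathcal{H}(\omega_i) + \operatorname{Cal}(\mathcal{Q}_i).
\end{equation*}
Combining these two displays yields $\mathcal{H}(\omega_1) + \operatorname{Cal}(\mathcal{Q}_1) = \mathcal{H}(\omega_2) + \operatorname{Cal}(\mathcal{Q}_2)$, i.e.
\begin{equation*}
    \mathcal{H}(\omega_1) - \mathcal{H}(\omega_2) = \operatorname{Cal}(\mathcal{Q}_2) - \operatorname{Cal}(\mathcal{Q}_1).
\end{equation*}

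\textbf{Step 4: Conclusion.} Substituting the Calabi identity from Step 1, namely $\operatorname{Cal}(\mathcal{Q}_2)-\operatorname{Cal}(\mathcal{Q}_1)=\overline{\operatorname{Cal}}(\mathcal{P}_2)-\overline{\operatorname{Cal}}(\mathcal{P}_1)$, into the above yields the desired equality
\begin{equation*}
    \mathcal{H}(\omega_1) + \overline{\operatorname{Cal}}(\mathcal{P}_1) = \mathcal{H}(\omega_2) + \overline{\operatorname{Cal}}(\mathcal{P}_2),
\end{equation*}
now viewed inside $\mathcal{R}$ via the inclusion $\R\subset\mathcal{R}$.

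There is really no hard step here, since all the substantive work has been done in Theorem \ref{thm:smoothing_homeomorphisms} and Lemma \ref{lem:helicity_after_plug_insertion}; the only thing to be careful about is that the identity lives in the abelian group $\mathcal{R}$, with the two helicities $\mathcal{H}(\omega_i)\in\R\subset\mathcal{R}$ and only the Calabi \emph{difference} a priori being in $\R$, which is exactly what Theorem \ref{thm:smoothing_homeomorphisms} guarantees.
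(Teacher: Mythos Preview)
Your proof is correct and follows essentially the same route as the paper's own argument: apply Theorem~\ref{thm:smoothing_homeomorphisms} to produce smooth plugs $\mathcal{Q}_i$ with the stated Calabi identity, use diffeomorphism invariance of smooth helicity together with Lemma~\ref{lem:helicity_after_plug_insertion}, and combine. Your Step~2 on exactness and the closing remark about where the identity lives in $\mathcal{R}$ are small clarifications the paper leaves implicit, but otherwise the arguments coincide.
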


\begin{proof}
    By Theorem \ref{thm:smoothing_homeomorphisms}, we can find a smooth $\omega_i$-plug $\mathcal{Q}_i$ for $i\in \{1,2\}$ and a diffeomorphism of Hamiltonian structures
    \begin{equation*}
        \varphi : (Y_1,\omega_1\#\mathcal{Q}_1) \rightarrow (Y_1,\omega_1\#\mathcal{Q}_1)
    \end{equation*}
    such that
    \begin{equation*}
        \operatorname{Cal}(\mathcal{Q}_1) - \overline{\operatorname{Cal}}(\mathcal{P}_1)  = \operatorname{Cal}(\mathcal{Q}_2) - \overline{\operatorname{Cal}}(\mathcal{P}_2)
    \end{equation*}
    Since $\varphi$ is a diffeomorphism, we clearly have $\mathcal{H}(\omega_1\#\mathcal{Q}_1) = \mathcal{H}(\omega_2\#\mathcal{Q}_2)$. It follows from Lemma \ref{lem:Helicity-Plug} that $\mathcal{H}(\omega_i\#\mathcal{Q}_i) = \mathcal{H}(\omega_i) + \operatorname{Cal}(\mathcal{Q}_i)$. The assertion of the lemma is an immediate consequence of these facts.
\end{proof}

Let $\Omega$ be an arbitrary exact $C^0$ Hamiltonian structure on a closed topological $3$-manifold $Y$. By Theorem \ref{thm:from_C0_to_smooth_plus_plug}, we may write $\Omega = \omega \# \mathcal{P}$ for a smooth Hamiltonian structure $\omega$ and a $C^0$ $\omega$-plug $\mathcal{P}$. By Lemma \ref{lem:flux_invariant_under_plug_insertion_C0_case}, the smooth Hamiltonian structure $\omega$ is exact. The idea is to use identity \eqref{eq:coherent_helicity_extension_proof_computation_uniqueness} as a definition, i.e.\ to set
\begin{equation}
\label{eq:coherent_helicity_extension_proof_existence}
    \overline{\mathcal{H}}(\Omega) \coloneqq \mathcal{H}(\omega) + \overline{\operatorname{Cal}}(\mathcal{P}).
\end{equation}
By Lemma \ref{lem:coherent_helicity_extension_proof_existence}, this definition is independent of choices. It remains to check that our extension $\overline{\mathcal{H}}$ satisfies properties \ref{item:coherent_helicity_extensions_smooth}, \ref{item:coherent_helicity_extensions_invariance}, and \ref{item:coherent_helicity_extensions_plug} in Theorem \ref{thm:coherent_helicity_extension}. For property \ref{item:coherent_helicity_extensions_smooth}, observe that if $\Omega = \omega$ is already smooth, we can simply take $\mathcal{P}$ to be the trivial plug in \eqref{eq:coherent_helicity_extension_proof_existence}. Property \ref{item:coherent_helicity_extensions_invariance} is immediate from Lemma \ref{lem:coherent_helicity_extension_proof_existence}.  Finally, suppose that $\Omega$ is an exact $C^0$ Hamiltonian structure and that $\mathcal{P}$ is an $\Omega$-plug. After possibly slightly shrinking the image of $\mathcal{P}$, we can assume that the complement of $\mathcal{P}$ contains an exhaustive open set $U$. By Theorem \ref{thm:from_C0_to_smooth_plus_plug} and Remark \ref{rem:from_C0_to_smooth_plus_plug_stronger}, we can write $\Omega = \omega \# \mathcal{Q}$ for a smooth Hamiltonian structure $\omega$ on $Y$ and a $C^0$ $\omega$-plug $\mathcal{Q}$ whose image is contained in $U$. By Lemma \ref{lem:flux_invariant_under_plug_insertion_C0_case}, the Hamiltonian structure $\omega$ is exact. The disjoint union $\mathcal{Z}$ of $\mathcal{P}$ and $\mathcal{Q}$ is an $\omega$-plug and we have $\Omega \# \mathcal{P} = \omega \# \mathcal{Z}$. Moreover, we have $\overline{\operatorname{Cal}}(\mathcal{Z}) = \overline{\operatorname{Cal}}(\mathcal{Q}) + \overline{\operatorname{Cal}}(\mathcal{P})$. We can therefore compute
\begin{equation*}
    \overline{\mathcal{H}}(\Omega\#\mathcal{P}) = \mathcal{\omega} + \overline{\operatorname{Cal}}(\mathcal{Z}) = \mathcal{\omega} + \overline{\operatorname{Cal}}(\mathcal{Q}) + \overline{\operatorname{Cal}}(\mathcal{P}) = \overline{\mathcal{H}}(\Omega) + \overline{\operatorname{Cal}}(\mathcal{P}),
\end{equation*}
verifying property \ref{item:coherent_helicity_extensions_plug}. This concludes the proof of Theorem \ref{thm:coherent_helicity_extension}.\qed

\subsection{Universality}

The goal of this subsection is to prove Proposition \ref{prop:universality}. Let $\R\subset A$ be an arbitrary extension of abelian groups and let $\overline{\mathcal{H}}^A$ be an $A$-valued extension of helicity to $C^0$ Hamiltonian structures. Assume that $\overline{\mathcal{H}}^A$ satisfies the Extension and Invariance properties in Theorem \ref{thm:coherent_helicity_extension}. Moreover, assume that $\overline{\mathcal{H}}^A$ satisfies plug homogeneity. Our task is to show that there exists a unique group homomorphism $p:\mathcal{R}\rightarrow A$ over $\R$ such that $\overline{\mathcal{H}}^A = p \circ \overline{\mathcal{H}}$.

Let $(\Sigma,\omega_\Sigma)$ be a non-empty connected open surface with area form. Let us begin by defining a map
\begin{equation*}
    \tilde{p}_\Sigma : \overline{\operatorname{Ham}}(\Sigma)\rightarrow A
\end{equation*}
as follows. Let $\varphi \in \overline{\operatorname{Ham}}(\Sigma)$ be a Hamiltonian homeomorphism. Pick an arbitrary isotopy $\varphi^t$ in $\overline{\operatorname{Ham}}(\Sigma)$ connecting the identity to $\varphi$. Moreover, pick an arbitrary $C^0$ Hamiltonian structure $(Y,\Omega)$ such that there exists an embedding of $C^0$ Hamiltonian structures $\alpha : ((0,1)\times \Sigma,\omega_\Sigma) \hookrightarrow (Y,\Omega)$. Define the $\Omega$-plug $\mathcal{P} \coloneqq (\Sigma,\omega_\Sigma,\alpha,(\varphi^t)_{t\in [0,1]})$ and set
\begin{equation*}
    \tilde{p}_\Sigma(\varphi) \coloneqq \overline{\mathcal{H}}^A(\Omega \# \mathcal{P}) - \overline{\mathcal{H}}^A(\Omega) \in A.
\end{equation*}
By plug homogeneity, this is well-defined and independent of choices.

We claim that $\tilde{p}_\Sigma$ is a group homomorphism. Indeed, consider two Hamiltonian homeomorphisms $\varphi,\psi \in \overline{\operatorname{Ham}}(\Sigma)$. Pick Hamiltonian isotopies $(\varphi^t)_{t\in [0,1]}$ and $(\psi^t)_{t\in [0,1]}$ starting at the identity and ending at $\varphi$ and $\psi$, respectively. We define the concatenation $(\chi^t)_{t\in [0,1]}$ by
\begin{equation*}
    \chi^t \coloneqq \begin{cases}
        \varphi^{2t} & t\in [0,1/2]\\
        \psi^{2t-1}\circ \varphi^1 & t \in [1/2,1].
    \end{cases}
\end{equation*}
Now pick a $C^0$ Hamiltonian structure $(Y,\Omega)$ and an embedding of $C^0$ Hamiltonian structures $\alpha : ((0,1)\times \Sigma, \omega_\Sigma) \hookrightarrow (Y,\Omega)$. Define the plug $\mathcal{Z} \coloneqq (\Sigma,\omega_\Sigma,\alpha,(\chi^t)_{t\in [0,1]})$. By construction, it is possible to split the plug $\mathcal{Z}$ into two disjoint plugs $\mathcal{P}$ and $\mathcal{Q}$ inserting the isotopies $\varphi^t$ and $\psi^t$, respectively, such that $\Omega \# \mathcal{Z} = \Omega\# (\mathcal{P} \sqcup \mathcal{Q})$. We can then compute
\begin{eqnarray*}
    \tilde{p}_\Sigma(\psi \circ \varphi) & = & \overline{\mathcal{H}}^A(\Omega \# \mathcal{Z}) - \overline{\mathcal{H}}^A(\Omega) \\
    & = & \overline{\mathcal{H}}^A((\Omega \# \mathcal{P}) \# \mathcal{Q}) - \overline{\mathcal{H}}^A(\Omega) \\
    & = & \overline{\mathcal{H}}^A((\Omega \# \mathcal{P}) \# \mathcal{Q}) - \overline{\mathcal{H}}^A(\Omega \# \mathcal{P}) + \overline{\mathcal{H}}^A(\Omega \# \mathcal{P}) - \overline{\mathcal{H}}^A(\Omega) \\
    & = & \tilde{p}_\Sigma(\psi) + \tilde{p}_\Sigma(\varphi).
\end{eqnarray*}
Here we use plug homogeneity and the definition of $\tilde{p}_\Sigma$. This shows that $\tilde{p}_\Sigma$ is a homomorphism.

Next, we claim that the restriction of $\tilde{p}_\Sigma$ to the group of Hamiltonian diffeomorphisms $\operatorname{Ham}(\Sigma)$ agrees with the Calabi homomorphism $\operatorname{Cal}_\Sigma$ via the inclusion $\R \subset A$. Suppose that $\varphi \in \operatorname{Ham}(\Sigma)$. Then we can choose the Hamiltonian isotopy, the Hamiltonian structure, and the plug embedding involved in the definition of $\tilde{p}_\Sigma$ to be smooth. Let $(Y,\omega)$ and $\mathcal{P}$ be the resulting smooth Hamiltonian structure and plug, respectively. We compute
\begin{eqnarray*}
    \tilde{p}_\Sigma(\varphi) & = & \overline{\mathcal{H}}^A(\omega \# \mathcal{P}) - \overline{\mathcal{H}}^A(\omega) \\
    & = & \mathcal{H}(\omega\# \mathcal{P}) - \mathcal{H}(\omega) \\
    & = & \operatorname{Cal}_\Sigma(\varphi).
\end{eqnarray*}
Here the second equality follows from the assumption that $\overline{\mathcal{H}}^A$ satisfies the Extension property in Theorem \ref{thm:coherent_helicity_extension}. The third equality follow from Lemma \ref{lem:helicity_after_plug_insertion}.

Suppose that $\iota : \Sigma_1 \hookrightarrow \Sigma_2$ is an area- and orientation-preserving embedding of open surfaces. Then we have
\begin{equation}
\label{eq:universality_proof_naturality}
    \tilde{p}_{\Sigma_2} \circ \overline{\operatorname{Ham}}(\iota) = \tilde{p}_{\Sigma_1},
\end{equation}
where $\overline{\operatorname{Ham}}(\iota)$ is the homomorphism between Hamiltonian homeomorphism groups obtained by pushforward via $\iota$; see Section \ref{sec:hamiltonian_homeos_and_calabi}. In order to see this, consider a $C^0$ Hamiltonian structure $(Y,\Omega)$ and an embedding $\alpha : ((0,1)\times \Sigma_2,\omega_{\Sigma_2}) \hookrightarrow (Y,\Omega)$. Given $\varphi \in \overline{\operatorname{Ham}}(\Sigma_1)$, we can form a plug $\mathcal{P}_2$ by inserting the homeomorphism $\overline{\operatorname{Ham}}(\iota)(\varphi)$ via the embedding $\alpha$. We can form a second plug $\mathcal{P}_1$ by inserting the homeomorphism $\varphi$ via the embedding $\alpha \circ (\operatorname{id}_{(0,1)}\times \iota)$. Note that the $C^0$ Hamiltonian structures obtained by inserting $\mathcal{P}_1$  and $\mathcal{P}_2$ agree, i.e.\ $\Omega \# \mathcal{P}_1 = \Omega \# \mathcal{P}_2$. Identity \eqref{eq:universality_proof_naturality} is then immediate from the definition of $\tilde{p}_{\Sigma_j}$.

Combining the above observations, we see that the homomorphisms $\tilde{p}_\Sigma$ descend to a well-defined group homomorphism
\begin{equation*}
    p : \mathcal{R} \cong \overline{\operatorname{Ham}}^{\operatorname{ab}}(\Sigma) \rightarrow A
\end{equation*}
over $\R$ which is independent of $\Sigma$. We need to check that $\overline{\mathcal{H}}^A = p \circ \overline{\mathcal{H}}$.

First, observe that since $p$ is a homomorphism over $\R$, and both $\overline{\mathcal{H}}^A$ and $\overline{\mathcal{H}}$ satisfy the Extension property in Theorem \ref{thm:coherent_helicity_extension}, we have
\begin{equation*}
    \overline{\mathcal{H}}^A(\omega) = p\circ \overline{\mathcal{H}}(\omega) = \mathcal{H}(\omega)
\end{equation*}
for every smooth Hamiltonian structure $\omega$.

It is immediate from the definition of $p$ that $\overline{\mathcal{H}}^A$ satisfies the following version of the Calabi property in Theorem \ref{thm:coherent_helicity_extension}: for every $C^0$ Hamiltonian structure $\Omega$ and every $\Omega$-plug $\mathcal{P}$, we have
\begin{equation*}
    \overline{\mathcal{H}}^A(\Omega \# \mathcal{P}) = \overline{\mathcal{H}}^A(\Omega) + p \circ \overline{\operatorname{Cal}}(\mathcal{P}).
\end{equation*}

Now consider an arbitrary $C^0$ Hamiltonian structure $(Y,\Omega)$. By Theorem \ref{thm:from_C0_to_smooth_plus_plug}, we can write $\Omega = \omega \# \mathcal{P}$ for some smooth Hamiltonian structure $\omega$ and a $C^0$ plug $\mathcal{P}$. We compute
\begin{eqnarray*}
    \overline{\mathcal{H}}^A(\Omega) & = & \overline{\mathcal{H}}^A(\omega \# \mathcal{P}) \\
    & = & \overline{\mathcal{H}}^A(\omega) + p\circ \overline{\operatorname{Cal}}(\mathcal{P}) \\
    & = & p(\mathcal{H}(\omega) + \overline{\operatorname{Cal}}(\mathcal{P})) \\
    & = & p\circ \overline{\mathcal{H}}(\omega \# \mathcal{P}) \\ 
    & = & p\circ \overline{\mathcal{H}}(\Omega).
\end{eqnarray*}
Here the second equality uses the Calabi property for $\overline{\mathcal{H}}^A$, the third equality uses the Extension property and the fact that $p$ is a homomorphism over $\R$, and the fourth equality uses the Calabi property of $\overline{\mathcal{H}}$.

This concludes the proof of the existence part of Proposition \ref{prop:universality}. For the uniqueness part, simply observe that every element of $\mathcal{R}$ is attained as the universal $\mathcal{R}$-valued helicity $\overline{\mathcal{H}}(\Omega)$ of some $C^0$ Hamiltonian structure $\Omega$.\qed

\section{The relationship between Hamiltonian structures and volume-preserving flows}
\label{sec:relationship_hamiltonian_structures_vol_pres_flows} 

As mentioned earlier, there exists a close relationship between Hamiltonian structures and volume-preserving flows. The goal of this section is to clarify this relation, particularly in the $C^0$ setting.  This is needed for deducing Theorem \ref{thm:topological_invariance_helicity_flows}, which is stated for volume-preserving flows, from our main result Theorem \ref{thm:coherent_helicity_extension}, which is stated for Hamiltonian structures.  Theorem \ref{thm:topological_invariance_helicity_flows} is proven in Section \ref{sec:proof-thm-flows}.

\subsection{The smooth case}
\label{sec:smooth-case}

Consider a closed oriented smooth $3$-manifold $Y$. Suppose $\mu$ is a volume form on $Y$ that is compatible with its orientation. Let $\omega$ denote a Hamiltonian structure, i.e., a closed maximally nondegenerate $2$-form. Finally, let $\varphi$ be a smooth, fixed-point-free flow on $Y$, generated by a nowhere vanishing vector field $X$.

Consider $\R^3 = \R\times \R^2$ equipped with coordinates $(t,x,y)$.  Let $\mu_{\operatorname{std}} = dt\wedge dx\wedge dy$ be the standard volume form and equip $\R^3$ with the orientation induced by this volume form.  Let $\varphi_{\operatorname{std}}$ be the flow generated by the vector field $X_{\operatorname{std}} = \partial_t$.  Recall that the standard Hamiltonian structure $\omega_{\operatorname{std}}$ is given by the $2$-form $\omega_{\operatorname{std}} = dx\wedge dy$.

\begin{lem}
\label{lem:compatible_triple_equivalent_conditions_smooth}
    The following statements are equivalent:
    \begin{enumerate}
        \item \label{item:compatible_triple_equivalent_conditions_smooth_one} We have $\iota_X\mu = \omega$.
        \item \label{item:compatible_triple_equivalent_conditions_smooth_two} The triple $(\varphi,\mu,\omega)$ is locally diffeomorphic to the triple $(\varphi_{\operatorname{std}},\mu_{\operatorname{std}}, \omega_{\operatorname{std}})$.
    \end{enumerate}
\end{lem}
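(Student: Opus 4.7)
The plan. The direction \ref{item:compatible_triple_equivalent_conditions_smooth_two} $\Rightarrow$ \ref{item:compatible_triple_equivalent_conditions_smooth_one} is straightforward: in the standard model on $\R^3$ one directly computes $\iota_{\partial_t}(dt \wedge dx \wedge dy) = dx \wedge dy$, so $\iota_{X_{\operatorname{std}}} \mu_{\operatorname{std}} = \omega_{\operatorname{std}}$. The identity $\iota_X \mu = \omega$ is tensorial, hence invariant under diffeomorphisms, so it propagates from the local model to $(Y, \varphi, \mu, \omega)$.

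For the converse, the plan is to produce flow box coordinates near each point and then straighten the induced area form on a transversal. First I would extract the consequences of $\iota_X \mu = \omega$: since $d\omega = 0$ and $\mu$ is top-dimensional, Cartan's formula gives $\mathcal{L}_X \mu = d \iota_X \mu = d\omega = 0$; also $\iota_X \omega = \iota_X \iota_X \mu = 0$, and hence $\mathcal{L}_X \omega = \iota_X d\omega + d \iota_X \omega = 0$. Thus $X$ preserves both $\mu$ and $\omega$, and $X$ is tangent to $\ker \omega$, i.e.\ to the characteristic foliation of $\omega$.

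Next, fix $p \in Y$. Since $X$ is nowhere vanishing, choose a smooth $2$-dimensional transversal $\Sigma \ni p$ and $\varepsilon > 0$ so that $\Phi : (-\varepsilon, \varepsilon) \times \Sigma \to Y$, $(t, q) \mapsto \varphi^t(q)$, is a diffeomorphism onto an open neighborhood $U$ of $p$; in these coordinates $X = \partial_t$. The conditions $\mathcal{L}_X \omega = 0$ and $\iota_X \omega = 0$ force $\Phi^* \omega = \pi^* \omega_\Sigma$ for some $2$-form $\omega_\Sigma$ on $\Sigma$, where $\pi$ denotes projection onto $\Sigma$; maximal non-degeneracy of $\omega$ on the plane field transverse to $X$ makes $\omega_\Sigma$ an area form. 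Writing $\Phi^* \mu = h \, dt \wedge \pi^* \omega_\Sigma$ for a positive function $h$ on $(-\varepsilon, \varepsilon) \times \Sigma$ and evaluating $\iota_{\partial_t} \Phi^* \mu = \Phi^* \omega$ yields $h \equiv 1$. Thus $\Phi^* \mu = dt \wedge \pi^* \omega_\Sigma$ and $\Phi^* \omega = \pi^* \omega_\Sigma$.

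Finally, I would invoke the two-dimensional Darboux theorem on $(\Sigma, \omega_\Sigma)$: after shrinking $\Sigma$, there exists an orientation- and area-preserving diffeomorphism $\psi : (\Sigma, \omega_\Sigma) \to (V, dx \wedge dy)$ for some open $V \subset \R^2$. The composition $(\operatorname{id}_{(-\varepsilon,\varepsilon)} \times \psi) \circ \Phi^{-1} : U \to (-\varepsilon, \varepsilon) \times V$ is then a diffeomorphism intertwining $(\varphi, \mu, \omega)$ with $(\varphi_{\operatorname{std}}, \mu_{\operatorname{std}}, \omega_{\operatorname{std}})$. The only mild subtlety is orientation bookkeeping: one must check that the orientation of $\Sigma$ induced by $\omega$ and the orientation of the flow agree with the ambient orientation from $\mu$, but this is dictated sign-wise by the equation $\iota_X \mu = \omega$ itself, so no real obstacle arises. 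The main technical step is thus the tensorial verification that $\Phi^* \mu = dt \wedge \pi^* \omega_\Sigma$ with the correct normalization, and this is a one-line calculation once $\iota_X \omega = 0$ is in hand.
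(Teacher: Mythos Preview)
Your proof is correct and follows essentially the same approach as the paper: both directions match, and for \ref{item:compatible_triple_equivalent_conditions_smooth_one} $\Rightarrow$ \ref{item:compatible_triple_equivalent_conditions_smooth_two} the paper also constructs flow-box coordinates around a $2$-disc transversal on which $\omega$ has been put into standard area form via two-dimensional Darboux, then verifies the pullback identities. The only cosmetic difference is that the paper first chooses the Darboux disc and then extends by the flow, whereas you first build the flow box and then straighten the transversal; your version also spells out the Cartan computations that the paper leaves implicit.
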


\begin{proof}
    Note that  $\iota_{X_{\operatorname{std}}} \mu_{\operatorname{std}} = \omega_{\operatorname{std}}$ and, moreover, this condition is preserved by diffeomorphisms and can be checked locally.  Hence statement \ref{item:compatible_triple_equivalent_conditions_smooth_two} implies statement \ref{item:compatible_triple_equivalent_conditions_smooth_one}.

    Conversely, suppose that $\iota_X\mu = \omega$.  Given an arbitrary point $p\in Y$, we may pick $a>0$ sufficiently small and an embedding $\iota : B(a)\hookrightarrow Y$ such that $\iota(0) = p$ and the pull back $\iota^*\omega$ agrees with the the standard area form on $B(a)$. For $\varepsilon>0$ sufficiently small, extend $\iota$ to an embedding $\iota : (-\varepsilon,\varepsilon)\times B(a) \hookrightarrow Y$ via the flow $\varphi$. It can be checked that the pull back of $(\varphi,\mu,\omega)$ via $\iota$ is given by $(\varphi_{\operatorname{std}},\mu_{\operatorname{std}},\omega_{\operatorname{std}})$. 
\end{proof}

\begin{definition}
\label{def:compatible_triple_smooth}
    We call $(\varphi,\mu,\omega)$ a \textit{compatible triple} if the equivalent conditions in Lemma \ref{lem:compatible_triple_equivalent_conditions_smooth} are satisfied.
\end{definition}

Note that given $(\mu,\omega)$, there exists a unique vector field $X$ satisfying $\iota_X\mu= \omega$ and we can extend $(\mu,\omega)$ to a compatible triple by taking $\varphi$ to be the flow generated by $X$.  Below we formulate criteria for extension of $(\varphi, \mu)$ or $(\varphi, \omega)$ to compatible triples.

\begin{prop}
\label{prop:compatible_triple_existence_extension_smooth}
    The pair $(\varphi,\mu)$ extends to a compatible triple if and only if it is locally diffeomorphic to $(\varphi_{\operatorname{std}},\mu_{\operatorname{std}})$. The analogous statements hold for the pairs $(\varphi,\omega)$ and $(\mu,\omega)$.
\end{prop}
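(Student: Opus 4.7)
The plan is to address each of the three pairs in turn. The ``only if'' direction is immediate in every case from Lemma~\ref{lem:compatible_triple_equivalent_conditions_smooth}: if $(\varphi,\mu,\omega)$ is a compatible triple, a local diffeomorphism to $(\varphi_{\operatorname{std}},\mu_{\operatorname{std}},\omega_{\operatorname{std}})$ restricts to a local diffeomorphism of any sub-pair onto the corresponding standard pair. Thus the content lies entirely in the ``if'' direction, which asks for the construction of the missing third element.

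For the pair $(\mu,\omega)$ the construction is essentially forced: since $\mu$ is a volume form, contraction $X\mapsto \iota_X\mu$ is a fiberwise isomorphism $TY\to \Lambda^2 T^*Y$, so there is a unique smooth vector field $X$ with $\iota_X\mu = \omega$. Maximal nondegeneracy of $\omega$ implies $\omega$ is nowhere zero, hence $X$ is nowhere vanishing, and its flow $\varphi$ completes the triple. Lemma~\ref{lem:compatible_triple_equivalent_conditions_smooth} then upgrades compatibility of the triple to its local standardness.

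For the pair $(\varphi,\mu)$ I would define $\omega := \iota_X\mu$, where $X$ generates $\varphi$. Local standardness of $(\varphi,\mu)$ implies $\varphi$ preserves $\mu$, so $\mathcal{L}_X\mu = 0$, whence $d\omega = d\iota_X\mu = 0$. In flow-box coordinates where $X=\partial_t$ and $\mu = dt\wedge dx\wedge dy$ one reads off $\omega = dx\wedge dy$, confirming maximal nondegeneracy.

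The case $(\varphi,\omega)$ is the most delicate, and where I expect the main obstacle, since the volume form must be manufactured globally from only $X$ and $\omega$. The key observation is that local standardness forces $\iota_X\omega = 0$, as $X=\partial_t$ lies in the kernel of $\omega = dx\wedge dy$ in local charts. Using a partition of unity on $Y$, I would produce a global $1$-form $\eta$ satisfying $\eta(X) = 1$, and then set $\mu := \eta\wedge \omega$. The Leibniz identity together with $\iota_X\omega = 0$ yields
\begin{equation*}
\iota_X\mu \;=\; \eta(X)\,\omega \,-\, \eta\wedge \iota_X\omega \;=\; \omega,
\end{equation*}
and a direct computation in any standard chart shows that $\mu$ equals $\mu_{\operatorname{std}}$ there, so $\mu$ is indeed a volume form. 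Independence of the choice of $\eta$ follows from the same local computation: any two such $1$-forms differ by a $1$-form annihilating $X$, whose wedge with $\omega$ vanishes. Orientation-compatibility of $\mu$ with the given orientation of $Y$ is automatic from our convention that local diffeomorphisms to the standard model are orientation-preserving. Applying Lemma~\ref{lem:compatible_triple_equivalent_conditions_smooth} once more closes the argument.
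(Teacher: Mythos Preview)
Your proof is correct, but it takes a different route from the paper's. The paper argues uniformly for all three pairs at once: locally, any sub-pair of $(\varphi_{\operatorname{std}},\mu_{\operatorname{std}},\omega_{\operatorname{std}})$ trivially extends to the full standard triple, so local standardness of a pair gives local extensions to compatible triples; then the uniqueness statement of Lemma~\ref{lem:compatible_triple_unique_extension_smooth} forces these local third components to agree on overlaps, yielding a global extension. No case analysis and no explicit formulas are needed.

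Your approach instead constructs the missing component globally and explicitly in each case, and your formulas are all correct (in particular the partition-of-unity construction of $\mu$ from $(\varphi,\omega)$ is fine, and your independence-of-$\eta$ argument is exactly the pointwise linear algebra behind Lemma~\ref{lem:compatible_triple_unique_extension_smooth}). In fact, your constructions for $(\varphi,\mu)$ and $(\varphi,\omega)$ are essentially what the paper uses later in the proof of Proposition~\ref{prop:compatible_triple_existence_extension_smooth_2}, so you have effectively merged that proposition's content into this one. The paper's local-plus-uniqueness argument is shorter here and has the advantage of porting verbatim to the $C^0$ setting (Proposition~\ref{prop:from_pair_to_tiple_iff_loc_triv_C0}), where explicit global formulas are not available; your explicit approach, on the other hand, makes the third component concrete and avoids a forward reference to the uniqueness lemma.
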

\begin{proof}
    The ``only if" part of the statement is clear in view of the characterization of compatibility given in Lemma \ref{lem:compatible_triple_equivalent_conditions_smooth}. For the converse direction, note that any two components of $(\varphi_{\operatorname{std}},\mu_{\operatorname{std}},\omega_{\operatorname{std}})$ clearly extend to a compatible triple. This shows that being locally diffeomorphic to a pair contained in the triple $(\varphi_{\operatorname{std}},\mu_{\operatorname{std}},\omega_{\operatorname{std}})$ allows to locally extend to a compatible triple. But in view of the uniqueness of extensions to a compatible triple proved in Lemma \ref{lem:compatible_triple_unique_extension_smooth}, stated and proven below, local extendibility implies global extendibility.
\end{proof}

\begin{lem}
\label{lem:compatible_triple_unique_extension_smooth}
    Any two components of a compatible triple $(\varphi,\mu,\omega)$ uniquely determine the third.
\end{lem}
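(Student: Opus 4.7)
The plan is to reduce the statement to pointwise linear algebra on the $3$-dimensional fibers of $TY$, using that the defining equation $\iota_X\mu = \omega$ (where $X$ is the infinitesimal generator of $\varphi$) is fiberwise linear and that, in a compatible triple, all three objects are nowhere degenerate.

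First I would handle the case where $(\mu,\omega)$ is given. Because $\mu$ is a nowhere vanishing top form on a $3$-manifold, the fiberwise contraction map $TY \to \Lambda^2 T^*Y,\ X\mapsto \iota_X\mu$, is an isomorphism of rank-$3$ vector bundles. Hence $\iota_X\mu = \omega$ has a unique solution $X$, which is automatically nowhere vanishing since $\omega$ is maximally nondegenerate. The flow $\varphi$ is then uniquely determined as the flow of $X$.

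Next I would handle the case where $(\varphi,\omega)$ is given. Let $X$ be the generator of $\varphi$, which is nowhere vanishing. Suppose $\mu_1,\mu_2$ both satisfy $\iota_X\mu_i = \omega$. Since the fiber of $\Lambda^3 T^*Y$ is $1$-dimensional and $\mu_1$ is nowhere vanishing, we may write $\mu_2-\mu_1 = f\mu_1$ for some smooth $f:Y\to\R$. Then
\begin{equation*}
    0 = \iota_X(\mu_2-\mu_1) = f\cdot \iota_X\mu_1 = f\omega.
\end{equation*}
Because $\omega$ is maximally nondegenerate, it is nowhere vanishing, so $f\equiv 0$ and $\mu_1 = \mu_2$. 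The remaining case, where $(\varphi,\mu)$ is given, is immediate: $\omega$ is defined by the formula $\omega = \iota_X\mu$ and so is unique.

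There is essentially no real obstacle here; the only thing to watch is that each uniqueness step uses the nondegeneracy of a different piece of the triple (nondegeneracy of $\mu$ in the first case, of $\omega$ in the second, tautological in the third), together with the fact that $X$ is nowhere vanishing, which is built into the notion of a compatible triple.
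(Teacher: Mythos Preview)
Your proposal is correct and takes essentially the same approach as the paper: both reduce to pointwise linear algebra via the defining relation $\iota_X\mu=\omega$. The paper simply states the underlying linear-algebra fact on a $3$-dimensional vector space without spelling out the three cases, whereas you carry out each case explicitly; there is no substantive difference.
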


\begin{proof}
    This is a consequence of the following fact: Let $V$ be a $3$-dimensional vector space and suppose $v\in V$, $\sigma \in \Lambda^3V^*$, and $\tau \in \Lambda^2V^*$ are non-zero and satisfy the identity $\iota_v\sigma = \tau$.  Then, any two components of the triple $(v,\sigma,\tau)$ uniquely determine the third.
\end{proof}

\begin{prop}
\label{prop:compatible_triple_existence_extension_smooth_2}
    \begin{enumerate}
        \item \label{item:compatible_triple_existence_extension_smooth_2_phi_mu} The pair $(\varphi,\mu)$ extends to a compatible triple if and only if the flow $\varphi$ preserves $\mu$.
        \item \label{item:compatible_triple_existence_extension_smooth_2_phi_omega} The pair $(\varphi,\omega)$ extends to a compatible triple if and only if the flow $\varphi$ is positively tangent to the characteristic foliation of $\omega$.
    \end{enumerate}
\end{prop}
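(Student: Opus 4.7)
The strategy is to reduce both parts to the local characterization established in Proposition \ref{prop:compatible_triple_existence_extension_smooth}: a pair extends to a compatible triple precisely when it is locally diffeomorphic to the corresponding pair drawn from the standard triple $(\varphi_{\operatorname{std}}, \mu_{\operatorname{std}}, \omega_{\operatorname{std}})$. The forward implications in both (i) and (ii) are immediate, since the standard flow preserves the standard volume and is positively tangent to the characteristic foliation of $\omega_{\operatorname{std}}$, and both of these properties are diffeomorphism invariant and purely local. The substance lies in the two converses, each of which amounts to a flow-box normalization.

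For part (i), suppose $\varphi$ preserves $\mu$. Since $X$ is nowhere vanishing, the flow-box theorem provides, around any point, local coordinates $(t,x,y)$ in which $X = \partial_t$. Writing $\mu = f(t,x,y)\, dt \wedge dx \wedge dy$, the invariance $(\varphi^s)^* \mu = \mu$ translates into $\partial_t f \equiv 0$, so $f = f(x,y) > 0$. By the classical local normal form for positive area forms on surfaces (e.g.\ set $u = x$ and solve $\partial_y v = f(x,y)$), one finds, after shrinking the domain, coordinates $(u,v)$ in the transverse slice satisfying $du \wedge dv = f(x,y)\, dx \wedge dy$. In the new coordinates $(t,u,v)$, the vector field $X$ remains $\partial_t$ while $\mu = dt \wedge du \wedge dv$, giving the desired local diffeomorphism with $(\varphi_{\operatorname{std}}, \mu_{\operatorname{std}})$.

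For part (ii), suppose $X$ is positively tangent to the characteristic foliation of $\omega$. Lemma \ref{lem:Draboux} yields local coordinates $(s,x,y)$ in which $\omega = dx \wedge dy$; since we work throughout with orientation-preserving diffeomorphisms, the oriented characteristic foliation coincides with $+\partial_s$. The positivity hypothesis on $X$ therefore gives $X = g(s,x,y)\, \partial_s$ with $g > 0$. Reparametrize the leaves by
\begin{equation*}
    t(s,x,y) \coloneqq \int_0^s g(\sigma,x,y)^{-1}\, d\sigma,
\end{equation*}
keeping $(x,y)$ fixed. Since only the leaf coordinate changes, $dx \wedge dy$ is preserved, while a one-line computation gives $X = \partial_t$ in the new coordinates. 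This exhibits $(\varphi, \omega)$ as locally diffeomorphic to $(\varphi_{\operatorname{std}}, \omega_{\operatorname{std}})$, and Proposition \ref{prop:compatible_triple_existence_extension_smooth} concludes the argument.

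No serious obstacle is anticipated; the only point requiring any care is the local Darboux-type normalization for surface area forms invoked in part (i), but this is elementary in two dimensions.
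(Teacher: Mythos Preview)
Your proof is correct, but it takes a different route from the paper's. The paper bypasses Proposition~\ref{prop:compatible_triple_existence_extension_smooth} entirely and instead uses the algebraic characterization from Lemma~\ref{lem:compatible_triple_equivalent_conditions_smooth}\ref{item:compatible_triple_equivalent_conditions_smooth_one}: for part~(i) it simply sets $\omega \coloneqq \iota_X\mu$, observing that this $2$-form is closed (since $L_X\mu = 0$) and maximally nondegenerate (since $X$ is nowhere vanishing); for part~(ii) it notes that if $X$ spans $\ker\omega$ then there is a unique volume form $\mu$ with $\iota_X\mu = \omega$, by pointwise linear algebra. This is considerably shorter and avoids any coordinate work.

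Your approach, by contrast, produces explicit local normal forms and then invokes Proposition~\ref{prop:compatible_triple_existence_extension_smooth}. While this is more laborious here, it has the virtue of being closer in spirit to what is actually needed in the $C^0$ setting (Proposition~\ref{prop:from_pair_to_tiple_conditions_C0}), where one cannot write down $\iota_X\mu$ and must instead build local charts by hand using the Oxtoby--Ulam theorem. So your argument, though not the most efficient in the smooth case, rehearses the structure of the genuinely nontrivial $C^0$ version.
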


\begin{proof}
    If $(\varphi,\mu,\omega)$ is a compatible triple, then $\varphi$ preserves $\mu$ and the flow $\varphi$ is positively tangent to the characteristic foliation of $\omega$. This shows the ``only if" direction in both statements. Conversely, if $\varphi$ preserves $\mu$, then $\iota_X\mu$ is a maximally nondegenerate closed $2$-form and we can extend $(\varphi,\mu)$ to a compatible triple by defining the Hamiltonian structure $\omega$ to consist of the $2$-form $\omega\coloneqq \iota_X\mu$.
    
    Finally, if the flow $\varphi$ is positively tangent to the characteristic foliation of $\omega$, then there exists a unique volume form $\mu$ satisfying $\iota_X\mu = \omega$.
\end{proof}

\subsection{The \texorpdfstring{$C^0$}{C0} case}
\label{sec:c0_case}

The goal of this section is to explain the relationship between Hamiltonian structures and volume-preserving flows in the $C^0$ setting.

\subsubsection*{Topological flows and $C^0$ foliations}

Let $Y$ be a closed oriented topological $3$-manifold, for now not equipped with a measure. Consider a topological flow $\varphi$ on $Y$, i.e.\ a continuous map $\varphi : \R \times Y \rightarrow Y$ such that $\varphi(t, \cdot )$ is a homeomorphism of $Y$ for each $t \in \R$ and, moreover,
\begin{equation*}
    \varphi(t,\varphi(s,p)) = \varphi(t+s,p) \qquad \text{for all $t,s\in \R$ and $p\in Y$.}
\end{equation*}
It will be convenient to use the notation $\varphi^t(p)\coloneqq \varphi(t,p)$. In the following, we always assume that $\varphi$ is fixed-point-free, i.e.\ that there does not exist a point $p\in Y$ such that $\varphi^t(p)= p$ for all $t\in \R$.

\begin{prop}
\label{prop:flow_on_C0_foliations}
    The flow lines of a fixed-point-free topological flow $\varphi$ on $Y$ form an oriented $1$-dimensional $C^0$ foliation $\mathcal{F}_\varphi$. Conversely, for every oriented $C^0$ foliation $\mathcal{F}$ on $Y$, there exists a fixed-point-free topological flow $\varphi$ such that $\mathcal{F} = \mathcal{F}_\varphi.$
\end{prop}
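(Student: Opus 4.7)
The plan is to prove the two implications separately.

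For the forward direction, given a fixed-point-free topological flow $\varphi$, I would establish a $C^0$ flow box theorem: for every $p\in Y$ there exist $\varepsilon>0$ and a topologically embedded $2$-disc $\Sigma \subset Y$ with $p\in \Sigma$ such that the map $(-\varepsilon,\varepsilon)\times \Sigma \to Y$, $(t,x)\mapsto \varphi^t(x)$, is a homeomorphism onto an open neighborhood of $p$. Since $\varphi$ is fixed-point-free, $\varphi^t(p)\neq p$ for all sufficiently small $t\neq 0$; continuity of $\varphi$ together with compactness of $Y$ then lets one choose a small topological transversal $\Sigma$ through $p$ on which the flow map is injective for small $|t|$, and invariance of domain upgrades this continuous injection to a homeomorphism onto an open set. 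This is the topological analogue of Whitney's flow box theorem and is classical for continuous fixed-point-free flows on locally compact Hausdorff spaces. The inverses of such flow boxes form an oriented $1$-dimensional $C^0$ foliation $\mathcal{F}_\varphi$ whose leaves are the orbits of $\varphi$ and whose coorientation is induced by the flow direction.

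For the reverse direction, given an oriented $1$-dimensional $C^0$ foliation $\mathcal{F}$, I would construct a topological flow $\varphi$ whose orbits are the leaves of $\mathcal{F}$ by patching together local time parameterizations coming from foliation charts. Fix a finite cover of $Y$ by foliation charts $\phi_i : U_i \to (-1,1)\times B^2$ compatible with the orientation of $\mathcal{F}$ in the first factor, together with a continuous partition of unity $\{\rho_i\}$ subordinate to $\{U_i\}$. The chart-dependent time coordinates $t_i := \phi_i^{(1)}$ provide, on each leaf segment in $U_i$, a local $\R$-action by translation in $t_i$; following the classical construction going back to Whitney's work on regular families of curves, one uses the $\rho_i$ to blend these local actions into a globally defined, leaf-preserving, jointly continuous $\R$-action on $Y$. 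The orientation of $\mathcal{F}$ ensures the action is nontrivial on every leaf, hence fixed-point-free, and the identification of the orbits of $\varphi$ with the leaves of $\mathcal{F}$ is built into the construction.

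The main technical point, and the hard part I expect, is the patching step in the second construction. A $C^0$ foliation by $1$-manifolds does not come equipped with differentiable leaf parameterizations: transition maps between foliation charts are only continuous along leaves, and a continuous Riemannian metric on $Y$ need not restrict to a finite-length structure on each leaf, so a naive definition of $\varphi^t$ via arclength or by integrating a continuous ``velocity'' is not available. Verifying that the partition-of-unity blending yields a jointly continuous $\R$-action on $Y$ requires careful uniform estimates on compact pieces of leaves that pass through finitely many overlapping foliation charts, together with compactness of $Y$ to terminate the global extension process. The forward direction is comparatively routine once the topological local cross-section theorem is in hand.
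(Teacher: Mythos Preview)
Your forward direction matches the paper's: both invoke Whitney's local cross-section theorem for fixed-point-free flows to produce flow-box charts, and both note (implicitly or explicitly) that dimension three is what makes the cross section a disc.

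For the converse, your approach and the paper's genuinely diverge. You propose to blend the local time coordinates $t_i$ of foliation charts via a partition of unity into a global $\R$-action, in the spirit of Whitney's work on regular families of curves. The paper instead does precisely what you dismissed as unavailable: it constructs a metric $d$ on $Y$ with respect to which every leaf is rectifiable, and then parametrizes each leaf by arclength. The trick is that $d$ is not induced by a continuous Riemannian metric; rather, one takes finitely many foliation charts, maps each chart smoothly into $S^3\subset\R^4$ (collapsing the complement of a slightly smaller chart to a point), and defines $d$ as the sum of the pullbacks of the Euclidean distance. Because these auxiliary maps are smooth, their restrictions to leaf segments have finite Euclidean length, so leaves are rectifiable; a short argument shows lengths diverge along unbounded leaf segments and that the resulting arclength flow is continuous.

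Your partition-of-unity route is plausible and classical in spirit, but the step you flag as hard---turning ``blend local translations by $t_i$ weighted by $\rho_i$'' into an honest jointly continuous $\R$-action---is left as a plan rather than an argument. Making it precise essentially requires defining a Stieltjes-type integral $\sum_i\int \rho_i\,dt_i$ along each leaf and checking continuity across charts, which is not far in content from the paper's arclength computation. The paper's construction is more explicit and self-contained; your approach would likely be shorter once the blending is formalized, but as written it defers exactly the point where the work lies.
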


\begin{proof}
    We begin by proving that the flow lines of a fixed-point-free flow $\varphi$ on $Y$ form an oriented $1$-dimensional $C^0$ foliation $\mathcal{F}_\varphi$. This actually makes important use of the fact that $Y$ has dimension three; see Remark \ref{rem:flow_to_foliation_importance_dim_3} below. Let $p \in Y$ be an arbitrary point. It was shown by Whitney \cite{whi38} (see also \cite[Lemma 1 and Corollary 1]{co75}) that $\varphi$ admits a local cross section through $p$ which is a $2$-dimensional topological disc. This means that there exist $\varepsilon >0$ and a topologically embedded $2$-dimensional disc $B\subset Y$ containing $p$ such that
    \begin{equation*}
        (-\varepsilon,\varepsilon)\times B \rightarrow Y \qquad (t,q) \mapsto \varphi^t(q)
    \end{equation*}
    is an embedding. By construction, this embedding maps straight lines $(-\varepsilon,\varepsilon)\times \{*\}$ into flow lines of $\varphi$. Its inverse is therefore a topological foliation chart. Since $p\in Y$ was arbitrary, we can cover $Y$ by such foliation charts.
    
    Next, we show that every oriented $C^0$ foliation $\mathcal{F}$ admits a fixed-point-free flow $\varphi$ such that $\mathcal{F} = \mathcal{F}_\varphi$. Our strategy is to construct a special metric $d$ on $Y$ with the property that the leaves of the characteristic foliation of $\Omega$ are rectifiable curves with respect to the metric $d$. We then use the metric $d$ to obtain, for each point $p\in Y$, a unique continuous curve $\gamma_p : \R \rightarrow L \subset Y$, where $L$ denotes the characteristic leaf containing $p$, with the following properties:

    \begin{itemize}
        \item $\gamma_p$ parametrized by arc length,
        \item  $\gamma_p(0)= p$,
        \item $\gamma_p : \R \rightarrow L$ is an orientation-preserving homeomorphism if $L$ is an open leaf and an orientation-preserving covering if $L$ is a closed leaf.
    \end{itemize}
    In this situation, we can define the flow $\varphi$ by simply setting $\varphi^t(p) \coloneqq \gamma_p(t)$.
    
    \medskip

    We now describe the construction of the metric $d$. For $\delta > 0$, we consider foliation charts of the form
    \begin{equation*}
        C_\delta := (0,1) \times D_\delta,
    \end{equation*}
    where $D_\delta$ denotes the disc of radius $\delta$. The oriented leaves of the foliation within this chart are given by
    \begin{equation*}
        (0,1) \times \{p\},
    \end{equation*}
    for $p \in D_\delta$. For some $\delta>0$, we can pick a finite collection of charts 
    \begin{equation*}
        \iota_i : C_{2\delta} \hookrightarrow Y \qquad 1\leq i \leq n
    \end{equation*}
    such that the sets $\iota_i(C_\delta)$ form an open covering of $Y$. Moreover, fix a smooth map
    \begin{equation*}
        f : C_{2\delta} \rightarrow S^3 \subset \R^4
    \end{equation*}
    with the property that there exist an open set $C_{\delta} \Subset U \Subset C_{2\delta}$ and a point $* \in S^3$ such that the restriction of $f$ to $U$ is a diffeomorphism $f|_U : U \rightarrow S^3\setminus \{*\}$ and $f(C_{2\delta}\setminus U) = *$. For $1\leq i \leq n$, we define
    \begin{equation*}
        f_i : Y \rightarrow S^3 \qquad f_i(p) \coloneqq
        \begin{cases}
            f(\iota_i^{-1}(p)) & \text{if $p$ in $\operatorname{im}(\iota_i)$} \\
            * & \text{otherwise}.
        \end{cases}
    \end{equation*}
    
    Note that $f_i$ is continuous and that its restriction to $\iota_i(C_\delta)$ is an embedding. For $1\leq i \leq n$, we define the pseudo-metric
    \begin{equation*}
        d_i : Y\times Y \rightarrow \R_{\geq 0} \qquad d_i(p,q) \coloneqq |f_i(p) - f_i(q)|,
    \end{equation*}
    where $|\cdot |$ denotes the standard norm on $\R^4$. Suppose that $p \neq q \in Y$ are two distinct points. There exists $i$ such that $p \in \iota_i(C_\delta)$. For this $i$, we must have $f_i(p) \neq f_i(q)$ which shows that
    \begin{equation*}
        d \coloneqq \sum\limits_{i=1}^n d_i
    \end{equation*}
    defines a metric on $Y$.

    Next, we show that every leaf $L$ of the characteristic foliation of $\Omega$ is rectifiable with respect to the metric $d$.  We recall the necessary preliminaries.  A continuous curve 
    \begin{equation*}
        \gamma : [a,b] \to Y
    \end{equation*}
    is called \emph{rectifiable} if its \emph{length} 
    \begin{equation}
    \label{eqn:length}
        \ell(\gamma) := \sup \left\{ \sum_{i=1}^{n} d(\gamma(t_{i-1}), \gamma(t_i)) \right\}
    \end{equation}
    is finite, where the supremum is taken over all partitions $a = t_0 < t_1 < \cdots < t_n = b$ of the interval $[a,b]$.  We say $\gamma : \R \rightarrow Y$ is rectifiable if the restriction of $\gamma$ to any closed interval is rectifiable. If $\gamma$ is rectifiable, then any reparametrization of it is rectifiable and has the same length.

    \medskip

    Now, consider a leaf $L$ of the foliation $\mathcal{F}$ and pick a continuous map $\gamma : \R \rightarrow L$ which is an orientation-preserving homeomorphism if $L$ is open and an orientation-preserving covering if $L$ is closed. The claim below shows that $\gamma$ is a rectifiable curve.
    
    \begin{claim}
    \label{claim:flow_on_C0Hamstructure_claim_rectifiable}
        For each compact interval $[a,b]\subset \R$, we have $\ell(\gamma|_{[a,b]}) < \infty$. Moreover, we have
        \begin{equation}
        \label{eq:flow_on_C0Hamstructrue_claim_limits}
            \lim_{b\rightarrow +\infty} \ell(\gamma|_{[a,b]}) = + \infty \qquad \text{and} \qquad \lim_{a\rightarrow - \infty} \ell(\gamma|_{[a,b]}) = + \infty.
        \end{equation}
    \end{claim}

    \begin{proof}
    For each $i$, we can write
    \begin{equation*}
        I_i \coloneqq \gamma^{-1}(\operatorname{im}(\iota_i)) = \bigcup_j I_i^j
    \end{equation*}
    where $(I_i^j)_j$ is a countable collection of pairwise disjoint finite open intervals $I_i^j\subset \R$. Every compact subset of $\R$ intersects only finitely many of the intervals $I_i^j$. Given a point $z \in D_{2\delta}$, let $\eta_z$ be a parametrization of the line segment $(0,1)\times \{z\} \subset C_{2\delta}$. Since $f$ is a smooth map, we have $\ell_{\operatorname{std}}(f\circ \eta_z) < +\infty$, where $\ell_{\operatorname{std}}$ stands for length measured with respect to the standard metric on $\R^4$. From this we can conclude that $\ell_i(\gamma|_{I_i^j}) < + \infty$, where $\ell_i$ denotes length with respect to the pseudo-metric $d_i$, defined analogously to \eqref{eqn:length} . Observe that if $J$ is an interval disjoint from $I_i$, then $\ell_i(\gamma|_J)=0$. Using the fact that $[a,b]$  intersects only finitely many of the intervals $I_i^j$, we conclude that $\ell(\gamma|_{[a,b]}) = \sum_i \ell_i(\gamma|_{[a,b]}) < + \infty$.

    Since the restriction of $f$ to $C_\delta$ is an embedding, there exists a positive number $c>0$ such that $\ell(f\circ \eta_z) > c$ for all $z \in D_\delta$. If $I_i^j$ is an interval such that $\gamma(I_i^j)\subset \iota_i(C_\delta)$, we then have $\ell_i(\gamma|_{I_i^j}) > c$. Since the sets $\iota_i(C_\delta)$ cover $Y$, every $t\in \R$ is contained in some interval $I_i^j$ with this property. The limits \eqref{eq:flow_on_C0Hamstructrue_claim_limits} are an immediate consequence. This concludes the proof of the claim.
    \end{proof}

    It follows from Claim \ref{claim:flow_on_C0Hamstructure_claim_rectifiable} that if $p\in Y$ is a point and $L$ is the leaf containing $p$, there exists a unique curve $\gamma_p : \R \rightarrow L\subset Y$ which is an orientation-preserving homeomorphism, or an orientation-preserving covering if $L$ is closed,  such that $\gamma_p$ is parametrized by arc length and $\gamma_p(0) = p$. As already mentioned, we can then define $\varphi(t,p) \coloneqq \gamma_p(t)$. It remains to check that $\varphi : \R \times M \rightarrow M$ is continuous.  A priori, we must also show that $\varphi(t, \cdot)$ is a homeomorphism for each $t \in \R$, however, this follows immediately from continuity as $\varphi(-t, \cdot)$ is the inverse of $\varphi(t, \cdot)$.
    
    Fix a point $p \in Y$ and an arbitrary  $T>0$. Our goal is to show that if $q\in Y$ is close to $p$, then $\gamma_q|_{[-T,T]}$ is $C^0$ close to $\gamma_p|_{[-T,T]}$. First note that if $q$ is sufficiently close to $p$ and $L_q$ denotes the leaf containing $q$, we can find a continuous map $\beta : \R \rightarrow L_q\subset Y$ which is an orientation-preserving covering of $L_q$ such that $\beta(0) = q$ and such that $\beta|_{[-T,T]}$ is $C^0$ close to $\gamma_p|_{[-T,T]}$. It then suffices to show that $\beta$ is close to being parametrized by arc length, i.e.\ that for every compact interval $[a,b]\subset [-T,T]$, the length $\ell(\beta|_{[a,b]})$ is close to $|b-a|$. We provide an outline of an argument proving this: For $i\in \{1,2\}$, let $\eta_i : [0,1] \rightarrow C_{2 \delta}$ be an embedded continuous line segment with image contained in $(0,1)\times \{z_i\}$ for $z_i \in D_{2 \delta}$. If $\eta_1$  and $\eta_2$ are $C^0$ close, then $\ell(f\circ \eta_1)$ and $\ell(f\circ \eta_2)$ are close. Note that for every segment of $\gamma_p$ traversing $\iota_i(C_{2_\delta})$ there is a corresponding $C^0$-close-by segment of $\beta$ traversing $\iota_i(C_{2\delta})$.  This implies that $\ell_i(\gamma_p|_{[a,b]})$ is close to $\ell_i(\beta|_{[a,b]})$ for all $i$ and for every interval $[a,b]\subset [-T,T]$. Thus $|b-a| = \ell(\gamma_p|_{[a,b]})$ and $\ell(\beta|_{[a,b]})$ are close as well. This concludes the proof of continuity of $\varphi$.
\end{proof}

\begin{rem}
\label{rem:flow_to_foliation_importance_dim_3}
    The assumption that Y has dimension 3 plays a crucial role in our proof of the first part of Proposition \ref{prop:flow_on_C0_foliations}, namely that the flow lines of $\varphi$ define a $C^0$ foliation. However, this dimensional restriction is not required for the second part of the proposition: in higher-dimensional manifolds, every oriented 1-dimensional $C^0$ foliation still admits a fixed-point-free flow tracing out its leaves.
    
    We now describe an example of a fixed-point-free topological flow $\psi$ on a topological $5$-manifold $Z$ which does not admit a local cross section homeomorphic to a $4$-dimensional ball $B^4$ everywhere. Start with a $3$-dimensional homology sphere $M$ which is not homeomorphic to the $3$-sphere $S^3$. Consider the suspension $SM = [-1,1]\times M/\sim$, where $\{\pm 1\}\times M$ are collapsed to points $p_\pm$. While this space is a homology manifold, it is not a topological manifold. The two points $p_\pm$ do not have neighborhoods homeomorphic to the $4$-dimensional ball. However, by the double suspension theorem \cite{can79}, the double suspension $S^2M$ is homeomorphic to $S^5$. This also implies that the product $Z \coloneqq S^1 \times SM$ is a topological $5$-manifold. Let $\psi$ be the fixed-point-free flow on $Z$ which rotates the $S^1$ factor and restricts to the identity on the $SM$ factor. Now observe that $\psi$ cannot have a local cross section homeomorphic to $B^4$ at any point contained in $S^1 \times \{p_\pm\}$. If it did, one would obtain a neighborhood of $p_\pm$ in $SM$ homeomorphic to $B^4$.
\end{rem}

\subsubsection*{The Oxtoby--Ulam theorem}

Let $X$ be a compact metric space and let $\mu$ be a \textit{Borel measure} on $X$ of finite mass, i.e.\ a measure $\mu$ defined on the Borel $\sigma$-algebra of $X$ satisfying $\mu(X)<+\infty$. The \textit{support} $\operatorname{supp}(\mu)$ of $\mu$ is the set of all points $x\in X$ such that every open neighborhood of $x$ has strictly positive measure. The measure $\mu$ is said to have \textit{full support} if $\operatorname{supp}(\mu) = X$. This is equivalent to requiring that the measure of any non-empty open subset is non-negative. Following \cite[Def. 2.15]{dgs76}, we say that $\mu$ is \textit{non-atomic} if $\mu(\{x\}) = 0$ for every point $x \in X$.

\begin{rem}
    An \textit{atom} of a general measure space $(X,\Sigma,\mu)$ is often defined to be a measurable set $A\in \Sigma$ such that $\mu(A)>0$ and such that every measurable set $B\subset A$ satisfies $\mu(B) = 0$ or $\mu(B) = \mu(A)$. A measure is then called \textit{non-atomic} if it does not have any atoms. A simple argument (see \cite[2.IV]{kno67}) shows that if $\mu$ is an inner and outer regular finite Borel measure, then $\mu$ is non-atomic in this sense if and only if it is non-atomic in the sense defined above, i.e. if and only if $\mu(\{x\}) = 0$ for all points $x$. Note that any finite Borel measure on a compact metric space is inner and outer regular \cite[Prop. 2.3]{dgs76}. This means that in all situations of interest to us in this paper, the two notions of being non-atomic agree.
\end{rem}

The following theorem was proved in \cite{ou41} by Oxtoby and Ulam, who also give credit to von Neumann for an independent and unpublished proof. Our formulation of the theorem follows \cite{fat80}.

\begin{thm}
\label{thm:oxtoby_ulam}
    Let $M^n$ be a compact connected topological manifold, possibly with boundary. For $i\in \{1,2\}$, let $\mu_i$ be a finite, non-atomic Borel measure on $M$ of full support. Moreover, assume that $\mu_i(\partial M) = 0$. Then, there exists a homeomorphism $f : M\rightarrow M$ such that $f_*\mu_1 = \mu_2$ if and only if $\mu_1(M) = \mu_2(M)$. If this is the case, the homeomorphism $f$ can be chosen such that $f|_{\partial M} = \operatorname{id}_{\partial M}$.
\end{thm}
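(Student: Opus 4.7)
The ``only if'' direction is immediate, since any homeomorphism preserves the total mass of a measure. For the ``if'' direction, my plan is to reduce to the model case of the standard cube $I^n = [0,1]^n$ equipped with Lebesgue measure, and then deduce the general statement by decomposing $M$ into finitely many pieces homeomorphic to cubes on which the measures $\mu_1$ and $\mu_2$ can be made to agree.

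For the cube case, I would argue by induction on the dimension $n$. For $n=1$, given two measures $\mu_1,\mu_2$ on $[0,1]$ of equal total mass, full support, non-atomic, and vanishing at the endpoints, the cumulative distribution functions $F_i(t) \coloneqq \mu_i([0,t])$ are strictly increasing continuous bijections from $[0,1]$ to $[0,\mu_i([0,1])]$, and $f \coloneqq F_2^{-1} \circ F_1$ is the desired boundary-fixing homeomorphism. For $n\geq 2$, I would use a slicing strategy: first construct a homeomorphism of $I^n$ of the form $(x_1,\ldots,x_n) \mapsto (g(x_1),x_2,\ldots,x_n)$ that aligns the $x_1$-marginal of $\mu_1$ with that of $\mu_2$ (using the $1$-dimensional case); then, after fine grid subdivisions, apply the inductive hypothesis to each $(n-1)$-dimensional slice $\{x_1 = c\}\cap I^n$. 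To ensure global continuity, I would iterate this subdivision, making the cells finer at each step in such a way that $\mu_1(C) = \mu_2(C)$ for every cell $C$ of the subdivision, and then extract a limiting homeomorphism from the resulting Cauchy sequence of approximations.

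To reduce the general case to the cube case, I would take a finite cover of $M$ by closed coordinate balls and refine it to a cellular decomposition $M = \bigcup_{j=1}^N K_j$ into closed topological $n$-balls with pairwise disjoint interiors, chosen so that each boundary $\partial K_j$ has measure zero under both $\mu_1$ and $\mu_2$. Non-atomicity of the measures together with an inductive, dimension-by-dimension perturbation of the cell skeleton inside coordinate charts allows this arrangement. Next, using connectedness of $M$ and the fact that sliding a shared wall between adjacent cells continuously changes the mass distributed on either side, an intermediate-value argument yields an isotopy, compactly supported in the interior of $M$, after which $\mu_1(K_j) = \mu_2(K_j)$ for every $j$. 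Applying the cube case to each $K_j$ separately (with the measures restricted, normalized, and identified with the cube) produces homeomorphisms that fix each $\partial K_j$ pointwise, so they glue to a global homeomorphism $f : M \to M$ with $f_*\mu_1 = \mu_2$ and $f|_{\partial M} = \operatorname{id}$.

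The principal obstacle will be the cellular reduction step in the general manifold case: constructing a decomposition whose walls are simultaneously null for both measures and then adjusting them so that $\mu_1$ and $\mu_2$ coincide cell-by-cell. The hypotheses of full support, non-atomicity, and $\mu_i(\partial M) = 0$ are used in an essential way here. Full support gives the strict monotonicity needed for intermediate-value arguments when sliding walls; non-atomicity makes lower-dimensional skeleta negligible; and the vanishing on $\partial M$ both permits the decomposition to be chosen compatibly with $\partial M$ and ensures that the equalizing isotopy can be taken to be the identity near the boundary, which is what ultimately allows $f$ to restrict to $\operatorname{id}_{\partial M}$.
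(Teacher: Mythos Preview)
The paper does not prove this theorem; it is stated as a classical result and attributed to Oxtoby--Ulam \cite{ou41} (with the formulation following Fathi \cite{fat80}). So there is no ``paper's own proof'' to compare against---this is a cited background fact.

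That said, your outline is a recognizable sketch of the classical argument, but it has a genuine gap in the reduction step. You propose to ``refine a finite cover of $M$ by closed coordinate balls to a cellular decomposition $M = \bigcup_j K_j$ into closed topological $n$-balls with pairwise disjoint interiors.'' For a general compact \emph{topological} manifold this is not available for free: in dimension $4$ there exist topological manifolds that admit no handle decomposition or triangulation, and even in other dimensions producing such a decomposition with the required control on the skeleta is nontrivial. The original Oxtoby--Ulam argument avoids this by working with a single ``sigma-starlike'' model (the cube or simplex) and using that any compact manifold with boundary admits a map onto the cube that is a homeomorphism off a closed null set (essentially a collapsing of a spine), rather than a genuine cell decomposition. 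Fathi's treatment likewise bypasses cellular structures.

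Your cube step is essentially correct in spirit, though the inductive slicing is more delicate than you indicate: aligning marginals and applying the $(n-1)$-dimensional result slice-by-slice does not directly give a homeomorphism of $I^n$, since the slice homeomorphisms need not vary continuously in the transverse parameter. The standard fix is an iterated bisection/approximation scheme producing a Cauchy sequence in $\operatorname{Homeo}(I^n)$, which you gesture at but do not make precise.
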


\subsubsection*{Volume-preserving topological flows and measured $C^0$ foliations}

As above, let $\varphi$ be a fixed-point-free topological flow on a closed oriented topological $3$-manifold $Y$. Let $\mathcal{F}_\varphi$ be the $C^0$ foliation induced by $\varphi$; see Proposition \ref{prop:flow_on_C0_foliations}.

\begin{lem}
\label{lem:induced_transverse_measure}
    Suppose that $\mu$ is a finite Borel measure on $Y$ preserved by $\varphi$. Then $\varphi$ and $\mu$ induce a transverse measure $\Lambda_{\varphi,\mu}$ on $\mathcal{F}_\varphi$ characterized by the condition that, for every transversal $\Sigma \subset Y$, every Borel subset $A\subset \Sigma$, and every $\varepsilon>0$ sufficiently small, we have
    \begin{equation}
    \label{eq:induced_transverse_measure}
        \Lambda_{\varphi,\mu}(A) = \varepsilon^{-1}\mu (\{\varphi^t(p)\mid p\in A, 0<t<\varepsilon\}).
    \end{equation}
\end{lem}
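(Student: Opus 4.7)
The plan is to first establish injectivity of a uniform collar of any transversal, and then derive the transverse measure from linearity in $\varepsilon$. Given a compact transversal $\Sigma$, a compactness argument using the local flow boxes provided by the proof of Proposition~\ref{prop:flow_on_C0_foliations} shows that there exists $\varepsilon_\Sigma>0$ such that for every $\varepsilon\in(0,\varepsilon_\Sigma)$ the map
\begin{equation*}
(0,\varepsilon)\times \Sigma \to Y,\qquad (t,p)\mapsto \varphi^t(p),
\end{equation*}
is an embedding. Indeed, if this failed, sequences $t_n,s_n\to 0$ and $p_n,q_n\in\Sigma$ with $\varphi^{t_n}(p_n)=\varphi^{s_n}(q_n)$ but $(t_n,p_n)\neq(s_n,q_n)$ would, by compactness of $\Sigma$, accumulate on a single point of $\Sigma$; inside a flow box around that point, $\Sigma$ is locally a cross section, which forces $p_n=q_n$ and $t_n=s_n$, a contradiction. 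For Borel $A\subset\Sigma$, the set $B_\varepsilon(A):=\{\varphi^t(p):p\in A,\ 0<t<\varepsilon\}$ is then Borel, so the right-hand side of \eqref{eq:induced_transverse_measure} is well-defined.

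The heart of the argument will be showing that $f_A(\varepsilon):=\mu(B_\varepsilon(A))$ depends linearly on $\varepsilon\in(0,\varepsilon_\Sigma)$. For $\varepsilon_1,\varepsilon_2>0$ with $\varepsilon_1+\varepsilon_2<\varepsilon_\Sigma$, the embedding above yields the disjoint decomposition
\begin{equation*}
B_{\varepsilon_1+\varepsilon_2}(A)=B_{\varepsilon_1}(A)\sqcup \varphi^{\varepsilon_1}\bigl(B_{\varepsilon_2}(A)\bigr),
\end{equation*}
and flow invariance of $\mu$ gives $\mu(\varphi^{\varepsilon_1}(B_{\varepsilon_2}(A)))=\mu(B_{\varepsilon_2}(A))$. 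Hence $f_A(\varepsilon_1+\varepsilon_2)=f_A(\varepsilon_1)+f_A(\varepsilon_2)$, and monotonicity of $f_A$ forces $f_A(\varepsilon)=c(A)\,\varepsilon$ for some $c(A)\geq 0$. I would then set $\Lambda_{\varphi,\mu}(A):=c(A)$, which is precisely the value prescribed by \eqref{eq:induced_transverse_measure} for every sufficiently small $\varepsilon>0$.

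It then remains to verify the measure and transverse-invariance properties. Countable additivity of $A\mapsto \Lambda_{\varphi,\mu}(A)$ follows from countable additivity of $\mu$ together with the fact that, thanks to the uniform injectivity, $B_\varepsilon(\cdot)$ turns disjoint unions of Borel subsets of $\Sigma$ into disjoint unions in $Y$. For invariance under sliding along leaves, I would first treat the case of two transversals $\Sigma,\Sigma'$ inside a common flow box related by a shift along the flow, where Borel subsets $A\subset\Sigma$ and $A'=\varphi^s(A)\subset\Sigma'$ satisfy $B_\varepsilon(A')=\varphi^s(B_\varepsilon(A))$, so flow invariance of $\mu$ gives $\Lambda_{\varphi,\mu}(A')=\Lambda_{\varphi,\mu}(A)$; the general case follows by composing such local shifts along a chain of flow boxes.

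The main technical obstacle I anticipate is the uniform injectivity step, as it is what upgrades the flow-box additivity to a \emph{global} Cauchy identity on all of $\Sigma$; without it, the linearity in $\varepsilon$ would only be available on small patches, and one would have to assemble the transverse measure through overlapping patches and check compatibility by hand. Once uniform injectivity and the resulting linearity are in place, the verification of countable additivity and holonomy invariance is essentially bookkeeping that reduces to the flow invariance of $\mu$.
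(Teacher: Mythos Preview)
Your proposal is correct and follows essentially the same approach as the paper: both establish a uniform $\varepsilon_0>0$ for which the flow collar on $\Sigma$ is embedded, use flow invariance of $\mu$ to deduce that the right-hand side of \eqref{eq:induced_transverse_measure} is independent of $\varepsilon$ (you make this explicit via the Cauchy additivity argument, while the paper states it in one line), and then verify holonomy invariance. The only minor difference is that the paper handles holonomy invariance in a single stroke by allowing a continuous transfer time $\tau:A\to\R$ with small $L^\infty$ norm and directly comparing the tube $\{\varphi^t(p):0<t<\varepsilon\}$ with its $\tau$-shifted version, whereas you first treat constant shifts and then compose through a chain of flow boxes; both arguments reduce to $\varphi$-invariance of $\mu$ in the same way.
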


\begin{proof}
    Pick $\varepsilon_0>0$ such that the restriction of $\varphi$ to $(-\varepsilon_0,\varepsilon_0)\times\Sigma$ is an embedding. Since $\mu$ is preserved by $\varphi$, the expression on the right hand side of \eqref{eq:induced_transverse_measure} is independent of $\varepsilon \in (0,\varepsilon_0)$. Therefore, \eqref{eq:induced_transverse_measure} yields a well-defined finite Borel measure on $\Sigma$. We need to show that maps between subsets of transversals obtained by sliding along leaves of $\mathcal{F}_\varphi$ are measure preserving. This can be reduced to showing that if $\Sigma\subset Y$ is a transversal, $A\subset \Sigma$ is a Borel subset, $\tau: A \rightarrow \R$ is a continuous function with small $L^\infty$ norm, and $\varepsilon>0$ is small, then
    \begin{equation*}
        \mu(\{\varphi^t(p)\mid p\in A, 0<t<\varepsilon\}) = \mu(\{\varphi^t(p) \mid p \in A, \tau(p) < t < \tau(p) + \varepsilon\}).
    \end{equation*}
    This identity follows from the assumption that $\varphi$ preserves $\mu$.
\end{proof}

Let us say that a transverse measure $\Lambda$ on some foliation $\mathcal{F}$ has \emph{full support} if the induced measure on any transversal has full support. Similarly, we say that $\Lambda$ is \emph{non-atomic} if the induced measure on any transversal is non-atomic.

Now suppose that $\varphi$ is a fixed-point-free topological flow preserving a finite Borel measure $\mu$. Let $(\mathcal{F}_\varphi,\Lambda_{\varphi,\mu})$ be the induced measured foliation; see Lemma \ref{lem:induced_transverse_measure}. It is straightforward to see that $\mu$ has full support if and only if the induced transverse measure $\Lambda_{\varphi,\mu}$ has full support. Since $\mu$ has finite mass and $\varphi$ does not have fixed points, the measure $\mu$ is automatically non-atomic. However, it is possible that the transverse measure $\Lambda_{\varphi,\mu}$ has atoms.

\begin{example}
\label{ex:atomic_transverse_measure}
    Consider $Y= S^3$ with the Hopf flow $\varphi$, i.e.\ the flow tracing out the fibers of the Hopf fibration at unit speed. Let $\mu_3$ be the standard $3$-dimensional Lebesgue measure on $S^3$, scaled to have total volume $1$. The Hopf flow $\varphi$ preserves $\mu_3$. Moreover, pick a Hopf fiber $F$ and let $\mu_1$ be the $1$-dimensional Lebesgue measure supported on this fiber $F$, again scaled to have total volume $1$. Then $\mu \coloneqq \frac{1}{2}(\mu_1 + \mu_3)$ is a finite Borel measure on $Y$ invariant under $\varphi$. The induced transverse measure $\Lambda_{\varphi,\mu}$ on the Hopf fibration $\mathcal{F}_\varphi$ is not non-atomic: any intersection point of a transversal with the special fiber $F$ yields an atom.

    In this example, the flow $\varphi$ is smooth, but the measure $\mu$ is not. We can turn the situation around using the Oxtoby-Ulam theorem: Since both $\mu$ and $\mu_3$ have full support, are non-atomic, and have total volume $1$, there exists a homeomorphism $f$ of $Y$ such that $f_*\mu = \mu_3$. Let $f_*\varphi$ be the pushforward of $\varphi$ under $f$. Then $f_*\varphi$ is a fixed-point-free topological flow on $Y$ which preserves the standard Lebesgue measure $\mu_3$. However, $f_*\varphi$ has a periodic orbit $f(F)$ of positive measure, causing the induced transverse measure $\Lambda_{f_*\varphi,\mu_3}$ on $\mathcal{F}_{f_*\varphi}$ to be not non-atomic.

    Note that we can produce even more pathological examples. Let $(F_n)_{n\geq 1}$ be a dense sequence of Hopf fibers. For each $n$, let $\mu_{F_n}$ denote the $1$-dimensional Lebesgue measure on $F_n$ of total volume $1$. Define the $\varphi$-invariant measure $\mu \coloneqq \sum_{n\geq 1} 2^{-n}\mu_{F_n}$. This measure has full support, is non-atomic, and has total volume $1$. Again by Oxtoby-Ulam, the measure $\mu$ is homeomorphic to the standard $3$-dimensional Lebesgue measure $\mu_3$, i.e.\ there exists a homeomorphism $g$ of Y such that $g_*\mu = \mu_3$. Then $g_*\varphi$ is a fixed-point-free topological flow on $Y$ which preserves $\mu_3$ and has the property that the complement of some countable sequence of periodic orbits has measure zero.
\end{example}

\begin{lem}
\label{lem:non-atomic}
    Suppose that $\mu$ is a finite Borel measure preserved by a fixed-point-free topological flow $\varphi$. Then the following statements are equivalent:
    \begin{enumerate}
        \item $\Lambda_{\varphi,\mu}$ is non-atomic.
        \item All flow lines of $\varphi$ have vanishing measure.
        \item All periodic orbits of $\varphi$ have vanishing measure.
    \end{enumerate}
\end{lem}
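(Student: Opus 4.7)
The plan is to establish (1) $\Leftrightarrow$ (2) directly from the defining formula \eqref{eq:induced_transverse_measure}, note that (2) $\Rightarrow$ (3) is trivial because periodic orbits are flow lines, and then prove (3) $\Rightarrow$ (1) by a contrapositive measure-growth argument: an atom of $\Lambda_{\varphi,\mu}$ on a transversal must sit on a periodic orbit of positive $\mu$-measure, because otherwise concatenating arcs would force the total mass of $\mu$ to be infinite.

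For (1) $\Rightarrow$ (2), I would fix a flow line $L$ through a point $p$ and pick a countable collection of flow boxes $\alpha_i:(-\varepsilon_i,\varepsilon_i)\times\Sigma_i\hookrightarrow Y$ covering $L$. Within each box, $L$ intersects $\Sigma_i$ in a countable set $\{p_{i,j}\}$ (countable because each intersection point corresponds to a distinct time value in the separable domain $\R$). By \eqref{eq:induced_transverse_measure} applied to the singleton $A=\{p_{i,j}\}$, non-atomicity of $\Lambda_{\varphi,\mu}$ gives $\mu(\{\varphi^t(p_{i,j}):0<t<\varepsilon_i\})=0$, so each plaque has $\mu$-measure zero and hence $\mu(L)=0$ by countable subadditivity. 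The converse (2) $\Rightarrow$ (1) is immediate: if $\Lambda_{\varphi,\mu}$ had an atom at $p$ with weight $c>0$, the formula would give $\mu$-measure $c\varepsilon>0$ to an arc of the flow line through $p$, violating (2).

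The main step is (3) $\Rightarrow$ (1), which I would argue by contrapositive. Suppose $\Lambda_{\varphi,\mu}$ has an atom at some $p\in\Sigma$ of weight $c>0$, with $\varphi$ embedding $(-\varepsilon_0,\varepsilon_0)\times\Sigma$ into $Y$. Then
\begin{equation*}
    \mu(\{\varphi^t(p):0<t<\varepsilon\}) = c\varepsilon \quad\text{for all }0<\varepsilon<\varepsilon_0.
\end{equation*}
By $\varphi$-invariance of $\mu$, the same formula holds for any time-translate:
\begin{equation*}
    \mu(\{\varphi^t(p):s<t<s+\varepsilon\}) = c\varepsilon \quad\text{for all }s\in\R,\ 0<\varepsilon<\varepsilon_0.
\end{equation*}
Now I claim the flow line $L$ through $p$ is periodic. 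If not, then $t\mapsto\varphi^t(p)$ is injective, so for any integer $n\geq 1$ the $n$ consecutive arcs $\{\varphi^t(p):k\varepsilon<t<(k+1)\varepsilon\}$, $k=0,\dots,n-1$, are pairwise disjoint, and their union has $\mu$-measure $nc\varepsilon$. Letting $n\to\infty$ contradicts $\mu(Y)<\infty$. Hence $L$ is periodic of some period $T>0$, and $\mu(L) = \lim_{\varepsilon\nearrow T} c\varepsilon = cT>0$, contradicting (3).

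The main subtlety to be careful about is the justification that $\varphi^s(p)$ is again an atom of the appropriate transverse measure with the same weight, so that concatenating arcs really produces arcs of the same $\mu$-measure. This is handled cleanly by invoking flow invariance of $\mu$ directly, bypassing any discussion of how the atom propagates to other transversals. The rest is bookkeeping: the covering argument in (1) $\Rightarrow$ (2) uses only that flow lines are $\sigma$-compact (being continuous images of $\R$), which ensures countably many flow boxes suffice.
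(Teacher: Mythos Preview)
Your proof is correct and follows essentially the same approach as the paper: the paper also proves (1)$\Leftrightarrow$(2) directly via the defining formula for $\Lambda_{\varphi,\mu}$, and then shows that any flow line of positive measure must be periodic by partitioning a non-periodic orbit into countably many disjoint segments of equal $\mu$-measure and invoking finiteness of $\mu$. Your version is slightly more detailed in places (the covering argument for (1)$\Rightarrow$(2), the explicit computation $\mu(L)=cT$), but the underlying ideas coincide.
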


\begin{proof}
    If $\Sigma\subset Y$ is a transversal and $x\in \Sigma$ is a point, then $\Lambda_{\varphi,\mu}(\{x\})>0$ if and only if the flow line of $\varphi$ through $x$ has positive measure. This shows the equivalence of the first two statements.

    We argue that any flow line $L$ of $\varphi$ of positive measure is necessarily periodic. Indeed, if $L$ is a non-periodic flow line through a point $x$, then $L$ is the disjoint union of the countable collection of flow line segments $L_i\coloneqq \varphi([i,i+1)\times \{x\})$ for $i\in \Z$. All of these flow line segments have the same volume. Since $\mu$ is finite, the total volume of $L$ must be finite as well. But this implies that all flow line segments $L_i$ and therefore $L$ itself have vanishing volume.
\end{proof}

\subsubsection*{Compatible triples}

Consider a closed oriented topological $3$-manifold $Y$. Fix a finite Borel measure $\mu$ and a $C^0$ Hamiltonian structure $\Omega$ on $Y$. Let $\varphi$ denote a fixed-point-free topological flow on $Y$.  

The $C^0$ Hamiltonian structure $\Omega$ determines a $1$-dimensional foliation $\mathcal{F}_\Omega$ on $Y$ equipped with a transverse measure $\Lambda_\Omega$.  Moreover, $\Omega$ determines a coorientation of $\mathcal{F}_\Omega$, which in combination with the orientation on $Y$ determines an orientation $\mathcal{F}$.

\begin{lem}
\label{lem:compatible_triple_equivalent_conditions_C0}
    The following statements are equivalent:
    \begin{enumerate}
        \item \label{item:compatible_triple_equivalent_conditions_C0_one} The flow $\varphi$ preserves $\mu$ and  $(\mathcal{F}_\varphi,\Lambda_{\varphi,\mu})$ coincides with $ (\mathcal{F}_\Omega, \Lambda_\Omega)$, as measured, cooriented (hence, oriented) foliations. 
        \item \label{item:compatible_triple_equivalent_conditions_C0_two} The triple $(\varphi,\mu,\Omega)$ is locally homeomorphic to the triple $(\varphi_{\operatorname{std}},\mu_{\operatorname{std}},\omega_{\operatorname{std}}).$
    \end{enumerate}
\end{lem}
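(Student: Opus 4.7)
The implication (\ref{item:compatible_triple_equivalent_conditions_C0_two}) $\Rightarrow$ (\ref{item:compatible_triple_equivalent_conditions_C0_one}) is essentially tautological: the assertions to verify in (\ref{item:compatible_triple_equivalent_conditions_C0_one}) are local and homeomorphism-invariant, and they manifestly hold for the standard triple $(\varphi_{\operatorname{std}}, \mu_{\operatorname{std}}, \omega_{\operatorname{std}})$ on $\R^3$ by a direct computation (the flow $\partial_t$ preserves Lebesgue measure; the characteristic foliation of $\omega_{\operatorname{std}}$ consists of the vertical lines with the induced orientation and the standard $2$-dimensional Lebesgue transverse measure, and these agree with $(\mathcal{F}_{\varphi_{\operatorname{std}}}, \Lambda_{\varphi_{\operatorname{std}},\mu_{\operatorname{std}}})$ via the characterization in Lemma \ref{lem:induced_transverse_measure}).

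For the interesting direction (\ref{item:compatible_triple_equivalent_conditions_C0_one}) $\Rightarrow$ (\ref{item:compatible_triple_equivalent_conditions_C0_two}), I plan to start from a $C^0$ Hamiltonian chart $\iota_0 : ((-\varepsilon,\varepsilon)\times B(a), \omega_{\operatorname{std}}) \hookrightarrow (Y,\Omega)$ with $\iota_0(0,0)=p$ (which exists by Definition \ref{def:C0_Hamiltonian_atlas}), and \emph{re-parametrize its time direction} to match the flow. Concretely, since $\mathcal{F}_\varphi = \mathcal{F}_\Omega$ as oriented foliations, for $(t,z)$ in a sufficiently small neighborhood of $(0,0)$ the point $\varphi^t(\iota_0(0,z))$ lies on the leaf $\iota_0((-\varepsilon,\varepsilon)\times\{z\})$, so it equals $\iota_0(\tau(t,z),z)$ for a unique $\tau(t,z)$ depending continuously on $(t,z)$ and strictly increasing in $t$ (using equality of orientations). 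After shrinking the domain to some $(-\varepsilon',\varepsilon')\times B(a')$ — which is possible, and periodic leaves pose no problem since we can take $\varepsilon'$ smaller than half the period — define
\begin{equation*}
    \iota_1 : (-\varepsilon',\varepsilon')\times B(a') \to Y, \qquad \iota_1(t,z) \coloneqq \varphi^t(\iota_0(0,z)).
\end{equation*}
This $\iota_1$ is a topological embedding (the standard flow-box construction, compare Proposition \ref{prop:flow_on_C0_foliations}) and by construction intertwines $\varphi_{\operatorname{std}}^s = \partial_t$-translation with $\varphi^s$. The transition map $\iota_0^{-1} \circ \iota_1$ has the form $(t,z)\mapsto(\tau(t,z),z)$; since the ``$z$''-component is the identity (in particular an orientation- and area-preserving embedding of a $2$-disc), this transition preserves the standard $C^0$ Hamiltonian structure on $\R^3$, so $\iota_1$ belongs to the same $C^0$ Hamiltonian atlas as $\iota_0$, i.e.\ it is a $C^0$ Hamiltonian chart for $\Omega$.

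It remains to verify $\iota_1^*\mu = \mu_{\operatorname{std}}$; this is the step that uses the hypothesis $\Lambda_{\varphi,\mu} = \Lambda_\Omega$. Since $\iota_1$ intertwines $\varphi_{\operatorname{std}}$ with $\varphi$ and $\mu$ is $\varphi$-invariant, the pushforward $\iota_1^*\mu$ is invariant under the translation flow $\partial_t$ on $(-\varepsilon',\varepsilon')\times B(a')$; a standard Fubini/disintegration argument (for example, evaluating on product Borel sets $(s_0,s_1)\times A$ and using translation invariance in $t$) identifies this measure as a product $dt\otimes \nu$ for a finite Borel measure $\nu$ on $B(a')$. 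On the other hand, evaluating the right-hand side of the defining formula \eqref{eq:induced_transverse_measure} in the chart $\iota_1$ shows that $\nu$ is precisely the restriction of $\Lambda_{\varphi,\mu}$ to the transversal $\iota_1(\{0\}\times B(a'))$; since $\iota_1$ is a $C^0$ Hamiltonian chart for $\Omega$, the restriction of $\Lambda_\Omega$ to the same transversal equals the Lebesgue measure $dx\wedge dy$ on $B(a')$. The assumption $\Lambda_{\varphi,\mu} = \Lambda_\Omega$ therefore gives $\nu = dx\wedge dy$, whence $\iota_1^*\mu = dt\wedge dx\wedge dy = \mu_{\operatorname{std}}$.

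\textbf{Main obstacle.} The most delicate step is the disintegration argument identifying $\iota_1^*\mu$ with a product measure and then pinning down the transverse factor as $\nu = \Lambda_\Omega|_{\text{transversal}}$. The product-measure decomposition relies only on translation invariance in $t$ together with $\sigma$-finiteness of the restriction to the slab, while the second identification requires carefully matching the definition of $\Lambda_{\varphi,\mu}$ in Lemma \ref{lem:induced_transverse_measure} with Fubini-type integration against $dt\otimes \nu$. The remaining ingredients (constructing the reparametrization $\tau$, verifying that $\iota_1$ is an embedding, and checking that the transition map lies in the $C^0$ Hamiltonian pseudogroup) are essentially bookkeeping.
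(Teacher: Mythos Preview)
Your proof is correct and follows essentially the same route as the paper: start from a $C^0$ Hamiltonian chart, reparametrize the time direction via the flow to obtain a new chart $\iota_1(t,z)=\varphi^t(\iota_0(0,z))$ that simultaneously pulls back $\Omega$ to $\omega_{\operatorname{std}}$ and $\varphi$ to $\varphi_{\operatorname{std}}$, and then use the hypothesis $\Lambda_{\varphi,\mu}=\Lambda_\Omega$ to identify $\iota_1^*\mu$ with $\mu_{\operatorname{std}}$. Your disintegration step (translation-invariance in $t$ forces a product form $dt\otimes\nu$, and the transverse factor is pinned down by \eqref{eq:induced_transverse_measure}) is exactly what the paper compresses into the sentence ``$\alpha^*\mu$ agrees with $\mu_{\operatorname{std}}$ on all sets of the form $I\times A$\dots\ This determines $\alpha^*\mu=\mu_{\operatorname{std}}$.''
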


\begin{proof}
    Note that the triple $(\varphi_{\operatorname{std}},\mu_{\operatorname{std}},\omega_{\operatorname{std}})$ satisfies all conditions in statement \ref{item:compatible_triple_equivalent_conditions_C0_one}. Since all of these conditions can be checked locally, statement \ref{item:compatible_triple_equivalent_conditions_C0_two} implies statement \ref{item:compatible_triple_equivalent_conditions_C0_one}. Conversely, suppose that $(\varphi,\mu,\Omega)$ satisfies statement \ref{item:compatible_triple_equivalent_conditions_C0_one}. Every point in $Y$ has a neighborhood which can be parametrized via an orientation-preserving topological embedding of $C^0$ Hamiltonian structures
    \begin{equation*}
        \iota : ((-1,1)\times B(a),\omega_{\operatorname{std}}) \hookrightarrow (Y,\Omega).
    \end{equation*}
    Note that $\iota$ maps line segments $(-1,1)\times \{*\}$ into flow lines of $\varphi$ respecting orientation. For $\varepsilon>0$ sufficiently small, define the embedding
    \begin{equation*}
        \alpha : (-\varepsilon,\varepsilon)\times B(a) \rightarrow Y \qquad \alpha(t,p) \coloneqq \varphi^t(\iota(0,p)).
    \end{equation*}
    This embedding continues to pull back $\Omega$ to $\omega_{\operatorname{std}}$ and in addition pulls back $\varphi$ to $\varphi_{\operatorname{std}}$. We conclude that the transverse measure $\alpha^*\Lambda_{\varphi,\mu} = \Lambda_{\varphi_{\operatorname{std}},\alpha^*\mu}$ agrees with $\alpha^*\omega = \omega_{\operatorname{std}}$. Thus $\alpha^*\mu$ agrees with $\mu_{\operatorname{std}}$ on all sets of the form $I \times A$ where $I\subset (-\varepsilon,\varepsilon)$ is an interval and $A\subset B(a)$ is a Borel set. This determines $\alpha^*\mu = \mu_{\operatorname{std}}$ and we conclude that $\alpha$ is a local homeomorphism between $(\varphi_{\operatorname{std}},\mu_{\operatorname{std}},\omega_{\operatorname{std}})$ and $(\varphi,\mu,\omega)$.
\end{proof}

\begin{definition}
\label{def:compatible-triple-C0}
    We call $(\varphi,\mu,\Omega)$ a \textit{compatible triple} if the equivalent conditions in Lemma \ref{lem:compatible_triple_equivalent_conditions_C0} are satisfied.
\end{definition}

\medskip
We now formulate criteria guaranteeing that any of the pairs $(\varphi, \mu)$, $(\varphi, \Omega)$ or $(\mu, \Omega)$ extend to compatible triples.

\begin{prop}
\label{prop:from_pair_to_tiple_iff_loc_triv_C0}
    The pair $(\varphi,\mu)$ extends to a compatible triple if and only if it is locally homeomorphic to $(\varphi_{\operatorname{std}},\mu_{\operatorname{std}})$. The analogous statements hold for the pairs $(\varphi,\Omega)$ and $(\mu,\Omega)$.
\end{prop}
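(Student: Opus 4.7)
The plan is to adapt the proof of Proposition \ref{prop:compatible_triple_existence_extension_smooth} to the $C^0$ setting. The ``only if'' direction of each of the three statements is immediate from Lemma \ref{lem:compatible_triple_equivalent_conditions_C0}, which characterizes compatible triples via local triviality: if $(\varphi, \mu, \Omega)$ is a compatible triple then in particular any two of its components form a pair locally homeomorphic to the corresponding pair from $(\varphi_{\operatorname{std}}, \mu_{\operatorname{std}}, \omega_{\operatorname{std}})$.

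For the ``if'' direction, the central task is to establish a $C^0$ analogue of Lemma \ref{lem:compatible_triple_unique_extension_smooth}: namely, that a homeomorphism between open subsets of $\R^3$ which preserves two components of the standard triple automatically preserves the third. Once this local rigidity is available, the strategy will be to cover $Y$ by charts trivializing the given pair, declare the missing component to be the corresponding standard one in each chart, and invoke local rigidity to show that the transition maps preserve this declared component. The local definitions will then glue to a global third structure, and by construction the resulting triple is locally homeomorphic to the standard one, hence compatible.

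The main obstacle is proving the local rigidity lemma without access to derivatives, since the smooth version is pointwise linear algebra. I plan to exploit the following two normal forms forced by preservation of the individual components: a homeomorphism $\psi$ preserving $\varphi_{\operatorname{std}}$ must commute with time translation and so take the form $\psi(t,z) = (t+f(z), g(z))$, while a homeomorphism preserving $\omega_{\operatorname{std}}$ admits, locally, the form $\psi(t,z) = (h(t,z), a(z))$ with $a$ area-preserving, by Definition \ref{def:C0_Hamiltonian_atlas}. Combining either of these with preservation of $\mu_{\operatorname{std}}$ via a Fubini computation then determines the free data: in the first case $g$ must be area-preserving, giving preservation of $\omega_{\operatorname{std}}$; in the second, $h(\cdot,z)$ must preserve one-dimensional Lebesgue measure for each $z$, and together with the orientation-preserving convention this forces $h(t,z) = t + f(z)$, giving preservation of $\varphi_{\operatorname{std}}$. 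The remaining case---preservation of both $\varphi_{\operatorname{std}}$ and $\omega_{\operatorname{std}}$ implies preservation of $\mu_{\operatorname{std}}$---is a direct Fubini calculation starting from the combined form $\psi(t,z) = (t+f(z), g(z))$ with $g$ area-preserving.

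With the local rigidity in hand, the global gluing is straightforward in the cases of $(\varphi, \mu)$ and $(\varphi, \Omega)$: the missing structure is declared chart by chart and the cocycle condition for the resulting atlas is automatic. For $(\mu, \Omega)$ one must additionally globalize the locally defined flows; the local rigidity ensures the local flows agree on overlaps, and a global flow can be produced either by direct gluing on the compact manifold $Y$, or more cleanly by applying Proposition \ref{prop:flow_on_C0_foliations} to the oriented foliation underlying $\Omega$ and then fixing the parametrization using $\mu$ and formula \eqref{eq:induced_transverse_measure} so that, in each chart, $(\varphi, \mu)$ is identified with $(\varphi_{\operatorname{std}}, \mu_{\operatorname{std}})$.
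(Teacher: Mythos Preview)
Your proposal is correct and follows essentially the same route as the paper: the paper reduces to the $C^0$ analogue of Lemma \ref{lem:compatible_triple_unique_extension_smooth} (stated there as Lemma \ref{lem:unique_extension_to_triple_C0}), whose proof it declares ``elementary'' and omits, and then says local extendibility plus uniqueness gives global extendibility. You have supplied exactly this argument with the elementary verification filled in; the only minor imprecision is the appeal to formula \eqref{eq:induced_transverse_measure} in the $(\mu,\Omega)$ case, where what you actually need is the $\mu$-arclength along leaves (not the transverse measure) to pin down the reparametrization, but the idea is correct.
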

\begin{proof}
   This can be proven through an argument analogous to the smooth case presented in Proposition \ref{prop:compatible_triple_existence_extension_smooth}; we omit the details here. The proof of Proposition \ref{prop:compatible_triple_existence_extension_smooth} makes use of Lemma \ref{lem:compatible_triple_unique_extension_smooth}, whose $C^0$ counterpart is Lemma \ref{lem:unique_extension_to_triple_C0}, stated and proven below.
\end{proof}

\begin{lem}
\label{lem:unique_extension_to_triple_C0}
    Any two components of a compatible triple $(\varphi,\mu,\Omega)$ uniquely determine the third.
\end{lem}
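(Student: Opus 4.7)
The plan is to reduce the uniqueness problem in each of the three cases to the standard local model $(\varphi_{\operatorname{std}},\mu_{\operatorname{std}},\omega_{\operatorname{std}})$ on $\R^3$, using the characterization of compatibility provided by Lemma \ref{lem:compatible_triple_equivalent_conditions_C0}. Throughout I will use Proposition \ref{prop:flow_on_C0_foliations} (which tells us that a fixed-point-free topological flow on an oriented closed $3$-manifold is the same datum as an oriented $1$-dimensional $C^0$ foliation) and Lemma \ref{lem:induced_transverse_measure} (which produces the transverse measure $\Lambda_{\varphi,\mu}$). Each case asks: if two compatible triples agree in two slots, they agree in the third.

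\emph{Case 1 (determining $\Omega$ from $(\varphi,\mu)$).} A $C^0$ Hamiltonian structure on $Y$ is tautologically the same as an oriented $1$-dimensional $C^0$ foliation equipped with a coorientation and a transverse measure. The first condition of Lemma \ref{lem:compatible_triple_equivalent_conditions_C0} forces the underlying foliation to be $\mathcal{F}_\varphi$ with its intrinsic orientation, the coorientation to be the one determined from this orientation together with the ambient orientation of $Y$, and the transverse measure to be $\Lambda_{\varphi,\mu}$. Every piece of data depends only on $(\varphi,\mu)$, so $\Omega$ is uniquely determined.

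\emph{Case 2 (determining $\mu$ from $(\varphi,\Omega)$).} Around any point of $Y$, Proposition \ref{prop:from_pair_to_tiple_iff_loc_triv_C0} furnishes a flow box $\alpha:(-\varepsilon,\varepsilon)\times \Sigma\hookrightarrow Y$ in which $\alpha^*\varphi$ is translation in the first factor and $\alpha^*\Omega=\omega_{\operatorname{std}}$. Since $\mu$ is $\varphi$-invariant and must satisfy $\Lambda_{\varphi,\mu}=\Lambda_\Omega$, applying \eqref{eq:induced_transverse_measure} on rectangles $(a,b)\times A$ with $A\subset\Sigma$ Borel forces $\alpha^*\mu$ to coincide with the product $dt\otimes \Lambda_\Omega|_\Sigma$ on a generating family of Borel sets, and hence everywhere. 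Covering $Y$ by such flow boxes yields global uniqueness of $\mu$.

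\emph{Case 3 (determining $\varphi$ from $(\mu,\Omega)$).} This is the main obstacle. From $\Omega$ we read off the oriented foliation $\mathcal{F}_\Omega$, and any compatible $\varphi$ must trace out its oriented leaves by Proposition \ref{prop:flow_on_C0_foliations}, so two candidate flows $\varphi_1,\varphi_2$ differ only by a leaf-wise reparametrization. Pinning down the parametrization is the crux. I would choose, via Proposition \ref{prop:from_pair_to_tiple_iff_loc_triv_C0}, a chart $\alpha:(-\varepsilon,\varepsilon)\times B\hookrightarrow Y$ conjugating $(\mu,\Omega)$ to $(\mu_{\operatorname{std}},\omega_{\operatorname{std}})$. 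In this chart any candidate flow has the form $\psi^t(0,p)=(h(t,p),p)$ with $h$ continuous, $h(0,p)=0$, and $h(\cdot,p)$ strictly increasing. Using \eqref{eq:induced_transverse_measure} with a transversal $\{0\}\times A$ and computing $\mu_{\operatorname{std}}$ of the swept region $\{(s,p):p\in A,\;0<s<h(\varepsilon,p)\}$ by Fubini gives $\int_A h(\varepsilon,p)\,d\omega_{\operatorname{std}}(p)=\varepsilon\,\omega_{\operatorname{std}}(A)$ for every Borel $A\subset B$ and every sufficiently small $\varepsilon>0$. Varying $A$ forces $h(\varepsilon,p)=\varepsilon$ pointwise, so $\psi=\varphi_{\operatorname{std}}$ on the chart; repeating this at every point of $Y$ (using the group law to promote unit-speed agreement on $\{0\}\times B$ to agreement on $(-\varepsilon,\varepsilon)\times B$) gives the global uniqueness $\varphi_1=\varphi_2$. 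The delicate step is the transition from the a.e.\ equality to everywhere equality, but this is immediate from continuity of $h$.
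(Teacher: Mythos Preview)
Your argument is correct, but it takes a different route from the paper's. The paper reduces everything to a single statement about the \emph{stabilizer} of the standard model: after localizing via Lemma~\ref{lem:compatible_triple_equivalent_conditions_C0}, it observes that uniqueness amounts to showing that any homeomorphism of an open set in $\R^3$ which preserves two of $(\varphi_{\operatorname{std}},\mu_{\operatorname{std}},\omega_{\operatorname{std}})$ automatically preserves the third, and then declares this elementary. You instead compute the third component explicitly from the other two in each case; your Cases~2 and~3 effectively supply the ``elementary'' details the paper omits. Your approach has the advantage of being self-contained and showing concretely why the Fubini/product structure of $\mu_{\operatorname{std}}$ pins down the missing data, while the paper's formulation is slightly more conceptual and uniform across the three cases.

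One point worth cleaning up: in Cases~2 and~3 you invoke Proposition~\ref{prop:from_pair_to_tiple_iff_loc_triv_C0} to produce the local chart, but the paper's proof of that proposition explicitly appeals to the very lemma you are proving. This is not a genuine circularity, since you only use the ``only if'' direction (a pair inside a compatible triple is locally standard), and that direction follows immediately from statement~\ref{item:compatible_triple_equivalent_conditions_C0_two} of Lemma~\ref{lem:compatible_triple_equivalent_conditions_C0} applied to the full triple you already have in hand. Still, citing Lemma~\ref{lem:compatible_triple_equivalent_conditions_C0} directly would make the independence of your argument transparent.
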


\begin{proof}
    It suffices to check the above uniqueness locally. This means we need to verify that if $(\varphi,\mu,\Omega)$ is a compatible triple on (an open subset of) $\R^3$ two of whose components coincide with the corresponding components of $(\varphi_{\operatorname{std}},\mu_{\operatorname{std}},\omega_{\operatorname{std}})$, then  $(\varphi,\mu,\Omega) = (\varphi_{\operatorname{std}},\mu_{\operatorname{std}},\omega_{\operatorname{std}})$. In view of the characterization of compatibility given by statement \ref{item:compatible_triple_equivalent_conditions_C0_two}, it suffices to check that if $\psi$ is a homeomorphism of $\R^3$ preserving two members of the triple $(\varphi_{\operatorname{std}},\mu_{\operatorname{std}},\omega_{\operatorname{std}})$, then it preserves the third. This is elementary and we omit the details.
\end{proof}

\begin{prop}
\label{prop:from_pair_to_tiple_conditions_C0}
    \begin{enumerate}
        \item The pair $(\varphi,\mu)$ extends to a compatible triple if and only if the flow of $\varphi$ preserves $\mu$ and the flow lines of $\varphi$ have zero measure.
        \item The pair $(\varphi,\mu)$ extends to a compatible triple if and only if the flow of $\varphi$ preserves $\mu$ and the induced transverse measure $\Lambda_{\varphi,\mu}$ on $\mathcal{F}_\varphi$ is of full support and non-atomic.
        \item The pair $(\varphi,\Omega)$ extends to a compatible triple if and only if the oriented flow lines of $\varphi$ agree with the oriented characteristic leaves of $\Omega$.
    \end{enumerate}
\end{prop}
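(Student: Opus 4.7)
The plan is to handle all three parts in parallel: the ``only if'' direction in each case is immediate from the local model characterization given by Lemma \ref{lem:compatible_triple_equivalent_conditions_C0}(\ref{item:compatible_triple_equivalent_conditions_C0_one}), combined with Lemma \ref{lem:non-atomic} (which relates atoms of $\Lambda_{\varphi,\mu}$ to flow lines of positive measure) and Lemma \ref{lem:induced_transverse_measure}. For the ``if'' directions, Proposition \ref{prop:from_pair_to_tiple_iff_loc_triv_C0} reduces the problem to showing that, near every point of $Y$, the given pair is locally homeomorphic to the corresponding standard pair.

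For part (3), given $p \in Y$, I would pick a topological chart $\iota : ((-1,1)\times B(a), \omega_{\operatorname{std}}) \hookrightarrow (Y, \Omega)$ for the $C^0$ Hamiltonian structure passing through $p$. Since the oriented flow lines of $\varphi$ agree with the oriented characteristic leaves $\iota((-1,1)\times \{z\})$, the map
\begin{equation*}
    \alpha(t,z) \coloneqq \varphi^t(\iota(0,z))
\end{equation*}
is a well-defined topological embedding on a small neighborhood of $\{0\}\times B(a)$ that reparametrizes the first coordinate via the flow. This embedding continues to pull back $\Omega$ to $\omega_{\operatorname{std}}$ while additionally pulling back $\varphi$ to $\varphi_{\operatorname{std}}$, giving the required local model.

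For parts (1) and (2), the strategy is to use a local cross-section. By Whitney's theorem on local cross-sections (invoked in the proof of Proposition \ref{prop:flow_on_C0_foliations}), every point $p$ has a neighborhood parametrized as
\begin{equation*}
    \psi : (-\varepsilon, \varepsilon) \times B \hookrightarrow Y, \qquad \psi(t,z) = \varphi^t(z),
\end{equation*}
where $B$ is a topologically embedded $2$-disc through $p$. Invariance of $\mu$ under $\varphi$ forces $\psi^* \mu$ to have the product form $dt \otimes \nu$, where $\nu$ is a finite Borel measure on $B$ coinciding with the transverse measure induced by $\Lambda_{\varphi,\mu}$ on the transversal $B$. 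Then non-atomicity of $\Lambda_{\varphi,\mu}$ (equivalent by Lemma \ref{lem:non-atomic} to flow lines having zero measure) translates to non-atomicity of $\nu$, and full support of $\mu$ to full support of $\nu$. I would then pick a small compact topological sub-disc $\overline{D}\subset B$ containing $\psi^{-1}(p)$ in its interior and apply the Oxtoby--Ulam theorem (Theorem \ref{thm:oxtoby_ulam}) to produce a homeomorphism $h : \overline{D} \to \overline{D(a)}$, for an appropriate Euclidean disc $D(a)$ of area $\nu(\overline{D})$, sending $\nu|_{\overline{D}}$ to standard Lebesgue measure. The composition $(t,z) \mapsto \psi(t, h^{-1}(z))$ then provides the sought local homeomorphism between the standard pair and $(\varphi, \mu)$.

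The main obstacle is the careful application of Oxtoby--Ulam in the topological setting: the cross-section $B$ is only a topologically embedded disc and a priori nothing ensures $\nu(\partial D)=0$ for the sub-disc where one wants to apply the theorem. The resolution is a standard approximation: since $\nu$ is non-atomic and finite, a countable family of nested topological sub-discs around $\psi^{-1}(p)$ must contain arbitrarily small members whose boundaries have vanishing $\nu$-measure, so one can arrange the hypotheses of Theorem \ref{thm:oxtoby_ulam}. With this local model in hand, Proposition \ref{prop:from_pair_to_tiple_iff_loc_triv_C0} delivers the extension $\Omega$, and the uniqueness asserted in Lemma \ref{lem:unique_extension_to_triple_C0} guarantees that the locally constructed Hamiltonian structures patch together on $Y$.
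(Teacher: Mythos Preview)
Your proposal is correct and follows essentially the same approach as the paper: for parts (1) and (2) you use Whitney's local cross-section together with the Oxtoby--Ulam theorem to straighten the transverse measure, and for part (3) you reparametrize a Hamiltonian chart along the flow, exactly as the paper does (and as already appears in the proof of Lemma \ref{lem:compatible_triple_equivalent_conditions_C0}). Your extra care about arranging $\nu(\partial D)=0$ before applying Theorem \ref{thm:oxtoby_ulam} is a point the paper glosses over, but your resolution via a nested family of sub-discs is standard and correct.
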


\begin{proof}
    The first and the second items are equivalent by Lemma \ref{lem:non-atomic}.  We will prove the second item.

    If the pair $(\varphi,\mu)$ extends to a compatible triple, then $(\mathcal{F}_\varphi,\Lambda_{\varphi,\mu})$ is locally homeomorphic to (the measured foliation of) $\omega_{\operatorname{std}}$, which clearly implies that $\Lambda_{\varphi,\mu}$ is of full support and non-atomic. Conversely, suppose that $\varphi$ preserves $\mu$ and $\Lambda_{\varphi,\mu}$ is of full support and non-atomic. Given an arbitrary point in $Y$, pick a local cross section $B\subset Y$ through that point which is homeomorphic to a disc; as mentioned earlier, the existence of the cross section $B$ was proven by Whitney \cite{whi38}. Since the measure on $B$ induced by $\Lambda_{\varphi,\mu}$ is of full support and non-atomic, it follows from the Oxtoby-Ulam theorem that there exists an area-preserving parametrization $\iota : (B(a),\omega_{\operatorname{std}}) \rightarrow (B,\Lambda_{\varphi,\mu})$. In addition, we can pick $\iota$ such that it is orientation preserving with respect to the orientation of $B$ induced by the flow $\varphi$ and the orientation of $Y$. Extend $\iota$ to an embedding $\alpha : (-\varepsilon,\varepsilon)\times B(a) \hookrightarrow Y$ via the flow $\varphi$. Clearly, $\alpha$ pulls back $\varphi$ to $\varphi_{\operatorname{std}}$. Moreover, it pulls back $(\mathcal{F}_\varphi,\Lambda_{\varphi,\mu})$ to $\omega_{\operatorname{std}}$. Hence $\Omega \coloneqq (\mathcal{F}_\varphi,\Lambda_{\varphi,\mu})$ equipped with the orientation and coorientation induced by $\varphi$ and the ambient orientation of $Y$ is a $C^0$ Hamiltonian structure extending $(\varphi,\mu)$ to a compatible triple. We have proven the second statement.

    We turn now  to the third statement. If $(\varphi,\Omega)$ extends to a compatible triple, then clearly the oriented flow lines of $\varphi$ agree with the oriented characteristic leaves of $\Omega$ by the characterization of compatibility given in statement \ref{item:compatible_triple_equivalent_conditions_C0_one} in Lemma \ref{lem:compatible_triple_equivalent_conditions_C0}. Conversely, suppose that the oriented flow lines of $\varphi$ agree with the oriented characteristic leaves of $\Omega$. As we saw in the proof of Lemma \ref{lem:compatible_triple_equivalent_conditions_C0}, we can parametrize a neighborhood of any point in $Y$ via an embedding of $C^0$ Hamiltonian structures $\alpha : ((-\varepsilon,\varepsilon)\times B(a),\omega_{\operatorname{std}}) \hookrightarrow (Y,\Omega)$ that also pulls back the flow $\varphi$ to $\varphi_{\operatorname{std}}$. By Proposition \ref{prop:from_pair_to_tiple_iff_loc_triv_C0}, this means that $(\varphi,\Omega)$ extends to a compatible triple.
    \end{proof}

\begin{rem}\label{rem:every-Ham-sturcture-extend-to-triple}
We proved in Proposition \ref{prop:flow_on_C0_foliations} that every oriented $C^0$ foliation admits a fixed-point free flow tracing out its leaves.  Hence, as a consequence of the second item in Proposition \ref{prop:from_pair_to_tiple_conditions_C0}, every $C^0$ Hamiltonian structure $\Omega$ is a component of a compatible tripe $(\varphi, \mu, \Omega)$.
\end{rem}

\subsection{Mass flow and flux}
\label{sec:mass flow flux}

In this section, we briefly review the mass flow homomorphism and show that for a volume-preserving flow $\varphi$ which is a component of a compatible triple $(\varphi, \mu, \Omega)$, the flux $\overline{\operatorname{Flux}}(\Omega)$ admits a description in terms of mass flow.

\medskip
 
Consider a closed and oriented topological manifold $M$ of dimension $n$. Suppose that $\mu$ is a finite non-atomic Borel measure on $M$ of full support. Recall that there is a \textit{mass flow homomorphism}, introduced in \cite{fat80,sch57},
\begin{equation*}
    \widetilde{\theta} : \widetilde{\operatorname{Homeo}}_0(M,\mu) \rightarrow H_1(M;\R) \cong H^{n-1}(M;\R),
\end{equation*}
where $\widetilde{\operatorname{Homeo}}_0(M,\mu)$ denotes the universal cover of the identity component of the group of volume-preserving homeomorphisms of $(M,\mu)$ and the identification $ H_1(M;\R) \cong H^{n-1}(M;\R)$ is given by Poincar\'{e} duality. 

We briefly recall the definition of $\widetilde{\theta}$. Let $\T\coloneqq \R/\Z$ denote the circle. Then we have an isomorphism $H^1(M;\Z) \cong [M,\T]$ and therefore $H_1(M;\R) \cong \operatorname{Hom}([M,\T],\R)$. Given an element $\widetilde{\varphi} \in \widetilde{\operatorname{Homeo}}_0(M,\mu)$ represented by a volume-preserving isotopy $(\varphi_t)_{t\in [0,1]}$ starting at the identity, defining $\widetilde{\theta}(\widetilde{\varphi})$ therefore amounts to defining a group homomorphism $$\widetilde{\theta}(\widetilde{\varphi}) : [M,\T]\rightarrow \R.$$ Let $[f] \in [M,\T]$ be a homotopy class represented by a map $f: M \rightarrow \T$ and consider the homotopy of maps $f\circ \varphi^t - f : Y \rightarrow \T$, which at $t=0$ coincides with the constant map zero. There exists a unique lift to a homotopy of maps $\overline{f\circ \varphi^t - f} : Y \rightarrow \R$, which at $t=0$ coincides with the constant map zero. Then,
\begin{equation}
\label{eq:construction_mass_flow}
    \widetilde{\theta}(\widetilde{\varphi}) ([f]) \coloneqq \int_Y \overline{f\circ \varphi_1 - f} d\mu,
\end{equation}
which turns out to be independent of the choice of representative of $\tilde{\varphi}$ and $[f]$.  This defines  $\widetilde{\theta} : \widetilde{\operatorname{Homeo}}_0(M,\mu) \rightarrow H_1(M;\R)$, which we may view as taking values in $H^{n-1}(M;\R)$, via Poincar\'{e} duality.

\begin{definition}\label{def:exact_vol_pres}
   Let ${\varphi^t}_{t \in \R} \subset \operatorname{Homeo}_0(M,\mu)$ be a flow.  We say $\varphi^t$ is exact if the element of $\widetilde{\operatorname{Homeo}}_0(M,\mu)$ determined by ${\varphi^t}_{t \in [0,1]}$  has vanishing mass flow.
\end{definition}

We now restrict our attention to the case where $M$ is a closed, oriented 3-manifold, which we will refer to from this point onward as $Y$.
\begin{prop}
\label{prop:equivalence_mass_flow_flux}
    Suppose that $(\varphi,\mu,\Omega)$ is a compatible triple. Then, for every $T\in \R\setminus \{0\}$, we have
    \begin{equation}
    \label{eq:equivalence_mass_flow_flux}
       \frac{1}{T} \widetilde{\theta}([(\varphi^t)_{t\in [0,T]}]) = \overline{\operatorname{Flux}}(\Omega).
    \end{equation}
\end{prop}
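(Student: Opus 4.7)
My plan is to establish the identity first in the smooth setting by direct computation, and then to reduce the general $C^0$ case via Theorem \ref{thm:from_C0_to_smooth_plus_plug} combined with a smooth approximation argument. For the smooth case, given a smooth compatible triple $(\varphi,\mu,\omega)$ with generating vector field $X$ satisfying $\iota_X\mu=\omega$, I evaluate the mass flow against a class $[f]\in H^1(Y;\Z)\cong [Y,\T]$ represented by a smooth map $f\colon Y\to\T$. The continuous lift $t\mapsto\overline{f\circ\varphi^t - f}$ starting at $0$ is then smooth with $t$-derivative $df(X)\circ\varphi^t$, giving $\overline{f\circ\varphi^T - f}(p)=\int_0^T df(X)|_{\varphi^s(p)}\,ds$. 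Integrating over $Y$, swapping the order of integration, and using $\mu$-invariance of each $\varphi^s$ yields
\begin{equation*}
    \widetilde{\theta}([(\varphi^t)_{t\in[0,T]}])([f]) \;=\; T\int_Y df(X)\, d\mu \;=\; T\int_Y df\wedge\omega,
\end{equation*}
where the last equality uses the identity $df(X)\,\mu = df\wedge\iota_X\mu$, valid in dimension three because $df\wedge\mu=0$. Under Poincar\'e duality, this identifies $\widetilde{\theta}([(\varphi^t)_{t\in[0,T]}])$ with $T\operatorname{Flux}(\omega)$.

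For the general $C^0$ case, Theorem \ref{thm:from_C0_to_smooth_plus_plug} furnishes a decomposition $\Omega=\omega\#\mathcal{P}$ with $\omega$ a smooth Hamiltonian structure and $\mathcal{P}=(\Sigma,\omega_\Sigma,\alpha,(\varphi_\Sigma^t)_{t\in[0,1]})$ a $C^0$ plug; by Definition \ref{def:flux_C0}, $\overline{\operatorname{Flux}}(\Omega)=\operatorname{Flux}(\omega)$. I approximate the continuous Hamiltonian isotopy $\varphi_\Sigma^t\in\overline{\operatorname{Ham}}(\Sigma)$ by smooth Hamiltonian isotopies $\varphi_{\Sigma,n}^t\in\operatorname{Ham}(\Sigma)$, uniformly in $t$ and preserving the boundary conditions (identity near $t=0$, constant near $t=1$); this is possible by density of $\operatorname{Ham}(\Sigma)$ in $\overline{\operatorname{Ham}}(\Sigma)$ and local path-connectedness, as invoked elsewhere in the paper. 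The resulting smooth plugs $\mathcal{P}_n$ produce smooth Hamiltonian structures $\Omega_n\coloneqq\omega\#\mathcal{P}_n$, for which $\operatorname{Flux}(\Omega_n)=\operatorname{Flux}(\omega)$ by Lemma \ref{lem:flux_invariant_under_plug_insertion_smooth_case}. Since each suspension $(t,p)\mapsto(t,\varphi_{\Sigma,n}^t(p))$ preserves $dt\wedge\omega_\Sigma$, Lemma \ref{lem:unique_extension_to_triple_C0} shows that the measure $\mu$ compatible with $\Omega$ coincides with those compatible with the $\Omega_n$, so the associated smooth flows $\varphi_n$ on $(Y,\mu)$ are well-defined.

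To conclude, I use $C^0$ continuity of mass flow under uniform convergence of paths. Using the explicit formula $\varphi^s(\alpha(t,q))=\alpha(t+s,\varphi_\Sigma^{t+s}((\varphi_\Sigma^t)^{-1}(q)))$ inside the plug image, together with $\varphi^s=\varphi_\omega^s$ outside, uniform convergence $\varphi_{\Sigma,n}^t\to\varphi_\Sigma^t$ forces uniform convergence of the flows $\varphi_n^s\to\varphi^s$ on $[0,T]\times Y$. Choosing $f\colon Y\to\T$ smooth with respect to the smooth structure furnished by Theorem \ref{thm:from_C0_to_smooth_plus_plug}, the families $t\mapsto f\circ\varphi_n^t-f$ converge uniformly to $t\mapsto f\circ\varphi^t-f$, so the unique continuous lifts to $\R$ starting at $0$ also converge uniformly, and hence $\int_Y\overline{f\circ\varphi_n^T-f}\,d\mu\to\int_Y\overline{f\circ\varphi^T-f}\,d\mu$. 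Combining this with the smooth case applied to $(\varphi_n,\mu,\Omega_n)$, which gives $\widetilde{\theta}([(\varphi_n^t)_{t\in[0,T]}])=T\operatorname{Flux}(\omega)$, yields $\widetilde{\theta}([(\varphi^t)_{t\in[0,T]}])=T\operatorname{Flux}(\omega)=T\,\overline{\operatorname{Flux}}(\Omega)$ as required.

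The main technical obstacle is verifying uniform $C^0$ convergence of the flows $\varphi_n\to\varphi$ on $[0,T]\times Y$. Inside the plug image this follows directly from uniform convergence of the Hamiltonian isotopies together with continuity of composition and inversion, while outside the plug all flows coincide with $\varphi_\omega$. The subtle point is propagating convergence through orbit transitions between these regions: entry points into the plug coincide for $\varphi_n$ and $\varphi$ (both flows equalling $\varphi_\omega$ outside), exit points converge uniformly by uniform convergence of the leaves inside the plug, and since each orbit crosses the plug only finitely many times over the bounded interval $[0,T]$, the uniform estimates propagate globally.
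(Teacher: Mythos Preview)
Your smooth computation is correct and matches the paper's. The gap is in the $C^0$ reduction.

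You invoke the formula $\varphi^s(\alpha(t,q))=\alpha(t+s,\varphi_\Sigma^{t+s}((\varphi_\Sigma^t)^{-1}(q)))$ inside the plug and $\varphi^s=\varphi_\omega^s$ outside. The second identity is fine (by Lemma \ref{lem:unique_extension_to_triple_C0}, the pair $(\mu,\omega)$ determines a unique compatible flow on the region where $\Omega=\omega$). But the first identity is equivalent to saying that $(\alpha\circ\Phi)^*\varphi$ is the standard translation flow on $(0,1)\times\Sigma$, which by the uniqueness lemma forces $(\alpha\circ\Phi)^*\mu = dt\wedge\omega_\Sigma$. Theorem \ref{thm:from_C0_to_smooth_plus_plug} only produces an embedding $\alpha$ of $C^0$ Hamiltonian structures into $(Y,\omega)$; it says nothing about how $\alpha$ interacts with the given measure $\mu$ or the given flow $\varphi$. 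So neither your explicit formula for $\varphi$ nor the claim that $(\mu,\Omega_n)$ extends to a compatible triple is justified: the suspension maps $\Phi,\Phi_n$ preserve $dt\wedge\omega_\Sigma$, but $\alpha^*\mu$ need not equal $dt\wedge\omega_\Sigma$, so $\Phi_n\circ\Phi^{-1}$ need not preserve $\alpha^*\mu$. Without this, you have no well-defined flows $\varphi_n$ on $(Y,\mu)$, and the convergence argument cannot get started.

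The paper handles this by using a \emph{flow-adapted} decomposition (Claim \ref{cl:from_C0_to_smooth_plus_plug_flow_version}): it produces a smooth compatible triple $(\varphi_0,\mu_0,\omega_0)$ together with a plug embedding $\iota$ satisfying $\varphi_0^s(\iota(t,p))=\iota(t+s,p)$, so that the plug coordinates are synchronized with the flow from the outset. It then avoids any approximation or convergence argument entirely: since $\Sigma$ is a disjoint union of discs, every class in $[Y_0,\T]$ has a representative $f$ with $f\circ\iota(t,p)=t$, and for such $f$ and small $T$ the lifts $\overline{f\circ\varphi_0^T-f}$ and $\overline{f\circ\varphi_1^T-f}$ are \emph{literally equal} (the plug twist leaves the $t$-coordinate, hence $f$, unchanged). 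Equality of mass flows for one small $T$ then propagates to all $T$ because mass flow is a homomorphism. This is both shorter and sidesteps the compatibility issue that breaks your argument.
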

\begin{proof}
   We begin by proving the identity \eqref{eq:equivalence_mass_flow_flux} for smooth compatible triples $(\varphi,\mu,\omega)$. Let $f:Y\rightarrow \T$ be a smooth map.  Throughout this proof, we will view  $\widetilde{\theta}$ as mapping into $H_1(Y;\R)$ which we identify with 
   $\operatorname{Hom}([Y,\Z],\R)$.   Viewing the right hand side of identity \eqref{eq:equivalence_mass_flow_flux} as a homomorphism $[Y,\T]\rightarrow \R$, we compute
    \begin{align*}
        T^{-1}\widetilde{\theta}([(\varphi^t)_{t\in [0,T]}]) ([f])
        &= T^{-1}\int_Y \overline{f\circ \varphi^T - f} \, \mu \\
        &= T^{-1}\int_Y \left( \int_0^T (\iota_Xdf)\circ \varphi^t \enspace dt \right) \mu \\
        &= T^{-1}\int_0^T \left(\int_Y (\iota_X df) \circ \varphi^t \enspace \mu\right) dt \\
        &= \int_Y (\iota_Xdf) \mu \\
        &= \int_Y df \wedge \iota_X\mu  \\
        &= \int_Y df \wedge \omega \\
        &= \langle \, [df] \cup \operatorname{Flux}(\omega), [Y] \, \rangle,
    \end{align*}
    where $\langle \cdot, \cdot \rangle$ stands for the pairing of cohomology and homology.
    Here, the first identity uses the definition of the mass flow homomorphism \eqref{eq:construction_mass_flow}. The fourth equality uses that $\varphi^t$ preserves $\mu$, which implies that $\int_Y (\iota_Xdf)\circ \varphi^t \, \mu$ is independent of $t$. The fifth equality holds because $df\wedge \mu$ being a $4$-form necessarily vanishes and hence we have
    \begin{equation*}
        0 = \iota_X (df \wedge \mu) = (\iota_Xdf)\mu - df \wedge \iota_X\mu.
    \end{equation*}
    The sixth equality uses that $(\varphi,\mu,\omega)$ is a compatible triple. In the final equality, $[df] \in H^1(Y;\R)$ is the de Rham cohomology class represented by the $1$-form $df$. This class agrees with the image of the cohomology class represented by the circle map $f$ under the natural map $H^1(Y;\Z) \rightarrow H^1(Y;\R)$. Using the identifications
    \begin{equation*}
        \operatorname{Hom}([Y,\Z],\R) \cong \operatorname{Hom}(H^1(Y;\Z),\R) \cong H_1(Y;R) \cong H^2(Y;\R),
    \end{equation*}
    this shows identity \eqref{eq:equivalence_mass_flow_flux} in the smooth case.

    In the general case, where $(\varphi,\mu,\Omega)$ is not assumed to be smooth, we make use of the following claim. Its proof closely parallels that of Theorem \ref{thm:from_C0_to_smooth_plus_plug} and is therefore omitted. 

\begin{claim}
\label{cl:from_C0_to_smooth_plus_plug_flow_version}
     There exist 
     \begin{enumerate}
         \item a smooth compatible triple $(\varphi_0,\mu_0,\omega_0)$ on a smooth, closed and oriented $3$-manifold $Y_0$,
         \item an open surface with area form $(\Sigma,\omega_\Sigma)$ consisting of finitely many disc components, and a smooth embedding $\iota : ((0,\varepsilon) \times \Sigma,\omega_\Sigma) \hookrightarrow (Y_0,\omega_0)$ satisfying $\varphi_0^s(\iota(t,p)) = \iota(t+s,p)$ for all $-t<s<\varepsilon -t$,
         \item  an isotopy $(\psi_t)_{t\in [0,\varepsilon]}$ in $\overline{\operatorname{Ham}}(\Sigma)$,
     \end{enumerate} 
     such that the compatible triple $(\varphi_1,\mu_0,\Omega_1)$ obtained from $(\varphi_0,\mu_0,\omega_0)$ by inserting\footnote{Here, by inserting the isotopy $(\psi_t)_t$ into $(\varphi_0,\mu_0,\omega_0)$ we mean modifying the compatible triple $(\varphi_0,\mu_0,\omega_0)$ in a manner analogous to the plug insertion operation, introduced in Section \ref{subsec:C0_plugs}.  We alluded to this in relation to Equation \eqref{eq:helicity_and_calabi_flows}.} the isotopy $(\psi_t)_t$ via the embedding $\iota$ is homeomorphic to $(\varphi,\mu,\Omega)$. 
\end{claim}

Given an arbitrary homotopy class in $[Y_0,\T]$, we may represent it by a smooth map $f: Y_0 \rightarrow \T$ with the property that $f\circ \iota (t,p) = t$. This makes use of the fact $\Sigma$ consists of finitely many disc components. For $T>0$ sufficiently small, we then have
\begin{equation*}
    \overline{f\circ \varphi_0^T - f} = \overline{f\circ \varphi_1^T - f}.
\end{equation*}
This implies that
\begin{equation*}
    \widetilde{\theta}([(\varphi_0^t)_{t\in [0,T]}]) = \widetilde{\theta}([(\varphi_1^t)_{t\in [0,T]}])
\end{equation*}
for all $T>0$ sufficiently small. Because mass flow is a homomorphism, we conclude that this identity holds for all $T>0$. Since $(\varphi_0,\mu_0,\omega_0)$ is a smooth compatible triple, we have
\begin{equation*}
    T^{-1}\widetilde{\theta}([(\varphi_0^t)_{t\in [0,T]}]) = \operatorname{Flux}(\omega_0)
\end{equation*}
by the smooth case treated above. It follows from Definition \ref{def:flux_C0}, which defines $\overline{\operatorname{Flux}}$, that 
\begin{equation*}
    \overline{\operatorname{Flux}}(\Omega_1) = \operatorname{Flux}(\omega_0).
\end{equation*}
Combining the above identities, we conclude that
\begin{equation*}
    T^{-1}\widetilde{\theta}([(\varphi_1^t)_{t\in [0,T]}]) = \overline{\operatorname{Flux}}(\Omega_1).
\end{equation*}
Since $(\varphi,\mu,\Omega)$ is homeomorphic to $(\varphi_1,\mu_1,\Omega_1)$, this implies that
\begin{equation*}
    T^{-1}\widetilde{\theta}([(\varphi^t)_{t\in [0,T]}]) = \overline{\operatorname{Flux}}(\Omega).
\end{equation*}
\end{proof}

\subsection{Proof of Theorem \ref{thm:topological_invariance_helicity_flows}}
\label{sec:proof-thm-flows}

We will now deduce Theorem \ref{thm:topological_invariance_helicity_flows} from  Theorem \ref{thm:coherent_helicity_extension} and the content of Section \ref{sec:c0_case}.

\medskip

Let $(Y, \mu)$ be a closed $3$-manifold equipped with a volume form $\mu$.  We will assume throughout this section that $Y$ is equipped with the orientation induced by $\mu$.

We denote by $\mathcal{S}$ the set consisting of all volume-preserving topological flows $\varphi$ which are fixed-point-free and whose flow lines have vanishing measure. Let \( \mathcal{S}_0 \subset \mathcal{S} \) be the subset consisting of exact flows (see Definition~\ref{def:exact_vol_pres}); in other words, \( \mathcal{S}_0 \) consists of flows satisfying the assumptions of Theorem~\ref{thm:topological_invariance_helicity_flows}.  
Similarly, let \( \mathcal{T} \) denote the set of all \( C^0 \) Hamiltonian structures on \( Y \), and let \( \mathcal{T}_0 \subset \mathcal{T} \) denote the subset of exact  \( C^0 \) Hamiltonian structures (see Definition~\ref{def:flux_C0}).

Take any $\varphi \in \mathcal{S}$. Then, by Proposition \ref{prop:from_pair_to_tiple_conditions_C0} and Lemma \ref{lem:compatible_triple_equivalent_conditions_C0}, there exists a unique $C^0$ Hamiltonian structure $\Omega_\varphi$ such that the triple $(\varphi, \mu, \Omega_\varphi)$ is a compatible triple in the sense of Definition \ref{def:compatible-triple-C0}.  This yields a mapping from $\mathcal{S}$ to $\mathcal{T}$, which preserves mass flow/flux by Proposition \ref{prop:equivalence_mass_flow_flux}. Hence it induces a mapping from $\mathcal{S}_0$ to $\mathcal{T}_0$ 

The above yields a $\mathcal{R}$-valued extension of helicity, which we continue to denote by $\overline{\mathcal{H}}$: for $\varphi \in \mathcal{S}_0$, we simply define
\begin{equation*}
    \overline{\mathcal{H}}(\varphi) := \overline{\mathcal{H}}(\Omega_\varphi),
\end{equation*}
where the right-hand-side in the above is the universal $\mathcal{R}$-valued extension of helicity given by Theorem \ref{thm:coherent_helicity_extension}. The three properties listed in Theorem \ref{thm:coherent_helicity_extension} translate as follows for this $\mathcal{R}$-valued extension of helicity $\overline{\mathcal{H}}: \mathcal{S}_0 \rightarrow \mathcal{R}$.

\begin{enumerate}
        \item Extension:  If $\varphi$ is smooth, then 
         \begin{equation*}
             \overline{\mathcal{H}}(\varphi) = \mathcal{H}(\varphi) \in \R,
        \end{equation*}
        where we view $\R$ as a subgroup of $\mathcal{R}$ via the natural inclusion \eqref{eqn:subgroup}.
        \item Conjugation invariance: we have 
        \begin{equation*}
            \overline{\mathcal{H}}(f \varphi f^{-1}) = \overline{\mathcal{H}}(\varphi),
        \end{equation*}
        for any orientation- and volume-preserving homeomorphism $f$.
        \item  Calabi Compatibility: For every plug $\mathcal{P} = (\Sigma,\omega_\Sigma,\alpha,\psi^t)$,  we have
        \begin{equation*}
            \overline{\mathcal{H}}(\varphi \# \mathcal{P}) = \overline{\mathcal{H}}(\varphi) + \overline{\mathrm{Cal}}_\Sigma(\psi^1),
        \end{equation*}
        where  $\varphi \# \mathcal{P}$ refers to the analogue of the plug-insertion operation for flows, which we alluded to in relation to Equation \eqref{eq:helicity_and_calabi_flows}.
    \end{enumerate}
    
Moreover, as in Theorem \ref{thm:coherent_helicity_extension}, $\overline{\mathcal{H}}: \mathcal{S}_0 \rightarrow \mathcal{R}$ is uniquely determined by the above properties. Indeed, by Claim \ref{cl:from_C0_to_smooth_plus_plug_flow_version} the flow $\varphi$ is topologically conjugate to a flow of the form $\psi \# \mathcal{P}$ where $\psi$ is a smooth exact volume-preserving flow. Applying Theorem \ref{thm:from_C0_to_smooth_plus_plug} and arguing as we did in the proof of Theorem \ref{thm:coherent_helicity_extension}, one can deduce that any other $\mathcal{R}$-valued extension of helicity to $\mathcal{S}_0$, satisfying the above three properties, must coincide with $\overline{\mathcal{H}}$. 

Now, to obtain a real-valued extension of helicity, as stated in Theorem \ref{thm:topological_invariance_helicity_flows}, simply pick $\operatorname{pr}: \mathcal{R}\rightarrow \R$ to be a choice of projection.   Then, the composition $\operatorname{pr}\circ \overline{\mathcal{H}}$ is an $\R$-valued extension of helicity which satisfies the three properties stated in Theorem \ref{thm:topological_invariance_helicity_flows}. The Calabi compatibility property holds with respect to the real-valued extension of Calabi $\operatorname{pr}\circ \overline{\mathrm{Cal}}_\Sigma : \overline{\Ham}(\Sigma) \rightarrow \R$. The uniqueness part of the statement may be proven using Claim \ref{cl:from_C0_to_smooth_plus_plug_flow_version}, as in the previous paragraph.
   
This completes the proof of Theorem \ref{thm:topological_invariance_helicity_flows}.\qed

\bibliographystyle{plain}
\bibliography{refs}

\medskip
\noindent Oliver Edtmair\\
\noindent ETH-ITS, ETH Z\"{u}rich, Scheuchzerstrasse 70, 8006 Z\"{u}rich, Switzerland.\\
{\it e-mail:} oliver.edtmair@eth-its.ethz.ch

\medskip
\noindent Sobhan Seyfaddini\\
\noindent D-MATH, ETH Z\"{u}rich, R\"{a}mistrasse 101, 8092 Z\"{u}rich, Switzerland.\\
{\it e-mail:} sobhan.seyfaddini@math.ethz.ch

\end{document}